\documentclass[a4paper,11pt]{article}
\pagestyle{plain}
\setlength{\oddsidemargin}{12pt}
\setlength{\evensidemargin}{12pt}
\setlength{\topmargin}{0pt}
\setlength{\textwidth}{15cm}
\setlength{\textheight}{21.5cm}
\setlength{\parindent}{0.5cm}
\setlength{\parskip}{1ex plus 0.2ex minus0.2ex}

%%%%%%%%%%%%%%%%%%%%%%%%%%%%%%%%%%%%%%%%%%For Syntex%%%%%%%%%%%%%%%%%%%%%%%%%%%%%%%%%%%%%%%%%%%%%%%%%%%%%%%%%%%%%%%%%%%
\synctex=1
%%%%%%%%%%%%%%%%%%%%%%%%%%%%%%%%%%%%%%%%%%%%%%%%%%%%%PACKAGE USED%%%%%%%%%%%%%%%%%%%%%%%%%%%%%%%%%%%%%%%%%%%%%%%%%%%%%%
\usepackage{pdfsync}
%%%%%%%%%%%%%%%%%%%%%%%%%%%%%%%%%%%%%%%%%%%%%%%%%%%%%%%%%%

%_______general packages_____________________________________________
\usepackage[plainpages=false]{hyperref}
\usepackage{amsfonts,amsmath,amssymb,amsthm,enumerate}
\usepackage{latexsym,lscape,rawfonts,mathrsfs,mathtools}
\usepackage{enumitem,pifont}
\usepackage{cases}
%\usepackage{refcheck}

%_______for bibliography_____________________________________________
% \usepackage{natbib}

%________for graphics________________________________________________
\usepackage[all]{xy}
\usepackage{eufrak}
\usepackage{makeidx}
\usepackage{graphicx,psfrag}
\usepackage{pstool}
\usepackage{tikz-cd}
\usepackage{float}

%________for tables__________________________________________________
\usepackage{array,tabularx,tabu}

%________for space___________________________________________________
\usepackage{setspace}

%________for appendix________________________________________________
\usepackage[titletoc,title]{appendix}

%________for draft___________________________________________________
%\usepackage{draftcopy}
%\draftcopyName{Ricci Flow Draft}{100}

%________for special fonts___________________________________________________
\usepackage{txfonts}

%%%%%%%%%%%%%%%%%%%%%%%%%%%%%%%%%%%%%%%%%%%%%%%%%COMMANDS SELFDEFINED%%%%%%%%%%%%%%%%%%%%%%%%%%%%%%%%%%%%%%%%%%%%%%%%%%

%____________abbreviations___________________________________________

\newcommand{\R}{\ensuremath{\mathbb{R}}}

\newcommand{\CP}{\ensuremath{\mathbb{CP}}}

\newcommand{\ba}{\begin{align*}}
\newcommand{\ea}{\end{align*}}
\newcommand{\na}{\nabla}
\newcommand{\la}{\langle}
\newcommand{\ra}{\rangle}
\newcommand{\lc}{\left(}
\newcommand{\rc}{\right)}

\newcommand{\ep}{\epsilon}

\newcommand{\Rm}{\overset{\circ }{Rm_{\Sigma}}}
\newcommand{\Rc}{\overset{\circ }{Rc_{\Sigma}}}
\newcommand{\ds}{\Delta_{\Sigma}}

%__________________functions_________________________________________

\makeatletter
\newcommand*\owedge{\mathpalette\@owedge\relax}
\newcommand*\@owedge[1]{%
\mathbin{%
\ooalign{%
$#1\m@th\bigcirc$\cr
\hidewidth$#1\m@th\wedge$\hidewidth\cr
}%
}%
}
\makeatother

%__________________notations_________________________________________
\makeatletter
\def\ExtendSymbol#1#2#3#4#5{\ext@arrow 0099{\arrowfill@#1#2#3}{#4}{#5}}

\makeatother

\makeatletter
\def\ExtendSymbol#1#2#3#4#5{\ext@arrow 0099{\arrowfill@#1#2#3}{#4}{#5}}
\newcommand\longright[2][]{\ExtendSymbol{-}{-}{\rightarrow}{#1}{#2}}
\makeatother

\def\Xint#1{\mathchoice
{\XXint\displaystyle\textstyle{#1}}%
{\XXint\textstyle\scriptstyle{#1}}%
{\XXint\scriptstyle\scriptscriptstyle{#1}}%
{\XXint\scriptscriptstyle\scriptscriptstyle{#1}}%
\!\int}
\def\XXint#1#2#3{{\setbox0=\hbox{$#1{#2#3}{\int}$ }
\vcenter{\hbox{$#2#3$ }}\kern-.55\wd0}}

\def\aint{\Xint-}

%__________________Equation within section_____________________________________
\numberwithin{equation}{section}

\newtheorem{thm}{Theorem}[section]
\newtheorem{cor}[thm]{Corollary}
\newtheorem{prop}[thm]{Proposition}
\newtheorem{lem}[thm]{Lemma}
\newtheorem{conj}[thm]{Conjecture}

\newtheorem{rem}[thm]{Remark}

\newtheorem{defn}[thm]{Definition}

\newtheorem{exmp}[thm]{Example}

\newtheorem{assum}{Assumption}

\setlength{\oddsidemargin}{0.25in}
\setlength{\evensidemargin}{0.25in} 
\setlength{\textwidth}{6in}
\setlength{\topmargin}{-0in} 
\setlength{\textheight}{8.5in}

\bibliographystyle{plain}

\title{Rigidity of the round cylinders in Ricci shrinkers}
\author{Yu Li \quad and \quad Bing Wang}
\date{\today}

\begin{document}
\maketitle

\begin{abstract}
In this paper, we prove that the round cylinders are rigid in the space of Ricci shrinkers. Namely, any Ricci shrinker that is sufficiently close to $S^{n-1}\times \R$ in the pointed-Gromov-Hausdorff topology must itself be isometric to $S^{n-1}\times \R$.
\end{abstract}

\tableofcontents

\section{Introduction}

A Ricci shrinker is a triple $(M^n, g, f)$ of smooth manifold $M^n$, Riemannian metric $g$ and a smooth function $f$ satisfying 
\begin{align*} 
Rc+\text{Hess}\,f=\frac{1}{2}g,
\end{align*}
where the potential function $f$ is normalized so that
\begin{align} 
R+|\nabla f|^2&=f \label{E101}.
\end{align}
The Ricci shrinkers play essential roles in studying the singularities of the Ricci flow. For example,  it was proved by Enders-M\"uller-Topping \cite{EMT11} that any proper blowup sequence from a type-I Ricci flow converges smoothly to a nontrivial Ricci shrinker.

Due to its importance, the classification of Ricci shrinkers has attracted extensive attentions. 
In dimension $2$ or $3$,  we know that $\R^2,S^2,\R^3,S^3,S^2\times \R$, and their quotients make the complete list of all Ricci shrinkers (e.g.,~\cite{Ha95}\cite{Naber}\cite{NW}\cite{CCZ}). 
In higher dimensions,  there exist many nontrivial, non-product Ricci shrinkers (e.g.,~\cite{Ko90}\cite{Cao92}\cite{FIK}), and the classification of Ricci shrinkers is only achieved when extra assumptions are assumed.
Such assumptions include non-negativity of curvatures (e.g.,~\cite{MW17}\cite{LNW18}\cite{Na19}\cite{LN20}\cite{LW20}), restriction of the Weyl curvatures (e.g.,~\cite{PW10}\cite{MS13}\cite{CC13}\cite{CW15}), 
restriction of asymptotic behavior at infinity (e.g.,~\cite{KW15}\cite{KW17}), K\"ahler conditions (e.g.,~\cite{Ni05}\cite{CDS19}\cite{Cifarelli}), and others. 
In general, much less is known if no extra assumptions are assumed.

We can consider all Ricci shrinkers as one moduli space $\mathcal M$ equipped with pointed-Gromov-Hausdorff topology.  On each Ricci shrinker,  $f$ always achieves its minimum value (cf. \cite{CZ10}) at some point $p$, which can be assigned as a base point. 
A natural question is: which Ricci shrinker is rigid in $\mathcal M$, in the sense that there is no nearby (in the pointed Gromov-Hausdorff distance) Ricci shrinker other than itself? For instance, it follows from \cite{Hui85} and the weak-compactness theory developed in \cite{LLW21} that the spherical space form $S^n/\Gamma$ is rigid since any Ricci shrinker is a self-similar Ricci flow solution. Other rigid examples include $\CP^{2n}$ proved by Kr\"oncke \cite{Kr16} and $S^2 \times S^2$ proved by Sun-Zhu \cite{SZ21} very recently. 
Their proofs depend on a delicate local structure theory of $\mathcal{M}$ (cf.  \cite{PS15}\cite{Ko82}),  which is not available for non-compact Ricci shrinkers. 
This makes the rigidity problem for non-compact Ricci shrinkers much more involved. 
Up to now,  the only non-compact Ricci shrinkers known to be rigid are the Gaussian solitons $(\R^n,g_E)$, whose rigidity is proved through an entropy-gap argument (cf. \cite{Yo09}\cite{Yo12}\cite{LW20}).

Inspired by the fundamental work of Colding-Ilmanen-Minicozzi~\cite{CIM15} on mean curvature flow, and our earlier research on 4-dimensional Ricci shrinkers~\cite{LWs1},  we are interested in figuring out whether the generalized cylinders $S^k \times \R^{n-k}, 2 \leq k \leq n-1$
are rigid in the moduli space of Ricci shrinkers.    In this article, we confirm the rigidity of the round cylinders $S^{n-1} \times \R$, i.e., the cases $k=n-1$. 

\begin{thm}[Main Theorem]
\label{T100}
There exists a small constant $\hat \ep=\hat \ep(n)>0$ satisfying the following property.

Suppose $(M^n,p,g,f)$ is a Ricci shrinker such that
\begin{align}
d_{PGH} \left\{ (M^n,p,g), (S^{n-1}\times \R,p_c,g_c)\right\}<\hat \ep, \label{E100}
\end{align} 
then $(M,g)$ is isometric to $(S^{n-1}\times \R,g_c)$. Here $d_{PGH}$ means the pointed-Gromov-Hausdorff distance, $p$ is a minimum point of $f$ and $p_c$ is a fixed point of $ S^{n-1}\times \R$.
\end{thm}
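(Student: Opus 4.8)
The plan is to argue by contradiction: suppose no such $\hat\ep$ exists, so there is a sequence of Ricci shrinkers $(M_i^n,p_i,g_i,f_i)$, none isometric to $S^{n-1}\times\R$, with $d_{PGH}\{(M_i,p_i,g_i),(S^{n-1}\times\R,p_c,g_c)\}\to 0$. The first step is to upgrade this Gromov--Hausdorff convergence to smooth Cheeger--Gromov convergence. Since the limit $S^{n-1}\times\R$ is a smooth Ricci shrinker with bounded geometry, one invokes the weak-compactness / $\epsilon$-regularity theory for Ricci shrinkers (the circle of results around \cite{LLW21}, \cite{LW20}, and the self-similar Ricci flow viewpoint): pointed-GH closeness to a smooth shrinker forces uniform local curvature bounds and a uniform lower injectivity-radius bound on $M_i$ near $p_i$, hence $(M_i,p_i,g_i,f_i)\to(S^{n-1}\times\R,p_c,g_c,f_c)$ in $C^\infty_{loc}$. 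In particular, for $i$ large, $M_i$ contains an arbitrarily large (metrically) region that is a normal graph over a large piece of the cylinder, with the graphing function and its derivatives of all orders tending to $0$.

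The core of the argument is then a gap/rigidity statement on the cylinder itself: \emph{a Ricci shrinker structure that is $C^\infty_{loc}$-close to the round cylinder on larger and larger regions must actually coincide with the cylinder}. Here I would exploit the extra structure coming from $f$. On $S^{n-1}\times\R$ the potential is $f_c=\frac{(n-1)}{2}+\frac{s^2}{4}$ (up to the normalization \eqref{E101}), so $f$ is a proper function whose level sets are the cross-sectional spheres and whose gradient flow is the $\R$-translation. On $M_i$, the normalized potential $f_i$ converges to $f_c$, its critical point $p_i$ is unique for $i$ large, and away from $p_i$ the level sets $\{f_i=c\}$ are smooth hypersurfaces close to round spheres $S^{n-1}$. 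The soliton equation $Rc+\mathrm{Hess}\,f=\frac12 g$, restricted and differentiated along $\nabla f_i$, gives a rigid ODE system for the induced geometry of the level sets: writing $g_i$ in "polar" coordinates adapted to $f_i$, the shrinker equation plus the constraint \eqref{E101} forces a Bernstein-type conclusion — the cross sections must have constant curvature and the warping must be exactly the cylindrical one. This is where the bulk of the computation lives, and it is the step I expect to be the main obstacle: one must show that the only Ricci shrinker whose geometry is uniformly cylindrical at large scale is the cylinder, ruling out, e.g., slow algebraic deviations that are invisible at any fixed scale but accumulate globally. The natural tools are (i) a Łojasiewicz--Simon inequality for Perelman's $\nu$-entropy / $\mathcal{W}$-functional at the cylinder, which upgrades $C^\infty_{loc}$-closeness of a genuine shrinker to actual equality once one controls the (finite-dimensional) kernel of the linearized operator, together with (ii) a careful analysis of that kernel: the Jacobi-type operator on $S^{n-1}\times\R$ for the shrinker equation has kernel spanned by the translation and dilation fields and the conformal Killing fields of $S^{n-1}$, all of which correspond to honest isometries or are excluded by the normalization and the fixed base point, so there are no genuine nearby deformations.

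To make (i)--(ii) precise one works on the compact exhaustion $\{f_i\le L\}$ with the drift measure $e^{-f_i}\,dV$, where the relevant functional is Perelman's shrinker entropy and the shrinker equation is its Euler--Lagrange equation; the second variation at the cylinder is a nonnegative operator with the explicit finite-dimensional kernel just described. A Łojasiewicz--Simon argument (as in \cite{CIM15} for self-shrinkers of mean curvature flow, and in \cite{Kr16}, \cite{SZ21} in the closed Ricci-shrinker setting, now adapted to the noncompact cylinder via the properness of $f$ to control the behavior at the $\R$-ends) then shows that a shrinker lying in a small $C^{2,\alpha}$-neighborhood of the cylinder must lie on the finite-dimensional orbit of actual cylinders, and the normalization ($p_i$ a minimum of $f_i$, the constraint \eqref{E101}) pins it down to $S^{n-1}\times\R$ itself — contradicting the choice of the $M_i$. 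The delicate points, beyond the kernel computation, are: controlling the ends uniformly (using that $f_i\to f_c$ is proper and quadratic, so the $C^\infty_{loc}$ convergence is in fact a global weighted convergence), and verifying that the Łojasiewicz exponent and neighborhood size depend only on $n$, which is exactly the content of the uniform constant $\hat\ep(n)$ in the statement. I expect the passage from local smooth convergence to the global weighted estimate needed to run Łojasiewicz--Simon at the noncompact cylinder to be the single hardest technical ingredient.
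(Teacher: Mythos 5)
Your proposal takes a genuinely different route from the paper, and it has a gap that is exactly the point the paper is designed to address.

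The paper's actual proof does not use a Łojasiewicz--Simon or entropy-gap argument. Its four steps are: (1) upgrade pointed GH-closeness to $C^\infty$-Cheeger--Gromov closeness near $p$, so $p$ lies in an evolving normalized $\ep$-neck; (2) propagate the $\ep$-symmetry along the flow lines of $\nabla f$ to establish a global structure theorem (Theorem~\ref{T101}): every point of $M$ is either the center of an $\ep$-neck or lies in a cap region modeled on the Bryant soliton, with the ends classified as cylindrical, conical, or capped; (3) invoke the Brendle--Naff--Daskalopoulos--Sesum symmetry improvement machinery (Appendix~\ref{app:B}) to conclude $(M,g)$ is exactly rotationally symmetric; (4) apply Kotschwar's classification of rotationally symmetric shrinkers. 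Your proposal agrees with the paper only in step (1).

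The gap in your proposal lies in steps (i)--(ii), and you correctly flagged the location but not its severity. A Łojasiewicz--Simon inequality for the shrinker entropy at $(S^{n-1}\times\R,g_c)$, if it could be made to work, would require that the competitor shrinker $(M_i,g_i,f_i)$ be expressible as a small perturbation of the cylinder in a \emph{global} weighted function space adapted to $e^{-f_c}\,dV$ --- that is, $M_i$ must be diffeomorphic to $S^{n-1}\times\R$ and the difference of metrics must be controlled uniformly out to spatial infinity. But the hypothesis $d_{PGH}<\hat\ep$, even after upgrading to $C^\infty_{\mathrm{loc}}$ convergence, only controls $M_i$ on a region of bounded size around $p_i$ depending on $\hat\ep$. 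Outside that region $M_i$ could a priori close up into a compact shrinker, develop asymptotically conical ends, or attach caps of entirely different topology; the properness of $f_i$ does not by itself rule any of this out. Indeed, the paper explicitly remarks that the local structure theory of the moduli space used for the compact rigidity results ($\CP^{2n}$ by Kr\"oncke, $S^2\times S^2$ by Sun--Zhu) --- which is precisely the slice-theorem plus Łojasiewicz--Simon framework you are proposing --- ``is not available for non-compact Ricci shrinkers.'' Supplying the missing global control is the content of Theorem~\ref{T101} and Sections 3--4, and constitutes the bulk of the paper: one must analyze an ODE for $R$ along the gradient flow of $f$, establish a trichotomy of ends, and show via a compactness argument and a PIC2 strong maximum principle that cap regions converge to the Bryant soliton. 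Without that (or an equivalent mechanism), the passage from local smooth closeness to the global weighted estimate needed to run Łojasiewicz--Simon cannot even begin, so your contradiction argument does not close. Moreover, once one has done all the work needed to make your step (ii) precise, the symmetry-improvement route arguably becomes shorter and avoids the spectral/kernel analysis entirely, which is likely why the authors chose it.
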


The proof of Theorem~\ref{T100} relies heavily on the symmetry improvement technique of S. Brendle et al. (cf. \cite{Bre14}\cite{Bre20}\cite{BDNS21}\cite{BDS20}\cite{BK20}), the classification result of B. Kotschwar~\cite{Kot08} and the weak-compactness theory developed by H. Li, S. Huang and the authors (\cite{LLW21} \cite{HLW21} \cite{LW20}). 

\begin{proof}[Outline of the proof of Theorem~\ref{T100}]
The proof consists of four steps. 

\textit{Step 1. The base point $p$ is in an $\epsilon$-neck.}

Condition \eqref{E100} means that the base point $p$ has a neighborhood very close to the standard cylinder in the Gromov-Hausdorff topology, which is very rough.
However, by the weak-compactness theory developed in~\cite{LLW21}, we can improve the Gromov-Hausdorff topology to the $C^{\infty}$-Cheeger-Gromov topology (cf. Proposition 7.4 of~\cite{LLW21}). 
Therefore, by choosing $\hat{\epsilon}$ sufficiently small, we are able to show (cf.~Proposition \ref{P210b}) that $p$ is actually in the center of an evolving normalized $\ep$-neck (cf. Definition~\ref{def:neck_2}). 
We may understand that the point $p$ has $\epsilon$-symmetry with respect to the model space $S^{n-1} \times \R$. 

\textit{Step 2. Each point $x \in M$ is in an $\ep$-neck or a region $\epsilon$-close to the Bryant soliton, after proper rescaling. }

This step is the technical core of this paper. We characterize the $\epsilon$-symmetry in terms of curvature and potential function estimates and show that these estimates are almost preserved along the flow line of $\nabla f$. 
Roughly speaking, if the level set of $f$ is uniformly away from being critical (i.e., $\nabla f=0$), then each point locates in an $\epsilon$-neck. Moreover, the critical points are isolated, and the region is very close to the Bryant soliton near the critical point of $f$. Therefore, each point ``far away" from the critical set is in an $\ep$-neck, and each point ``near" critical set is in a region $\epsilon$-close to the Bryant soliton. These descriptions can be made precise, which is the key new ingredient of this argument. 

\textit{Step 3. $(M,g)$ is rotationally symmetric.}

This step utilizes the celebrated symmetry improvement technique developed by S. Brendle et al. (cf. Appendix~\ref{app:B} for details). Suppose $\epsilon$ is sufficiently small and every point on $(M, g)$ has $\epsilon$-symmetry (with respect to either the cylinder or Bryant soliton), then every point has $\frac{\epsilon}{2}$-symmetry. 
Consequently, this process will run forever and yield that 
every point $x \in M$ has $2^{-k} \epsilon$-symmetry for each positive integer $k$. Therefore, $(M,g)$ is rotationally symmetric.

\textit{Step 4. $(M,g)$ is isometric to $S^{n-1} \times \R$.}

Since $(M,g)$ is rotationally symmetric already, the classification result of Kotschwar~\cite{Kot08} applies. 
We know that $(M, g)$ is isometric to $S^n$, $S^{n-1} \times \R$  or $\R^{n}$.
By condition (\ref{E100}), we conclude that $(M, g)$ must be isometric to $S^{n-1} \times \R$. 
\end{proof}

From the discussion above, it is clear that the most difficult part of the proof is to analyze precisely the propagation of the $\epsilon$-symmetry along the flow line of $\nabla f$. 
Due to this propagation, if there is a neck region far away from the base point and close to the round cylinder, then the behavior of the far-end can be described precisely. 
Since $f \sim \frac{d^2(p, \cdot)}{4}$, the value of $f$ indicates the distance to the base point. 
For each pair $t \leq s$, we define $\Sigma(t, s)$ to be a connected component of the level set 
\begin{align}
\{x\in M \, \mid t \le f(x) \le s\}. \label{eqn:OF05_2}
\end{align}
In the particular case $t=s$, we define
\begin{align}
\Sigma(t) \coloneqq \Sigma(t, t)=\{x\in M \, \mid f(x) = t\}. \label{eqn:OG05_1}
\end{align}
If a point locates in a cylinder-like neighborhood, we say it is the center of $\epsilon$-neck. 
Alternatively, if a point locates in a neighborhood close to a steady soliton with mild singularities, we say it is the center of $\epsilon$-steady soliton conifold.
Note that the conifold  here is the one introduced by Chen-Wang (cf. Definition 1.2 of~\cite{CW17}) to denote the Riemannian space with mild singularities.
For the exact definitions of the aforementioned concepts, see Definition \ref{def:neck_2}, Definition \ref{def:soli} and Definition~\ref{dfn:B001}.  
With these terminologies, we can precisely describe the propagation of the $\epsilon$-symmetry in the following theorem.

\begin{thm}[Propagation of almost symmetry]
\label{T101}
For any positive constants $n,A,B,\ep$ and $\delta_0\in (0,1)$, there exist positive constants $\sigma=\sigma(n,\delta_0,A,B)$, $L=L(n,\delta_0,A,B)$ and $\eta=\eta(n,\delta_0,A,B,\ep)$ satisfying the following property.

Let $(M^n,g,f)$ be a Ricci shrinker with
\begin{align}
\begin{cases}
|\na^i Rm| \le BR^{\frac{i}{2}+1}, \quad \forall \, 0\le i \le 4 \quad &\text{on}\quad \Sigma(t_0,s_0); \label{E001} \\
R\le \ep_1 f \quad &\text{on} \quad \Sigma(t_0,s_0); \\
|\Rm| \le \ep_1' R \quad &\text{on}\quad \Sigma(t_0); \\
\left |R-\frac{n-1}{2} \right| \le \ep'_1 \quad &\text{on}\quad \Sigma(t_0); \\
(1-\delta_0)s_0\ge t_0 \ge n \ep_1^{-1}; \\
\boldsymbol{\mu}(g) \ge -A;\\
\Sigma(t_0) \text{ is diffeomorphic to } S^{n-1}.
\end{cases}
\end{align}
If $\ep_1 \le \sigma$ and $\ep_1' \le \eta$, then one of the following statements holds.
\begin{itemize}
\item[(a).] There exists an end $E$ with $\partial E=\Sigma(t_0)$ such that any point in $\Sigma(\eta^{-1} t_0,\infty) \subset E$ is the center of an evolving normalized $\ep$-neck. Moreover, $E$ is asymptotic to the round cylinder with rate $O(r^{-\tau(n)})$. 
\item[(b).] There exists an end $E$ with $\partial E=\Sigma(t_0)$ such that any point in $\Sigma(\eta^{-1} t_0,\infty) \subset E$ is the center of an evolving $\ep$-neck. Moreover, $E$ is asymptotic to a regular cone with cross section diffeomorphic to $S^{n-1}$. 
\item[(c).] There exists a compact set $E$ with $\partial E=\Sigma(t_0)$, a number $s\ge s_0$ and a point $q \in \Sigma(s)$ such that $R(q)=\sigma s$. Moreover, any point in $\Sigma(\eta^{-1} t_0,s) \subset E$ is the center of an evolving $\ep$-neck and any point in the cap $D:=E \backslash \Sigma(t_0,s)$ is the center of an $\ep$-steady soliton conifold. Furthermore, 
\begin{align}
\emph{diam}_g D \le \frac{L}{\sqrt{s}}, \quad \sup_D|f-s| \le L \quad \text{and} \quad L^{-1}s \le \inf_D R \le \sup_D R \le s+L.\label{E002}
\end{align} 
\end{itemize}
\end{thm}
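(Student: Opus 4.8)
The plan is to analyze the flow line of $\nabla f$ starting from $\Sigma(t_0)$ and track how the hypotheses $|\Rm|\le \ep_1'R$ and $|R-\tfrac{n-1}{2}|\le\ep_1'$ propagate forward (toward larger $f$). The first step is to set up the ODE/PDE system for the relevant quantities along the $\nabla f$-flow: since $\Sigma(t_0)$ is a neck-like level set ($|\Rm|\le\ep_1'R$ forces sectional curvatures close to the cylinder value on $\Sigma(t_0)$, and $\Sigma(t_0)\cong S^{n-1}$), I would first use the soliton identities (e.g.\ $\nabla R = 2\Rm(\nabla f,\cdot)$, $\Delta_f R = R - 2|\Rc|^2$, and the evolution of $\Rm$ under $\nabla f$) together with the curvature bound \eqref{E001} on $\Sigma(t_0,s_0)$ to derive a Gronwall-type estimate showing that, as long as we stay in a region where $f\le (1-\delta_0)^{-1}t_0\cdot(\text{something})$ and $R$ stays small relative to $f$, the neck structure persists with $\ep$ in place of $\ep_1'$ after allowing the deterioration constant $\eta$. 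The normalization $R+|\nabla f|^2=f$ and $t_0\ge n\ep_1^{-1}$ guarantee $|\nabla f|^2\approx f$ is large, so the $\nabla f$-flow moves quickly through level sets and the total "time" is controlled.

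The second step is the trichotomy. Running the $\nabla f$-flow from $\Sigma(t_0)$ outward, either (i) $R/f$ stays below $\sigma$ for all $f\ge t_0$ — then every level set remains an $\ep$-neck, one gets a complete end $E$, and a separate asymptotic analysis (using the improved neck estimates and the soliton equation at infinity, cf.\ the asymptotically cylindrical/conical dichotomy for shrinkers) shows $E$ is either asymptotically cylindrical with polynomial rate $O(r^{-\tau(n)})$ (case (a)) or asymptotic to a regular cone over $S^{n-1}$ (case (b)); or (ii) there is a first value $s\ge s_0$ and a point $q\in\Sigma(s)$ with $R(q)=\sigma s$ — this is case (c). Distinguishing (a) from (b) should come from whether $R\cdot f$ is bounded (cylinder) or $R\cdot f\to 0$ (cone), i.e.\ from the asymptotics of the scalar curvature decay along the end; standard shrinker asymptotics (Kotschwar–Wang type arguments, or the structure theory of ends of shrinkers) give the polynomial rate $\tau(n)$ in case (a).

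The third step handles case (c): near the point $q$ where $R(q)=\sigma s$ is "large," I would rescale by $R(q)$ and use the curvature bounds \eqref{E001} plus the weak-compactness theory (\cite{LLW21}, \cite{HLW21}) to extract a limit which, by the neck structure on the approach and the fact that $\nabla f$ has an isolated zero there, must be a $\kappa$-noncompact steady Ricci soliton with mild singularities — i.e.\ an $\ep$-steady soliton conifold (the Bryant-soliton-like cap). The diameter, potential, and scalar-curvature bounds \eqref{E002} on the cap $D$ then follow from: the cap sits at $f\approx s$ (since outside $D$ every point is on an $\ep$-neck where $f$ is essentially the level-set value and $R\approx \tfrac{n-1}{2(f)}\cdot(\text{const})$... more precisely from $R+|\nabla f|^2=f$ with $R$ bounded on the compact cap), the diameter bound comes from $\mathrm{diam}\sim R^{-1/2}\sim s^{-1/2}$ at the scale of the cap, and $L$ absorbs the steady-soliton geometry; the entropy lower bound $\boldsymbol\mu(g)\ge -A$ is what prevents collapsing and makes the rescaled limits noncollapsed, hence genuine steady solitons rather than degenerate objects.

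The main obstacle I expect is the propagation estimate in Step 1 combined with closing case (c): one must show that the neck estimate genuinely improves from $\ep_1'$ to $\ep$ uniformly along the entire flow line (not just on a bounded interval), which requires a careful bootstrap where the constant $\eta$ degrades only finitely, and one must rule out that the scalar curvature oscillates or that multiple "caps" form — i.e.\ one needs that once $R/f$ starts to grow past the neck regime it monotonically runs up to the threshold $\sigma s$ and the region beyond is a single Bryant-type cap. Controlling this monotonicity/uniqueness of the cap, and extracting the conifold limit with the precise quantitative bounds \eqref{E002} rather than just a subsequential smooth limit, is where the real work lies.
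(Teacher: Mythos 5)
Your outline matches the paper's high-level strategy: prove that the almost-cylindrical structure propagates via a maximum-principle argument (giving $|\Rm|=O(\ep_0 R)$ on the whole neck region), reduce the evolution of $R$ along the $\nabla f$-flow to an ODE of Riccati type $tR_t=\bigl(\tfrac{2}{n-1}+X\bigr)R^2-R$ with $X=O(\ep_0^{1/3})$, and use that ODE plus a threshold $\ep_2$ around $\tfrac{n-1}{2}$ to produce the trichotomy. However, you have a sign error and two genuine gaps.

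First, your criterion distinguishing (a) from (b) is backwards. In the cylindrical case $R\to\tfrac{n-1}{2}$ while $f\to\infty$, so $R\cdot f\to\infty$; in the conical case the ODE forces $R\sim c/f$, so $R\cdot f$ is bounded above by a constant (and by Proposition~\ref{P303} it is also bounded below by $t_0>0$, so $R\cdot f\to 0$ never happens). The actual trichotomy in the paper is on whether $R$ stays within $\ep_2$ of $\tfrac{n-1}{2}$ forever, drops below $\tfrac{n-1}{2}-\ep_2$ (then decays quadratically and gives a cone), or rises above $\tfrac{n-1}{2}+\ep_2$ (then increases until $R=\ep_1 f$, ending the neck region).

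Second, and more substantively, you flag in your closing paragraph that "one must rule out that multiple caps form" and that the cap is a single compact Bryant-type region, but your proposed mechanism ("$\nabla f$ has an isolated zero there") is not what is used and would not suffice. The ingredient you are missing is a \emph{one-end theorem for nontrivial non-compact Ricci steady soliton conifolds} (Theorem~\ref{thm:oneend}). The rescaled limit near $\Sigma(s)$ is a steady soliton conifold; that it has exactly one end — proved via the first-eigenvalue identity $\lambda_1=\tfrac14$, existence of a positive Green's function, a Busemann-function argument, and a splitting theorem — is precisely what forces the cap $D$ to be a compact domain with boundary $\Sigma(s)$, and it is from this compactness that the bounds~\eqref{E002} on diameter, potential, and scalar curvature follow. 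Without this the cap could a priori be non-compact or have another neck emerging beyond it. Relatedly, you do not address how one upgrades the static almost-symmetry at time $0$ to the statement that points are centers of \emph{evolving} $\ep$-necks: the paper uses the pseudolocality theorem for Ricci shrinkers (to go forward in time) together with backward uniqueness of the Ricci flow (to go backward in time), neither of which appears in your sketch; a naive Gronwall/bootstrap along the $\nabla f$-flow does not give the parabolic neighborhood control that Definition~\ref{def:neck_2} requires.
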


Note that (\ref{E001}) on $\Sigma(t_0,s_0)$ is satisfied if the region is very close to the cylinder, where the constant $\epsilon_1'$ can be chosen as zero. Since the function $f$ contains no critical point on the region $\Sigma(t_0,s_0)$, one can follow Munteanu-Wang \cite{MW19} to define the tangential curvature operator $Rm_{\Sigma}$ restricted on each level set of $f$ and the term $\Rm$ is the ``traceless" part of $Rm_{\Sigma}$ (cf. (\ref{eqn:OH04_3}), (\ref{eqn:OH04_4})). If $R/f$ is small, the value of $|\Rm|$ measures how close $Rm_{\Sigma}$ resembles the curvature operator of the standard $S^{n-1}$. Theorem \ref{T101} implies that if a Ricci shrinker has a neck region sufficiently close to the cylinder in some weak sense, one can move forward along the direction of $\na f$ to determine the future geometric property. 
If we regard a Ricci shrinker as a Ricci flow solution, the control of the geometric behavior at spatial infinity amounts to that near the singular time $t=1$. 
Recall that the traditional pseudolocality on Ricci shrinkers (cf. ~\cite[Corollary $10.6$]{LW20}) means that the almost Euclidean property is preserved by the flow until time $t=1$.
Theorem~\ref{T101} claims instead that the $\epsilon$-symmetric property is almost preserved by the flow until time $t=1$. Therefore, Theorem~\ref{T101} can be understood as a generalized form of pseudolocality theorem on Ricci shrinkers. 
See Figure~\ref{fig:ends} for rough intuition. 
We emphasize here that the global bound of the curvature is not needed. With some extra assumptions like the boundedness of the curvature, Munteanu-Wang \cite[Theorem $1.6$]{MW19} has already obtained case (a) in Theorem~\ref{T101}.

\begin{figure}[H]
\begin{center}
\psfrag{A}[c][c]{(a). Cylindrical end}
\psfrag{B}[c][c]{(b). Conical end}
\psfrag{C}[c][c]{(c). Cap end}
\includegraphics[width=0.5 \columnwidth]{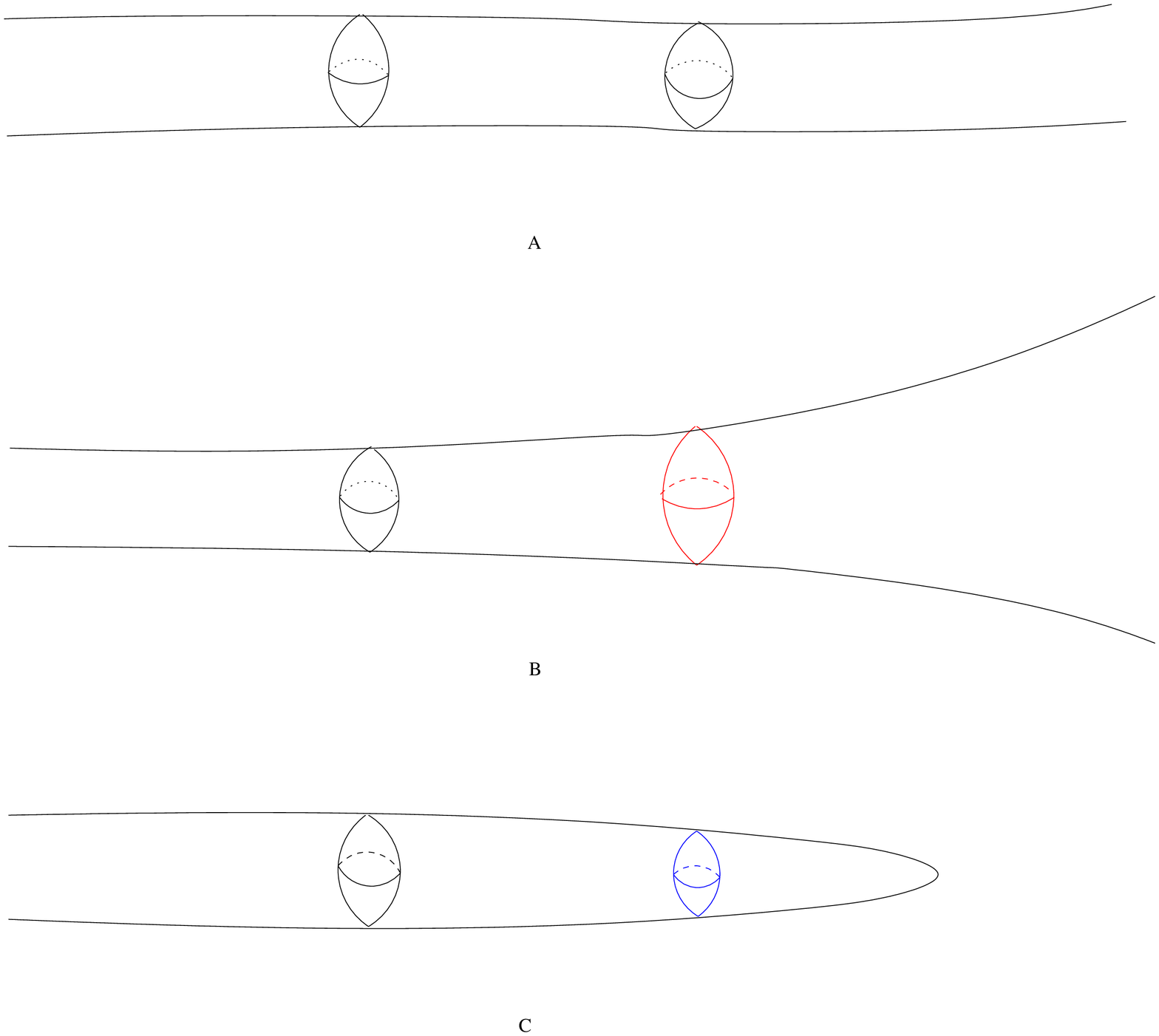}
\caption{Three types of end}
\label{fig:ends}
\end{center}
\end{figure}

We briefly discuss the proof of Theorem~\ref{T101}. We first show on the neck region satisfying \eqref{E001}, the estimate $ |\Rm| \le \ep_0 R$ holds on $\Sigma(t_0,s_0)$, where $\ep_0$ is a constant depending only on $\ep_1$ and $\ep'_1$. Since $f \ge t_0$ is large and $R/f \le \ep_1$ is small, 
all normal directional part of $Rm$ is small and insignificant. Moreover, we can show that the scalar curvature $R$ is almost constant along any level set $\Sigma$ of $f$ contained in the neck region. From the elliptic equation of $R$, one obtains an ODE by examining the behavior of $R$ along the gradient flow of $f$.

One can extend the neck region from $\Sigma(t_0,s_0)$ to $\Sigma(t_0,s)$ with $s$ to be maximal so that \eqref{E001} still holds. Then we show that $s$ is finite only if there exists a point $q \in \Sigma(s)$ with $R(q)=\ep_1 f(q)$. 
Here, we need to use the pseudolocality for Ricci shrinkers proved in \cite{LW20} to show that any point $x \in \Sigma(\eta^{-1} t_0,s)$ is the center of an evolving $\ep$-neck.
Depending on the behavior of $R$ along the gradient flow of $f$, only three cases can happen, as described in Theorem~\ref{T101}. 
If $R$ stays close to $\frac{n-1}{2}$, then $s=\infty$ and we obtain the case (a). If at some point in the neck $R$ is smaller than $\frac{n-1}{2}$ by some detectable small number, then $R$ will be quadratically decaying along the flow. In this case, $s=\infty$ and we obtain the case (b). If at some point in the neck $R$ is bigger than $\frac{n-1}{2}$ by some detectable small number, then $R$ will be increasing along the flow until it reaches the boundary $\Sigma(s)$. If this happens, $s <\infty$ and we obtain case (c).
In this case, it is important to show that there exists a compact cap $D$ with $\partial D=\Sigma(s)$. The reason is that  the behavior of the Ricci shrinker near the boundary $\Sigma(s)$ is modeled on a Ricci steady soliton conifold (cf. Definition~\ref{dfn:B001}) at a proper scale.
This can be proved through  a compactness argument (cf.~\cite{LLW21}\cite{HLW21}).   Since each model steady soliton conifold has only one end (cf. Theorem~\ref{thm:oneend}), the cap region $D$ is compact.  Further analysis implies the estimates in \eqref{E002}.

Now we continue to discuss Step 2 in the outline proof of Theorem \ref{T100} in more detail. 
First, one notices that any Ricci shrinker close to the cylinder satisfies the conditions of Theorem \ref{T101} for two neck regions. By extending those two neck regions, there are three possibilities.

\begin{itemize}
\item[(1).] The Ricci shrinker has two ends so that each point is the center of an evolving $\ep$-neck. 
\item[(2).] The Ricci shrinker has exactly one end so that outside a compact cap $D$, any point is the center of an evolving $\ep$-neck. 
\item[(3).] The Ricci shrinker is compact and it consists of two caps $D_1$ and $D_2$ such that outside those two caps, any point is the center of an evolving $\ep$-neck.
\end{itemize}

\noindent
In order to realize Step 2 in the proof of Theorem~\ref{T100}, the last key ingredient needed is the observation that the cap is modeled on the Bryant soliton in cases (2), (3) above. This fact follows from the classification result (cf.~Theorem \ref{thm:Bryant}) of any asymptotically cylindrical Ricci steady soliton conifold with the curvature condition PIC2, 
which is a generalization of the celebrated work of S. Brendle~\cite{Bre14}. On the other hand, if one considers the Ricci flow associated with a Ricci shrinker close to the cylinder, we show that locally the Ricci flow almost preserves the nonnegativity of some curvature conditions (cf. Theorem \ref{thm:T601}) defined by S. Brendle.
By analyzing the geometry along the gradient flow of $f$, one can show that the blowup limit near the cap region must have the PIC2 condition.
Consequently, the blowup limit must be a steady soliton conifold with PIC2 condition, which can be classified and can only be the Bryant soliton. 
Therefore, the cap regions are modeled on Bryant solitons. 
In conclusion, for a Ricci shrinker sufficiently close to a cylinder, each point lies in a region either $\epsilon$-close to the cylinder or the Bryant soliton. 
Roughly speaking, this means that the shrinker itself is $\ep$-symmetric (cf. Definition \ref{neck_symmetry}, Definition \ref{def:sym1} or Definition \ref{def:sym2} for precise definitions).
Therefore, we have collected sufficient technical preparation to apply the symmetry improvement argument of S. Brendle et al. \\

This paper is organized as follows. In Section 2, we discuss some basic properties for Ricci shrinkers and the associated Ricci flows. In Section 3, we obtain all the necessary technical estimates on the neck region. 
Section 4 focuses on the cap region and shows that it can be modeled on a Ricci steady soliton conifold. As a conclusion of the estimates in Section 3 and Section 4, we prove Theorem~\ref{T101} at the end of Section 4. 
In Section 5, we prove Theorem \ref{T100}. 
In the last section, we discuss possible generalizations of our theorems. \\

{\bf Acknowledgements}: 

Yu Li is supported by YSBR-001 and a research fund from USTC (University of Science and Technology of China). 
Bing Wang is supported by NSFC-11971452, NSFC-12026251, YSBR-001 and a research fund from USTC.

\section{Preliminaries}

For any Ricci shrinker $(M^n,g,f)$, the scalar curvature $R \ge 0$ by \cite[Corollary $2.5$]{CBL07}. Moreover, it follows from the strong maximum principle that $R>0$ unless $(M,g)$ is the Gaussian soliton.

We recall the following fundamental estimate of the potential function $f$.

\begin{lem}[\cite{CZ10} \cite{HM11}]
\label{L201}
Let $(M^n,g,f)$ be a Ricci shrinker. Then there exists a point $p \in M$ where $f$ attains its infimum and $f$ satisfies the quadratic growth estimate
\begin{align*}
\frac{1}{4}\left(d(x,p)-5n \right)^2_+ \le f(x) \le \frac{1}{4} \left(d(x,p)+\sqrt{2n} \right)^2
\end{align*}
for all $x\in M$, where $a_+ :=\max\{0,a\}$.
\end{lem}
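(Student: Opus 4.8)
The plan is to derive the quadratic growth of $f$ from the normalization \eqref{E101} together with the trace of the soliton equation, following the classical argument of Cao–Zhou and Haslhofer–Müller. First I would take the trace of $Rc+\he f=\frac{1}{2}g$ to get $R+\Delta f=\frac{n}{2}$, and combine it with \eqref{E101} to eliminate $R$, yielding $\Delta f = \frac{n}{2}-f+|\nabla f|^2$. Next I would establish that $f$ is proper and bounded below; since $R\ge 0$ by \cite{CBL07}, \eqref{E101} gives $|\nabla f|^2\le f$, so along any unit-speed minimal geodesic $\gamma$ from a point where $f$ is small one has $\left|\frac{d}{ds}\sqrt{f(\gamma(s))}\right|=\frac{|\langle\nabla f,\gamma'\rangle|}{2\sqrt f}\le\frac12$, which integrates to the upper bound $f(x)\le\frac14\big(d(x,p)+c\big)^2$ for an appropriate constant once we know $f$ attains a minimum at some $p$.

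For the lower bound, the key step is the standard second-variation/Laplacian-comparison estimate: using $Rc=\frac12 g-\he f$ and integrating the Ricci curvature along a minimal geodesic $\gamma:[0,r]\to M$ from $p$ to $x$, one controls $\int_0^r f(\gamma(s))\,ds$ from below, and with the Bochner-type identity one shows that outside a compact set $f(\gamma(s))$ grows at least like $\frac14(s-C n)^2$; here the term $5n$ appears from optimizing the constants in the index-form inequality applied to suitable cutoff vector fields along $\gamma$. The existence of a minimum point $p$ of $f$ follows because properness of $f$ (a consequence of the lower bound just sketched, bootstrapped from a weaker a priori growth estimate) forces $f$ to attain its infimum on the compact sublevel sets.

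I would structure the write-up as: (i) trace identities and $|\nabla f|^2\le f$; (ii) a preliminary linear-in-$d$ bound on $\sqrt f$ giving properness and hence existence of $p=\arg\min f$; (iii) the refined upper bound by integrating $\big|\frac{d}{ds}\sqrt f\big|\le\frac12$; (iv) the refined lower bound via the Ricci-integral/index-form argument along minimal geodesics from $p$. The main obstacle is step (iv): getting the \emph{sharp} leading constant $\frac14$ (rather than merely quadratic growth with some constant) requires a careful choice of test vector fields in the second-variation formula and a precise accounting of the error terms coming from $\he f$ and the curvature near $p$, which is where the explicit $5n$ and $\sqrt{2n}$ come from. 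Since this is a known result quoted from \cite{CZ10} and \cite{HM11}, in practice I would cite those references for the delicate constant-chasing and only reproduce the short derivation of the identities and the one-dimensional ODE comparison for $\sqrt f$.
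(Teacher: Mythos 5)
Your sketch is correct and matches the approach of the cited references \cite{CZ10}, \cite{HM11}; the paper itself states this lemma without proof. One small completion worth noting: the explicit constant $\sqrt{2n}$ in the upper bound comes from observing that at the minimum $p$ one has $\nabla f(p)=0$ and $\Delta f(p)\ge 0$, so the identity $\Delta f=\tfrac{n}{2}-f+|\nabla f|^2$ forces $f(p)\le\tfrac{n}{2}$; then integrating $\big|\tfrac{d}{ds}\sqrt{f}\big|\le\tfrac12$ from $p$ gives $f(x)\le\tfrac14\big(d(x,p)+2\sqrt{f(p)}\big)^2\le\tfrac14\big(d(x,p)+\sqrt{2n}\big)^2$.
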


For any Ricci shrinker $(M^n,g,f)$ with the normalization \eqref{E101}, the entropy is defined as
\begin{align*} 
\boldsymbol{\mu}=\boldsymbol{\mu}(g)\coloneqq \log \int\frac{e^{-f}}{(4\pi)^{n/2}}\, dV.
\end{align*}
We remark that $e^{\boldsymbol{\mu}}$ is comparable to the volume of the unit ball $B(p,1)$, see \cite[Lemma 2.5]{LLW21}.

Next we recall the following identities and elliptic equations on Ricci shrinkers (see \cite{PW10} \cite{MW19}):
\begin{align}
R_{ik}f_k=&\frac{1}{2}\na_iR=\na_k R_{ki}, \label{E202a} \\
R_{ijkl}f_l=&\nabla_j R_{ik}-\na_i R_{jk}=\na_l R_{ijkl}, \label{E202b}\\
\Delta_f R =&R-2|Rc|^2, \label{E202c}\\
\Delta_f R_{ij}=&R_{ij}-2R_{ikjl}R_{kl}.\label{E202d}
\end{align}
Here, the weighted Laplacian $\Delta_f \coloneqq \Delta-\la \na f, \na \cdot \ra$.

We will only consider the case that $\Sigma(t)=\Sigma(t,s) \cap \{x\in M \, \mid f(x) = t\}$. Notice that $\Sigma(t,\infty)$ can be either compact or non-compact and it represents a cap or an end of the Ricci shrinker, respectively. 

For the rest of the paper, we will mainly focus on $\Sigma(t)$ when $t$ is a noncritical value of $f$. Also, we omit $t$ in $\Sigma(t)$, if there is no confusion.

On $\Sigma$, we denote the unit normal vector $|\na f|^{-1}\na f$ by $e_n$ and assume $\{e_1,e_2,\cdots,e_n\}$ is a local orthonormal frame such that $\{e_1,\cdots,e_{n-1}\}$ are tangent to $\Sigma$. By using the same convention as in \cite{MW19}, we use subscript $n$ to denote $e_n$ and $a,b,c,d,\cdots$ to denote all tangential directions.

We have the following estimates of the curvature along the normal direction. The proof follows from direct calculations based on \eqref{E202a}-\eqref{E202d}.

\begin{prop}\label{P201}
For any Ricci shrinker $(M^n,g,f)$ such that $R \le \frac{3}{4}f$ on a level set $\Sigma$ of $f$, the following estimates hold on $\Sigma$.
\begin{align*} 
|R_{an}|\le &Cf^{-\frac{1}{2}}|\na R|,\\
|R_{nn}|\le &Cf^{-1}\lc R+|Rc|^2+|\na^2 R| \rc,\\
|R_{abcn}| \le & Cf^{-\frac{1}{2}}|\na Rc|, \\
|R_{ancn}| \le & Cf^{-1}\lc |Rc|+|\na^2 Rc|+|Rc|\,|Rm| \rc, 
\end{align*}
where $C=C(n)>0$.
\end{prop}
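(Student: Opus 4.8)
The plan is to derive all four inequalities from the contracted soliton identities \eqref{E202a}--\eqref{E202d} by projecting onto the splitting $TM|_\Sigma = T\Sigma \oplus \R e_n$ and exploiting that $f^{1/2}$ is comparable to $|\na f|$ on $\Sigma$. Indeed, from \eqref{E101} and the hypothesis $R \le \tfrac34 f$ we get $|\na f|^2 = f - R \ge \tfrac14 f$, so on $\Sigma$ one has $|\na f| \ge \tfrac12 f^{1/2}$; this is the mechanism that converts each contraction of $Rm$ against $\na f = |\na f| e_n$ into a factor $f^{-1/2}$. Throughout, $C = C(n)$ absorbs absolute constants and I will freely use $|Rc| \le C|Rm|$.

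First I would do the two Ricci-type estimates. Contracting \eqref{E202a} with $e_n$, $R_{an}|\na f| = R_{an}f_n = R_{ak}f_k = \tfrac12\na_a R$, hence $|R_{an}| = \tfrac12 |\na f|^{-1}|\na_a R| \le Cf^{-1/2}|\na R|$, giving the first line. For $R_{nn}$, contract \eqref{E202a} once more in the normal direction: $R_{nn}|\na f| = R_{nk}f_k = \tfrac12 \na_n R$, so $|R_{nn}| \le Cf^{-1/2}|\na R|$ already — but to match the stated (sharper in $f$, involving second derivatives) bound one instead uses $\Delta_f R = R - 2|Rc|^2$ from \eqref{E202c} together with $\Delta_f R = \Delta R - \la \na f, \na R\ra = \Delta R - |\na f|\,\na_n R$. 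Solving for the normal first derivative, $|\na f|\,\na_n R = \Delta R - R + 2|Rc|^2$, and then $R_{nn} = \tfrac12 |\na f|^{-1}\na_n R = \tfrac12 |\na f|^{-2}(\Delta R - R + 2|Rc|^2)$; since $|\na f|^{-2} \le 4f^{-1}$ and $|\Delta R| \le C|\na^2 R|$, this yields $|R_{nn}| \le Cf^{-1}\big(R + |Rc|^2 + |\na^2 R|\big)$, the second line.

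Next the two $Rm$-type estimates, which come from \eqref{E202b} in the same fashion. Contracting $R_{ijkl}f_l$ with indices $i=a,j=b,k=c$ forces $l=n$ (the tangential part is cancelled by antisymmetry against $f_l$ picking out $e_n$): $R_{abcn}|\na f| = R_{abcl}f_l = \na_b R_{ac} - \na_a R_{bc}$, so $|R_{abcn}| \le Cf^{-1/2}|\na Rc|$, the third line. For $R_{ancn}$ the first contraction gives $R_{ancl}f_l = \na_n R_{ac} - \na_a R_{nc}$; here the right side still contains the ``bad'' term $\na_n R_{nc}$ with a normal derivative, so I would instead contract \eqref{E202b} twice, i.e. feed it $f$ in two slots, or equivalently apply \eqref{E202d}: $\Delta_f R_{ac} = R_{ac} - 2R_{akcl}R_{kl}$, split $\Delta_f R_{ac} = \Delta R_{ac} - |\na f|\,\na_n R_{ac}$, and split the curvature contraction $R_{akcl}R_{kl} = R_{abcd}R_{bd} + 2R_{abcn}R_{bn} + R_{ancn}R_{nn}$. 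The term $R_{ancn}R_{nn}$ is where the real work lies: one rearranges to isolate $R_{ancn}$, but its coefficient is $R_{nn}$, which is only $O(f^{-1})$, so this route is circular unless handled carefully — the cleaner path is to get $\na_n R_{ac}$ from the first-contracted identity after first disposing of $\na_n R_{nc}$ via $R_{nc} = \tfrac12|\na f|^{-1}\na_c R + (\text{curvature}\cdot\na f)$ and differentiating, which is exactly the kind of bookkeeping I'd relegate to a direct calculation. Modulo that, one arrives at $|R_{ancn}| \le Cf^{-1}\big(|Rc| + |\na^2 Rc| + |Rc|\,|Rm|\big)$, the fourth line.

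The main obstacle is the last estimate: unlike $R_{abcn}$, the component $R_{ancn}$ has \emph{two} normal legs, so a single contraction with $\na f$ leaves a residual normal derivative $\na_n(\,\cdot\,)$ that cannot be controlled by tangential data alone. Extracting the needed $f^{-1}$ decay therefore requires going through the second-order equation \eqref{E202d} (or equivalently contracting \eqref{E202b} twice) and carefully tracking which terms in $\Delta R_{ac}$, $\na_n R_{ac}$, and the quadratic curvature term carry the normal direction; the appearance of the cross term $|Rc|\,|Rm|$ in the stated bound is the signature of that double contraction. Everything else is a routine projection of \eqref{E202a}--\eqref{E202d} onto the $\Sigma$-splitting combined with the uniform lower bound $|\na f| \ge \tfrac12 f^{1/2}$.
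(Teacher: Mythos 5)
Your first three estimates match the paper's proof exactly: contract \eqref{E202a} once for $|R_{an}|$, combine \eqref{E202a} with \eqref{E202c} to replace $\na_n R$ by $\Delta R - R + 2|Rc|^2$ in the $|R_{nn}|$ bound, and contract \eqref{E202b} once for $|R_{abcn}|$, each time using $|\na f| \ge \tfrac12 f^{1/2}$. The genuine gap is the fourth estimate, where you assemble the right ingredients but stop at ``Modulo that, one arrives at\ldots''. The circularity you flag is a real obstruction for the route you sketch: after decomposing $R_{akcl}R_{kl} = R_{abcd}R_{bd} + 2R_{abcn}R_{bn} + R_{ancn}R_{nn}$ and rearranging, the coefficient of $R_{ancn}$ becomes $|\na f|^2 - 2R_{nn}$, and nothing in the hypothesis $R \le \tfrac34 f$ controls the single component $R_{nn}$ relative to $f$.

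The paper's computation avoids this entirely and never decomposes $R_{akcl}R_{kl}$. Starting from $|\na f|^2 R_{ancn} = R_{akcl}f_kf_l$ and applying \eqref{E202b} once gives $(\na_k R_{ac} - \na_a R_{kc})f_k$; instead of projecting onto normal and tangential parts, apply the product rule to the second summand, $\na_a R_{kc}\,f_k = \na_a(R_{kc}f_k) - R_{kc}f_{ak}$, and then substitute $R_{kc}f_k = \tfrac12\na_c R$ from \eqref{E202a}, $f_{ak} = \tfrac12 g_{ak} - R_{ak}$ from the soliton equation, and $\na_k R_{ac}\,f_k = \Delta R_{ac} - \Delta_f R_{ac} = \Delta R_{ac} - R_{ac} + 2R_{akcl}R_{kl}$ from \eqref{E202d}. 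Altogether
\begin{align*}
|\na f|^2 R_{ancn} = 2R_{akcl}R_{kl} - R_{kc}R_{ka} - \tfrac12 R_{ac} + \Delta R_{ac} - \tfrac12\na^2_{ac}R,
\end{align*}
and the quadratic term $R_{akcl}R_{kl}$ is bounded wholesale by $|Rc|\,|Rm|$, which is precisely what the stated inequality permits; no $R_{ancn}$ contribution is ever isolated inside it. Incidentally, in your once-contracted identity $|\na f|R_{ancn} = \na_n R_{ac} - \na_a R_{nc}$ the term requiring care is $\na_n R_{ac}$, not $\na_n R_{nc}$, which does not appear.
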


\begin{proof}
From \eqref{E101} and our assumption on $R$, we have $|\na f| \ge \frac{1}{2} \sqrt{f}$. We compute on $\Sigma$,
\begin{align*} 
|R_{an}|=\frac{|Rc(e_a,\na f)|}{|\na f|} \le Cf^{-\frac{1}{2}}|\la \na R, e_a\ra| \le Cf^{-\frac{1}{2}}|\na R|.
\end{align*}
from \eqref{E202a}. Moreover, it follows from \eqref{E202a} and \eqref{E202c} that
\begin{align*} 
|R_{nn}|=& |\na f|^{-2}|Rc(\na f,\na f)| \\
\le& Cf^{-1}|\la \na R,\na f \ra| \\
=& Cf^{-1}\left| \Delta R-R+2|Rc|^2 \right| \\
\le& Cf^{-1}\lc R+|Rc|^2+|\na^2 R| \rc.
\end{align*}
Similarly, we compute from \eqref{E202b},
\begin{align*} 
|R_{abcn}| \le |\na f|^{-1}|R_{abcl}f_l| \le Cf^{-\frac{1}{2}}|\na Rc|.
\end{align*}
For the last inequality, we have
\begin{align*} 
|R_{ancn}| \le& |\na f|^{-2}|R_{akcl}f_kf_l| \\
\le& Cf^{-1}|(\na_k R_{ac}-\na_aR_{kc})f_k| \\
=&Cf^{-1}|\na_k R_{ac}f_k-\na_a(R_{kc}f_k)+R_{kc}f_{ka}| \\
=&Cf^{-1}|\Delta R_{ac}-R_{ac}+2R_{akcl}R_{kl}-\na^2_{ac} R/2+R_{kc}(g_{ka}/2-R_{ka})| \\
=&Cf^{-1}|2R_{akcl}R_{kl}-R_{kc}R_{ka}-R_{ac}/2+\Delta R_{ac}-\na^2_{ac} R/2| \\
\le & Cf^{-1}\lc |Rc|+|\na^2 Rc|+|Rc|\,|Rm| \rc.
\end{align*}
In sum, the proof is complete.
\end{proof}

As in \cite{MW19}, we define the following tensors on $\Sigma$:
\begin{align}
&\overset{\circ }{Rc_{\Sigma}}=\overset{\circ }{R}_{ab}:=R_{ab}-\frac{1}{n-1}R\,g_{ab}, \label{E207a} \\
&U=U_{abcd}:=\frac{1}{\left( n-1\right) \left( n-2\right) }R\left(g_{ac}g_{bd}-g_{ad}g_{bc}\right), \label{eqn:OH04_3}\\
&\overset{\circ }{Rm_{\Sigma}}=\overset{\circ }{R}_{abcd}:=R_{abcd}-U_{abcd}, \label{eqn:OH04_4} \\
&V=V_{abcd}:=\frac{1}{n-3}\left( \overset{\circ }{R}_{ac}g_{bd}+\overset{\circ }{R}_{bd}g_{ac}-\overset{\circ }{R}_{ad}g_{bc}-\overset{\circ }{R}_{bc}g_{ad}\right), \label{eqn:OH04_5} \\
&W=W_{abcd}:=R_{abcd}-U_{abcd}-V_{abcd}. \label{eqn:OH04_6}
\end{align}
Notice that all tensors $\overset{\circ }{Rc_{\Sigma}}, U, \overset{\circ }{Rm_{\Sigma}}, V$ and $W$ can be extended to tensors on $\Sigma(s,t)$ by requiring they vanish on the normal direction $e_n$. 
By this convention, all tensors can be written in a coordinate-free form. For instance, we have
\begin{align} 
\overset{\circ }{Rc_{\Sigma}}=Rc-\frac{R}{n-1}g -T \label{E207f}
\end{align}
where 
\begin{align}
T \coloneqq \frac{1}{2|\na f|^2}(dR\otimes df+df\otimes dR)-\lc \frac{R}{(n-1)|\na f|^2}+\frac{Rc(\na f,\na f)}{|\na f|^4} \rc df \otimes df. \label{eqn:OH05_2}
\end{align}

Next, we have the following lower bound estimate of the scalar curvature, which essentially follows from \cite[Theorem $1$]{CLY11}.

\begin{prop} \label{P303}
For any Ricci shrinker $(M^n,g,f)$, suppose $\bar t \ge 2n$ and
\begin{align*}
\min_{\Sigma(\bar t)} \frac{Rf}{1+nf^{-1}}=\alpha>0.
\end{align*}
Then for any $x \in \Sigma(\bar t,\infty)$,
\begin{align*}
R(x)f(x) \ge \frac{R(x)f(x)}{1+nf^{-1}(x)} \ge \alpha.
\end{align*}
\end{prop}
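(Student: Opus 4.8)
The plan is to apply a maximum-principle / ODE-along-the-gradient-flow argument to the quantity $\Phi \coloneqq \dfrac{Rf}{1+nf^{-1}}$ on the region $\Sigma(\bar t,\infty)$, using the elliptic equation \eqref{E202c} for $R$ together with the identity $\Delta_f f = \frac n2 - R$ (which follows from tracing the soliton equation and \eqref{E101}). First I would compute $\Delta_f \Phi$ using \eqref{E202c}, $\Delta_f R = R - 2|Rc|^2$, and the formula for $\Delta_f f$, and isolate the terms that have a favorable sign. The key elementary input is $|Rc|^2 \le R^2$ is \emph{not} available in general, so instead one uses that along the level sets where $R$ is small relative to $f$ the Ricci curvature is controlled; more robustly, the proof of \cite[Theorem 1]{CLY11} shows that $Rf$ (suitably corrected by lower-order terms in $f^{-1}$) is a subsolution of the weighted heat/elliptic operator away from its minimum, so that its infimum over $\Sigma(\bar t,\infty)$ is attained on the boundary $\Sigma(\bar t)$ (or in the limit at infinity, where $R\to 0$ but $Rf$ can stay bounded — this is exactly why the correction factor $1+nf^{-1}$ is inserted, to make the relevant barrier exact).

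Concretely, I would argue as follows. Set $h \coloneqq Rf/(1+nf^{-1}) = Rf^2/(f+n)$. A direct computation using \eqref{E202a}, \eqref{E202c}, and $\Delta_f f = \tfrac n2 - R$ gives an identity of the form $\Delta_f h = (\text{coefficient})\, h + (\text{gradient terms}) - 2 \tfrac{f^2}{f+n}|Rc|^2 + (\text{lower order})$, where the sign of the zeroth-order coefficient, on the region $f \ge \bar t \ge 2n$, is such that the strong maximum principle applies to the function $h$ wherever $h$ attains an interior minimum: at such a point $\na h = 0$, $\Delta_f h \ge 0$, which forces a contradiction with the computed sign unless $h$ is constant. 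The upshot is that $\inf_{\Sigma(\bar t,\infty)} h = \min\{\inf_{\Sigma(\bar t)} h,\ \liminf_{f\to\infty} h\}$. Since $\alpha = \min_{\Sigma(\bar t)} h > 0$ by hypothesis, and since $h \le Rf$ with $R f \to 0$ would contradict nothing by itself, the crucial point is the \emph{asymptotic} lower bound: one shows $\liminf_{f\to\infty} h \ge \alpha$ as well, again by running the same barrier argument on the exterior annuli $\Sigma(\bar t, s)$ and letting $s\to\infty$, so that no mass of $h$ escapes to infinity below the level $\alpha$. This yields $h \ge \alpha$ throughout, i.e. $Rf \ge h \ge \alpha$, which is the claim (the middle inequality $R f \ge Rf/(1+nf^{-1})$ being trivial since $1 + nf^{-1} \ge 1$).

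The main obstacle is controlling the term $-2|Rc|^2$ in the evolution of $h$: it has the wrong sign for a clean maximum principle, and $|Rc|^2$ is not pointwise bounded by $R^2$ on a general Ricci shrinker. The resolution, following \cite{CLY11}, is to observe that at an interior minimum of $h$ one has extra information — $\na(Rf) $ is determined by $\na$ of the correction factor — and to feed the soliton identities \eqref{E202a}--\eqref{E202d} back in so that the $|Rc|^2$ contribution is absorbed by the positive zeroth-order term coming from $\Delta_f f = \tfrac n2 - R$ and the smallness of $R$ relative to $f$ on $f\ge 2n$. A secondary technical point is justifying the maximum principle on the noncompact region $\Sigma(\bar t,\infty)$: since $f$ is proper and $h$ is, after the correction, essentially bounded, one can either use a cutoff/barrier exhaustion argument or invoke the Omori--Yau maximum principle (available here because the Bakry--Émery Ricci curvature is bounded below, indeed equal to $\tfrac12 g$). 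Once these two points are handled, the rest is bookkeeping with the identities already recorded in the Preliminaries.
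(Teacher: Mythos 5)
Your overall strategy — a maximum-principle argument on $\Sigma(\bar t,\infty)$ driven by the elliptic equation \eqref{E202c} and the identity $\Delta_f f = \tfrac n2 - R$, following \cite{CLY11} — is the same as the paper's. But there are two concrete problems with the way you set it up.

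First, you assert that the $-2|Rc|^2$ term in $\Delta_f R = R - 2|Rc|^2$ ``has the wrong sign for a clean maximum principle.'' This is backwards. You are trying to establish a \emph{lower} bound for $R$ (equivalently for $h$), and for that purpose $-2|Rc|^2 \le 0$ is exactly the sign you want: it gives $\Delta_f R \le R$ for free, and one simply discards the $|Rc|^2$ term rather than having to absorb or estimate it. There is no obstacle here at all; the ``resolution following \cite{CLY11}'' that you gesture at is unnecessary because nothing needs resolving.

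Second, applying the maximum principle directly to $h = Rf^2/(f+n)$ is more delicate than you acknowledge, and your claim that $h$ is ``after the correction, essentially bounded'' is not justified — from $R+|\na f|^2=f$ one only gets $R\le f$, so $Rf$ (hence $h$) can a priori grow. What makes the boundedness and the $\liminf$-at-infinity step painless is precisely the change of unknown used in the paper: set $\phi = R - \alpha f^{-1} - \alpha n f^{-2}$, so that $\phi \ge 0 \iff h \ge \alpha$. Then $\phi$ automatically satisfies $\liminf_{x\to\infty}\phi(x) \ge 0$ (because $R\ge 0$ and the subtracted terms vanish at infinity), $\phi \ge 0$ on $\Sigma(\bar t)$ by hypothesis, and a computation (cf.~\cite[Eq.~(6)]{CLY11}) gives $\Delta_f \phi \le \phi$ on $\{f\ge 2n\}$, so the maximum principle closes immediately. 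Your proposal does not commit to a barrier, does not produce the differential inequality, and does not correctly handle the non-compact case; the sign confusion about $-2|Rc|^2$ suggests the calculation was not actually carried out.
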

\begin{proof}
We define $\phi=R-\alpha f^{-1}-\alpha n f^{-2}$. Then it follows from direct computations, see \cite[Equation (6)]{CLY11} for details, that on $\Sigma(\bar t,\infty)$
\begin{align*}
\Delta_f \phi \le \phi- \alpha nf^{-3} \lc \frac{f}{2}-n \rc- \alpha f^{-4}(2f+6n) |\na f|^2 \le \phi
\end{align*}
since $f \ge \bar t \ge 2n$. By our assumption, $\phi \ge 0$ on the boundary $\Sigma(\bar t)$. If $\Sigma(\bar t,\infty)$ is compact, then we conlude that $\phi \ge 0$ on $\Sigma(\bar t,\infty)$ from the maximum principle. If $\Sigma(\bar t,\infty)$ is non-compact, the maximum principle also applies since $ \liminf_{x \to \infty} \phi(x) \ge 0$ from its definition.

In sum, we have proved that on $\Sigma(\bar t,\infty)$, 
\begin{align*}
\frac{Rf}{1+nf^{-1}} \ge \alpha
\end{align*}
and the proof is complete.
\end{proof}

\subsection*{Ricci flow associated with a Ricci shrinker}

Recall that any Ricci shrinker $(M^n,g,f)$ can be regarded as a self-similar solution of the Ricci flow. Let ${\psi^t}: M \to M$ be a family of diffeomorphisms generated by $X(t)=\dfrac{1}{1-t}\nabla f$, and $\psi^{0}=\text{id}$.
In other words, we have
\begin{align} 
\frac{\partial}{\partial t} {\psi^t}(x)=\frac{1}{1-t}\nabla f\left({\psi^t}(x)\right). \label{E211}
\end{align}
It is well known that the rescaled pull-back metric $g(t)\coloneqq (1-t) (\psi^t)^*g$ satisfies the Ricci flow equation
\begin{align*} 
\partial_t g=-2Rc(g(t))
\end{align*}
for any $-\infty <t<1$. In particular, $g(0)=g$. For any Ricci shrinker, the associated Ricci flow is implicitly understood.

\begin{exmp}[Round cylinder] \label{ex:round}
The round cylinder $(S^{n-1}\times \R,g_c)$ is a Ricci shrinker where the scalar curvature is identically $\dfrac{n-1}{2}$ and the potential function $f_c=\dfrac{z^2}{4}+\dfrac{n-1}{2}$, where $z$ is the coordinate in the $\R$-factor. The associated Ricci flow is $\{(S^{n-1} \times \R,g_c(t)), \,-\infty <t<1\}$, where
\begin{align*} 
g_c(t)=2(n-2)(1-t)g_{S^{n-1}}+dz \otimes dz, \quad g_c(0)=g_c.
\end{align*}
\end{exmp}

Next, we have the following definition, which measures how close a parabolic neighborhood of a point in a Ricci flow is to the round cylinder, see also \cite[Section 11.8]{Pe1}.

\begin{defn}[Center of an evolving $\ep$-neck]\label{def:neck_2}
Let $(M^n,g(t))$ be a Ricci flow solution and let $(\bar{x},\bar{t})$ be a point in space-time with $R(\bar{x},\bar{t}) = \frac{n-1}{2}r^{-2}$. We say that $(\bar{x},\bar{t})$ is the center of an \textbf{evolving} $\ep$-neck $S^{n-1} \times \R$ if, after rescaling the metric by the factor $r^{-2}$, the parabolic neighborhood $B_{g(\bar{t})}(\bar{x},\ep^{-1} r) \times [\bar{t}-\ep^{-1} r^2,\bar{t}]$ is $\ep$-close in $C^{[\ep^{-1}]}$-topology to $\lc S^{n-1} \times \R,g_c(t) \rc$. The evolving neck is called an \textbf{evolving normalized} $\ep$-neck, if we further require that $|r-1|<\ep$. For any Ricci shrinker $(M^n,g,f)$, we say $x \in M$ is the center of an evolving (normalized) $\ep$-neck if $(x,0)$ is the center of an evolving (normalized) $\ep$-neck. 
\end{defn}

From the weak-compactness theory developed in \cite{LLW21}, we prove the following result.

\begin{prop}\label{P210b}
For any $n$ and $\ep>0$, there exists a constant $\eta_1=\eta_1(\ep,n)>0$ satisfying the following property.

Suppose $(M^n,p,g,f)$ is a Ricci shrinker such that
\begin{align*}
d_{PGH} \left\{ (M^n,p,g), \lc S^{n-1} \times \R,p_c, g_c \rc\right\}<\eta_1,
\end{align*} 
then $p$ is the center of an evolving normalized $\ep$-neck.
\end{prop}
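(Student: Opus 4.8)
The plan is to deduce Proposition~\ref{P210b} from the weak-compactness machinery of~\cite{LLW21} by a standard contradiction-and-compactness argument. Suppose the statement fails for some fixed $\ep>0$ and dimension $n$. Then there is a sequence of Ricci shrinkers $(M_i^n, p_i, g_i, f_i)$ with $d_{PGH}\{(M_i^n,p_i,g_i),(S^{n-1}\times\R,p_c,g_c)\}\to 0$, yet $p_i$ is \emph{not} the center of an evolving normalized $\ep$-neck. The normalization~\eqref{E101} pins down $f_i$ once $p_i$ (a minimum point of $f_i$) is fixed, and from Lemma~\ref{L201} the geometry near $p_i$ is controlled. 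I would then invoke the weak-compactness theory (Proposition~7.4 of~\cite{LLW21}) to upgrade the pointed Gromov-Hausdorff convergence to $C^\infty$-Cheeger-Gromov convergence on the regular part of the limit; since the limit is the smooth manifold $S^{n-1}\times\R$, which has no singularities, the convergence is in fact smooth pointed Cheeger-Gromov convergence of the Ricci shrinker structures $(M_i^n,p_i,g_i,f_i)\to(S^{n-1}\times\R,p_c,g_c,f_c)$, where $f_c$ is as in Example~\ref{ex:round}.

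The next step is to pass from smooth convergence of the shrinker data to smooth convergence of the associated Ricci flows. Each Ricci shrinker $(M_i,g_i,f_i)$ carries its canonical self-similar Ricci flow $g_i(t)=(1-t)(\psi_i^t)^*g_i$ for $t<1$, built from the flow of $X_i(t)=(1-t)^{-1}\nabla f_i$ as in~\eqref{E211}. Because the vector fields $X_i$, together with all their derivatives, converge locally smoothly (they are built from $\nabla f_i$, which converges), the diffeomorphisms $\psi_i^t$ converge locally smoothly on compact time intervals $[-T,0]$, and hence so do the flows $g_i(t)$, with limit the round-cylinder flow $g_c(t)$ of Example~\ref{ex:round}. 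In particular, on a fixed parabolic neighborhood $B_{g_i(0)}(p_i, \rho)\times[-\rho^2,0]$ with $\rho$ large, the metrics $g_i(t)$ are $C^{[\ep^{-1}]}$-close to $g_c(t)$ for $i$ large, and the scalar curvature $R(p_i,0)\to R(p_c,0)=\tfrac{n-1}{2}$, so that the scale $r_i$ defined by $R(p_i,0)=\tfrac{n-1}{2}r_i^{-2}$ satisfies $|r_i-1|<\ep$ eventually. Choosing $\rho=2\ep^{-1}$ and $i$ large, this exhibits $(p_i,0)$ as the center of an evolving normalized $\ep$-neck, contradicting the choice of the sequence; the contradiction proves the proposition, and $\eta_1(\ep,n)$ is obtained by the usual non-effective extraction.

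The main obstacle is the upgrade from Gromov-Hausdorff to smooth convergence, i.e.\ correctly citing and applying the weak-compactness theory of~\cite{LLW21}. One must check that the hypotheses of Proposition~7.4 of~\cite{LLW21} are met: a lower entropy bound $\boldsymbol{\mu}(g_i)\ge -A$ is needed for the compactness theory, and this has to be derived from the Gromov-Hausdorff closeness to the cylinder — since $e^{\boldsymbol{\mu}}$ is comparable to $\mathrm{Vol}\,B(p_i,1)$ and volumes are continuous under Gromov-Hausdorff convergence with the curvature control supplied by the theory, the entropies of the $M_i$ are bounded below by (a constant depending on) the entropy of the cylinder. One must also rule out the limit being a lower-dimensional or singular space: here the no-local-collapsing estimate together with the fact that the Gromov-Hausdorff limit is exactly the smooth $n$-manifold $S^{n-1}\times\R$ forces the convergence to be non-collapsed and the limit to be smooth, so that the $C^\infty$-Cheeger-Gromov convergence of~\cite[Prop.~7.4]{LLW21} holds on all of $S^{n-1}\times\R$. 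Once this is in hand, the remaining steps — convergence of potential functions, of the self-similar flows, and of the neck scale — are routine consequences of smooth convergence.
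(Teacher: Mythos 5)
Your proposal is correct and follows essentially the same route as the paper: a contradiction/compactness argument, first securing a uniform lower entropy bound from the Gromov-Hausdorff closeness to the cylinder, then using the weak-compactness theory of \cite{LLW21} to upgrade pointed Gromov-Hausdorff convergence to smooth Cheeger-Gromov convergence of the shrinker data, and finally converging the associated self-similar Ricci flows to $g_c(t)$ so that $(p_i,0)$ is eventually the center of an evolving normalized $\ep$-neck. The paper cites Proposition~5.8 (for the entropy bound) and Theorem~1.1 of \cite{LLW21} rather than Proposition~7.4, but the mechanism is the same.
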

\begin{proof}
Suppose otherwise, there exists a number $\bar \ep$ and a sequence of Ricci shrinkers $(M^n_i,p_i,g_i,f_i)$ such that
\begin{align*}
\lim_{i \to 0} d_{PGH} \left\{ (M_i^n,p_i,g_i), \lc S^{n-1} \times \R,p_c,g_c \rc \right\}=0
\end{align*} 
and $p_i$ is not the center of an $\bar \ep$-neck. By Proposition 5.8 of~\cite{LLW21}, the entropy $\boldsymbol{\mu}(g_i)$ is uniformly bounded from below. Using Theorem 1.1 of~\cite{LLW21}, the above convergence can be improved to be in the smooth topology
\begin{align*}
(M_i, p_i, g_i) \longright{C^{\infty}-Cheeger-Gromov} \lc S^{n-1} \times \R,p_c,g_c \rc.
\end{align*}
From the definition of the Ricci flow associated with a Ricci shrinker, it is clear that the corresponding sequence of Ricci flows $(M_i,p_i,g_i(t))_{t \le 0}$ converges smoothly to $( S^{n-1} \times \R,g_c(t) )_{t \le 0}$. Therefore, we obtain a contradiction.
\end{proof}

%Next, we have the following lemma, which is straightforward from Definition \ref{def:neck_2}.
%
%\begin{lem} \label{lem:211}
%For any $n$ and $\ep>0$, there exists a constant $\eta_2=\eta_2(\ep,n)>0$ satisfying the following property.
%
%Let $(M^n,p,g,f)$ be a Ricci shrinker such that $p$ is the center of an evolving normalized $\eta_2$-neck. Then any $x \in M$ with $f(x) \le \ep^{-1}$ is the center of an evolving normalized $\ep$-neck.
%\end{lem}

\section{Estimates on the neck region}

Throughout this section, we consider a Ricci shrinker $(M^n,p,g,f)$ such that the following assumptions as in Theorem \ref{T101} are satisfied.

\begin{align}
\begin{cases}
|\na^i Rm| \le BR^{\frac{i}{2}+1}, \quad \forall \, 0\le i \le 4 \quad &\text{on}\quad \Sigma(t_0,s); \tag{$\dagger$} \label{E401}\\
R\le \ep_1 f \quad &\text{on} \quad \Sigma(t_0,s); \\
|\Rm| \le \ep_1' R \quad &\text{on}\quad \Sigma(t_0); \\
\left |R-\frac{n-1}{2} \right| \le \ep'_1 \quad &\text{on}\quad \Sigma(t_0);\\
(1-\delta_0)s\ge t_0 \ge n\ep_1^{-1}; \\
\boldsymbol{\mu}(g) \ge -A;\\
\Sigma(t_0) \text{ is diffeomorphic to } S^{n-1}.
\end{cases}
\end{align}
Moreover, we set
\begin{align}
\ep_0:=\max\{\ep_1',\ep_1^{\frac{1}{2}}\}. \label{eqn:OH04_2}
\end{align}
Here, we assume $0<\delta_0 <1$ and $A,B \ge 1$. With fixed $n,\delta_0,A$ and $B$, we regard $\ep_1$ and $\ep_1'$ as two small positive parameters. In addition, the constant $s$ can be finite or $\infty$. Notice that for the round cylinder $S^{n-1} \times \R$, \eqref{E401} is satisfied for any $\ep_1$ and $\ep_1'$ provided that $t_0$ is sufficiently large. In the following analysis, we set 
\begin{align}
E \coloneqq \Sigma(t_0, s) \label{eqn:OH05_1}
\end{align}
and use the notation $\phi_1=O(\phi_2)$ if $|\phi_1| \le C |\phi_2|$ for some constant $C=C(n,A,B)>0$. 

One important consequence of $\boldsymbol{\mu}(g) \ge -A$ is the following no-local-collapsing result from \cite{LW20}.

\begin{thm} \emph{(\cite[Theorem 22]{LW20})}\label{thm:nonc}
For any Ricci shrinker $(M^n,g,f)$ with $\boldsymbol{\mu}(g) \ge -A$, there exists a $\kappa=\kappa(n,A)>0$ such that for any $B(q,r) \subset M$ with $R \le r^{-2}$, we have
\begin{align*} 
|B(q,r)| \ge \kappa r^n.
\end{align*}
\end{thm}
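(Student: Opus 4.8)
The plan is to derive this as a consequence of the lower entropy bound $\boldsymbol{\mu}(g) \ge -A$ together with Perelman's no-local-collapsing theorem, applied to the Ricci flow $g(t)$ associated with the shrinker. First I would recall that the entropy functional is monotone along the Ricci flow: for the self-similar solution $g(t) = (1-t)(\psi^t)^* g$, the Perelman $\boldsymbol{\mu}$-entropy $\boldsymbol{\mu}(g(t), \tau)$ is actually constant in the sense that $\boldsymbol{\mu}(g(t), 1-t) = \boldsymbol{\mu}(g)$ for all $t<1$, because shrinking solitons are precisely the fixed points of the renormalized flow. In particular the scale-invariant quantity $\boldsymbol{\mu}(g, \tau)$ for all $\tau \in (0,\infty)$ is bounded below by some $\boldsymbol{\mu}_0 = \boldsymbol{\mu}_0(n, A)$ — here one uses that $\boldsymbol{\mu}(g) \ge -A$ controls $\boldsymbol{\mu}(g, \tau)$ for all $\tau$ via the soliton identity, rather than just for $\tau = 1$.

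Next I would invoke the standard implication ``lower bound on $\boldsymbol{\mu}$-entropy at all scales $\Rightarrow$ $\kappa$-noncollapsing at all scales'' (this is exactly the mechanism in Perelman's Section~4; see also the local version). Concretely: given a ball $B(q,r)$ with $R \le r^{-2}$ on it, one plugs into the $\mathcal{W}$-functional with parameter $\tau = r^2$ a test function of the form $\phi(x) = e^{c} \chi(d(q,x)/r)$ where $\chi$ is a fixed cutoff supported in $[0,1]$ and $c$ is chosen so that $\int (4\pi r^2)^{-n/2} \phi^2 \, dV = 1$. Then $\mathcal{W}(g, \phi, r^2) \ge \boldsymbol{\mu}(g, r^2) \ge \boldsymbol{\mu}_0$, and the scalar curvature bound $R \le r^{-2}$ together with the gradient term $\int r^2 |\nabla \phi|^2$ being controlled by a universal constant forces $e^{c}$, hence $|B(q,r)|$, to be bounded below by $\kappa(n,A)\, r^n$. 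I should be a little careful about the normalization: the constant $c$ satisfies $e^{-c} \sim |B(q,r)| / (4\pi r^2)^{n/2}$ up to the universal cutoff profile, and tracing the inequality $\mathcal{W} \ge \boldsymbol{\mu}_0$ through gives $-c \le C(n) - \boldsymbol{\mu}_0$, i.e.\ $|B(q,r)| \ge e^{\boldsymbol{\mu}_0 - C(n)} r^n =: \kappa(n,A) r^n$.

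The one subtlety — and the step I expect to be the main obstacle — is that the standard presentations of Perelman noncollapsing assume a bound on $R$ over the whole parabolic neighborhood or at a single fixed scale, whereas here we want it at an \emph{arbitrary} scale $r$ with only the spatial bound $R \le r^{-2}$ on $B(q,r)$. This is where the soliton structure is essential: because $g$ is a shrinking soliton, all scales are equivalent after the self-similar rescaling, so a noncollapsing estimate valid at one scale automatically propagates to all scales, with the constant depending only on $n$ and the (scale-invariant) entropy bound $A$. For a complete argument I would simply cite \cite[Theorem~22]{LW20}, where exactly this statement is proven; the outline above records the structure of that proof. I should also note that the hypothesis $R \ge 0$ on Ricci shrinkers (recalled at the start of Section~2 via \cite{CBL07}) is used implicitly to make the scalar curvature term in $\mathcal{W}$ cooperate.
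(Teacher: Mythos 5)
The paper itself gives no proof of this statement; it simply cites \cite[Theorem 22]{LW20}, so there is no in-paper argument to compare against. Your sketch captures the right spirit, and your honest admission that you would ultimately fall back on the citation is appropriate, but I want to flag where the sketch is thinner than it looks. The subtlety you call out (needing the $R$-bound on a whole parabolic neighborhood) is not the real issue -- Perelman's argument at a single scale $\tau = r^2$ only needs the spatial bound $R \le r^{-2}$ on $B(q,r)$. The genuine delicate points on a noncompact shrinker are different: (a) the Perelman infimum $\boldsymbol{\mu}(g,\tau)$ (over compactly supported, normalized test functions) is not obviously $> -\infty$; and (b) the quantity $\boldsymbol{\mu}(g)$ as defined in this paper is $\log \int (4\pi)^{-n/2} e^{-f}\,dV$, which is the \emph{value} of the $\mathcal{W}$-functional on the particular test function $e^{-f/2}$, hence a priori it is an \emph{upper} bound for $\boldsymbol{\mu}(g,1)$. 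Deducing a lower bound $\boldsymbol{\mu}(g,\tau) \ge \boldsymbol{\mu}_0(n,A)$ from the hypothesis $\boldsymbol{\mu}(g) \ge -A$ therefore needs a nontrivial Carrillo--Ni-type identity establishing that $e^{-f/2}$ is in fact (asymptotically) a minimizer, plus the self-similar monotonicity argument you outline; your phrase ``via the soliton identity'' compresses a real lemma into a parenthetical.

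The cited reference actually proceeds along a slightly different but cleaner route: it first establishes a uniform $L^2$-Sobolev inequality
\begin{align*}
\Bigl( \int u^{\frac{n}{n-2}}\,dV \Bigr)^{\frac{n-2}{n}} \le C(n,A) \int \bigl(4|\nabla u|^2 + R u^2\bigr)\,dV
\end{align*}
(this is exactly Theorem~\ref{thm:sobo} in the present paper), derived from the log-Sobolev form of the entropy bound, and then obtains the noncollapsing from the Sobolev inequality by the standard iteration: a cutoff $u$ supported in $B(q,\rho)$ with $u\equiv 1$ on $B(q,\rho/2)$ yields $v(\rho/2) \le C'' v(\rho)^{n/(n-2)}$ for $v(\rho) = |B(q,\rho)|/\rho^n$, and since $v(\rho)\to\omega_n$ as $\rho\to 0^+$, the quantity $v(r)$ cannot be too small. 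This route sidesteps issues (a) and (b) above, since one never needs to discuss the Perelman infimum at arbitrary $\tau$; it also makes the scale-invariance manifest without invoking the self-similar flow. Your $\mathcal{W}$-functional sketch and the Sobolev route buy the same conclusion, but the latter is the one that actually makes the noncompactness manageable, which is why it is the one the authors rely on.
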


We next prove

\begin{lem} \label{L401}
On $E$, we have
\begin{align} 
Rf &\ge t_0,\label{E401aa}\\
|R_{an}|, \,|R_{abcn}| &=O(f^{-\frac{1}{2}} R^{\frac{3}{2}}), \label{E401ab}\\
|R_{nn}|, \,|R_{ancn}| &=O(f^{-1}R(1+R)). \label{E401ac}
\end{align}
\end{lem}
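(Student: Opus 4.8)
The plan is to establish the three estimates in Lemma~\ref{L401} by combining Proposition~\ref{P201} with the curvature derivative bounds~\eqref{E401} and the lower bound of $Rf$ coming from Proposition~\ref{P303}. First I would prove~\eqref{E401aa}: on $\Sigma(t_0)$ the hypothesis $|R-\tfrac{n-1}{2}|\le \ep_1'$ forces $R$ to be close to $\tfrac{n-1}{2}$, hence $Rf\ge \tfrac{n-1}{2}t_0 - o(1)$ there, and more precisely one checks that $\min_{\Sigma(t_0)} \tfrac{Rf}{1+nf^{-1}} \ge t_0$ once $t_0$ is large and $\ep_1'$ is small (using $f\equiv t_0$ on $\Sigma(t_0)$ and $t_0\ge n\ep_1^{-1}$ so that $nf^{-1}$ is tiny and $R$ is within $\ep_1'$ of $\tfrac{n-1}{2}$, while $n-1 > 2$). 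Then Proposition~\ref{P303} with $\bar t = t_0 \ge 2n$ propagates $Rf \ge \tfrac{Rf}{1+nf^{-1}} \ge t_0$ to all of $\Sigma(t_0,\infty) \supseteq E$, giving~\eqref{E401aa}.

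Next, for~\eqref{E401ab} and~\eqref{E401ac} I would invoke Proposition~\ref{P201}, which applies since on $E$ we have $R \le \ep_1 f \le \tfrac{3}{4} f$. That proposition bounds $|R_{an}|$, $|R_{nn}|$, $|R_{abcn}|$, $|R_{ancn}|$ in terms of $f^{-1/2}$ or $f^{-1}$ times quantities like $|\na R|$, $|\na^2 R|$, $|\na Rc|$, $|\na^2 Rc|$, $R$, $|Rc|$, and $|Rc|\,|Rm|$. The point is then to bound each of those quantities using~\eqref{E401}: since $|\na^i Rm| \le B R^{i/2+1}$ for $0\le i\le 4$, we get $|Rc| = O(R)$, $|\na R|, |\na Rc| = O(R^{3/2})$, and $|\na^2 R|, |\na^2 Rc| = O(R^2)$, as well as $|Rc|\,|Rm| = O(R^2)$. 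Substituting: $|R_{an}| = O(f^{-1/2} R^{3/2})$ and $|R_{abcn}| = O(f^{-1/2}(R^{3/2}))$ from the $f^{-1/2}|\na R|$ and $f^{-1/2}|\na Rc|$ bounds, which is~\eqref{E401ab}; and $|R_{nn}| = O(f^{-1}(R + R^2 + R^2)) = O(f^{-1} R(1+R))$, $|R_{ancn}| = O(f^{-1}(R + R^2 + R^2)) = O(f^{-1} R(1+R))$, which is~\eqref{E401ac}. (One should be slightly careful to keep the lower-order term $R$ in~\eqref{E401ac} rather than absorbing it, since $R$ need not be bounded below by a constant a priori — though~\eqref{E401aa} combined with $f$ large does give $R \gtrsim f^{-1}$, that is not needed here.)

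The only mild subtlety — and the step I expect to require the most care — is verifying the normalization hypothesis $\min_{\Sigma(t_0)} \tfrac{Rf}{1+nf^{-1}} \ge t_0$ needed to feed Proposition~\ref{P303}: one needs $R \ge \tfrac{1+nf^{-1}}{f}\cdot t_0 = 1 + n t_0^{-1}$ on $\Sigma(t_0)$ (where $f = t_0$), i.e. $R \ge 1 + n t_0^{-1}$, and since on $\Sigma(t_0)$ we have $R \ge \tfrac{n-1}{2} - \ep_1'$ with $n\ge 3$ (so $\tfrac{n-1}{2}\ge 1$), this holds once $\ep_1'$ is small and $t_0 \ge n\ep_1^{-1}$ is large — but strictly one may only conclude $Rf \ge \alpha$ with $\alpha$ slightly less than $t_0$, so it is cleaner to observe that any $\alpha \le t_0$ works and it suffices to note $Rf \ge (1-o(1)) t_0 \cdot \tfrac{n-1}{2}$-type bound dominates $t_0$; alternatively one shrinks $\sigma,\eta$ in Theorem~\ref{T101} so that $\ep_1,\ep_1'$ are small enough to make this clean, which is consistent with how the constants are quantified. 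Everything else is a direct substitution, so the lemma follows.
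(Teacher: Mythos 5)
Your proof is correct and follows essentially the same route as the paper's: (\ref{E401aa}) via Proposition~\ref{P303} applied with $\bar t = t_0$, using the hypothesis $|R-\tfrac{n-1}{2}|\le\ep_1'$ on $\Sigma(t_0)$ (together with $t_0\ge n\ep_1^{-1}$) to check $\min_{\Sigma(t_0)}\tfrac{Rf}{1+nf^{-1}}\ge t_0$, and then (\ref{E401ab})--(\ref{E401ac}) by plugging the bounds $|\na^i Rm|\le BR^{i/2+1}$ directly into Proposition~\ref{P201}. Your worry about $\alpha$ being slightly less than $t_0$ is resolved exactly as the paper resolves it (one needs $n\ge 4$, so that $\tfrac{n-1}{2}-\ep_1'\ge 1+n t_0^{-1}$ once $\ep_1,\ep_1'$ are small), and your explicit substitution for (\ref{E401ab})--(\ref{E401ac}) is just the spelled-out version of the paper's one-line appeal to Proposition~\ref{P201} and (\ref{E401}).
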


\begin{proof}
The inequality \eqref{E401aa} follows immediately from Proposition~\ref{P303}, since by~\eqref{E401},
\begin{align*}
\inf_{\Sigma(t_0)} \frac{Rf}{1+nf^{-1}} \ge \frac{(\frac{n-1}{2}-\ep_1')t_0}{1+nt^{-1}_0} \ge t_0.
\end{align*}
The equations \eqref{E401ab} and \eqref{E401ac} follow from Proposition \ref{P201} and \eqref{E401}.
\end{proof}

The next lemma follows from Lemma \ref{L401} and direct calculations from the definition \eqref{E207a}.
\begin{lem}
\label{LL402}
On $E$, we have
\begin{eqnarray}
&&|Rc|^2=|\Rc|^2+\frac{R^2}{n-1}+O(f^{-1}R^2(1+R)), \notag \\
&&\la U,V \ra \,, \la V,W \ra,\, \la W, U \ra =O(f^{-1}R^2(1+R)),\notag\\
&&|U|^2=\frac{2}{(n-1)(n-2)}R^2, \notag \\
&&|V|^2=\frac{4}{n-3} |\Rc|^2+O(f^{-1}R^2(1+R)), \notag \\
&&|Rm|^2=|\Rm|^2+\frac{2}{(n-1)(n-2)}R^2+O(f^{-1}R^2(1+R)),\notag \\
&& |Rm|^2=|U|^2+|V|^2+|W|^2+O(f^{-1}R^2(1+R)). \notag
\end{eqnarray}
\end{lem}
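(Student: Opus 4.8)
The plan is to expand everything in the orthonormal frame $\{e_1,\dots,e_{n-1},e_n\}$ adapted to $\Sigma$, using the decomposition $Rc = \Rc + \frac{R}{n-1}g + (\text{normal terms})$ and $Rm = U + V + W + (\text{normal terms})$ from \eqref{E207a}--\eqref{eqn:OH04_6}, and then simply collect the error terms using the normal-direction bounds from Lemma \ref{L401}. Concretely, for the Ricci identity I would write $|Rc|^2 = \sum_{a,b}R_{ab}^2 + 2\sum_a R_{an}^2 + R_{nn}^2$; the tangential part $\sum_{a,b}R_{ab}^2$ splits as $|\Rc|^2 + \frac{R^2}{n-1}$ since $\Rc$ is the traceless-on-$\Sigma$ part of $R_{ab}$ (the cross term vanishes because $\Rc$ is trace-free against $g_{ab}$), and by \eqref{E401ab}, \eqref{E401ac} the remaining pieces $R_{an}^2, R_{nn}^2$ are each $O(f^{-1}R^2(1+R)^2)$ — which I absorb into $O(f^{-1}R^2(1+R))$ after noting $R = O(1)$ on $E$ (indeed $R \le \ep_1 f$ and... actually one must be slightly careful here: $R$ need not be bounded a priori on all of $E$, so the cleaner bookkeeping is to keep $O(f^{-1}R^2(1+R))$ and observe $(1+R)^2 = O(1+R)$ fails in general — so instead I would re-examine and note that the statement's error term $O(f^{-1}R^2(1+R))$ should really read with the understanding that wherever $R_{nn}^2 \sim f^{-2}R^2(1+R)^2$ appears, the extra factor $f^{-1}(1+R)$ is harmless because on $E$ we have $R \le \ep_1 f$, hence $f^{-1}(1+R)^2 \lesssim f^{-1} + \ep_1(1+R) \lesssim 1+R$; this is the one spot requiring a line of care).

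For the inner products $\langle U,V\rangle$, $\langle V,W\rangle$, $\langle W,U\rangle$: the point is that $U, V, W$ as defined by \eqref{eqn:OH04_3}--\eqref{eqn:OH04_6} give an orthogonal decomposition of the tangential curvature operator $Rm_\Sigma$ of an $(n-1)$-dimensional Riemannian manifold — this is the classical irreducible decomposition into scalar, traceless-Ricci, and Weyl parts, so $\langle U,V\rangle_{Rm_\Sigma} = \langle V,W\rangle_{Rm_\Sigma} = \langle W,U\rangle_{Rm_\Sigma} = 0$ identically when all contractions are taken purely over tangential indices $a,b,c,d$. The only reason these inner products are not exactly zero is that here the ambient contraction runs over all $n$ indices, but each of $U,V,W$ vanishes on $e_n$ by our extension convention, so in fact the full-index inner product equals the tangential-index one and the three cross terms vanish exactly — wait, that would make them $0$ rather than $O(f^{-1}R^2(1+R))$. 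The discrepancy is that $V$ and $W$ are built from $\Rc$ and $R_{abcd}$, which are the restrictions of the ambient Ricci/curvature tensors, and these do not exactly satisfy the $(n-1)$-dimensional Gauss-equation relations; the defect is precisely measured by the normal curvature components (via the Gauss equation $R^\Sigma_{abcd} = R_{abcd} + R_{ancn}\text{-type terms}$), which are $O(f^{-1}R(1+R))$ by Lemma \ref{L401}, and feeding this into the orthogonality computation produces the stated $O(f^{-1}R^2(1+R))$ errors.

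The norms $|U|^2 = \frac{2}{(n-1)(n-2)}R^2$ is a direct algebraic computation: $|g\owedge g|^2 = \sum_{a,b,c,d}(g_{ac}g_{bd}-g_{ad}g_{bc})^2 = 2(n-1)(n-2)$ over $n-1$ tangential indices, giving the stated exact value with no error. For $|V|^2$, expanding $V_{abcd}^2$ and contracting yields a constant times $|\Rc|^2$ plus (by the same reasoning as above) a normal-curvature defect of size $O(f^{-1}R^2(1+R))$; one checks the constant is $\frac{4}{n-3}$. Then $|Rm|^2 = |U|^2 + |V|^2 + |W|^2 + O(f^{-1}R^2(1+R))$ follows by writing $|Rm|^2 = \sum_{\text{all indices}}R_{ijkl}^2 = \sum_{a,b,c,d}R_{abcd}^2 + (\text{terms with} \ge 1 \text{ normal index})$, bounding the latter by Lemma \ref{L401}, and then using the near-orthogonality of $U,V,W$ just established to expand $\sum_{a,b,c,d}R_{abcd}^2 = |U+V+W|^2 = |U|^2+|V|^2+|W|^2 + (\text{cross terms})$. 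Finally $|Rm|^2 = |\Rm|^2 + \frac{2}{(n-1)(n-2)}R^2 + O(\cdots)$ comes from $\Rm = Rm_\Sigma - U$ on tangential indices (hence $|\Rm|^2 = |V+W|^2 + O(\cdots) = |V|^2+|W|^2+O(\cdots)$) combined with the previous identity.

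The main obstacle is purely bookkeeping: one must consistently track how many normal indices each error term carries, since each normal index in a curvature component costs a factor $f^{-1/2}$ (for $R_{an}$, $R_{abcn}$) or $f^{-1}$ (for $R_{nn}$, $R_{ancn}$) times powers of $R$, and verify that every such contribution collapses into the uniform $O(f^{-1}R^2(1+R))$ bound after using $R \le \ep_1 f$ on $E$; there is no conceptual difficulty, but the Gauss-equation relation between the intrinsic curvature of $\Sigma$ and the restricted ambient curvature must be invoked carefully to justify that the cross terms $\langle U,V\rangle$ etc. are small rather than exactly zero.
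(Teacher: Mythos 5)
Your overall strategy — expand in the adapted frame, split off the normal components, and absorb them into the error term via Lemma~\ref{L401} and the bound $R\le\ep_1 f$ — is exactly the ``direct calculation'' the paper has in mind; there is no hidden idea being used beyond this bookkeeping.

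However, there is one genuine inaccuracy in your justification. You assert that ``the cross term vanishes because $\Rc$ is trace-free against $g_{ab}$.'' This is false: by definition $\overset{\circ}{R}_{ab}=R_{ab}-\tfrac{R}{n-1}g_{ab}$ uses the \emph{ambient} scalar curvature $R$, and since $\sum_{a=1}^{n-1}R_{aa}=R-R_{nn}$, one gets
\begin{equation*}
\sum_{a=1}^{n-1}\overset{\circ}{R}_{aa}=-R_{nn}\neq 0.
\end{equation*}
The cross term in $\sum_{a,b}R_{ab}^2=|\Rc|^2+\tfrac{2R}{n-1}\,\mathrm{tr}\,\Rc+\tfrac{R^2}{n-1}$ is therefore $-\tfrac{2R\,R_{nn}}{n-1}$, which is of size $O(f^{-1}R^2(1+R))$ and gets folded into the error — so your final formula is right, but not for the reason you state. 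The same nonzero trace is also the elementary mechanism behind the failure of exact orthogonality of $U,V,W$: contracting $U$ against $V$ (or $W$) produces terms proportional to $R\cdot\mathrm{tr}\,\Rc=-RR_{nn}$ and to the incomplete traces of $W$, all of which are governed directly by the normal components from Lemma~\ref{L401}. Your Gauss-equation narrative is not wrong in spirit (the defect is indeed measured by normal curvature), but it is a more roundabout explanation than the paper's framework calls for — no comparison between intrinsic and ambient curvature of $\Sigma$ is needed, only the observation that these are ambient quantities whose tangential traces pick up $R_{nn}$, $R_{an}$, $R_{ancn}$, etc. Once you replace the incorrect trace-free claim with this observation, the rest of your bookkeeping (including the absorption of $(1+R)^2$ using $R\le\ep_1 f$, $f\ge t_0\ge n$) is sound.
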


\begin{prop} \label{P401}
Under the assumption \eqref{E401}, there exists $\sigma_1=\sigma_1(n,B)>0$ such that
\begin{align} \label{E406a}
|\Rm| =O(\ep_0 R)
\end{align}
on $E$ if $\ep_0 \le \sigma_1$. 
\end{prop}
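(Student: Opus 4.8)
\textbf{Proposal for the proof of Proposition~\ref{P401}.}

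The plan is to derive a differential inequality for $|\Rm|^2$ along $E$ and then apply a maximum-principle/propagation argument starting from the boundary $\Sigma(t_0)$, where $|\Rm| \le \ep_1' R \le \ep_0 R$ is given by hypothesis. First I would compute the elliptic equation satisfied by the tangential curvature operator. Following Munteanu-Wang~\cite{MW19}, on the region $E$ (where $\na f \ne 0$ and $f$ is large) the shrinker equations \eqref{E202a}--\eqref{E202d} give, after projecting onto $\Sigma$ and using the definitions \eqref{E207a}--\eqref{eqn:OH04_6}, an equation of the schematic form $\Delta_f \Rm = \Rm + Q(\Rm) + (\text{error terms involving normal components and } \na R, \na Rc)$. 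By Lemma~\ref{L401} and Lemma~\ref{LL402}, all the normal-directional pieces $R_{an}$, $R_{nn}$, $R_{abcn}$, $R_{ancn}$, together with cross terms among $U,V,W$, are $O(f^{-1}R^2(1+R))$ or $O(f^{-1/2}R^{3/2})$; since $f \ge t_0 \ge n\ep_1^{-1}$ and $R \le \ep_1 f$, these are all controlled by $\ep_0 R^2$ up to the constant $C(n,A,B)$. The curvature derivative bounds in \eqref{E401} (the cases $i=1,2$) convert $|\na R|, |\na Rc|, |\na^2 R|, |\na^2 Rc|$ into powers of $R$, so the entire error is $O(\ep_0 R^2 + \sqrt{\ep_1}\,R^2)=O(\ep_0 R^2)$ on $E$.

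Next I would set up the barrier. A clean way is to consider the quantity $h := |\Rm|^2 - K \ep_0^2 R^2$ for a large constant $K=K(n,B)$ to be chosen, or — perhaps more robustly in the non-compact case — to work with $u := |\Rm| / R$ directly, using the positivity $R>0$ on $E$ and the equation $\Delta_f R = R - 2|Rc|^2 = R - \frac{2R^2}{n-1} + O(\ep_0 R^2)$ from \eqref{E202c} and Lemma~\ref{LL402}. Computing $\Delta_f(|\Rm|^2/R^2)$, the reaction terms proportional to $|\Rm|^2$ from numerator and denominator should largely cancel by the structure of the Gauss-equation-type algebra (this is exactly the mechanism that makes $\Rm=0$, i.e. the round cylinder, a fixed point), leaving $\Delta_f u^2 \le C u^2 \cdot(\text{bounded}) + C\ep_0^2 \cdot(\text{from the inhomogeneous error})$, hence $\Delta_f(u^2 - C'\ep_0^2) \le C''(u^2 - C'\ep_0^2)$ for suitable constants once $\ep_0 \le \sigma_1$. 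On $\Sigma(t_0)$ we have $u^2 \le (\ep_1')^2 \le \ep_0^2 \le C'\ep_0^2$, and as $f \to \infty$ along $E$ (if $s=\infty$) the hypotheses force $u$ to stay bounded while $R \to 0$, so $\liminf$ of $u^2 - C'\ep_0^2$ at infinity is non-positive; the weighted maximum principle (as used already in Proposition~\ref{P303}) then gives $u^2 \le C'\ep_0^2$ on all of $E$, which is \eqref{E406a}. If $s<\infty$ one also needs the bound on the far boundary $\Sigma(s)$, but there $R \le \ep_1 f$ still holds and one can run the argument on $\Sigma(t_0,s)$ by comparison, or observe that the maximum of $u^2$ on the compact piece is attained on $\partial E = \Sigma(t_0)\cup \Sigma(s)$ and handle $\Sigma(s)$ by a continuity/first-time argument in $s$.

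The main obstacle I anticipate is bookkeeping the algebra of the elliptic system for $\Rm$: one must verify that when $\Delta_f |\Rm|^2$ and $\Delta_f R^2$ are combined, the "dangerous" quadratic-in-curvature reaction terms (which a priori are only $O(R^2)$, not $O(\ep_0 R^2)$) cancel or can be absorbed, leaving a genuinely small inhomogeneity of size $\ep_0 R^2$. This is the place where the precise definitions \eqref{eqn:OH04_3}--\eqref{eqn:OH04_6} — decomposing $Rm_\Sigma$ into the part $U$ built from $R$, the part $V$ built from $\Rc$, and the Weyl-type part $W$ — are essential, since $|U|^2$ is exactly $\frac{2}{(n-1)(n-2)}R^2$ with no error, so the $R^2$ reaction in $|Rm|^2$ is matched by the corresponding term in $R^2$ up to $O(f^{-1}R^2(1+R)) = O(\ep_0 R^2)$. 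A secondary technical point is ensuring the weighted maximum principle applies on the possibly non-compact $E$; this is handled exactly as in the proof of Proposition~\ref{P303}, using that $|\Rm|/R$ is bounded (by \eqref{E401} with $i=0$, $|\Rm| \le |Rm| \le BR$) and $R>0$ on $E$, so no growth issue arises at infinity.
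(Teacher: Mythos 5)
Your high-level strategy — derive an elliptic inequality for the normalized quantity $|\Rm|^2/R^2$ (equivalently $G=|Rm|^2/R^2-\tfrac{2}{(n-1)(n-2)}$), use the $U,V,W$ decomposition to control the quadratic curvature reaction, and close by a maximum principle — is the right one and matches the paper. But there are two concrete gaps in the way you propose to close the argument.

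First, the sign of the differential inequality is reversed. You write $\Delta_f u^2 \le C u^2 + C\ep_0^2$ and try to conclude from the maximum principle that $u^2-C'\ep_0^2 \le 0$. An upper bound $\Delta_f h \le C''h$ makes $h$ a supersolution of $\Delta_f - C''$, which controls its \emph{minimum}, not its maximum; it gives no contradiction at an interior positive maximum. What you actually need, and what the paper derives via the $P$-estimate from \cite{MW19} and Lemma~\ref{LL402}, is a \emph{lower} bound of the schematic form $\Delta_f G \ge -2\langle\nabla G,\nabla\log R\rangle + c_8\,R\,G - c_7 f^{-1}R(1+R)$ (see \eqref{E407aaa}--\eqref{E407b}); the term $+c_8 RG$ is the good sign, so at a maximum point $\Delta_\Sigma G\le 0$, $\nabla_\Sigma G=0$ and $\langle\nabla G,\nabla f\rangle\ge 0$ yield $c_8 R G \le$ error $=O(\ep_0^2 R)$, hence $G\le O(\ep_0^2)$. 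This is not ``cancellation'': the reaction term $P$ must be shown to be \emph{coercive} in $|\Rm|^2$, not merely small, and that coercivity carries the argument.

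Second, the global weighted maximum principle over the possibly non-compact $E$ does not close because your boundary data at infinity is false. The hypotheses only give $|\Rm|/R \le B$, i.e.\ $u^2\le B^2$, which is $O(1)$ — not $O(\ep_0^2)$ — so $\liminf_{f\to\infty}(u^2-C'\ep_0^2)\le 0$ does not follow from anything you have. (The analogy with Proposition~\ref{P303} fails precisely here: there $\phi\to 0$ at infinity by construction, whereas $u$ has no smallness at infinity a priori.) The paper avoids this by a bootstrap: it defines $s_0$ to be the largest value for which $|\Rm|^2/R^2\le\tau\ep_0^2$ on $\Sigma(t_0,s_0)$, observes that the supremum of $G$ on this set is attained at a point $z$ on some compact level set $\Sigma(s_1)$ (each $\Sigma(t)$ being diffeomorphic to $S^{n-1}$ since $|\nabla f|>0$ on $E$), and then splits $\Delta_f$ into $\Delta_\Sigma$ plus normal terms and applies the maximum principle to $\Delta_\Sigma G$ at $z$, using $\langle\nabla G,\nabla f\rangle(z)\ge 0$ to discard the normal term. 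No information about $G$ at infinity or on $\Sigma(s)$ is ever needed. Your parenthetical mention of a ``continuity/first-time argument in $s$'' is pointing in the right direction, but as written your main line of argument depends on the two gaps above, and the auxiliary remark is not developed into a proof.
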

\begin{proof}
In the proof, all constants $c_i,i=1,2,\cdots$ are positive and depend only on $n$ and $B$. We define $s_0 \in [t_0,s]$ to be the largest number such that 
\begin{align*}
\frac{|\Rm|^2}{R^2} \le \tau\ep^2_0
\end{align*}
on $\Sigma(t_0,s_0)$, where $\tau>1$ is a large constant to be determined later. By the assumption \eqref{E401}, we have 
\begin{align*}
|\Rm| \le \ep_1' R
\end{align*}
on $\Sigma(t_0)$ and hence $s_0>t_0$ as $\ep_0 \ge \ep_1'$. If $s_0=s$, then the conclusion follows. Otherwise, we assume $s_0<s$.

Following \cite{MW19}, we define
\begin{align}
G \coloneqq &\frac{|Rm|^2}{R^2}-\frac{2}{(n-1)(n-2)}=\frac{|\Rm|^2}{R^2}+O(f^{-1}(1+R)) \notag\\
=&\frac{|\Rm|^2}{R^2}+O(t_0^{-1}+\ep_1)=\frac{|\Rm|^2}{R^2}+O(\ep_0^2). \label{E407aa}
\end{align}
where the equalities follow from Lemma \ref{LL402} and \eqref{E401}. Therefore, there exists $s_1 \in (t_0,s_0]$ such that
\begin{align}
\max_{\Sigma(t_0,s_0)} G=\max_{\Sigma(s_1)} G \in [\frac{\tau}{2}\ep_0^2,2 \tau \ep_0^2]. \label{E407aab}
\end{align}
It follows from a direct calculation, see \cite[(4.10),(4.11)]{MW19} for details, that
\begin{align}
\Delta_f G \ge -2\la \na G,\na \log R \ra+4R^{-3}P, \label{E407a}
\end{align}
where
\begin{align*}
P :=-2RR_{ijkl}R_{piqk}R_{pjql}-\frac{1}{2}RR_{ijkl}R_{ijpq}R_{pqkl}+|Rm|^2|Rc|^2.
\end{align*}
By similar calculations as \cite[(4.16)]{MW19} and our assumptions, we obtain
\begin{align}
P\ge& \frac{1}{n-1}R^2 |W|^2+\frac{4}{(n-1)(n-2)(n-3)}R^2|\Rc|^2-\frac{5}{2}R|W|^3 \notag \\
& -\frac{8}{(n-3)^2}R|\Rc|^3-\frac{6}{n-3}R|W||\Rc|^2-c_1 f^{-1}R^4(1+R). \label{E407ac}
\end{align}
From Lemma \ref{LL402}, we obtain on $\Sigma(t_0,s_0)$ that
\begin{align}
|W|^2+\frac{4}{n-3}|\Rc|^2=|\Rm|^2+O(f^{-1}R^2(1+R)) \le \tau\ep_0^2R^2+O((t_0^{-1}+\ep_1) R^2) \le c_2 \ep_0^2 R^2. \label{E407ad}
\end{align}
Now, it follows from \eqref{E407ac} and \eqref{E407ad} that
\begin{align}
P \ge \lc \frac{1}{n-1}-c_3 \ep_0\rc R^2|W|^2+\lc \frac{4}{(n-1)(n-2)(n-3)}-c_3 \ep_0 \rc R^2|\Rc|^2-c_3f^{-1} R^4(1+R). \label{E407ae}
\end{align}
Therefore, if $\ep_0$ is sufficiently small, we obtain
\begin{align*}
P \ge c_4 R^2 |\Rm|^2-c_5f^{-1}R^4(1+R). 
\end{align*}
It follows from \eqref{E407a} and \eqref{E407ae} that
\begin{align}
\Delta_f G \ge -2\la \na G,\na \log R \ra+4c_4RG-4c_5f^{-1}R(1+R). \label{E407aaa}
\end{align}

If we denote the Laplacian and inner product on the level set $\Sigma$ by $\Delta_{\Sigma}$ and $\la \,,\,\ra_{\Sigma}$ respectively, then 
\begin{align*}
\Delta G=\Delta_{\Sigma} G+G_{nn}+HG_n,
\end{align*}
where $H:=\text{Tr}_{\Sigma} \frac{\na^2 f}{|\na f|}$ is the mean curvature of the level set $\Sigma$. 
From the fact that $|\na^i Rm| \le BR^{\frac{i}{2}+1}$ for $0 \le i\le 4$, the term $G_{nn}$ can be estimated as (see \cite[(4.26)]{MW19} for details)
\begin{align}
G_{nn} \le c_6 f^{-1}(1+R)|\la \na G, \na f \ra|+O(f^{-1}R(1+R)). \label{E407ag}
\end{align}
Since
\begin{align*}
H=\frac{\Delta f-f_{nn}}{|\na f|}=\frac{\frac{n-1}{2}-R+R_{nn}}{|\na f|}=\frac{\frac{n-1}{2}-R+O(f^{-1}R(1+R))}{|\na f|}, 
\end{align*}
we have
\begin{align}
HG_n=O(f^{-1}(1+R)|\la \na G,\na f \ra|). \label{E407ah}
\end{align}
Moreover, 
\begin{align}
\la \na G, \na \log R \ra=&\la \na G, \na \log R \ra_{\Sigma}+\frac{1}{|\na f|^2} \la \na G, \na f \ra \la \na \log R, \na f \ra \notag \\
=& \la \na G, \na \log R \ra_{\Sigma}+O(f^{-1}(1+R))|\la \na G, \na f \ra| \label{E407ai}
\end{align}
since $ \la \na \log R, \na f \ra=2R^{-1}Rc(\na f,\na f)=O(R^{-1}fR_{nn})=O(1+R)$.

Combining \eqref{E407aaa}, \eqref{E407ag}, \eqref{E407ah} and \eqref{E407ai}, we derive on $\Sigma(t_0,s_0)$ that
\begin{align}
\ds G \ge \la \na G, \na f\ra-c_7f^{-1}(1+R) |\la \na G,\na f \ra|-2\la \na G,\na \log R \ra_{\Sigma}+c_8 RG-c_7f^{-1}R(1+R). \label{E407b}
\end{align}
By our assumption, there exists a point $z\in \Sigma(s_1)$ such that 
\begin{align*}
G(z)=\max_{\Sigma(s_1)} G=\max_{\Sigma(t_0,s_0)} G \in [\frac{\tau}{2}\ep_0^2,2 \tau \ep_0^2]. 
\end{align*}
From \eqref{E407b} and the maximum principle, we conclude at $z$ that
\begin{align*}
0 \ge \la \na G, \na f\ra-c_7f^{-1}(1+R) |\la \na G,\na f \ra|+c_8 RG-c_7 f^{-1}R(1+R).
\end{align*}
Since $\la \na G, \na f\ra \ge 0$ at $z$, we have
\begin{align*}
G \le c_9 f^{-1}(1+R)
\end{align*}
at $z$. Therefore, it follows from \eqref{E407aab} that
\begin{align*}
\frac{\tau}{2}\ep_0^2 \le G(z) \le c_9f^{-1}(1+R) \le c_{10} \ep_0^2,
\end{align*}
where for the last inequality we have used $t_0 \ge n \ep_1^{-1}$ and hence $t_0^{-1} =O( \ep_0^2)$. However, it contradicts our assumption \eqref{E401} if $\tau$ is sufficiently large.
\end{proof}

\begin{rem}
From the proof of Proposition \ref{P401}, we have shown that on $E$
\begin{align*}
|\Rm| \le C\ep_0 R
\end{align*}
for some constant $C$ depending only on $n$ and $B$.
\end{rem}

Next, we prove

\begin{lem} \label{L402}
With the same assumptions above, we have on $E$,
\begin{align}
\begin{dcases}
&|\Rc|=O(\ep_0 R), \\
&|\na \Rc|=O(\ep_0^{\frac{1}{2}} R^{\frac{3}{2}}), \\
&|\na^2 \Rc|=O(\ep_0^{\frac{1}{3}} R^2).
\end{dcases} \label{E408a}
\end{align}
\end{lem}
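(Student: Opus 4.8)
The plan is to bootstrap from the curvature control on $\overset{\circ}{Rm_\Sigma}$ already obtained in Proposition \ref{P401} to successively bound $|\overset{\circ}{Rc_\Sigma}|$, $|\nabla\overset{\circ}{Rc_\Sigma}|$ and $|\nabla^2\overset{\circ}{Rc_\Sigma}|$. The first bound $|\overset{\circ}{Rc_\Sigma}|=O(\ep_0 R)$ is essentially algebraic: by Lemma \ref{LL402} we have $|Rm|^2=|U|^2+|V|^2+|W|^2+O(f^{-1}R^2(1+R))$ and $|V|^2=\frac{4}{n-3}|\overset{\circ}{Rc_\Sigma}|^2+O(f^{-1}R^2(1+R))$, while Proposition \ref{P401} gives $|\overset{\circ}{Rm_\Sigma}|^2=|V|^2+|W|^2+O(f^{-1}R^2(1+R))\le C\ep_0^2R^2$. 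Since $f^{-1}(1+R)=O(t_0^{-1}+\ep_1)=O(\ep_0^2)$ on $E$ by the assumptions $t_0\ge n\ep_1^{-1}$ and $R\le\ep_1 f$, we immediately get $|\overset{\circ}{Rc_\Sigma}|^2\le \tfrac{n-3}{4}(|V|^2+\text{error})=O(\ep_0^2 R^2)$, which is the first line of \eqref{E408a}. It is worth noting here that by \eqref{E207f} and \eqref{eqn:OH05_2} together with Lemma \ref{L401}, the full ambient Ricci deviation $Rc-\frac{R}{n-1}g$ differs from $\overset{\circ}{Rc_\Sigma}$ only by the tensor $T$, which is $O(f^{-1}R(1+R)) = O(\ep_0^2 R)$, so the same bound controls the ambient quantity as well.

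For the derivative estimates I would use an interpolation / Bernstein-type argument against the fourth-order curvature bound in \eqref{E401}. Specifically, the idea is: we have a pointwise bound on $|\overset{\circ}{Rc_\Sigma}|$ of size $\ep_0 R$, and uniform bounds on $|\nabla^i Rm|\le BR^{i/2+1}$ for $0\le i\le 4$; by the standard Bernstein trick (as in \cite{MW19}), if a tensor $h$ built from the curvature satisfies $|h|\le \Lambda$ and $|\nabla^2 h|\le \Lambda'$ on a ball of controlled geometry, then $|\nabla h|\le C(\Lambda\Lambda')^{1/2}$. Applying this with $h=\overset{\circ}{Rc_\Sigma}$ (extended by zero in the normal direction as in the paper's convention), $\Lambda=O(\ep_0 R)$ and $\Lambda'=O(R^2)$ coming from \eqref{E401}, and using that $R$ is almost constant on the scale $R^{-1/2}$ (again from \eqref{E401}), we get $|\nabla\overset{\circ}{Rc_\Sigma}|=O((\ep_0 R\cdot R^2)^{1/2})=O(\ep_0^{1/2}R^{3/2})$. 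Iterating one step further with the same scheme, using now $|h|=O(\ep_0^{1/2}R^{3/2})$ and $|\nabla^2 h|=O(R^{5/2})$ (which follows from $|\nabla^3 Rm|\le BR^{5/2}$ plus the structure of $\overset{\circ}{Rc_\Sigma}$), gives $|\nabla^2\overset{\circ}{Rc_\Sigma}|=O((\ep_0^{1/2}R^{3/2}\cdot R^{5/2})^{1/2})=O(\ep_0^{1/4}R^2)$; the stated exponent $\ep_0^{1/3}$ is weaker, so it follows a fortiori (alternatively one optimizes the interpolation radius to land exactly on $\ep_0^{1/3}$). One must be slightly careful that $\overset{\circ}{Rc_\Sigma}$ is not literally a restriction of an ambient parallel tensor but involves $R$ and the second fundamental form through \eqref{E207f}–\eqref{eqn:OH05_2}, so its covariant derivatives pick up terms in $\nabla R$, $\nabla Rc$ and $\nabla^2 f = \tfrac12 g - Rc$; all of these are controlled by \eqref{E401} and Proposition \ref{P201}, so the error terms are of lower order and absorbed.

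The main obstacle I expect is the careful bookkeeping in the second and third lines: one has to verify that differentiating the tensor $T$ in \eqref{eqn:OH05_2}, and the normalizing factor $\frac{R}{n-1}g$, produces only terms that are genuinely negligible compared to the claimed bounds — in particular that no term of size $\ep_0^0\cdot R^{3/2}$ (i.e. without a gain of $\ep_0$) survives in $\nabla\overset{\circ}{Rc_\Sigma}$. Here one uses that $|\nabla R|=O(\ep_0^{1/2}R^{3/2})$-type bounds do hold: indeed $\nabla R = 2Rc(\nabla f,\cdot)$ by \eqref{E202a}, and combined with $R\le \ep_1 f$ and $|Rc-\frac{R}{n-1}g|=O(\ep_0 R)$ one checks the tangential part of $\nabla R$ is small. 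Modulo this somewhat delicate but routine estimation of the off-diagonal and normal-direction error terms — which is exactly the kind of computation carried out in \cite[\S4]{MW19} — the interpolation machine delivers all three estimates in \eqref{E408a}. I would present the algebraic first line in detail and then invoke the Bernstein interpolation for the derivative bounds, citing the parallel computations in \cite{MW19} for the error analysis.
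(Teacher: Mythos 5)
Your derivation of the first line $|\Rc|=O(\ep_0 R)$ is correct, using the $|U|^2+|V|^2+|W|^2$ decomposition from Lemma~\ref{LL402} rather than the paper's trace identity $\overset{\circ}{R}_{ac}=\sum_k\overset{\circ}{R}_{akck}+R_{ancn}$ plus Proposition~\ref{P201}; both are valid. Your plan for the derivative bounds (interpolation/Bernstein against the coarse $|\na^i Rm|\le BR^{i/2+1}$ estimates) is also the same underlying idea as the paper, which runs the Gilbarg-Trudinger $W^{1,p}$ interpolation with $\ep=\ep_0^{1/2}$ and then converts to a pointwise bound via the mean value formula.

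However, there are two genuine gaps in the derivative part. First, the exponent computation for $|\na^2\Rc|$ is wrong. Iterating Bernstein (Step 1: $|\na h|\lesssim(|h|\cdot|\na^2 h|)^{1/2}$ with the coarse $|\na^2\Rc|\lesssim R^2$; Step 2: $|\na^2 h|\lesssim(|\na h|\cdot|\na^3 h|)^{1/2}$) does give $|\na^2\Rc|\lesssim\ep_0^{1/4}R^2$, but since $\ep_0<1$ we have $\ep_0^{1/4}>\ep_0^{1/3}$, so the bound $O(\ep_0^{1/4}R^2)$ is \emph{weaker} than the target $O(\ep_0^{1/3}R^2)$, not stronger. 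The claim that "the stated exponent $\ep_0^{1/3}$ is weaker, so it follows a fortiori" is backwards; the derivation as written does not prove the lemma. The fix is a single two-to-three interpolation, $|\na^2\Rc|\lesssim|\Rc|^{1/3}|\na^3\Rc|^{2/3}=O\bigl((\ep_0 R)^{1/3}(R^{5/2})^{2/3}\bigr)=O(\ep_0^{1/3}R^2)$ (equivalently, optimizing the GT interpolation parameter as $\ep=\ep_0^{1/3}$, or feeding the improved $|\na^2\Rc|$ back into Step 1 and iterating the scheme to its fixed point). Your parenthetical about "optimizing the interpolation radius" gestures at this but contradicts your main claim and is not carried out.

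Second, your interpolation argument requires a ball $B(q,r_1)$ of controlled geometry \emph{contained in $E=\Sigma(t_0,s)$}, where the preliminary estimates $|\na^i\Rc|=O(R^{i/2+1})$ hold. For $q$ close to the boundary $\Sigma(s)$, the ball $B(q,2r_1)$ may leave $E$, and the paper devotes a separate Case 2 to this: it constructs an almost-product chart near $\Sigma(s)$ via the flow of $\na\rho/|\na\rho|^2$ ($\rho=2\sqrt f$) and re-runs the interpolation in a product neighborhood. Your proposal does not address this, and it is not automatic. Finally, a smaller point: your aside that $T=O(\ep_0^2 R)$ is false; the normal-normal component $T_{nn}=R_{nn}-\frac{R}{n-1}$ is of order $R$ (that is precisely what forces $\overset{\circ}{Rc_\Sigma}$ to vanish in the normal direction). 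Only the tangential block of $T$ is small. This does not affect the statement of \eqref{E408a}, which concerns $\Rc$ itself, but it does affect your claim that the same bound controls the ambient Ricci deviation.
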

\begin{proof}
In the proof, all constants $C_i,i=1,2,\cdots$ are positive and depend only on $n,A$ and $B$.

From the definition of $\Rc$, we obtain
\begin{align*}
\overset{\circ}{R}_{ac}= \sum_{k=1}^{n-1} \overset{\circ}{R}_{akck}+R_{ancn}.
\end{align*}
Therefore, it follows from \eqref{E401ac} and \eqref{E406a} that
\begin{align}
|\Rc|=O(\ep_0 R)+O(f^{-1}R(1+R))=O(\ep_0 R). \label{E408b}
\end{align}
In addition, we have
\begin{align}
\begin{dcases}
&|\na \Rc|=O(R^{\frac{3}{2}}), \\
&|\na^2 \Rc|=O(R^2), \\
& |\na^3 \Rc|=O(R^{\frac{5}{2}}). 
\end{dcases} \label{E408c}
\end{align}
Indeed, it follows from \eqref{E207f} and \eqref{E401} that
\begin{align*}
|\na \Rc|=& O(|\na Rc|)+O(f^{-1}|\na R|(1+R))+O(f^{-\frac1 2} |\na^2 R|)+O(f^{-\frac1 2} |Rc|(1+R)) \\
=&O(R^{\frac 3 2})+O(f^{-1}R^{\frac 3 2}(1+R))+O(f^{-\frac1 2} R^2)+O(f^{-\frac1 2} R(1+R))=O(R^{\frac{3}{2}}).
\end{align*}
Here, we have used \eqref{E401aa}. The other two equalities in \eqref{E408c} can be proved similarly by using $|\na^i Rm| \le BR^{\frac{i}{2}+1}$ for $0 \le i\le 4$.

For any $q \in E$, we set $R(q)=r^{-2}$ and $r_1=C_1^{-1}r$ for a large constant $C_1>1$ determined later.

\textbf{Case 1}: $B(q,2r_1) \cap \Sigma(s)=\emptyset$.

From the fact that $|\na R|^2=O(R^3)$, we may assume that $C_1$ is large enough so that
\begin{align*}
C_1^{-1}r^{-2} \le R \le C_1r^{-2}
\end{align*}
on $B(q,2r_1)$. It follows from Theorem \ref{thm:nonc} and the Bishop-Gromov volume comparison that
\begin{align*}
C_2^{-1} \le \frac{|B(q,r_1)|}{r_1^n} \le C_2
\end{align*}
for some $C_2>1$. By the standard interpolation inequality (see \cite[Theorem $7.28$]{GT01}) that for any $\ep>0$
\begin{align*}
&\lc \int_{B(q,r_1)} |\na \Rc|^2 r_1^{6-n} \,dV \rc^{\frac 1 2} \\
\le& \ep \lc \int_{B(q,r_1)} |\Rc|^2 r_1^{4-n} \,dV+\int_{B(q,r_1)} |\na \Rc|^2 r_1^{6-n} \,dV+\int_{B(q,r_1)} |\na^2 \Rc|^2 r_1^{8-n} \,dV \rc^{\frac 1 2} \\
&+C_3\ep^{-1} \lc \int_{B(q,r_1)} |\Rc|^2 r_1^{4-n} \,dV \rc^{\frac 1 2} \le C_4 \ep+C_4 \ep^{-1} \ep_0,
\end{align*}
where we have used \eqref{E408b} and \eqref{E408c}.
Plugging $\ep=\ep_0^{\frac1 2}$ into the above inequality, we obtain
\begin{align}
&\int_{B(q,r_1)} |\na \Rc|^2 \,dV \le 2C_4 \ep_0 r_1^{n-6} \label{E408d}.
\end{align}
Since $|\na^2 \Rc| \le C_5 r_1^{-4}$ by \eqref{E408c}, it follows from mean value formula and \eqref{E408d} that
\begin{align*}
|\na \Rc| \le C_6 \ep_0^{\frac 1 2}r_1^{-3}.
\end{align*}
In other words, we have proved that $|\na \Rc| =O(\ep_0^{\frac{1}{2}} R^{\frac{3}{2}})$. The estimate of $|\na^2 \Rc|$ can be proved similarly.

\textbf{Case 2}: $B(q,2r_1) \cap \Sigma(s)\ne \emptyset$.

In this case, there exists $q' \in \Sigma(s)$ which can be connected by a shortest geodesic in $E$ from $q$ to $q'$ such that its length is smaller than $2r_1$. From $|\na R|^2=O(R^3)$, $R(q')$ is almost equal to $r^{-2}$, if $C_1$ is sufficiently large. 

Next, we denote the induced metric on $\Sigma(s)$ by $g_{\Sigma}$ and define $\delta_1>0$ to be the largest number such that $R \le 2r^{-2}$ on $\Omega:=B_{g_{\Sigma}}(q',\delta_1 r)$. In addition, we define $\rho=2\sqrt f$ and $\phi_t$ to be the family of diffeomorphisms generated by $\na \rho/|\na \rho|^2$.

Now we define a map
\begin{align*}
\Phi: \Omega \times (-\delta_2 r,0] \longrightarrow E
\end{align*}
by $\Phi(z,t)=\phi_t(z)$. We claim that there exists $C_7>1$ such that $\delta_2 \ge C_7^{-1}$ and $\Phi^* g$ is close to $\bar g:=g_{\Sigma}+dt^2$ in $C^2$ sense. More precisely,
\begin{align} \label{E408dxa}
|\Phi^* g-\bar g|_{C^k(\bar g)} =O(\ep^{\frac 1 2}_1 r^{-k})
\end{align}
for $k=0,1,2$. We only prove the case $k = 0$, and the other cases are similar. We first compute for any $t \in (-\delta_2 r,0]$,
\begin{align}\label{E408dxb}
\Phi^* g(\partial_t ,\partial_t)=\frac{1}{|\na \rho|^2}=\frac{f}{|\na f|^2}=1+\frac{R}{f-R}=1+O(\ep_1).
\end{align}

Next, we set $g(t)$ to be $\Phi^* g$ restricted on $ \Omega \times \{t\}$. From our definition of $\phi_t$, we compute
\begin{align*}
\partial_t g(t)= \frac{2 \sqrt f}{|\na f|^2} \na^2 f = \frac{2 \sqrt f}{|\na f|^2} (g(t)-2Rc)=O(f^{-\frac1 2} R).
\end{align*}
By integration, we have for any $t \in (-\delta_2 r,0]$,
\begin{align} \label{E408dxc}
|g(t)-g_{\Sigma}| = O(\delta_2 r f^{-\frac1 2} R) =O(\ep^{\frac 1 2}_1),
\end{align}
where for the last equality we have used $|\na R|^2=O(R^3)$ and \eqref{E408dxb}. Combining \eqref{E408dxb} and \eqref{E408dxc}, \eqref{E408dxa} is proved.

From \eqref{E408dxa}, it follows that $g$ is almost a product metric on a neighborhood of $\Sigma(s)$. By $|\na R|^2=O(R^3)$ again, there exists $C_8>1$ such that $\delta_1 \ge C_8^{-1}$. Moreover, we can guarantee that $B(q,2r_1) \cap E \subset \Phi \lc \Omega \times (-\delta_2 r,0] \rc$.

Therefore, the rest of the proof can be done similarly to Case 1 by considering a product neighborhood of $q$ with size $r_1$ instead.
\end{proof}

Next, we prove that the scalar curvature is almost constant on any level set contained in $E$. We first prove

\begin{lem} \label{L403}
With the same assumptions above, on any level set $\Sigma$ of $f$ contained in $E$, we have
\begin{align*}
|\na_{\Sigma} R| =O( \ep_0^{\frac{1}{2}} R^{\frac{3}{2}}).
\end{align*}
\end{lem}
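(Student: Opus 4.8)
The plan is to estimate the tangential gradient of $R$ by relating it to the tensor $\Rc$, whose derivatives we already control via Lemma \ref{L402}. Recall from \eqref{E202a} that $\nabla_a R = 2R_{ak}f_k = 2R_{an}|\nabla f|$, so the purely tangential component of $\nabla R$ is governed by the mixed curvature component $R_{an}$, and \eqref{E401ab} already gives $|R_{an}| = O(f^{-1/2}R^{3/2})$. By itself this yields $|\nabla_\Sigma R| = O(f^{-1/2}R^{3/2}|\nabla f|) = O(R^{3/2})$, which is not quite the claimed bound; the point of the lemma is to upgrade the constant factor $R^{3/2}$ to $\ep_0^{1/2}R^{3/2}$, i.e., to extract the smallness. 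So the real work is to show that $|\nabla_\Sigma R|$ is not merely $O(R^{3/2})$ but carries an extra factor $\ep_0^{1/2}$.

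The mechanism I expect to use is the second contracted Bianchi-type identity on the level set together with the definition \eqref{E207a} of $\Rc$. Taking the tangential divergence of $\overset{\circ}{R}_{ab} = R_{ab} - \frac{1}{n-1}Rg_{ab}$ along $\Sigma$, one gets an expression of the form $\nabla^a_\Sigma \overset{\circ}{R}_{ab} = \nabla^a_\Sigma R_{ab} - \frac{1}{n-1}\nabla_{\Sigma,b} R$. The intrinsic divergence $\nabla^a_\Sigma R_{ab}$ can be rewritten using the Gauss/Codazzi equations and the ambient contracted Bianchi identity $\nabla^i R_{ib} = \frac12\nabla_b R$, with the error terms expressed through the second fundamental form (i.e. through $\nabla^2 f/|\nabla f|$, hence through $g - 2Rc$) and through the normal components $R_{an}$, $R_{nn}$. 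Feeding in Proposition \ref{P201}, Lemma \ref{L401}, and Proposition \ref{P401}, all these error terms are $O(f^{-1/2}R(1+R))$ or better, while $|\nabla_\Sigma \overset{\circ}{R}_{ab}| \le |\nabla \overset{\circ}{R}_{ab}| = O(\ep_0^{1/2}R^{3/2})$ by Lemma \ref{L402}. Solving for $\nabla_{\Sigma,b}R$ then gives $|\nabla_\Sigma R| = O(\ep_0^{1/2}R^{3/2}) + O(f^{-1/2}R(1+R))$, and since $f \ge t_0 \ge n\ep_1^{-1}$ forces $f^{-1/2} = O(\ep_1^{1/2}) = O(\ep_0)$, both $O(f^{-1/2}R)$ and $O(f^{-1/2}R^2)$ (using $R \le \ep_1 f$, so $R \cdot f^{-1/2} \le \ep_1^{1/2} R^{1/2}\cdot R^{1/2} \cdot \ep_1^{1/2} \le \ep_0 R$ after bookkeeping with $R \le \ep_1 f$) are absorbed into $O(\ep_0^{1/2}R^{3/2})$. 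This closes the estimate.

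The main obstacle I anticipate is the careful bookkeeping of the error terms coming from the second fundamental form when passing between the ambient divergence and the intrinsic divergence on $\Sigma$: one must check that every term involving $H$, $R_{an}$, $R_{nn}$, $R_{abcn}$, or $R_{ancn}$ that appears is indeed dominated, after using $R \le \ep_1 f$ and $f^{-1} = O(\ep_0^2)$, by $\ep_0^{1/2}R^{3/2}$ rather than just by $R^{3/2}$. The homogeneity is tight — one is trading a $\sqrt f$ in the denominator against a half power of $R$ — so the inequalities $R \le \ep_1 f$ and $t_0 \ge n\ep_1^{-1}$ must be invoked in exactly the right combination. An alternative, perhaps cleaner, route that I would try in parallel is to work directly with $\nabla_a R = 2R_{an}|\nabla f|$ and differentiate the identity \eqref{E202b} contracted appropriately, so that $R_{an}$ is itself expressed through $\nabla Rc$; then the smallness of $|\nabla \overset{\circ}{R}_{ab}|$ from Lemma \ref{L402} transfers to $R_{an}$ modulo the harmless trace part, which only contributes $\nabla_\Sigma R$ back with a small coefficient and can be reabsorbed. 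Either way the quantitative input is Lemma \ref{L402}, and the lemma should follow in a few lines of estimates once the identity is set up.
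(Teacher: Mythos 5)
Your main route coincides with the paper's argument: the paper likewise starts from the contracted Bianchi identity $\tfrac{1}{2}\nabla_a R = \nabla_k R_{ka}$, splits into tangential and normal parts, inserts the decomposition $R_{ab} = \overset{\circ}{R}_{ab} + \tfrac{R}{n-1} g_{ab} + T_{ab}$, solves for $\nabla_a R$ with the nondegenerate coefficient $\tfrac{n-3}{2(n-1)}$, and estimates the leading divergence term $\nabla_b\overset{\circ}{R}_{ab}$ via Lemma~\ref{L402}, with $\nabla_b T_{ab}$ and $\nabla_n R_{an}$ controlled exactly as the error terms you anticipate. The only difference is that the paper works throughout with the ambient covariant derivative and the extension tensor $T$ rather than passing to the intrinsic divergence on $\Sigma$ through Gauss--Codazzi, a cosmetic reformulation of the same identity.
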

\begin{proof}
Recall the decomposition of $Rc$ from \eqref{E207f}. Direct calculation implies
\begin{align*}
\frac{1}{2}\na_a R=& \sum_{b=1}^{n-1} \na_b R_{ab}+\na_n R_{an} \\
=& \sum_{b=1}^{n-1} \na_b \lc \overset{\circ}{R}_{ab}+\frac{R}{n-1} g_{ab}+T_{ab}\rc +\na_n R_{an} \\
=& \sum_{b=1}^{n-1}\lc \na_b \overset{\circ}{R}_{ab}+\na_b T_{ab}\rc+\frac{\na_a R}{n-1}+\na_n R_{an}.
\end{align*}
Therefore, we obtain
\begin{align}
\frac{n-3}{2(n-1)}\na_a R=\sum_{b=1}^{n-1} \lc \na_b \overset{\circ}{R}_{ab}+\na_b T_{ab}\rc+\na_n R_{an}. \label{E409a}
\end{align}
It is clear from the definition that 
\begin{align*}
\na_b T_{ab}=\frac{f_{ab}\na_b R}{|\na f|^2} =\frac{(g_{ab}/2-R_{ab})\na_b R }{|\na f|^2}. 
\end{align*}

Since $R_{ab}=\overset{\circ}{R}_{ab}+\frac{R}{n-1}g_{ab}=\frac{R}{n-1}g_{ab}+O(\ep_0 R)$, we have
\begin{align}
\sum_{b=1}^{n-1} \na_b T_{ab}=&\frac{1}{|\na f|^2} \lc \frac{\na_a R}{2}-\frac{R\na_a R}{n-1}+O(\ep_0R^{\frac 5 2}) \rc \notag \\
=&O(f^{-1}R^{\frac3 2}(1+R))+ O(\ep_0f^{-1}R^{\frac 5 2})=O(\ep_0 R^{\frac 3 2}). \label{E409ab}
\end{align}

Moreover, we compute
\begin{align}
\na_n R_{an}=& \frac{\la \na R_{an},\na f \ra}{|\na f|}=\frac{\Delta R_{an}-R_{an}+2R_{aknl}R_{kl}}{|\na f|} \notag \\
=& O(f^{-\frac 1 2} R^2)+O(f^{-1}R^{\frac 3 2})=O(\ep_0 R^{\frac{3}{2}}). \label{E409b}
\end{align}

Combining \eqref{E409a},\eqref{E409ab}, \eqref{E409b} and Lemma \ref{L402}, we immediately obtain
\begin{align*}
|\na_a R|=O( \ep_0^{\frac{1}{2}} R^{\frac{3}{2}}).
\end{align*}
\end{proof}

\begin{lem} \label{L404}
With the same assumptions above, for any level set $\Sigma$ contained in $E$ and $x,y \in \Sigma$, we have
\begin{align*}
\left| \frac{R(x)}{R(y)}-1 \right| =O(\ep_0^{\frac{1}{2}}).
\end{align*}
Moreover, the diameter of $\Sigma$ is $O(R^{-\frac{1}{2}}(x))$.
\end{lem}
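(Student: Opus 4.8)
The plan is to integrate the tangential gradient estimate of Lemma~\ref{L403} along paths inside a level set $\Sigma$. First I would fix a level set $\Sigma \subset E$ and a base point $x \in \Sigma$ with $R(x) = r^{-2}$. The key quantity to control is $|\na_\Sigma \log R| = R^{-1}|\na_\Sigma R| = O(\ep_0^{1/2} R^{1/2})$ by Lemma~\ref{L403}. To turn this into the multiplicative estimate, I would run a continuity/bootstrap argument: let $\Sigma_0 \subset \Sigma$ be the set of points $y$ that can be joined to $x$ by a path in $\Sigma$ of $g_\Sigma$-length at most (say) $C_0 r$ along which $R$ stays within a factor $2$ of $r^{-2}$. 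On such a path, $|\na_\Sigma \log R| = O(\ep_0^{1/2} r^{-1})$, so integrating over length $\le C_0 r$ gives $|\log(R(y)/R(x))| = O(\ep_0^{1/2})$, which is $\ll \log 2$ once $\ep_0$ is small; this shows the "factor $2$" constraint is never active and $\Sigma_0$ is both open and closed in the subset of $\Sigma$ reachable by short paths.

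The remaining point is therefore to control the intrinsic diameter of $\Sigma$, i.e. to show $\operatorname{diam}_{g_\Sigma}(\Sigma) = O(r)$ so that every point of $\Sigma$ lies in $\Sigma_0$ and the path-length budget $C_0 r$ suffices. Here I would use the hypotheses: $\Sigma(t_0)$ is diffeomorphic to $S^{n-1}$, and the structure of $E = \Sigma(t_0,s)$ together with Proposition~\ref{P401} and Lemma~\ref{L402} says that on $\Sigma$ the full Riemann tensor is $R\,g \owedge g$ up to an $O(\ep_0 R)$ error, i.e. $\Sigma$ is almost a round $(n-1)$-sphere of radius $\sim r\sqrt{(n-1)(n-2)}$ after rescaling by $r^{-2}$. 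Combined with the no-local-collapsing bound (Theorem~\ref{thm:nonc}), which rules out long thin collapsed directions, and Myers-type reasoning from the almost-positive tangential Ricci curvature $\overset{\circ}{R}_{ab} + \frac{R}{n-1}g_{ab} = \frac{R}{n-1}g_{ab} + O(\ep_0 R) \gtrsim \frac{1}{2(n-1)} r^{-2} g_{ab}$, one gets $\operatorname{diam}_{g_\Sigma}(\Sigma) \le C(n) r$. (One may also propagate this diameter bound from $\Sigma(t_0)$, which is $C^\infty$-close to the standard $S^{n-1}$ by the opening hypotheses, along the gradient flow of $f$ using the almost-product structure \eqref{E408dxa}-type estimates, since $R$ is comparable along $E$ and the flow distorts lengths by bounded factors on bounded time intervals after rescaling.) Once $\operatorname{diam}_{g_\Sigma}(\Sigma) = O(r) = O(R^{-1/2}(x))$ is in hand, every $y \in \Sigma$ is joined to $x$ by a minimizing $g_\Sigma$-geodesic of length $O(r)$, the bootstrap applies along it, and $|R(x)/R(y) - 1| = O(\ep_0^{1/2})$ follows; the diameter statement is exactly what we proved along the way.

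I expect the main obstacle to be the intrinsic diameter bound on $\Sigma$: the gradient estimate itself is an immediate consequence of Lemma~\ref{L403}, but converting it to a global comparison requires knowing a priori that $\Sigma$ is not intrinsically huge, and this in turn needs the interplay of the topological hypothesis ($\Sigma(t_0) \cong S^{n-1}$), the curvature pinching from Proposition~\ref{P401}, and noncollapsing. A clean way to organize it is: (i) establish the diameter bound for the specific level set $\Sigma(t_0)$ from the closeness hypotheses in \eqref{E401}; (ii) use the almost-product structure along $\nabla f$ (as in the proof of Lemma~\ref{L402}, Case~2) to transport the bound to all nearby level sets, with $R$-comparability controlling the rescaling; (iii) run the continuity argument above on each such $\Sigma$.
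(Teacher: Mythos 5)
Your overall strategy---the Lemma~\ref{L403} gradient estimate to compare $R$ over short paths in $\Sigma$, plus a Myers-type argument to bound the intrinsic diameter---is exactly the paper's. The one step you gloss over is precisely where the paper does most of its work: for Myers applied to the hypersurface $\Sigma$ you need a lower bound on the \emph{intrinsic} Ricci tensor $\tilde R_{ab}$ of $\Sigma$, whereas the quantity $\overset{\circ}{R}_{ab}+\frac{R}{n-1}g_{ab}=R_{ab}$ that you invoke is the restriction of the \emph{ambient} Ricci tensor. The paper bridges this by first estimating the second fundamental form $h_{ab}=f_{ab}/|\nabla f|=O(f^{-1/2}(1+R))$ (small because $|\nabla f|\sim\sqrt{f}$ is large on $E$) and then using the Gauss equation together with \eqref{E401ac} to deduce $\tilde R_{ab}=R_{ab}+O(\ep_0^2 R)$ and $\tilde R=R+O(\ep_0^2 R)$, whence the intrinsic pinching $\tilde R_{ab}=\frac{\tilde R}{n-1}g_{ab}+O(\ep_0\tilde R)$. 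Once this is established, neither Theorem~\ref{thm:nonc} (noncollapsing) nor the topological hypothesis $\Sigma(t_0)\cong S^{n-1}$ plays any role in this lemma; the Myers estimate alone gives $\operatorname{diam}\Sigma=O(r)$, and your bootstrap then closes the scalar-curvature comparison. Your alternative route---propagating the diameter bound from $\Sigma(t_0)$ along $\nabla f$ using the almost-product structure---is not what the paper does and would require more global control over the whole of $E$; the Gauss--Myers route is purely local on $\Sigma$ and therefore cleaner.
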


\begin{proof}
For the second fundamental form $h$ of $\Sigma$, we have
\begin{align*}
h_{ab}=\frac{f_{ab}}{|\na f|}=\frac{\frac{g_{ab}}{2}-R_{ab}}{|\na f|}=O(f^{-\frac{1}{2}}(1+R))
\end{align*}
and hence the mean curvature
\begin{align*}
H=\sum h_{aa}=O(f^{-\frac{1}{2}}(1+R)).
\end{align*}

We denote the intrinsic Ricci curvature and scalar curvature of $\Sigma$ by $\tilde R_{ab}$ and $\tilde R$, respectively. It follows from the Gauss curvature equation that
\begin{align}
\tilde R_{ab}=&R_{ab}-R_{anbn}+Hh_{ab}-h_{ac}h_{cb} \notag \\
=& R_{ab}+O(f^{-1}R(1+R))+O(f^{-1}(1+R)^2)=R_{ab}+O(\ep_0^2 R) \label{E410ba}
\end{align}
where we have used \eqref{E401aa} and \eqref{E401ac}.

Similarly,
\begin{align}
\tilde R=&R-R_{nn}+H^2-|h|^2 \notag \\
=&R+O(\ep_0^2 R). \label{E410bb}
\end{align}
Combining \eqref{E410ba}, \eqref{E410bb} and Lemma \ref{L402}, we have
\begin{align}
\tilde R_{ab}=&\frac{R}{n-1}g_{ab}+\overset{\circ}{R_{ab}}+O(\ep_0^2 R) \notag \\
=& \frac{R}{n-1}g_{ab}+O(\ep_0 R)=\frac{\tilde R}{n-1}g_{ab}+O(\ep_0 \tilde R).
\label{E410a}
\end{align}
Now we fix a point $x \in \Sigma$ and set $R(x)=r^{-2}$. By Lemma \ref{L403}, we have $|\na_{\Sigma} R^{-\frac{1}{2}}| \le C_1 \ep_0^{\frac{1}{2}}$ and hence
\begin{align}
|R^{-\frac{1}{2}}(y)-r| \le C_1L\ep_0^{\frac{1}{2}}, \label{E410b}
\end{align}
provided that $d_{\Sigma}(x,y) \le L$. From \eqref{E410bb}, \eqref{E410a} and \eqref{E410b}, the proof of the Myers theorem indicates that 
\begin{align*}
\text{diam}\, \Sigma=O(r)
\end{align*}
and hence by \eqref{E410b},
\begin{align*}
R(y)=\lc 1+O(\ep_0^{\frac{1}{2}}) \rc r^{-2}.
\end{align*}
In sum, the proof is complete.
\end{proof}

For later applications, we prove

\begin{lem} \label{L405}
With the same assumptions above, we have on $E$,
\begin{align*}
|\Delta R|= O(\ep_0^{\frac{1}{3}} R^2).
\end{align*}
\end{lem}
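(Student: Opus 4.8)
The plan is to deduce the statement from a Hessian bound obtained by the interpolation scheme already used for $\Rc$ in Lemma \ref{L402}. Since $\Delta R = \operatorname{tr}_g \nabla^2 R$, it suffices to prove $|\nabla^2 R| = O(\ep_0^{1/3}R^2)$ on $E$. The idea is to interpolate $\nabla^2 R$ between $\nabla R$ and $\nabla^4 R$: such an interpolation puts weight $\tfrac23$ on the first derivative and $\tfrac13$ on the fourth, so starting from a first–derivative bound of size $\ep_0^{1/2}$ together with $|\nabla^4 R| = O(R^3)$ (which follows from \eqref{E401}), one obtains $(\ep_0^{1/2})^{2/3}=\ep_0^{1/3}$ precisely, with the correct curvature scale $R^2$.

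So the first task is to upgrade the gradient estimate to $|\nabla R| = O(\ep_0^{1/2}R^{3/2})$ on $E$. The tangential part of $\nabla R$ is exactly Lemma \ref{L403}. For the normal part I would use \eqref{E202a} to write $\langle \nabla R, e_n\rangle = 2|\nabla f|\,R_{nn}$, then feed in $|R_{nn}| = O(f^{-1}R(1+R))$ from Lemma \ref{L401} together with $|\nabla f|^2 = f-R$ and the inequality $f^{-1/2}\le \ep_1^{1/2}R^{-1/2}\le \ep_0 R^{-1/2}$ coming from $R\le \ep_1 f$; this bounds the normal part by $O(\ep_0 R^{1/2}(1+R))$, which on $E$ is even smaller than needed after the $(1+R)$ factor is absorbed via $R\le\ep_1 f$.

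The second task is the interpolation itself, run on a ball $B(q, cR(q)^{-1/2})$ around an arbitrary $q\in E$. Exactly as in Lemma \ref{L402}, on such a ball $R$ is comparable to $R(q)$ and its covariant derivatives up to order four are controlled by \eqref{E401}; the ball is $\kappa$-noncollapsed and volume–comparable by Theorem \ref{thm:nonc} and Bishop--Gromov, and when it meets $\Sigma(s)$ one substitutes the product neighbourhood built in Case 2 of Lemma \ref{L402}. Applying the interpolation inequality (as in \cite[Theorem 7.28]{GT01}) to $\nabla R$ and optimising the radius parameter — whose optimal value is $\sim\ep_0^{1/6}R(q)^{-1/2}$, comfortably inside the ball — yields $|\nabla^2 R|(q)=O(\ep_0^{1/3}R(q)^2)$; the passage from the $L^2$ interpolation to this pointwise bound is by the mean–value formula, using $|\nabla^3 R|=O(R^{5/2})$ to control the oscillation of $|\nabla^2 R|$, as before. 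Taking the trace gives $|\Delta R|=O(\ep_0^{1/3}R^2)$.

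I do not expect a conceptual obstacle: the whole point is that $\Delta R$ is small because $\nabla R$ is small, so no delicate cancellation in the elliptic identity $\Delta_f R = R-2|Rc|^2$ is needed. The only friction is bookkeeping — tracking the $(1+R)$ factors (harmless on $E$ since $R\le\ep_1 f$, but worth watching when $R$ is large), and running the interpolation near $\Sigma(s)$, where one lacks a genuine geodesic ball and must import the product–neighbourhood construction of Lemma \ref{L402}.
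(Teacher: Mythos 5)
Your route is genuinely different from the paper's. The paper never estimates the full Hessian $\na^2 R$; it splits $\Delta R=\sum_a\na^2_{aa}R+\na^2_{nn}R$, handles $\na^2_{nn}R$ directly from the soliton identities (using $\la\na R,\na f\ra=\Delta R-R+2|Rc|^2$), and pulls the tangential part back to the \emph{traceless} tensor via the contracted Bianchi-type identity \eqref{E409a}, so that the $\ep_0^{1/3}$ gain is imported wholesale from $|\na^2\Rc|=O(\ep_0^{1/3}R^2)$ in Lemma \ref{L402}. Your plan of proving a Hessian bound $|\na^2 R|=O(\ep_0^{1/3}R^2)$ by interpolating between $\na R$ (gain $\ep_0^{1/2}$, via Lemma \ref{L403} plus the normal estimate from $R_{nn}$, which is fine) and $\na^4 R$ is a different idea, and the exponent $(\ep_0^{1/2})^{2/3}=\ep_0^{1/3}$ is indeed the right answer at the $L^2$ level.

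However, there is a genuine quantitative gap in the passage from $L^2$ to the pointwise bound. At your optimal radius $\rho\sim\ep_0^{1/6}R^{-1/2}$, the mean-value oscillation error is $\rho\sup|\na^3 R|\sim\ep_0^{1/6}R^{-1/2}\cdot R^{5/2}=\ep_0^{1/6}R^2$, and since $\ep_0^{1/6}\gg\ep_0^{1/3}$ this term \emph{dominates} the $L^2$ interpolation bound $\ep_0^{1/3}R^2$; as written, your argument only gives $|\na^2 R|=O(\ep_0^{1/6}R^2)$. This mismatch does not occur in Lemma \ref{L402} precisely because there the bottom of the interpolation is $\Rc$ with a \emph{full} $\ep_0$ gain (not $\ep_0^{1/2}$), and the pair $(\Rc,\na^3\Rc)$ yields an optimal radius $\sim\ep_0^{1/3}R^{-1/2}$ whose oscillation error $\rho\sup|\na^3\Rc|\sim\ep_0^{1/3}R^2$ matches. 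Since $R$ itself carries no smallness, there is no analogue of that favourable pair for $\na^2 R$, which is the structural reason the paper avoids the Hessian of $R$ entirely and routes through $\Rc$. Your scheme can be repaired — either first bootstrap $|\na^3 R|=O(\ep_0^{1/6}R^{5/2})$ by interpolating between $\na R$ and $\na^4 R$ and then rerun the mean-value step with the improved oscillation bound, or replace the mean-value step by a symmetric Taylor expansion in geodesic normal coordinates, $\na R(\exp_q v)-\na R(\exp_q(-v))=2\na^2 R(q)(v,\cdot)+O(|v|^3\sup|\na^4 R|)$, which cancels the $\na^3 R$ contribution and recovers $\ep_0^{1/3}$ — but the proposal as stated does not do either, and so does not establish the claimed exponent.
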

\begin{proof}

It follows from definition that 
\begin{align}
\Delta R= \sum_{a=1}^{n-1} \na^2_{aa} R+\na^2_{nn} R. \label{E411aa}
\end{align}
We shall estimate the different terms of the right-hand side of (\ref{E411aa}) separately. 

Firstly, it is clear that
\begin{align}
\na^2_{nn} R=& \frac{\na^2 R(\na f,\na f)}{|\na f|^2} \notag \\
=& \frac{1}{|\na f|} \na_n \la \na R,\na f\ra-\frac{1}{|\na f|^2} \na^2 f(\na R,\na f) \notag \\
=& \frac{1}{|\na f|} \na_n \lc \Delta R-R+2|Rc|^2\rc-\frac{1}{|\na f|^2} \lc \frac{\la \na R, \na f \ra}{2}-Rc(\na R,\na f) \rc \notag \\
=&O(f^{-\frac{1}{2}}R^{\frac3 2}(1+R))+O(f^{-1}R(1+R))=O(\ep_0 R^2). \label{E411a}
\end{align}

Secondly, we move on to estimate $\nabla^2_{aa} R$. It follows from \eqref{E409a} that
\begin{align}
\frac{n-3}{2(n-1)} \na_{ca}^2 R=\sum_{b=1}^{n-1} \lc \na_c\na_b \overset{\circ}{R}_{ab}-\na_c\na_b T_{ab}\rc+\na_c\na_n R_{an}. \label{E411ab}
\end{align}
In light of \eqref{E408c}, we have
\begin{align}
|\na_c\na_b \overset{\circ}{R}_{ab}|=O(\ep_0^{\frac1 3} R^2). \label{E411ac}
\end{align}
Recall that $T$ is defined in (\ref{eqn:OH05_2}). Direct calculation implies that
\begin{align}
|\na_c \na_b T_{ab}|=&O(|\na f|^{-2} |\na R| |\na^3 f |)+O(|\na f|^{-2} |\na^2 R| |\na^2 f |)+O(|\na f|^{-2} |Rc| |\na^2 f |^2) \notag \\
=& O(f^{-1} |\na R| |\na Rc |)+O(f^{-1} |\na^2 R| (1+R))+O(f^{-1} |Rc| (1+R)^2) \notag \\
=& O(f^{-1} R^3)+O(f^{-1} R^2 (1+R))+O(f^{-1} R (1+R)^2)=O(\ep_0^2 R^2).
\label{E411ad}
\end{align}
Here, we have used the fact that some components of $\na^2 T$ along the normal direction vanish.
Furthermore, we compute
\begin{align}
\na_c\na_n R_{an}=& \na_c \lc \frac{\la \na R_{an},\na f \ra}{|\na f|} \rc= \na_c \lc \frac{\Delta R_{an}-R_{an}+2R_{aknl}R_{kl}}{|\na f|} \rc \notag \\
=& O(|\na f|^{-1}(|\na^3 Rc|+|\na Rc|+|\na Rm||Rm|))+O(|\na f|^{-2}|\na^2 f|(R+R^2)) \notag\\
=& O(f^{-\frac 1 2} R^{\frac3 2}(1+R))+O(f^{-1}R(1+R)^2)=O(\ep_0 R^2).
\label{E411ae}
\end{align}
Combining \eqref{E411ab}, \eqref{E411ac}, \eqref{E411ad} and \eqref{E411ae}, we obtain
\begin{align}
\na^2_{ca}R=O(\ep_0^{\frac{1}{3}} R^2). \label{E411b}
\end{align}

Finally, it is clear that \eqref{E411aa} follows from the combination of \eqref{E411a} and \eqref{E411b}. The proof of the lemma is complete. 
\end{proof}

For later applications, we prove

\begin{lem} \label{L406}
With the same assumptions above, we have on $E$,
\begin{align*}
|\Delta R_{ab}|=O(\ep_0^{\frac{1}{3}} R^2).
\end{align*}
\end{lem}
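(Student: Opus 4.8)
The plan is to mirror the proof of Lemma~\ref{L405}, but to organise it through the tensor decomposition \eqref{E207f} rather than a direct frame computation, which keeps the second--fundamental--form correction terms out of the way. Recall that on $E$ we have, as ambient symmetric $2$--tensors,
\begin{align*}
Rc=\Rc+\frac{R}{n-1}\,g+T,
\end{align*}
with $T$ as in \eqref{eqn:OH05_2}. Applying the (rough) Laplacian and using $\na g=0$,
\begin{align*}
\Delta Rc=\Delta\Rc+\frac{\Delta R}{n-1}\,g+\Delta T,
\end{align*}
so it suffices to bound the three tensors on the right by $O(\ep_0^{1/3}R^2)$ and read off the $(e_a,e_b)$--component, since $|\Delta R_{ab}|\le|\Delta Rc|$ componentwise. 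The first term is handled by Lemma~\ref{L402}: $\Delta\Rc$ is a contraction of $\na^2\Rc$, so $|\Delta\Rc|=O(\ep_0^{1/3}R^2)$. The second term is exactly $|\Delta R|=O(\ep_0^{1/3}R^2)$ from Lemma~\ref{L405}. Thus the weak power $\ep_0^{1/3}$ is inherited from those two lemmas, and the remaining work is to show the third term is of lower order, say $|\Delta T|=O(\ep_0 R^2)$.

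For $\Delta T$ one expands $T$ as a sum of products of $R$, $Rc$, $dR$, $df$ and negative powers of $|\na f|^2=f-R$, and differentiates twice. The point is that each resulting term carries an explicit factor $|\na f|^{-1}=O(f^{-1/2})$ or $f^{-1}$, after which one uses $|\na f|\ge\frac12\sqrt f$, the assumption $R\le\ep_1 f$ in \eqref{E401} (so $f^{-1}R=O(\ep_1)=O(\ep_0^2)$ and $f^{-1/2}R^{1/2}=O(\ep_0)$), the shrinker identities \eqref{E202a}--\eqref{E202d}, and the $C^4$ bound $|\na^i Rm|\le BR^{i/2+1}$. Concretely, $\Delta f=\tfrac n2-R$, $\na\Delta f=-\na R$, $Rc(\na f)=\tfrac12\na R$ by \eqref{E202a} (hence $\Delta\na f=-\tfrac12\na R$ by the Bochner formula), $\na_{\na f}\na f=\tfrac12\na f-\tfrac12\na R$, and $\na_{\na f}R_{ij}=\Delta R_{ij}-R_{ij}+2R_{ikjl}R_{kl}$ from \eqref{E202d}; these identities make the a priori dangerous terms (a bare $\na^3 f$, a $|\na f|^{-4}$ blow--up, etc.) collapse into quantities controlled by $|\na^2 Rm|$, $|\na^3 Rm|$ and $f^{-1}$. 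As in the derivation of \eqref{E411ad} one also uses that certain normal components of $\na^2 T$ vanish by construction. Collecting everything yields $|\Delta T|=O(\ep_0 R^2)$, and the lemma follows.

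The main obstacle is precisely this $\Delta T$ bookkeeping: because $T$ is a genuine rational expression in $|\na f|^2$, $\Delta T$ generates many terms, and one must verify that no uncontrolled combination survives --- which is exactly where the shrinker identities and the $C^4$ curvature bound are used. An alternative more in the spirit of Lemma~\ref{L405} is to split $\Delta R_{ab}=\sum_{c=1}^{n-1}\na^2_{cc}R_{ab}+\na^2_{nn}R_{ab}$, treat $\na^2_{nn}R_{ab}$ via \eqref{E202d} exactly as $\na^2_{nn}R$ was treated via \eqref{E202c} in \eqref{E411a}, and treat the tangential trace $\sum_c\na^2_{cc}R_{ab}$ by differentiating the divergence identity \eqref{E409a} once more as in \eqref{E411ab}; this route is equally valid but forces one to carry the extra Gauss/second--fundamental--form terms ($h=O(f^{-1/2}(1+R))$ and its derivatives) arising from the non--parallel frame $\{e_a\}$, which the decomposition above sidesteps.
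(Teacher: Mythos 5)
Your proof is essentially the same as the paper's: both decompose $\Delta Rc$ via \eqref{E207f}, bound $\Delta\Rc$ and $\Delta R$ using Lemma~\ref{L402} and Lemma~\ref{L405}, and then control $\Delta T$ by a computation analogous to \eqref{E411ad} (the paper quotes $O(\ep_0^2 R^2)$ there; your $O(\ep_0 R^2)$ is weaker but still sufficient since both are dominated by $O(\ep_0^{1/3}R^2)$). Incidentally, the paper's displayed identity \eqref{E411ba} carries a sign typo ($-T_{ab}$ instead of $+T_{ab}$), which your version corrects, and the ``alternative route'' you sketch at the end is in fact closer to the paper's proof of Lemma~\ref{L405} than to its proof of the present lemma.
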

\begin{proof}
By definition,
\begin{align}
\Delta R_{ab}=\Delta \lc \overset{\circ}{R}_{ab}+\frac{R}{n-1}g_{ab}-T_{ab}. \rc \label{E411ba}
\end{align}

It is clear from \eqref{E408a} and Lemma \ref{L405} that
\begin{align*}
\Delta \lc \overset{\circ}{R}_{ab}+\frac{R}{n-1}g_{ab} \rc=O(\ep_0^{\frac{1}{3}} R^2).
\end{align*}

On the other hand, one can compute as \eqref{E411ad} that $|\Delta T_{ab}|=O(\ep_0^2 R^2)$. Now the conclusion follows from \eqref{E411ba}.
\end{proof}

Now we consider a family of diffeomorphisms $\varphi_t$ defined by
\begin{align}
\begin{dcases}
& \frac{d \varphi_t}{dt}=\frac{\na f}{|\na f|^2}, \\
&\varphi_{t_0}=\text{id} \quad \text{on} \quad \Sigma(t_0) .
\end{dcases} \label{E412}
\end{align}

By abuse of notation, we set $R=R(t)=R(\varphi_t(x))$ for $x \in \Sigma(t_0)$. Direct calculation yields that
\begin{align}
R_t=\frac{\la \na R ,\na f \ra}{|\na f|^2}=\frac{\Delta R-R+2|Rc|^2}{t-R}.
\label{E412a}
\end{align}

On the one hand, we compute on $E$,
\begin{align}
|Rc|^2=&\sum_{a,b=1}^{n-1} R_{ab}^2+2 \sum_{a=1}^{n-1}R^2_{an}+R^2_{nn} \notag\\
=& \sum_{a,b=1}^{n-1} \lc \overset{\circ}{R}_{ab}+\frac{R}{n-1}g_{ab} \rc^2+2 \sum_{a=1}^{n-1}R^2_{an}+R^2_{nn} \notag\\
=& \frac{R^2}{n-1}+O(\ep_0R^2)+O(f^{-1}R^3)+O(f^{-2}R^2(1+R)^2) \notag\\
=& \frac{R^2}{n-1}+O(\ep_0R^2),
\label{E412aa}
\end{align}
where we have used \eqref{E401ab} and \eqref{E401ac}.

On the other hand, $\Delta R=O(\ep_0^{\frac{1}{3}} R^2)$ from Lemma \ref{L405}. Combining this with \eqref{E412aa}, we obtain from \eqref{E412a} that
\begin{align}
tR_t=\lc \frac{2}{n-1}+X \rc R^2-R
\label{E413}
\end{align}
where $X$ is a function such that
\begin{align}
X=\frac{O(\ep_0^{\frac{1}{3}})t-1+\frac{2}{n-1}R}{t-R}=O(\ep_0^{\frac{1}{3}}).
\label{E413a}
\end{align}

Now we define
\begin{align}
\ep_2 \coloneqq \ep_0^{\frac{1}{4}} \label{eqn:OH05_3}
\end{align}
and analyze different cases depending on whether $R$ stays in the interval $[\frac{n-1}{2}-\ep_2, \frac{n-1}{2}+\ep_2]$. Notice that by the assumption \eqref{E401}, on $\Sigma(t_0)$, $R$ stays in this interval.

\subsection*{Cylindrical end}

Without loss of generality, we assume that $s$ is the largest number such that $E=\Sigma(t_0,s)$ satisfies \eqref{E401}. Throughout this subsection, we assume on $E$
\begin{align}
\left| R -\frac{n-1}{2} \right| < \ep_2. \label{E416ab}
\end{align}

\begin{prop} 
\label{prop:405a}
Under the assumption \eqref{E416ab}, there exists $\sigma_2=\sigma_2(n,A,B,\delta_0)>0$ such that if $\ep_0\le \sigma_2$, then $s=\infty$.
\end{prop}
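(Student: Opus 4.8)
The plan is to run the ODE analysis of \eqref{E413}--\eqref{E413a} under the bootstrap hypothesis \eqref{E416ab} and show that $R$ cannot leave a slightly larger window $[\tfrac{n-1}{2}-2\ep_2,\tfrac{n-1}{2}+2\ep_2]$ as long as the level value stays $\le (1-\delta_0)s$; combined with the definition of $s$ as the \emph{maximal} value for which \eqref{E401} holds, this forces $s=\infty$. Concretely, along the flow line $\varphi_t$ of $\na f/|\na f|^2$ starting on $\Sigma(t_0)$, equation \eqref{E413} reads $tR_t=(\tfrac{2}{n-1}+X)R^2-R$ with $|X|=O(\ep_0^{1/3})$. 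Write $u\coloneqq R-\tfrac{n-1}{2}$; then a direct computation gives $tu_t = u\big(\tfrac{2}{n-1}R + \tfrac{2}{n-1}\cdot\tfrac{n-1}{2} - 1\big) + XR^2 + (\text{lower order}) = \tfrac{2}{n-1}\,u\,R + XR^2$. On the window \eqref{E416ab} we have $R\asymp \tfrac{n-1}{2}$, so the linear coefficient $\tfrac{2}{n-1}R$ is bounded below by a positive constant $c(n)>0$, and $|XR^2|\le C(n)\ep_0^{1/3}$. Hence $t u_t \ge c(n)\,u - C(n)\ep_0^{1/3}$ when $u\ge 0$ and $t u_t \le c(n)\,u + C(n)\ep_0^{1/3}$ when $u\le 0$; i.e. $u$ satisfies a differential inequality whose equilibria are at $|u|=O(\ep_0^{1/3})$.

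The key step is the barrier/continuity argument. Since $\ep_2=\ep_0^{1/4}\gg \ep_0^{1/3}$ for $\ep_0$ small, the ``forcing'' term $C(n)\ep_0^{1/3}$ is negligible compared with $\ep_2$. Starting from $|u|\le \ep_1'\le\ep_2$ on $\Sigma(t_0)$ (the fourth and third hypotheses in \eqref{E401}, together with Proposition~\ref{P401} giving $|\Rm|=O(\ep_0 R)$ and $\ep_0\le\sigma_1$), one shows that if ever $u$ were to reach the value $2\ep_2$ at some first level $t_1$, then at that level $u>0$ and $tu_t\ge c(n)\cdot 2\ep_2 - C(n)\ep_0^{1/3}>0$, so $u$ is strictly increasing through $t_1$ — but by continuity $u$ must have been $\le 2\ep_2$ just before, and an integration of $t u_t\ge c(n)u - C(n)\ep_0^{1/3}$ from $t_0$ (where $u\le\ep_2$) using Grönwall-type estimates, together with the factor $1/t$ and $t\ge t_0\ge n\ep_1^{-1}$, in fact shows $u$ stays below $2\ep_2$ on the whole interval where \eqref{E416ab} is assumed. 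The same one-sided argument handles $u\le -2\ep_2$. Therefore on all of $E$ one has the strengthened bound $|R-\tfrac{n-1}{2}|<2\ep_2$, and in particular $\ep_1'$-type control propagates: by Lemma~\ref{L404} (scalar curvature almost constant on each level set) and Proposition~\ref{P401} (the $|\Rm|\le C\ep_0 R$ bound), the third and fourth conditions of \eqref{E401} — the ones imposed only on $\Sigma(t_0)$ — continue to hold at level $s$ with $\ep_1'$ replaced by $C\ep_0^{1/4}$, which is still $\le\eta$ for $\ep_0$ small.

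It then remains to rule out $s<\infty$. If $s<\infty$, maximality of $s$ means one of the standing conditions in \eqref{E401} fails at level $s$: either the curvature-derivative bound $|\na^i Rm|\le BR^{i/2+1}$ (which, however, is an open condition that follows on $\Sigma(t_0,s)$ from Lemmas~\ref{L402}, \ref{L405}, \ref{L406} and the smallness of $\ep_0$, so it cannot be the first to fail), or $R\le\ep_1 f$ (but on $E$ we have just shown $R<\tfrac{n-1}{2}+2\ep_2$ while $f\ge t_0\ge n\ep_1^{-1}$, so $R\le\ep_1 f$ holds with room to spare), or the constraint $(1-\delta_0)s\ge t_0$ — and this is the only one that can genuinely obstruct extension, but it only says the \emph{window of levels we control} is $[t_0,(1-\delta_0)s]$, not that geometry degenerates. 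The resolution is that $\Sigma(t_0,s)$ with these a priori estimates has uniformly bounded geometry at the scale $R^{-1/2}$ (by Theorem~\ref{thm:nonc} and \eqref{E401}), so one can push the neck region past any finite $s$: the estimates \eqref{E408a}, $|\Delta R|=O(\ep_0^{1/3}R^2)$, etc., together with the now-established $|R-\tfrac{n-1}{2}|<2\ep_2$, verify \eqref{E401} on a strictly larger $\Sigma(t_0,s')$, contradicting maximality. Hence $s=\infty$.

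The main obstacle I expect is the bootstrap bookkeeping: one must choose the thresholds ($2\ep_2$ for the window, and the dependence $\sigma_2(n,A,B,\delta_0)$) so that (i) the improved bound $|R-\tfrac{n-1}{2}|<2\ep_2$ feeds back into re-verifying \emph{all} seven conditions of \eqref{E401} on the enlarged region with the \emph{same} constants $B,\ep_1,\ep_1'$ (not merely weaker ones), and (ii) the $1/t$ weight and the lower bound $t\ge n\ep_1^{-1}$ are used correctly so that the Grönwall constant does not blow up as $t\to\infty$ — indeed one wants the linear term to dominate but the integrating factor $\exp(\int c(n)/t\,dt)=t^{c(n)}$ grows, so the estimate must be organized as "u stays in an invariant region" rather than "u decays". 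Getting the invariant-region argument clean, rather than attempting sharp decay here (decay is deferred to the asymptotics statement in case (a)), is the delicate point.
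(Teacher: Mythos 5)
Your proposal has a genuine gap: the step where you assert that the curvature-derivative bound $|\nabla^i Rm| \le BR^{\frac{i}{2}+1}$ ``is an open condition that follows on $\Sigma(t_0,s)$ from Lemmas \ref{L402}, \ref{L405}, \ref{L406} and the smallness of $\ep_0$, so it cannot be the first to fail'' is circular. Those lemmas are proved \emph{under} the standing hypotheses \eqref{E401}, which already include the derivative bound for $0 \le i \le 4$; their conclusions concern $|\nabla \Rc|$, $|\Delta R|$, $|\Delta R_{ab}|$ and the like -- tangential and specific derivative combinations -- and do not re-derive the full bound $|\nabla^i Rm| \le BR^{\frac{i}{2}+1}$ with the same constant $B$. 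There is no self-improvement available purely from ODE and elliptic estimates along the gradient flow. The paper's actual proof attacks maximality of $s$ by contradiction and compactness: it takes a sequence of shrinkers with $\ep_{0,i} \to 0$ for which the derivative bound saturates at some $q_i \in \Sigma(s_i)$, flows $q_i$ backward to $x_i$ deep inside $E_i$, shows via the weak compactness theory of \cite{LLW21} and Theorem~\ref{thm:nonc} that $(M_i, x_i, g_i)$ converges smoothly to the round cylinder, and then -- this is the crucial step your argument is missing -- invokes the pseudolocality theorem of \cite{LW20} to obtain uniform curvature bounds forward in time on $[0,\delta]$, upgrades to smooth convergence of the associated Ricci flows, and uses forward uniqueness of the bounded-curvature Ricci flow to identify the limit at time $\delta$ with $g_c(\delta)$. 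On the cylinder, $|\nabla^k Rm|/R^{\frac{k}{2}+1}$ equals $\sqrt{2/((n-1)(n-2))}$ for $k=0$ and $0$ for $k\ge 1$, both strictly below $B\ge1$, contradicting saturation. Without this compactness-plus-pseudolocality step you cannot exclude the derivative bound from being the condition that fails at $\Sigma(s)$, and nothing in your proposal provides a substitute.

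A secondary problem is the invariant-region argument for $u = R - \frac{n-1}{2}$. Rewriting \eqref{E413} gives $tu_t = \frac{2}{n-1}uR + XR^2$, so $u\approx 0$ is an \emph{unstable} equilibrium (the linear coefficient $\frac{2}{n-1}R$ is positive), and the flow pushes $u$ away from zero; a barrier at $|u|=2\ep_2$ is therefore not exit-proof, and a Gr\"onwall bound from $tu_t \ge c u - C\ep_0^{1/3}$ produces $u\lesssim (t/t_0)^{c}$, which grows rather than stays confined. This part of your proposal is, however, superfluous: the bound $|R-\frac{n-1}{2}|<\ep_2$ on all of $E$ is exactly the standing hypothesis \eqref{E416ab} of this subsection -- it is the defining condition of the cylindrical-end case, not a consequence to be bootstrapped from data on $\Sigma(t_0)$. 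Finally, the constraint $(1-\delta_0)s \ge t_0$ in \eqref{E401} never obstructs enlarging $s$, since increasing $s$ only makes it easier to satisfy; the real obstruction is precisely the two saturation alternatives at $\Sigma(s)$ that the paper's proof rules out.
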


\begin{proof}
We assume $s<\infty$ and derive a contradiction. By the definition of $s$, there exists a point $q \in \Sigma(s)$ such that at $q$ either
\begin{align*}
R=\ep_1 f \quad \text{or} \quad |\na^k Rm|=BR^{\frac{k}{2}+1}
\end{align*}
for some $0 \le k \le 4$. We claim that the first case cannot happen. Indeed, by \eqref{E416ab} that
\begin{align*}
\frac{n-1}{2}+\ep_2 \ge R(q) =\ep_1f(q) \ge \ep_1 t_0 \ge n
\end{align*}
which is impossible.

To exclude the second case, we assume there exists a sequence of Ricci shrinkers $(M^n_i, g_i, f_i) $ with the end $E_i=\Sigma(t_{0,i},s_i)$ satisfying \eqref{E401} and $\ep_{0,i} \to 0$. Moreover, there exists a $q_i \in \Sigma(s_i)$ at which
\begin{align} \label{E416xx}
|\na^k Rm_i|=BR_{i}^{\frac{k}{2}+1}
\end{align}
for a fixed $0 \le k \le 4$.

Now we fix a small number $\delta<\delta_0/2$ to be determined later and define $x_i$ to be a point such that $\psi_i^{\delta}(x_i)=q_i$, where $\psi^t$ is a family of diffeomorphisms defined in \eqref{E211}. We claim that such $x_i$ must exist and lie in $E_i$. 
Indeed, from \eqref{E211} we have for any $t \ge 0$ that
\begin{align*}
\frac{df_i(\psi_i^t(x_i))}{dt}=\frac{|\na f_i|^2(\psi^t_i(x_i))}{1-t}=\frac{f_i(\psi^t_i(x_i))-R_i(\psi^t_i(x_i))}{1-t} \le \frac{f_i(\psi^t_i(x_i))}{1-t}.
\end{align*}
Therefore, 
\begin{align}\label{E416ba}
t_i:=f_i(x_i) \ge s_i(1-\delta).
\end{align}
In particular, $x_i \in E_i$ since $(1-\delta_0)s_i \ge t_{0,i}$ by our assumption \eqref{E401}. On the other hand, by \eqref{E416ab} we have for any $0 \le t \le \delta$,
\begin{align*}
\frac{df_i(\psi_i^t(x_i))}{dt}=\frac{f_i(\psi^t_i(x_i))-R_i(\psi^t_i(x_i))}{1-t} \ge \frac{f_i(\psi^t_i(x_i))-n}{1-t}.
\end{align*}
By solving the above ODE, we obtain
\begin{align}\label{E416bb}
t_i \le (s_i-n)(1-\delta)+n= s_i(1-\delta)+n\delta .
\end{align}
\textbf{Claim.} 
\begin{align*}
(M_i, x_i, g_i) \longright{C^{\infty}-Cheeger-Gromov} (S^{n-1} \times \R, p_c,g_c).
\end{align*}

\emph{Proof of the Claim:} From \eqref{E416ba} and \eqref{E416bb}, for any $L>1$ we have $B_{g_i}(x_i,L) \subset E_i$, if $i$ is sufficiently large. Indeed, since $2\sqrt{f_i}$ is $1$-Lipschitz, we have for any $y \in B_{g_i}(x_i,L)$,
\begin{align}
|2\sqrt{f_i(y)}- 2\sqrt{t_i}|=|2\sqrt{f_i(y)}- 2\sqrt{f_i(x_i)}| \le L. \label{E418ba}
\end{align}
Since $\lim_{i \to \infty} t_i=+\infty$ by \eqref{E416ba}, we obtain
\begin{align}
\lim_{i \to \infty} \frac{f_i(y)}{t_i}=1 \label{E418c}
\end{align}
uniformly for $y \in B_{g_i}(x_i,L)$. By our assumptions \eqref{E401} and \eqref{E416ab}, the Riemannian curvature $|Rm_i|$ is uniformly bounded on $E_i$. Moreover, one can obtain the higher order estimates of $Rm_i$ on $B_{g_i}(x_i,L)$. Indeed, we take $\delta_1:=\delta_0/10$ and for any $(y,t) \in B_{g_i}(x_i,L) \times [-\delta_1,0]$, we compute
\begin{align}
|Rm_i|(y,t)=\frac{1}{1-t} |Rm_i|( \psi_i^t(y),0). \label{E418ca}
\end{align}
Similar to \eqref{E416ba}, if $i$ is sufficiently large, we obtain
\begin{align*}
f_i(y) \ge f_i(\psi_i^t(y)) \ge \frac{f_i(y)}{1+\delta_1} \ge t_{0,i}. 
\end{align*}
In other words, $\psi_i^t(y) \in E_i$. Therefore, it is clear from \eqref{E418ca} that $|Rm_i|$ is uniformly bounded on $ B_{g_i}(x_i,L) \times [-\delta_1,0]$. Now the higher order estimates of $|Rm_i|$ follow from Shi's local estimates \cite{Shi89A}.

From the above estimates of the curvature and Theorem \ref{thm:nonc}, we conclude that
\begin{align*}
(M^n_i, x_i, g_i) \longright{C^{\infty}-Cheeger-Gromov} (M^n_{\infty}, x_{\infty},g_{\infty}),
\end{align*}
where the limit is a complete smooth Riemannian manifold with bounded curvature. On the other hand, if we define $\tilde f_i:= t_i^{-\frac{1}{2}} (f_i-t_i)$, then
\begin{align}
|\na_{g_i} \tilde f_i|^2_{ g_i}=\frac{|\na_{g_i} f_i|_{g_i}^2}{ t_i}=\frac{f_i-R_i}{t_i}. \label{E419ax}
\end{align}
In addition, we obtain
\begin{align}
\text{Hess}_{g_i} \tilde f_i= t_i^{-\frac{1}{2}} \text{Hess}_{g_i}f_i=\frac{1}{2} t_i^{-\frac 1 2} g_i- t_i^{-\frac{1}{2}} Rc_i. \label{E419bx}
\end{align}
From the definition of $\tilde f_i$ and \eqref{E418ba}, we compute for any $y \in B_{g_i}(x_i,L)$ that
\begin{align}
\tilde f_i(y) =t_i^{-\frac{1}{2}}(f_i(y)-t_i) =t_i^{-\frac{1}{2}} (\sqrt{f_i(y)}-\sqrt{t_i})(\sqrt{f_i(y)}+\sqrt{t_i}) \le \frac{3}{2}L. \label{E419cx}
\end{align}

Therefore, it follows from \eqref{E419ax}, \eqref{E419bx}, \eqref{E419cx} and the standard regularity theorem for elliptic equations that $\tilde f_i$ converges smoothly to a smooth function $f_{\infty}$ on $M_{\infty}$. Moreover, we conclude from \eqref{E419bx} that
\begin{align}
\text{Hess}_{g_{\infty}} f_{\infty}=0. \label{E420x}
\end{align}

In addition, $f_{\infty}$ is not a constant since by \eqref{E416ab} and \eqref{E419ax},
\begin{align*}
|\na_{g_{\infty}} f_{\infty}|^2=1.
\end{align*}

From \eqref{E420x}, we conclude that $(M_{\infty},g_{\infty})$ is isometric to $(\Sigma_{\infty} \times \R,g'_{\infty} \times g_E )$, where $\Sigma_{\infty}$ is the level set of $f_{\infty}$. Since the second fundamental form of $\Sigma_{\infty}$ vanishes, it follows from \eqref{E406a} that the sectional curvature of $g'_{\infty}$ on $\Sigma_{\infty}$ is identically $\frac{1}{2(n-2)}$. Therefore, it is clear that $(M_{\infty}, g_{\infty})$ is isometric to $(S^{n-1} \times \R, g_c)$ and Claim is proved.

Now we fix the parameter $\delta$ small enough such that the pseudolocality Theorem \cite[Theorem 24]{LW20} can be applied. Combining with \cite[Theorem $3.1$]{CBL07}, we conclude that for any $L>0$,
\begin{align*}
|Rm_i| \le C(n)
\end{align*}
on $B_{g_i}(x_i,L) \times [0,\delta]$, if $i$ is sufficiently large. Therefore, we conclude that 
\begin{align*}
(M_i, x_i, g_i(t))_{t \in [0,\delta]} \longright{C^{\infty}-Cheeger-Gromov} (M_{\infty}, x_{\infty},g_{\infty}(t))_{t \in [0,\delta]}
\end{align*}
such that $(M_{\infty}, g_{\infty}(t))$ has unifomrly bounded curvature. From the uniqueness of the Ricci flow with bounded curvature \cite{CZ06} and the Claim, we have
\begin{align*}
(M_{\infty}, x_{\infty}, g_{\infty}(t))_{t \in [0,\delta]}=(S^{n-1} \times \R, p_c,g_c(t))_{t \in [0,\delta]}.
\end{align*}
Therefore, for $ 0 \le k \le 4$,
\begin{align*}
\lim_{i \to \infty} \frac{|\na^k Rm_i|}{R^{\frac{k}{2}+1}} (q_i,0)=\lim_{i \to \infty} \frac{|\na^k Rm_i|}{R^{\frac{k}{2}+1}} (x_i,\delta)=\frac{|\na^k Rm_{g_c}|}{R_{g_c}^{\frac{k}{2}+1}}=
\begin{cases}
\sqrt{\frac{2}{(n-1)(n-2)}} \quad &\text{if} \quad k=0,\\
0 \quad &\text{if} \quad k>0.
\end{cases}
\end{align*}
However, this contradicts \eqref{E416xx} since $B \ge 1$. In sum, we obtain a contradiction, and the proof is complete.
\end{proof}

From Proposition \ref{prop:405a}, $E$ is an end of the Ricci shrinker. Next, we show that any point in $E$ far away from $\Sigma(t_0)$ is the center of an evolving normalized $\ep$-neck.

\begin{prop} \label{prop:405b}
With the above assumptions \eqref{E401} and \eqref{E416ab}, for any $\ep>0$, there exists a constant $\eta_3=\eta_3(\ep,n,\delta_0,A,B)>0$ such that if $\ep_0\le \eta_3$, then any point $x \in \Sigma(\eta_3^{-1}t_0,\infty) \subset E$ is the center of an evolving normalized $\ep$-neck.
\end{prop}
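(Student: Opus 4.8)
The plan is to argue by contradiction through a pointed Cheeger--Gromov--Hamilton compactness argument that upgrades the time-$0$ convergence obtained inside the proof of Proposition~\ref{prop:405a} to convergence of a whole backward parabolic neighborhood. Suppose the statement fails: there are $\bar\ep>0$ and Ricci shrinkers $(M_i^n,g_i,f_i)$ satisfying \eqref{E401} and \eqref{E416ab} with $\ep_{0,i}\to 0$ (hence $t_{0,i}\ge n\ep_{1,i}^{-1}\to\infty$), together with $\eta_{3,i}\to 0$ and points $x_i\in\Sigma(\eta_{3,i}^{-1}t_{0,i},\infty)\subset E_i$ such that $(x_i,0)$ is \emph{not} the center of an evolving normalized $\bar\ep$-neck in the associated Ricci flow $(M_i,g_i(t))$. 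Set $t_i\coloneqq f_i(x_i)\ge\eta_{3,i}^{-1}t_{0,i}\to\infty$ and define $r_i$ by $R_i(x_i)=\tfrac{n-1}{2}r_i^{-2}$; by \eqref{E416ab}, $r_i\to 1$, so the normalization $|r_i-1|<\bar\ep$ holds for large $i$. Consequently it suffices to prove that, for $T\coloneqq\bar\ep^{-1}+1$ and every $L>0$,
\begin{align*}
(M_i,x_i,g_i(t))_{t\in[-T,0]}\longright{C^{\infty}-Cheeger-Gromov}(S^{n-1}\times\R,p_c,g_c(t))_{t\in[-T,0]};
\end{align*}
after the harmless rescaling by $r_i^{-2}\to 1$ this forces $(x_i,0)$ to be the center of an evolving normalized $\bar\ep$-neck for large $i$, a contradiction.

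First I would establish uniform geometry on the backward slab $B_{g_i}(x_i,L)\times[-T,0]$. On $E_i$ one has $R_i\le\tfrac{n-1}{2}+\ep_2\le n$, hence $|\na^jRm_i|\le BR_i^{j/2+1}\le Bn^{j/2+1}$ for $0\le j\le 4$. Since $2\sqrt{f_i}$ is $1$-Lipschitz and $t_i\to\infty$, $f_i\ge t_i/2$ on $B_{g_i}(x_i,L)$ for large $i$; and along the $\na f_i$-flow $\psi_i^t$ the inequality $\tfrac{d}{dt}\log f_i(\psi_i^t(y))=\tfrac{f_i-R_i}{f_i(1-t)}\le\tfrac{1}{1-t}$ integrates over $[t,0]$, $t\in[-T,0]$, to $f_i(\psi_i^t(y))\ge f_i(y)/(1-t)\ge f_i(y)/(1+T)\ge t_{0,i}$, so $\psi_i^t(y)\in E_i$. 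Therefore $|Rm_i|(y,t)=(1-t)^{-1}|Rm_i|(\psi_i^t(y),0)\le Bn^2$ on the slab (the factor $(1-t)^{-1}\le 1$ for $t\le 0$ makes the time-$0$ bound propagate backward for free), and likewise $|\na^jRm_i|$, $0\le j\le 4$, are uniformly bounded there; higher derivatives follow from Shi's local estimates \cite{Shi89A}. Combined with the $\kappa$-noncollapsing of Theorem~\ref{thm:nonc} (applied at scale $n^{-1/2}$ at $x_i$), Hamilton's compactness theorem together with a diagonal argument in $L$ yields, along a subsequence, a complete ancient limit Ricci flow $(M_i,x_i,g_i(t))_{t\le 0}\to(M_\infty,x_\infty,g_\infty(t))_{t\le 0}$ with bounded curvature on every compact time interval, and the property that all level sets of $f_i$ in $E_i$ are diffeomorphic to $S^{n-1}$ passes to the limit.

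It remains to identify $g_\infty(t)$ with the round-cylinder flow. At $t=0$ this is precisely the Claim established in the proof of Proposition~\ref{prop:405a}: using $\tilde f_i\coloneqq t_i^{-1/2}(f_i-t_i)$ one gets $|\na\tilde f_i|^2=(f_i-R_i)/t_i\to 1$, $\text{Hess}\,\tilde f_i=\tfrac12 t_i^{-1/2}g_i-t_i^{-1/2}Rc_i\to 0$ and $\tilde f_i$ uniformly bounded on $B_{g_i}(x_i,L)$, so $\tilde f_i\to f_\infty$ with $\text{Hess}_{g_\infty}f_\infty=0$, $|\na f_\infty|\equiv 1$; then $|\Rm|=O(\ep_0R)\to 0$ (Proposition~\ref{P401}), the Gauss equation, and $R_i\to\tfrac{n-1}{2}$ give $(M_\infty,g_\infty(0))\cong(S^{n-1}\times\R,g_c)$. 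The same estimates propagate along the flow: with $b_i(t)\coloneqq f_i(\psi_i^t(x_i))$ and $\tilde f_i(\cdot,t)\coloneqq(1-t)t_i^{-1/2}\bigl((\psi_i^t)^*f_i-b_i(t)\bigr)$, the bounded geometry on the slab gives uniform $C^k$ bounds and elliptic regularity gives $C^\infty_{\mathrm{loc}}$ convergence $\tilde f_i(\cdot,t)\to f_\infty(\cdot,t)$, with $\text{Hess}_{g_i(t)}\tilde f_i(\cdot,t)=(1-t)t_i^{-1/2}(\psi_i^t)^*\bigl(\tfrac12 g_i-Rc_i\bigr)\to 0$ and $|\na_{g_i(t)}\tilde f_i(\cdot,t)|^2_{g_i(t)}=(1-t)t_i^{-1}(\psi_i^t)^*(f_i-R_i)\to 1$, while the traceless level-set curvature of $g_i(t)$ satisfies $(1-t)^{-1}(\psi_i^t)^*|\Rm|=O(\ep_0R)\to 0$. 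Hence for every $t\le 0$ the limit $(M_\infty,g_\infty(t))$ splits isometrically as $(\Sigma_\infty(t)\times\R,\,g'_\infty(t)\oplus g_E)$, with $\Sigma_\infty(t)$ a closed hypersurface of vanishing second fundamental form and, by the Gauss equation, vanishing traceless curvature operator; being diffeomorphic to $S^{n-1}$ it is a round sphere. Since a product Ricci flow with a flat factor is the product of the flows on the factors, $g'_\infty(t)$ is a Ricci flow through round metrics on $S^{n-1}$ with $g'_\infty(0)=2(n-2)g_{S^{n-1}}$, hence (by the scalar ODE for the radius) $g'_\infty(t)=2(n-2)(1-t)g_{S^{n-1}}$, so $g_\infty(t)=g_c(t)$ for all $t\le 0$; this gives the desired contradiction (alternatively one may invoke backward uniqueness for the Ricci flow).

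I expect the main obstacle to be this last identification of the limiting ancient flow, rather than the compactness itself. The backward curvature control is essentially free, the only genuine point being the ODE check that the $\na f_i$-flow lines issuing from a neighborhood of $x_i$ remain inside $E_i=\Sigma(t_{0,i},\infty)$, where \eqref{E401} holds and hence all earlier estimates of Section~3 apply. The new ingredient is recognizing that both the isometric $\R$-splitting ($\text{Hess}\,f_\infty=0$) and the roundness of the cross-section ($\Rm\to 0$) persist to \emph{every} negative time, by pulling back the corresponding estimates along the associated flow and then reducing the cross-section evolution to a scalar ODE; care is also needed to check that a diagonal argument produces a global limit potential on $M_\infty$ and that $C^\infty_{\mathrm{loc}}$ spacetime convergence, after the rescaling by $r_i^{-2}\to 1$, indeed delivers $C^{[\bar\ep^{-1}]}$-$\bar\ep$-closeness of the specified parabolic neighborhood.
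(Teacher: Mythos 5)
Your argument is correct and follows the same overall contradiction-plus-backward-compactness scheme as the paper's proof: you verify that the $\nabla f_i$-flow lines issuing from $B_{g_i}(x_i,L)$ stay in $E_i$ on $[-T,0]$, deduce uniform bounds on $|\nabla^j Rm_i|$ on the backward slabs, extract a complete ancient limit flow $(M_\infty, g_\infty(t))_{t\le 0}$ with bounded curvature via non-collapsing and Hamilton compactness, and identify $g_\infty(0)$ with the round cylinder exactly as in the Claim inside the proof of Proposition~\ref{prop:405a}. Where you genuinely diverge from the paper is the final identification of $g_\infty(t)$ for $t<0$. The paper at this point simply quotes Kotschwar's backward uniqueness theorem \cite[Theorem~1.1]{Kot10}: since $g_\infty(0)=g_c$ and both $g_\infty$ and $g_c(\cdot)$ are complete Ricci flows with bounded curvature, they coincide for all $t \le 0$. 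You instead push the shrinker splitting estimates backward along the flow, showing $\text{Hess}_{g_i(t)}\,\tilde f_i(\cdot,t)\to 0$, $|\nabla\tilde f_i(\cdot,t)|\to 1$ and $\Rm\to 0$ at every negative time, so that each time slice of $g_\infty$ is a priori a round cylinder of some radius $\rho(t)$, and then pin down $\rho(t)$ via forward uniqueness of the product flow. Both are valid; the paper's version is shorter because it treats backward uniqueness as a black box, while yours is more self-contained and makes the shrinker's self-similarity do the work explicitly. One small point you should make explicit: when you reduce the cross-section evolution to a scalar ODE you implicitly use that the $\R$-direction of the splitting is consistent across time. This can be justified either by noting that the de Rham splitting of $S^{n-1}\times\R$ (with $n\ge 3$, so $S^{n-1}$ irreducible) is unique, or more directly by running Chen--Zhu forward uniqueness from each fixed $t_0<0$: the product flow with initial data $g_\infty(t_0)$ must equal $g_\infty$ on $[t_0,0]$, and comparing its radius at time $0$ with that of $g_c$ forces $\rho(t_0)^2=2(n-2)(1-t_0)$ regardless of direction considerations.
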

\begin{proof}
We prove it by contradiction. Suppose for fixed $\ep$ the conclusion does not hold. Then there exists a sequence of Ricci shrinkers $(M^n_i,p_i,g_i,f_i)$ satisfying all the assumptions with $\ep_{0,i} \to 0$ and $x_i \in \Sigma(i^2 t_{0,i},\infty)$ which is not the center of an evolving normalized $\ep$-neck. By the identical proof as that of Proposition \ref{prop:405a}, we conclude that
\begin{align*}
(M_i, x_i, g_i) \longright{C^{\infty}-Cheeger-Gromov} (S^{n-1} \times \R, p_c,g_c).
\end{align*}
For a fixed number $L>0$, we consider any spacetime point $(y,\bar t) \in B_{g_i}(x_i,L) \times [-L,0]$. From
\begin{align*}
0\le \frac{df_i(\psi_i^t(y))}{dt}=\frac{f_i(\psi^t_i(y))-R_i(\psi^t_i(y))}{1-t} \le \frac{f_i(\psi^t_i(y))}{1-t},
\end{align*}
we have
\begin{align}\label{E420aa}
f_i(y) \ge f_i(\psi_i^{\bar t}(y)) \ge \frac{f_i(y)}{1-\bar t} \ge \frac{f_i(y)}{1+L}.
\end{align}

On the other hand, if we set $f_i(x_i)=t_i$, then similar to \eqref{E418c}, 
\begin{align} \label{E420ab}
\lim_{i \to \infty} \frac{f_i(y)}{t_i}=1
\end{align}
for $y \in B_{g_i}(x_i,L)$ uniformly. In particular, \eqref{E420aa} and \eqref{E420ab} imply $\psi_i^{\bar t}(y) \in E_i$ since $t_i \ge i^2 t_{0,i}$. Now we compute 
\begin{align*}
|Rm_i|(y,\bar t)=\frac{|Rm_i|(\psi_i^{\bar t}(y),0)}{1-\bar t}
\end{align*}
which is uniformly bounded for $(y,\bar t) \in B_{g_i}(x_i,L) \times [-L,0]$. Combining this fact with Theorem \ref{thm:nonc}, we conclude that
\begin{align*}
(M_i, x_i, g_i(t))_{t \le 0} \longright{C^{\infty}-Cheeger-Gromov} (M^n_{\infty}, x_{\infty},g_{\infty}(t))_{t \le 0}
\end{align*}
such that $(M^n_{\infty} ,g_{\infty}(t))_{t \le 0}$ has uniformly bounded curvature. By the same proof as in Proposition \ref{prop:405a}, $(M^n_{\infty}, g_{\infty}(0))=(S^{n-1} \times \R,g_c)$. Therefore, it follows from the backward uniqueness of the Ricci flow \cite[Theorem $1.1$]{Kot10} that $g_{\infty}(t)= g_c(t)$ for any $t \le 0$. However, this implies that $x_i$ is the center of an evolving normalized $\ep$-neck for large $i$, which is a contradiction.
\end{proof}

Next, we show that $E$ is asymptotic to the cylinder at some fixed rate depending only on $n$.

From Proposition \ref{prop:405b}, there exists a large number $t'>t_0$ such that on $E':=\Sigma(t',\infty)$ we have
\begin{align*}
|\na^i Rm| \le C(n)R^{\frac{i}{2}+1}
\end{align*}
for any $0 \le i\le 4$ and Theorem \ref{thm:nonc} holds for some $\kappa$ depending only on $n$. Therefore, it follows from \eqref{E407b} that on $E'$ we have
\begin{align*}
\ds G \ge \la \na G, \na f\ra-c_2 f^{-1}(1+R) |\la \na G,\na f \ra|-2\la \na G,\na \log R \ra_{\Sigma}+c_1 RG-c_2 f^{-1}R(1+R), 
\end{align*}
where $G=\frac{|Rm|^2}{R^2}-\frac{2}{(n-1)(n-2)}$ and $c_1,c_2$ depend only on $n$. If we set $k(t)=\sup_{\Sigma(t)} G$, then for any $t \ge t'$, there exists a maximum point $z \in \Sigma(t)$ such that at $z$,
\begin{align}
0 \ge \la \na G, \na f\ra-c_4 t^{-1} |\la \na G,\na f \ra|+c_3 R k-c_4 Rt^{-1}, \label{Ex418c}
\end{align}
for $c_3$ and $c_4$ depending on $n$, where we have used the fact that $R$ is uniformly bounded on $E$.

If $k(t) \ge 2c_4 c_3^{-1} t^{-1}$, then by \eqref{Ex418c} we have
\begin{align}
0 \ge \la \na G, \na f\ra-c_3 t^{-1} |\la \na G,\na f \ra|+\frac{c_4}{2}R k. \label{Ex418d}
\end{align}

It is clear from \eqref{Ex418d} that $ \la \na G, \na f\ra \le 0$ at $z$. Therefore, we obtain
\begin{align}
k'(t) \le \frac{\la \na G, \na f\ra}{|\na f|^2}(z) \le - \frac{c_4 R k(t)}{2|\na f|^2(1-c_3t^{-1})}(z) \le -\tau_1 k(t) t^{-1}
\label{Ex418e}
\end{align}
for $\tau_1=\tau_1(n)\in (0,1)$, where the last inequality follows from \eqref{E416ab}. If we consider $\max\{k(t), 2c_4 c_3^{-1} t^{-1} \}$ and \eqref{Ex418e}, it is easy to see that for any $t \ge t'$,
\begin{align}
k(t)=O( t^{-\tau_1}).
\label{Ex418f}
\end{align}

Combining \eqref{Ex418f} with \eqref{E407aa}, we conclude that
\begin{align}
\frac{|\Rm|}{R}=O(t^{-\tau_2}),
\label{Ex419}
\end{align}
on $\Sigma(t)$, where $\tau_2=\tau_1/2$.

Next, we prove

\begin{prop} \label{prop:406}
With the above assumptions, for any sequence $x_i \in E$ with $x_i \to \infty$, we have
\begin{align*}
(M, x_i, g) \longright{C^{\infty}-Cheeger-Gromov} \lc S^{n-1} \times \R,p_c,g_c \rc.
\end{align*}
\end{prop}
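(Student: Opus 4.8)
The plan is to run a Cheeger--Gromov compactness argument along the sequence $x_i$, parallel to the Claim inside the proof of Proposition~\ref{prop:405a}, and then to upgrade the outcome using the decay estimate \eqref{Ex419} and the ODE \eqref{E413} in order to identify the limit exactly. Set $t_i:=f(x_i)$; since $x_i\to\infty$ and $f\sim d^2(p,\cdot)/4$ we have $t_i\to\infty$, and since $2\sqrt f$ is $1$-Lipschitz, for any fixed $L>0$ one has $f=(1+o(1))t_i$ uniformly on $B_g(x_i,L)$, so $B_g(x_i,L)\subset E=\Sigma(t_0,\infty)$ for all large $i$ (recall $s=\infty$ by Proposition~\ref{prop:405a}). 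On $E$ the curvature is uniformly bounded by \eqref{E401} and \eqref{E416ab}; pulling back by the self-similar flow $\psi^t$ and invoking Shi's estimates exactly as in Proposition~\ref{prop:405a} gives uniform bounds on all covariant derivatives of $Rm$ on $B_g(x_i,L)$, while Theorem~\ref{thm:nonc} provides a uniform non-collapsing bound. Hence a subsequence converges in the $C^\infty$-Cheeger--Gromov topology to a complete pointed manifold $(M_\infty,x_\infty,g_\infty)$ with bounded geometry.

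Next I would recover the product structure. Put $\tilde f_i:=t_i^{-1/2}(f_i-t_i)$; by \eqref{E419ax}, \eqref{E419bx}, \eqref{E419cx} and elliptic regularity, $\tilde f_i$ converges smoothly to a function $f_\infty$ on $M_\infty$ with $\mathrm{Hess}_{g_\infty}f_\infty=0$ and $|\nabla_{g_\infty}f_\infty|^2=\lim(f_i-R_i)/t_i=1$ (using that $R$ is bounded and $f_i/t_i\to1$). Thus $(M_\infty,g_\infty)$ splits isometrically as $N\times\R$, with $N=\{f_\infty=0\}$ totally geodesic. The decay estimate \eqref{Ex419} gives $|\Rm|/R=O(t^{-\tau_2})\to0$ uniformly on $B_g(x_i,L)$ (as $f\sim t_i\to\infty$ there), so $\Rm\equiv0$ in the limit on the level sets of $f_\infty$; since these are totally geodesic, $N$ has constant sectional curvature $R_\infty/((n-1)(n-2))$, where $R_\infty$ is the limiting scalar curvature. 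Moreover $f$ has no critical point in $E$, so the normalized gradient flow $\varphi_t$ of \eqref{E412} identifies each $\Sigma(t)\subset E$ with $\Sigma(t_0)$, which is diffeomorphic to $S^{n-1}$, and $\mathrm{diam}\,\Sigma(t)=O(1)$ by Lemma~\ref{L404}; hence $N$ is a compact manifold diffeomorphic to $S^{n-1}$, i.e.\ a round sphere.

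It remains to show $R_\infty=\frac{n-1}{2}$, which is the step that genuinely goes beyond Proposition~\ref{prop:405a} (there $\ep_0\to0$ along the sequence, whereas here $\ep_0$ is fixed and \eqref{E416ab} only confines $R$ to an interval around $\frac{n-1}{2}$). The idea is that once \eqref{Ex419} is in hand, Lemmas~\ref{L402}--\ref{L405} can be rerun with $\ep_0$ replaced by the decaying quantity $t^{-\tau_2}$, so that the function $X$ in \eqref{E413a} satisfies $X=O(t^{-\tau_3})$ for some $\tau_3=\tau_3(n)>0$; along a gradient flow line of $f$, \eqref{E413} then reads $tR_t=\tfrac{2}{n-1}R^2-R+O(t^{-\tau_3})$. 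Writing $u=R-\frac{n-1}{2}$, dividing by $t^2$ and integrating from $t$ to $\infty$, the boundedness of $u$ forces $u/t\to0$, and then $u(t)\to u_\infty$ with $u_\infty\bigl(1+\tfrac{2}{n-1}u_\infty\bigr)=0$; since $R>0$ is bounded away from $0$ on $E$, we get $u_\infty=0$. Hence $N$ has sectional curvature $\frac{1}{2(n-2)}$, so $(M_\infty,g_\infty)$ is isometric to $(S^{n-1}\times\R,g_c)$, and the base point is normalized to $p_c$ by translating in the $\R$-factor. Since every subsequential limit is pointed-isometric to $(S^{n-1}\times\R,p_c,g_c)$, the full sequence converges. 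The main obstacle is precisely this last step: converting the decay of $\Rm$ into genuine decay of $R-\frac{n-1}{2}$ through the ODE, so that the cross-section of the limit is exactly the standard $S^{n-1}$ rather than a sphere of a nearby radius.
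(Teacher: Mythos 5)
Your proof takes a genuinely different route from the paper at the crucial step of pinning down the radius of the limiting cylinder. After the common steps (Cheeger--Gromov limit, splitting $M_\infty = \Sigma_\infty \times \R$ via $\tilde f_i$, identifying $\Sigma_\infty$ as a round sphere of \emph{some} radius from \eqref{Ex419}), the paper simply invokes \cite[Proposition 5.2]{Naber}, which furnishes a shrinking-soliton structure on the limit, and the shrinker equation on a cylinder then forces the radius to be $\sqrt{2(n-2)}$. You instead propose to bootstrap the decay \eqref{Ex419} back through Lemmas~\ref{L402}--\ref{L405} (replacing the fixed $\ep_0$ by the decaying quantity $t^{-\tau_2}$, which is legitimate since on any bounded ball $f$ varies by $O(1)$ and the constants in those lemmas only need to be locally uniform), so that the error term in \eqref{E413} becomes $X=O(t^{-\tau_3})$, and then to solve the ODE to get $R\to\frac{n-1}{2}$. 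That is a more self-contained argument (no external soliton-limit theorem) at the cost of rerunning several interpolation estimates. One wrinkle in the execution: for $u = R - \frac{n-1}{2}$ the ``divide by $t^2$ and integrate by parts'' manipulation produces $-u(t)/t = \tfrac{2}{n-1}\int_t^\infty u^2/s^2\,ds + O(t^{-1-\tau_3})$, whose right-hand side has a definite sign and so only gives the one-sided bound $u(t)\le O(t^{-\tau_3})$ directly, not $u\to0$; your appeal to ``$R$ bounded away from $0$'' is also misplaced, since $|u|<\ep_2$ already excludes $u_\infty=-\tfrac{n-1}{2}$. The clean way to close this is to linearize by passing to $v = 1/R$: then $tv_t = v - \tfrac{2}{n-1} - X$, and $w := v - \tfrac{2}{n-1}$ satisfies $\frac{d}{dt}(w/t) = -X/t^2$, so boundedness of $w$ forces $w(t) = t\int_t^\infty X(s)s^{-2}\,ds = O(t^{-\tau_3})$ along each gradient flow line, uniformly by Lemma~\ref{L404}. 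With that repair, your argument is correct and your identification of the crux --- upgrading the decay of $\Rm$ into exact decay of $R - \tfrac{n-1}{2}$ --- is exactly the point the paper outsources to Naber's result.
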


\begin{proof}
For any sequence $x_i \to \infty$ with $t_i=f(x_i)$, we conclude that
\begin{align*}
(M_i, x_i, g_i) \longright{C^{\infty}-Cheeger-Gromov} (M_{\infty}, x_{\infty},g_{\infty}),
\end{align*}
where the limit is a complete smooth Riemannian manifold. Here, the smooth convergence follows from the same proof as that of the Claim in Proposition \ref{prop:405a}. Moreover, by considering $f_i=t_i^{-\frac{1}{2}}(f_i-t_i)$, we can show as in Proposition \ref{prop:405a} that the limit $(M_{\infty},g_{\infty})$ splits isometrically as $(\Sigma_{\infty} \times \R,g'_{\infty} \times g_E )$. From \eqref{Ex419}, it is clear that $(\Sigma_{\infty},g'_{\infty})$ is isometric to $(S^{n-1},g_S)$ up to scaling, where $g_S$ is the standard metric on $S^{n-1}$ such that the scalar curvature is $\frac{n-1}{2}$. In particular, if the limit $(M_{\infty},g_{\infty})$ is a Ricci shrinker, then $(\Sigma_{\infty},g'_{\infty})$ is exactly isometric to $(S^{n-1},g_S)$.

Therefore, we conclude from \cite[Proposition 5.2]{Naber} that for any $x_i \to \infty$
\begin{align*}
(M_i, x_i, g_i) \longright{C^{\infty}-Cheeger-Gromov} (S^{n-1}\times \R,p_c,g_c)
\end{align*}
and the proof is complete.
\end{proof}

If we denote the induced metric on $\Sigma(t)$ by $\tilde g(t)$, then it follows from \eqref{Ex419}, Proposition \ref{prop:406} and the same proof as in \cite[Theorem $4.1$, Page 919-922]{MW19} that
\begin{align}
\tilde g(t)=g_S+O(t^{-\tau}), \label{E422x}
\end{align}
where $\tau=\tau(n)>0$. Similar estimates also hold for the higher derivatives of $\tilde g(t)$. Now we define a map
\begin{align*}
\Psi: S^{n-1} \times [t_0,\infty) \longrightarrow E
\end{align*}
by $\Psi(z,t)=\varphi_t(z)$. Here $\varphi_t$ is defined in \eqref{E412} and we have identified $S^{n-1}$ with $\Sigma(t_0)$ by a diffeomorphism.

Therefore, it follows from \eqref{E422x} that on $S^{n-1} \times \{t\}$,
\begin{align*}
|\Psi^*g-g_c|=O(t^{-\tau}). 
\end{align*}
If we consider the estimates of the second fundamental form, then the estimates for all higher derivatives can also be derived. More precisely, we have on $S^{n-1} \times \{t\}$ that
\begin{align*}
|\na^k_{g_c}(\Psi^*g)|=O(t^{-\tau_k})
\end{align*}
for some $\tau_k=\tau_k(n)>0$. In other words, the end $E$ is smoothly asymptotic to $S^{n-1} \times \R$.

\begin{rem}
In \emph{\cite[Theorem $1.6$]{MW19}}, Munteanu and Wang have obtained a similar result with an extra assumption that the Ricci shrinker considered has uniformly bounded curvature. This assumption plays an important role in \emph{\cite[Theorem $3.1$]{MW19}} for getting the higher-order estimates $|\na^k Rm| \le B_k R^{\frac{k}{2}+1}$ for any $k \ge 0$. Notice that we drop the uniformly bounded curvature condition by applying the pseudolocality theorem (cf. \emph{\cite[Corollary 10.6]{LW20}, ~\cite[Theorem 10.1]{Pe1},~\cite[Theorem 1.2 ]{BWang20}}).
\end{rem}

\subsection*{Conical end}

In this subsection, we assume that \eqref{E416ab} is violated in one direction and hence there exists a $t_1 \in ( t_0,\infty)$ such that 
\begin{align} \label{E450a}
\left| R -\frac{n-1}{2} \right| < \ep_2
\end{align}
on $\Sigma(t_0,t_1)$ and
\begin{align} \label{E451a}
\min_{\Sigma(t_1)} R = \frac{n-1}{2}-\ep_2.
\end{align}
We define
\begin{align}
E^1 \coloneqq \Sigma(t_1,s) \subset E. \label{eqn:OH05_4}
\end{align}

\begin{prop} \label{prop:402}
With the assumptions above, there exist positive constants $\hat c_1$ and $\hat c_2$ depending only on $n,A$ and $B$ such that if $\ep_0$ is sufficiently small, then
\begin{align}
\frac{1}{\frac{2}{n-1}+\hat c_2 \ep_2 t_1^{-1}t} \le R(x) \le \frac{1}{\frac{2}{n-1}+\hat c_1 \ep_2 t_1^{-1}t} \label{E417}
\end{align}
for any $x \in E^1$, where $t=f(x)$.
\end{prop}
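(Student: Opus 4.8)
The plan is to analyze the ODE \eqref{E413} governing the scalar curvature along the gradient flow $\varphi_t$ of $f$ started from $\Sigma(t_0)$. Recall that on $E$ we have $tR_t = \left(\frac{2}{n-1}+X\right)R^2 - R$ with $|X| = O(\ep_0^{1/3})$, and by Lemma \ref{L404} the scalar curvature is almost constant on each level set, so it suffices to track one flow line. The starting value at $\Sigma(t_1)$ satisfies $R \le \frac{n-1}{2}-\ep_2$ at the infimum point by \eqref{E451a}, and by Lemma \ref{L404} the whole level set $\Sigma(t_1)$ has $R \le \frac{n-1}{2}-\ep_2 + O(\ep_0^{1/2})\cdot\frac{n-1}{2}$; since $\ep_2 = \ep_0^{1/4} \gg \ep_0^{1/2}$, we retain $R < \frac{n-1}{2}-\frac{1}{2}\ep_2$ on all of $\Sigma(t_1)$ once $\ep_0$ is small. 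The key structural point is that $\frac{n-1}{2}$ is the unstable equilibrium of the approximate ODE $tR_t \approx \frac{2}{n-1}R^2 - R$: once $R$ dips below it by a definite amount, $R_t < 0$ and $R$ is pushed further down, decaying quadratically.

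The cleanest way to run this is via the substitution $u = R^{-1}$, so that \eqref{E413} becomes
\begin{align*}
t u_t = 1 - \left(\tfrac{2}{n-1}+X\right)u,
\end{align*}
a linear (in $u$) ODE with coefficient $X = O(\ep_0^{1/3})$. I would bound $X$ uniformly on $E^1$ using \eqref{E413a} together with \eqref{E450a}–\eqref{E451a} (which keep $R$ bounded on $E^1$, hence $X = O(\ep_0^{1/3})$ throughout, not just near $\Sigma(t_1)$ — one needs that $R$ stays bounded above, which follows because $u_t \ge 0$ whenever $u$ is small, preventing blow-up). Then a Gronwall/comparison argument gives two-sided bounds: since the initial value $u(t_1) = R^{-1}(x)$ at $x \in \Sigma(t_1)$ satisfies $u(t_1) \ge \frac{2}{n-1} + c\,\ep_2 t_1^{-1} \cdot t_1$ for a suitable reading — more precisely $u(t_1) - \frac{2}{n-1} \ge c\ep_2$ for some $c = c(n)>0$ — and the equation drives $u$ toward the slowly-varying value $(\frac{2}{n-1}+X)^{-1}$ while the homogeneous solutions scale like $t^{-(\frac{2}{n-1}+X)}$... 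Actually, the stated form of \eqref{E417} suggests tracking $v := u - \frac{2}{n-1}$, which satisfies $t v_t = -\left(\frac{2}{n-1}+X\right)v - X\cdot\frac{2}{n-1}$; the homogeneous part decays like a negative power of $t$ but the initial gap $v(t_1) \asymp \ep_2$ combined with the linear-in-$t$ growth one reads off from comparing with $v = \hat c\, \ep_2 t_1^{-1} t$ gives exactly the asserted envelope. I would verify that $w_\pm(t) := \frac{2}{n-1} + \hat c_\pm \ep_2 t_1^{-1} t$ are respectively a super- and sub-solution of the $u$-equation on $E^1$ for appropriate choices of $\hat c_1 < \hat c_2$, using that at $t = t_1$ we have $w_\pm(t_1) = \frac{2}{n-1}+\hat c_\pm\ep_2$ which brackets $u(t_1)$, and that $t w_t = \hat c\,\ep_2 t_1^{-1} t$ compared against $1 - (\frac{2}{n-1}+X)w = -(\frac{2}{n-1}+X)(w - \frac{2}{n-1}) - \frac{2X}{n-1}$; the dominant balance forces $\hat c$ into a bounded range depending only on $n, A, B$.

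I would also need to confirm that the flow $\varphi_t$ is defined for all the relevant $t$, i.e. that flowing from $\Sigma(t_1)$ stays inside $E^1 = \Sigma(t_1,s)$ until $f$ reaches $s$ — this follows from \eqref{E412}, which shows $f$ is strictly increasing along $\varphi_t$ with $\frac{df}{dt} = 1$, so $\varphi_t$ maps $\Sigma(t_1)$ to $\Sigma(t_1+\cdot)$ and every point of $E^1$ lies on such a flow line. Combined with the level-set-almost-constancy from Lemma \ref{L404}, the pointwise bounds \eqref{E417} at $f(x) = t$ follow from the flow-line bounds up to an error $O(\ep_0^{1/2})$ which is absorbed into the constants $\hat c_1, \hat c_2$ (this is why the inequalities are not sharp and the constants are not pinned down).

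The main obstacle I anticipate is not the ODE comparison itself but ensuring the bound $|X| = O(\ep_0^{1/3})$ holds uniformly on all of $E^1$ rather than just where $R$ is already known to be near $\frac{n-1}{2}$: the estimate \eqref{E413a} requires control of $\Delta R$ (Lemma \ref{L405}) and of $R$ itself, and Lemma \ref{L405}'s hypotheses are the running assumptions \eqref{E401} which do hold on all of $E$. So the real subtlety is circular-looking: to know \eqref{E417} one wants $R$ bounded, and to bound $R$ one wants \eqref{E417}. I would break the circle with a continuity/bootstrap argument — define $t^* \le s$ maximal such that \eqref{E417} (with slightly enlarged constants) holds on $\Sigma(t_1, t^*)$, note this region is nonempty near $t_1$, derive on it the improved ODE estimate, and show the improved estimate propagates the bound strictly past $t^*$ unless $t^* = s$, forcing $t^* = s$.
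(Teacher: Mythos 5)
Your high‑level strategy — follow the flow lines of $\nabla f$, use Lemma~\ref{L404} to read off the initial value on $\Sigma(t_1)$, and run an ODE comparison for~\eqref{E413} — is exactly the paper's, which simply solves the two one‑sided ODE bounds~\eqref{E417b} with the initial data~\eqref{E417a}. However, there is a genuine algebraic error in your reduction. With $u=R^{-1}$, the correct form of~\eqref{E413} is
\begin{align*}
t u_t = u - \Bigl(\tfrac{2}{n-1}+X\Bigr),
\end{align*}
not $t u_t = 1 - \bigl(\tfrac{2}{n-1}+X\bigr)u$ as you wrote. The correct equation (with $a:=\tfrac{2}{n-1}+X$ treated as approximately constant) integrates to $u(t)=a+Ct$, i.e.\ \emph{linear} in $t$, which is precisely the envelope asserted in~\eqref{E417}; your version has the wrong fixed point (it gives $u=a^{-1}\approx\tfrac{n-1}{2}$, which is $R$ rather than $R^{-1}$) and power‑law homogeneous solutions $\sim t^{-a}$. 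This is the source of the puzzle you then flag ("the homogeneous part decays like a negative power of $t$ but..."); it is an artifact of the wrong equation, not a feature to be reconciled. Consequently the $v$‑equation you write down, and the sub/super‑solution check you sketch for $w_{\pm}(t)=\tfrac{2}{n-1}+\hat c_{\pm}\ep_2 t_1^{-1}t$, cannot close with your version of the ODE, whereas with the correct form the verification is immediate: $t w_t = \hat c\,\ep_2 t_1^{-1}t = w - \bigl(\tfrac{2}{n-1}+\hat c\,\ep_2\bigr)$, so choosing $\hat c_1<\hat c_2$ to bracket both the initial gap $u(t_1)-\tfrac{2}{n-1}\asymp\ep_2$ (after absorbing the $O(\ep_0^{1/3})$ from $X$, which is $\ll\ep_2=\ep_0^{1/4}$) and the coefficient $\tfrac{2}{n-1}\pm c_1\ep_0^{1/3}$ gives the two‑sided comparison.

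The circularity concern you raise is also unfounded. The estimate $|X|=O(\ep_0^{1/3})$ from~\eqref{E413a} does not require knowing~\eqref{E417} in advance: the running hypotheses~\eqref{E401} already give $R\le\ep_1 f$ on all of $E=\Sigma(t_0,s)$, so $t-R\ge (1-\ep_1)t$ and the numerator of~\eqref{E413a} is $O(\ep_0^{1/3})t+1+O(\ep_1)t$. Since $\ep_1\le\ep_0^2$ and $t\ge t_0\ge n\ep_1^{-1}$, each term is $O(\ep_0^{1/3})$ times the denominator, uniformly on $E$ and hence on $E^1$. No bootstrap or continuity argument is needed; the a priori bound from~\eqref{E401}, together with Lemma~\ref{L405} (which is proved under~\eqref{E401} alone), does the job directly.
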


\begin{proof}
In the proof, all constants $c_i,i=1,2,\cdots$ depend only on $n,A$ and $B$.

By our assumption, there exists $x \in \Sigma(t_1)$ such that 
\begin{align*}
R(x)=\frac{n-1}{2}-\ep_2.
\end{align*}
From Lemma \ref{L404}, we may assume on $\Sigma(t_1)$,
\begin{align}
\frac{n-1}{2}-\frac{3}{2}\ep_2 \le R \le \frac{n-1}{2}-\frac{1}{2}\ep_2. \label{E417a}
\end{align}
In addition, it follows from \eqref{E413} and \eqref{E413a} that
\begin{align}
\lc \frac{2}{n-1}-c_1 \ep_0^{\frac{1}{3}}\rc R^2-R \le tR_t \le \lc \frac{2}{n-1}+c_1 \ep_0^{\frac{1}{3}}\rc R^2-R \label{E417b}
\end{align}
for some constant $c_1>0$. By considering the ODE with the given initial values in \eqref{E417a}, we obtain
\begin{align}
\frac{1}{\frac{2}{n-1}-c_1\ep_0^{\frac{1}{3}}+c_3 \ep_2 t_1^{-1}t} \le R(t) \le \frac{1}{\frac{2}{n-1}+c_1\ep_0^{\frac{1}{3}}+c_2 \ep_2 t_1^{-1}t} \label{E417c}
\end{align}
for any $t \ge t_1$. Here, the constants $c_2$ and $c_3$ are determined by
\begin{align}
\frac{1}{\frac{2}{n-1}+c_1\ep_0^{\frac{1}{3}}+c_2 \ep_2}=\frac{n-1}{2}-\frac{1}{2}\ep_2 \label{E417d}
\end{align}
and
\begin{align}
\frac{1}{\frac{2}{n-1}-c_1\ep_0^{\frac{1}{3}}+c_3 \ep_2}=\frac{n-1}{2}-\frac{3}{2}\ep_2. \label{E417e}
\end{align}

It is clear from \eqref{E417c}, \eqref{E417d}, \eqref{E417e} and the definition of $\ep_2$ that \eqref{E417} holds if $\ep_0$ is sufficiently small.
\end{proof}

Similar to Proposition \eqref{prop:405a}, we have the following result. 

\begin{prop} \label{prop:406a}
Under the assumptions \eqref{E401}, \eqref{E450a} and \eqref{E451a}, there exists $\sigma_3=\sigma_3(n,\sigma_0,A,B)>0$ such that if $\ep_0 \le \sigma_3$, then $s=\infty$.
\end{prop}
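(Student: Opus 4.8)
The plan is to follow the scheme of the proof of Proposition~\ref{prop:405a}: assume $s<\infty$, pick a point of $\Sigma(s)$ where \eqref{E401} just fails, and derive a contradiction by a blow-up argument. The essential new feature is that along a conical end the scalar curvature is no longer comparable to $\frac{n-1}{2}$ but decays like $t^{-1}$ (Proposition~\ref{prop:402}), so the blow-up must be performed \emph{after rescaling} by the scalar curvature at the chosen point, rather than using the fixed-scale argument of Proposition~\ref{prop:405a}.

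By maximality of $s$ and the assumptions \eqref{E401}, if $s<\infty$ there is a point $q\in\Sigma(s)$ at which either $R(q)=\ep_1 f(q)$ or $|\na^k Rm|(q)=BR(q)^{\frac{k}{2}+1}$ for some $0\le k\le4$. The first alternative is excluded as in Proposition~\ref{prop:405a}, using Proposition~\ref{prop:402} in place of \eqref{E416ab}: since $q\in\Sigma(s)\subset E^1$ we have $R(q)\le\lc\frac{2}{n-1}+\hat c_1\ep_2 t_1^{-1}s\rc^{-1}\le\frac{n-1}{2}$, whereas $\ep_1 f(q)=\ep_1 s\ge\ep_1 t_0\ge n$ by \eqref{E401}, which is impossible. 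It remains to rule out the second alternative.

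Suppose there is a sequence of Ricci shrinkers $(M_i^n,g_i,f_i)$ satisfying \eqref{E401}, \eqref{E450a}, \eqref{E451a} with $\ep_{0,i}\to0$, $s_i<\infty$, and points $q_i\in\Sigma(s_i)$ at which $|\na^k Rm_i|=BR_i(q_i)^{\frac{k}{2}+1}$ for a fixed $0\le k\le4$. Put $\mu_i:=R_i(q_i)$, rescale to $\hat g_i:=\mu_i g_i$, and consider the normalized potentials $\hat f_i:=\sqrt{\mu_i/s_i}\,(f_i-s_i)$. By Lemma~\ref{L404} the level set $\Sigma(s_i)$ has $\hat g_i$-diameter $O(1)$ and almost constant scalar curvature; Proposition~\ref{P401} and Lemma~\ref{L402} force $|\Rm|/R$ and $|\Rc|/R$ to $0$ and the second fundamental form of $\Sigma(s_i)$ to $0$ in $\hat g_i$; the scale-invariant non-collapsing Theorem~\ref{thm:nonc} applies; and the rescaled curvature bounds $|\hat\na^i Rm_i|_{\hat g_i}\le B\hat R_i^{\frac i2+1}=O(1)$ for $0\le i\le4$ hold on bounded $\hat g_i$-balls around $q_i$. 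These extend to $C^\infty$ bounds: the neck estimates of this section together with Proposition~\ref{prop:402} give $|Rm_{g_i}|=O(R_{g_i})$ on $E^1$ and show that flowing $\psi_i^\sigma$ backward from $q_i$ keeps the relevant points inside $E^1$, where $R\sim t^{-1}$, so the rescaled Ricci flow has bounded curvature on a fixed-size backward parabolic neighbourhood of $q_i$, and Shi's estimates (or the shrinker's own elliptic regularity) then provide uniform higher-order bounds at the central time. Exactly as in Proposition~\ref{prop:405a}, $(M_i,q_i,\hat g_i)$ converges in the $C^\infty$-Cheeger--Gromov sense to a complete limit which, since $|\hat\na\hat f_i|_{\hat g_i}\to1$ and $\text{Hess}_{\hat g_i}\hat f_i\to0$ with $\hat f_i$ bounded on bounded balls, splits off a line; the cross section is a compact, simply connected space form of constant curvature with scalar curvature $1$ at the base point, hence a round $S^{n-1}$. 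Thus the limit is a round cylinder.

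The conclusion now follows from the scale invariance of the ratio $|\na^k Rm|/R^{\frac k2+1}$: at $q_i$ it equals $B$ for every $i$ by the defining relation, while its limit is the corresponding ratio on the round cylinder, namely $\sqrt{\tfrac{2}{(n-1)(n-2)}}$ if $k=0$ and $0$ if $k\ge1$; in either case this contradicts $B\ge1$. Hence $s=\infty$. The crux of the argument is the blow-up step: unlike in \cite[Theorem~1.6]{MW19}, the curvature is not assumed globally bounded, so the higher-order curvature control and the smooth convergence must be extracted from the estimates of this section and Proposition~\ref{prop:402} (and, if one prefers, from the pseudolocality theorem of \cite{LW20}) rather than assumed; keeping careful track of the decay rate $R\sim t^{-1}$ — so that after rescaling $\hat f_i$ genuinely has unit gradient and vanishing Hessian and the cross sections neither collapse nor blow up — is the delicate point.
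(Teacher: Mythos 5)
Your overall scheme — blow up at the first point $q\in\Sigma(s)$ where \eqref{E401} fails, rescale by $R(q)$, show the limit is a round cylinder, and derive a contradiction from the scale-invariant ratios $|\na^k Rm|/R^{k/2+1}$ — is the same as the paper's, and your exclusion of the alternative $R(q)=\ep_1 f(q)$ via Proposition~\ref{prop:402} is correct. However, the step in which you extract $C^{\infty}$-Cheeger-Gromov convergence at $(q_i,0)$ has a genuine gap, and it is the one place where the paper has to work.

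The issue is that $q_i$ sits precisely on $\partial E_i=\Sigma(s_i)$, and the rescaled ball $B_{g_i}(q_i,Lr_i)$ necessarily contains points $y$ with $f_i(y)>s_i$, i.e.\ outside $E_i$, where \emph{none} of the curvature assumptions in \eqref{E401} are available. Thus the claim that ``$|\hat\na^i Rm_i|_{\hat g_i}\le B\hat R_i^{i/2+1}=O(1)$ for $0\le i\le 4$ holds on bounded $\hat g_i$-balls around $q_i$'' is not justified, and a fortiori the rescaled Ricci flow does \emph{not} have a priori bounded curvature on a fixed-size backward parabolic neighbourhood $B_{\tilde g_i}(q_i,L)\times[-L,0]$: for $y$ with $f_i(y)>s_i$ and $t$ near $0$, the point $\psi_i^t(y)$ is still outside $E_i$. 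Consequently neither Shi's local estimates (which require curvature control on a full backward cylinder terminating at $t=0$) nor elliptic regularity for $\Delta_f Rm$ (which would require either interior control on a full ball, not a half-ball, or boundary control on $\Sigma(s_i)$ which is unavailable) can be applied directly at $(q_i,0)$. Note that for the contradiction at $k=4$ one genuinely needs $|\na^4 Rm_i|(q_i,0)$ to converge to the cylinder value $0$; uniform $C^4$ curvature bounds give only boundedness, not convergence, of this top-order quantity, so the higher-order control is not a cosmetic issue.

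The paper resolves this by \emph{only} proving smooth convergence of the rescaled flows on the half-open interval $[-\delta,0)$ (using that backward flow by a fixed rescaled amount $\delta$ pushes the base point and its neighbourhood into the interior of $E^1_i$, where \eqref{E401} applies), identifying the time $-\delta$ slice as a cylinder (Claim~4), and then invoking the pseudolocality theorem \cite[Theorem~24]{LW20} together with Chen's estimate \cite[Theorem~3.1]{CBL07} to push a uniform curvature bound forward across $t=0$, after which \cite{CZ06} identifies the limiting flow as the shrinking cylinder flow on the closed interval $[-\delta,0]$. Only then can the ratios be evaluated at $(q_i,0)$. In your write-up pseudolocality is mentioned only in a parenthetical ``if one prefers''; in fact it is the essential ingredient, and without it the argument as written does not close.
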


\begin{proof}
We prove it by contradiction. Similar to Proposition \eqref{prop:405a}, we assume there exists a sequence of Ricci shrinkers $(M_i, g_i, f_i) $ with the end $E_i=\Sigma(t_{0,i},s_i)$ satisfying \eqref{E401}, \eqref{E450a} and \eqref{E451a} with $\ep_{0,i} \to 0$.
Moreover, there exists a point $q_i \in \Sigma(s_i)$ at which
\begin{align*}
|\na^k Rm_i|= BR_{g_i}^{\frac{k}{2}+1}
\end{align*}
for a fixed $0 \le k \le 4$.

\textbf{Claim 1.} 
\begin{align}\label{E452b}
\liminf_{i \to \infty} \ep_{2,i}\frac{s_i}{t_{1,i}}>0.
\end{align}
\emph{Proof of Claim 1:} Otherwise, it follows from the assumption \eqref{E450a} and \eqref{E417} that
\begin{align*}
\limsup_{i \to \infty} \max_{\Sigma(t_{0,i},s_i)} \left| R_i-\frac{n-1}{2} \right|=0.
\end{align*}
Therefore, the same proof of Proposition \ref{prop:405a} yields a contradiction.

Now we set $r^{-2}_i=\frac{2}{n-1}R_i(q_i)$, then it follows from \eqref{E417} that 
\begin{align*}
r_i=\sqrt{ 1+\frac{n-1}{2}\sigma_i s_i} \ge 1
\end{align*}
where 
\begin{align*}
\hat c_1 \ep_{2,i} t_{1,i}^{-1} \le \sigma_i \le \hat c_2 \ep_{2,i} t_{1,i}^{-1}.
\end{align*}

Next we fix a constant $\delta=\delta(n) \in (0,1/20)$ determined later and prove the following result.

\textbf{Claim 2.} There exists a constant $c_1=c_1(n)>0$ such that for any $t \in [-2\delta r_i^2,0]$,
\begin{align}\label{E452c}
\frac{s_i}{1+|t|} \le f_i(\psi_i^t(q_i)) \le \frac{s_i-c_1}{1+|t|}+c_1
\end{align}
\emph{Proof of Claim 2:} For any $t \in [-2\delta r_i^2,0]$ we have
\begin{align*}
\frac{df_i(\psi_i^t(q_i))}{dt}=\frac{|\na_{g_i} f_i|_{g_i}^2}{1-t}(\psi_i^t(q_i))=\frac{f_i-R_i}{1-t}(\psi_i^t(q_i))
\end{align*}
Since $0 \le R_i(\psi_i^t(q_i)) \le c_1$, it is clear that
\begin{align*}
\frac{f_i(\psi_i^t(q_i))-c_1}{1-t} \le \frac{df_i(\psi_i^t(q_i))}{dt} \le \frac{f_i(\psi_i^t(q_i))}{1-t}.
\end{align*}
By solving the above ODE, we immediately obtain \eqref{E452c}.

Now we set $q_i^t:=\psi_i^t(q_i)$ and $s_i^t:=f_i(\psi_i^t(q_i))$. Moreover, for any fixed number $L>1$ and $t \in [-2\delta r_i^2,0]$, we consider $y \in B_{g_i}\lc q^t_i,L(1+|t|)^{-\frac1 2}r_i \rc$ and obtain
\begin{align*}
|2\sqrt{f_i(y)}- 2\sqrt{s^t_i}|=|2\sqrt{f_i(y)}- 2\sqrt{f_i(q^t_i)}| \le \frac{Lr_i}{\sqrt{1+|t|}}=L \sqrt{\frac{ 1+\frac{n-1}{2}\sigma_i s_i}{1+|t|}}. 
\end{align*}
From \eqref{E452c} and the fact that $\lim_{i \to \infty} \sigma_i=0$, we obtain
\begin{align}
\lim_{i \to \infty} \frac{f_i(y)}{s^t_i}=1 \label{E453c}
\end{align}
uniformly for $(y,t) \in B_{g_i}\lc q^t_i,L(1+|t|)^{-\frac1 2}r_i \rc \times [-2\delta r_i^2,0]$. 

In particular, for $(y,t) \in B_{g_i}\lc q^t_i,L(1+|t|)^{-\frac1 2}r_i \rc \times [-2\delta r_i^2,-L^{-1}r_i^2]$, we obtain
\begin{align}
s_i^t \ge \frac{s_i}{1+|t|} \ge \frac{s_i}{1+2\delta r_i^2} \ge t_{1,i}, \label{E453d}
\end{align}
where we have used \eqref{E452b} for the last inequality. Moreover, since $r_i \ge 1$, we have
\begin{align*}
s_i^t \le \frac{s_i-c_1}{1+|t|}+c_1 \le \frac{s_i-c_1}{1+L^{-1} r_i^2}+c_1 \le \frac{s_i-c_1}{1+L^{-1}}+c_1.
\end{align*}
Therefore, it follows from \eqref{E453c} that
\begin{align*}
t_{1,i} \le f_i(y) \le s_i 
\end{align*}
for sufficiently large $i$, which implies that $y \in E^1_i$.

\textbf{Claim 3.} If we set $\tilde g_i(t)=r_i^{-2}g_i(r_i^2 t)$, then
\begin{align*}
(M_i, q_i, \tilde g_i(t))_{t\in [-\delta,0)} \longright{C^{\infty}-Cheeger-Gromov} (M^n_{\infty}, q_{\infty},g_{\infty}(t))_{t\in [-\delta,0)}
\end{align*}
for a complete smooth Ricci flow $(M^n_{\infty}, g_{\infty}(t))_{t\in [-\delta,0)}$ with uniformly bounded curvature.

\emph{Proof of Claim 3:} For any $L>1$ and $(y,t) \in B_{g_i}\lc q^t_i,L(1+|t|)^{-\frac1 2}r_i \rc \times [-2\delta r_i^2,-L^{-1}r_i^2]$ it follows from \eqref{E417} and \eqref{E453c} that
\begin{align*}
R_i(y) \le \frac{1}{\frac{2}{n-1}+\hat c_1\frac{\ep_{2,i}}{t_{1,i}} f_i(y) } \le \frac{2}{\frac{2}{n-1}+\hat c_1\frac{\ep_{2,i}}{t_{1,i}} s_i^t } .
\end{align*}
Therefore, we obtain
\begin{align}
\frac{1}{1+|t|} R_i(y) \le \frac{s_i^t}{s_i}R_i(y) \le \frac{2}{s_i} \frac{1}{\frac{2}{n-1} (s^t_i)^{-1}+\hat c_1\frac{\ep_{2,i}}{t_{1,i}} } \le \frac{K}{\frac{2}{n-1} +\hat c_2\frac{\ep_{2,i}}{t_{1,i}}s_i } \le K R_i(q_i)=\frac{n-1}{2}Kr_i^{-2}. \label{E454ab}
\end{align}
Here, $K:=2(1+\hat c_2/\hat c_1)$ and the third inequality follows from \eqref{E452b}.

From \eqref{E454ab}, it is clear that $R_{\tilde g_i}(y,t)$ is uniformly bounded for any $t \in [-2\delta, -L^{-1}]$ and $y \in B_{\tilde g_i(t)}(q_i,L)$. Since $|Rm_{\tilde g_i}|$ is controlled by $R_{\tilde g_i}$ by our assumption \eqref{E401}, we immediately conclude from the Shi's local estimates and Theorem \ref{thm:nonc} that
\begin{align*}
(M_i, q_i, \tilde g_i(t))_{t\in [-\delta,0)} \longright{C^{\infty}-Cheeger-Gromov} (M^n_{\infty}, q_{\infty},g_{\infty}(t))_{t\in [-\delta,0)}
\end{align*}
for a complete smooth Ricci flow $(M^n_{\infty}, g_{\infty}(t))_{t\in [-\delta,0)}$ with uniformly bounded curvature. 

We have

\textbf{Claim 4.} There exists a constant $\lambda \ge K^{-1}$ such that
\begin{align*}
(M_{\infty}, g_{\infty}(-\delta))=(S^{n-1} \times \R, \lambda g_c),
\end{align*}
where $K$ is the same constant in \eqref{E454ab}.

\emph{Proof of Claim 4:} We set 
\begin{align*}
\tilde f_i:= r_i^{-1} t_i^{-\frac{1}{2}} (F_i(-\delta r_i^2)-t_i),
\end{align*}
where $F_i(t):=(1-t)f_i(t)$ and $t_i:=F_i(-\delta r_i^2)(q_i)$. From a direct calculation, 
\begin{align}
|\na_{\tilde g_i(-\delta)} \tilde f_i|^2_{\tilde g_i(-\delta)}=\frac{|\na_{g_i(-\delta r_i^2)} F_i(-\delta r_i^2)|_{g_i(-\delta r_i^2)}^2}{ t_i}=\frac{F_i(-\delta r_i^2)-(1+\delta r_i^2)^2R_{g_i(-\delta r_i^2)}}{t_i}, \label{E455a}
\end{align}
where we have used the identity $(1-t)^2R_i+|\na F_i|^2=F_i$. From \eqref{E454ab}, we obtain $(1+\delta r_i^2)R_{g_i(-\delta r_i^2)}$ is uniformly bounded on $B_{\tilde g_i(-\delta)}(q_i,L)$. Moreover, 
\begin{align}
t_i=(1+\delta r_i^2) f_i(q^{-\delta r_i^2}_i)=(1+\delta r_i^2) s_i^{-\delta r_i^2}, \label{E455b}
\end{align}

Therefore, it follows from \eqref{E453d} that
\begin{align}
\lim_{i \to \infty} \frac{(1+\delta r_i^2)^2R_{g_i(-\delta r_i^2)}}{t_i}=0 \label{E455c}
\end{align}
uniformly on $B_{\tilde g_i(-\delta)}(q_i,L)$. On the other hand, it follows from \eqref{E455a} that $F_i(-\delta r_i^2)$ is $r_i/2$-Lipschitz with respect to $\tilde g_i(-\delta)$. Therefore, for any $y \in B_{\tilde g_i(-\delta)}(q_i,L)$,
\begin{align}
\tilde f_i(y) =& r_i^{-1} t_i^{-\frac{1}{2}}(F_i(y,-\delta r_i^2)-t_i) \notag\\
=& r_i^{-1} t_i^{-\frac{1}{2}} (\sqrt{F_i(y,-\delta r_i^2)}-\sqrt{t_i})(\sqrt{F_i(y,-\delta r_i^2)}+\sqrt{t_i}) \notag\\
\le & r_i^{-1} t_i^{-\frac{1}{2}}\frac{L}{2}r_i 3\sqrt{t_i} \le \frac{3}{2}L, \label{E455d}
\end{align}
where we have used the fact that $\lim_{i \to \infty} t_i^{-1}r_i^{2}=0$ from \eqref{E455b}. Then it is clear that 
\begin{align*}
\lim_{i \to \infty} \frac{F_i(-\delta r_i^2)}{t_i}=\lim_{i \to \infty} \frac{\tilde f_i r_i t_i^{\frac 1 2}+t_i}{t_i}=1 
\end{align*}
uniformly on $B_{\tilde g_i(-\delta)}(q_i,L)$.
In addition, we compute
\begin{align}
&\text{Hess}_{\tilde g_i(-\delta)} \tilde f_i \notag\\ 
=& r_i^{-1} t_i^{-\frac{1}{2}} \text{Hess}_{g_i(-\delta r_i^2)} F_i \notag\\
=&\frac{1}{2} r^{-1}_it_i^{-\frac 1 2} g_i(-\delta r_i^2)-(1+\delta r_i^2) r_i^{-1} t_i^{-\frac{1}{2}} Rc_{g_i(-\delta r_i^2)} \notag\\
=&\frac{1}{2} r_i t_i^{-\frac 1 2}\tilde g_i(-\delta)-(1+\delta r_i^2) r_i^{-1} t_i^{-\frac{1}{2}} Rc_{\tilde g_i(-\delta)}. \label{E455f}
\end{align}
Therefore, it follows from \eqref{E455d}, \eqref{E455f} and the standard regularity theorem for elliptic equations that $\tilde f_i$ converges smoothly to a smooth function $f_{\infty}$ on $M_{\infty}$ and
\begin{align}
\text{Hess}_{g_{\infty}} f_{\infty}=0, \label{E455g}
\end{align}
since $\displaystyle \lim_{i \to \infty} t_i^{-1}r_i^{2}=0$. Furthermore, it follows from the combination of \eqref{E455a}, \eqref{E455b} and \eqref{E455c} that 
\begin{align}
|\na_{g_{\infty}} f_{\infty}|^2=1.\label{E455ff}
\end{align}
Therefore, $f_{\infty}$ is not a constant. 
From \eqref{E455g} and \eqref{E455ff}, we conclude that $(M_{\infty},g_{\infty})$ is isometric to $(\Sigma_{\infty} \times \R,g'_{\infty} \times g_E )$, where $\Sigma_{\infty}$ is the level set of $f_{\infty}$. 
It follows immediately from Lemma \ref{L404} that $(M_{\infty}, g_{\infty}) =(S^{n-1} \times \R, \lambda g_c)$ for some constant $\lambda >0$. Notice that $(M_{\infty}, g_{\infty})$ cannot be flat $\R^n$ since by \eqref{E417} and \eqref{E452c} that $R_i(q_i^{-\delta})/R_i(q_i) \ge C>0$ uniformly. From the estimate \eqref{E454ab}, it is clear that $\lambda \ge K^{-1}$.

Now we fix the parameter $\delta=\delta(n)>0$ small enough such that the pseudolocality Theorem \cite[Theorem 24]{LW20} can be applied. Combining with \cite[Theorem $3.1$]{CBL07}, we conclude that for any $L>0$,
\begin{align*}
|Rm_{\tilde g_i}| \le C(n)
\end{align*}
on $B_{\tilde g_i}(q_i,L) \times [-\delta,0]$, if $i$ is sufficiently large. Therefore, we conclude that 
\begin{align*}
(M_i, q_i, \tilde g_i(t))_{t \in [-\delta,0]} \longright{C^{\infty}-Cheeger-Gromov} (S^{n-1} \times \R, p_c, g_c( t))_{t \in [-\delta,0]}
\end{align*}
since $R_{\tilde g_i}(q_i,0)=\frac{n-1}{2}$. Therefore, we can derive the contradiction as in the proof of Proposition~\ref{prop:405a}. 
\end{proof}

Proposition \ref{prop:406a} implies that $E$ is an end of the Ricci shrinker. Moreover, it follows from \eqref{E417} and \eqref{E401} that $|Rm|$ is quadratically decaying. 
It is well-known (e.g., \cite{KW15}) that the tangent cone at infinity concerning this end is a metric cone over a smooth cross-section.

Next, we show that any point in $E$ far away from $\Sigma(t_0)$ is the center of an evolving $\ep$-neck. We only sketch the proof as it is similar to the proofs of Proposition \ref{prop:405b} and Proposition \ref{prop:406a}.

\begin{prop} \label{prop:405c}
Under the assumptions \eqref{E401}, \eqref{E450a} and \eqref{E451a}, for any $\ep>0$, there exists a constant $\eta_4=\eta_4(\ep,n,\delta_0,A,B)>0$ such that if $\ep_0\le \eta_4$, then any point $x \in \Sigma(\eta_4^{-1} t_0,\infty) \subset E^1$ is the center of an evolving $\ep$-neck.
\end{prop}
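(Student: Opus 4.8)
I would argue by contradiction, combining the blow-up scheme of Proposition~\ref{prop:405b} with the rescaling used in Proposition~\ref{prop:406a}. Suppose the assertion fails for some fixed $\ep>0$. Then there is a sequence of Ricci shrinkers $(M^n_i,p_i,g_i,f_i)$ satisfying \eqref{E401}, \eqref{E450a} and \eqref{E451a} with $\ep_{0,i}\to 0$, together with points $x_i\in\Sigma(i^2 t_{0,i},\infty)\subset E^1_i$ that are not the center of an evolving $\ep$-neck. By Proposition~\ref{prop:406a} we may assume $s_i=\infty$, so $E^1_i=\Sigma(t_{1,i},\infty)$ is a genuine end, and by Proposition~\ref{prop:402} the scalar curvature obeys the quadratic decay \eqref{E417} on $E^1_i$; in particular $R_i\le\frac{n-1}{2}$ there and, by \eqref{E401}, $|Rm_i|\le BR_i$. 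Put $r_i^{-2}:=\frac{2}{n-1}R_i(x_i)$ (so $r_i\ge 1$) and consider the rescaled Ricci flows $\tilde g_i(t):=r_i^{-2}g_i(r_i^2 t)$, normalized so that $R_{\tilde g_i}(x_i,0)=\frac{n-1}{2}$.

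The first task is to obtain uniform curvature bounds on each fixed rescaled parabolic neighborhood. As in Claim~2 and Claim~3 of the proof of Proposition~\ref{prop:406a}, I would follow the potential function along the diffeomorphisms $\psi_i^t$ of \eqref{E211}: using $s_i=\infty$, for $(y,t)\in B_{\tilde g_i(0)}(x_i,L)\times[-L,0]$ the image $\psi_i^{r_i^2 t}(y)$ stays in $E^1_i$ for $i$ large, and $f_i(\psi_i^{r_i^2 t}(y))$ is comparable to $f_i(x_i)/(1-r_i^2 t)$. Inserting this into \eqref{E417} gives $R_{\tilde g_i}=O\big((1+|t|)^{-1}\big)$, hence a uniform bound on $B_{\tilde g_i(0)}(x_i,L)\times[-L,0]$, and by \eqref{E401} the full curvature tensor is controlled the same way. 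Shi's local estimates \cite{Shi89A} and the no-local-collapsing Theorem~\ref{thm:nonc} then give $C^{\infty}$-Cheeger–Gromov convergence $(M_i,x_i,\tilde g_i(t))_{t\le 0}\to(M^n_{\infty},x_{\infty},g_{\infty}(t))_{t\le 0}$ to a complete Ricci flow of bounded curvature on $(-\infty,0]$.

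It remains to identify the limit as $g_c(t)$ and conclude. Arguing as in Claim~4 of Proposition~\ref{prop:406a} with the suitably rescaled and normalized potential function $\tilde f_i$ built from $F_i(t)=(1-t)f_i(t)$, and using \eqref{E417} to absorb the error terms, $\tilde f_i$ converges to a function $f_{\infty}$ on $M_{\infty}$ with $\text{Hess}_{g_{\infty}}f_{\infty}=0$ and $|\nabla_{g_{\infty}}f_{\infty}|=1$; thus $(M_{\infty},g_{\infty}(0))$ splits isometrically as $(\Sigma_{\infty}\times\R,\,g'_{\infty}\times g_E)$. Since the second fundamental form of $\Sigma_{\infty}$ vanishes and $\ep_{0,i}\to 0$, Proposition~\ref{P401} (equivalently Lemma~\ref{L404}) forces $(\Sigma_{\infty},g'_{\infty})$ to be a round sphere, and the normalization $R_{\tilde g_i}(x_i,0)=\frac{n-1}{2}$ pins it down so that $g_{\infty}(0)=g_c$. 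Kotschwar's backward uniqueness theorem \cite[Theorem~1.1]{Kot10} upgrades this to $g_{\infty}(t)=g_c(t)$ for all $t\le 0$, so for $i$ large the rescaled parabolic neighborhood of $(x_i,0)$ is $\ep$-close to $(S^{n-1}\times\R,g_c(t))$; that is, $x_i$ is the center of an evolving $\ep$-neck, a contradiction.

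The only genuinely new point compared with Proposition~\ref{prop:405b} and Proposition~\ref{prop:406a} — and the step I expect to be the main obstacle — is the curvature bound in the second paragraph: because $r_i^2$ may diverge, one must control $\psi_i^t$ over a backward time span of order $r_i^2$ and exploit the sharp two-sided estimate \eqref{E417} to keep the image points inside $E^1_i$ and the rescaled curvature uniformly bounded up to and including $t=0$. Once these a priori bounds are secured, the splitting of the limit and the backward-uniqueness argument are routine adaptations of the two proofs already given.
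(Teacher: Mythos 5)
Your proposal is correct and follows essentially the same route as the paper's proof: argue by contradiction, rescale by $r_i^{-2}$ with $R_i(x_i)=\frac{n-1}{2}r_i^{-2}$, track the image of $B_{g_i}(x_i,Lr_i)$ under $\psi_i^t$ over a backward time span of order $r_i^2$, use the two-sided bound \eqref{E417} to keep these images inside $E^1_i$ with uniformly bounded rescaled curvature, split the limit via the rescaled potential function, and then apply Kotschwar's backward uniqueness. The only cosmetic difference is that the paper first disposes of the case $\liminf_i\ep_{2,i}t_i/t_{1,i}=0$ (which reduces to Proposition~\ref{prop:405b}) before running the blow-up; your argument absorbs that case without isolating it, which is fine since the estimates you invoke are uniform in both regimes.
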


\begin{proof}
We prove it by contradiction. Suppose for fixed $\ep,n$ and $A$ the conclusion does not hold. Then there exists a sequence of Ricci shrinkers $(M^n_i,p_i,g_i,f_i)$ satisfying all the assumptions and $x_i \in \Sigma(i^2 t_{0,i},\infty)$ which is not the center of an evolving $\ep$-neck. 
Now we set $t_i=f_i(x_i)$. We claim that
\begin{align}\label{E457aaa}
\liminf_{i \to \infty} \ep_{2,i}\frac{t_i}{t_{1,i}}>0.
\end{align}
Otherwise, it follows from the assumption \eqref{E450a} and \eqref{E417} that
\begin{align*}
\limsup_{i \to \infty} \max_{\Sigma(t_{0,i},t_i)} \left| R_i-\frac{n-1}{2} \right|=0.
\end{align*}
Therefore, the same proof of Proposition \eqref{prop:405b} shows that $x_i$ is the center of an evolving normalized $\ep$-neck for sufficiently large $i$.

By the same proof as in that of Proposition \ref{prop:406a} we have
\begin{align}
(M_i, x_i, \tilde g_i) \longright{C^{\infty}-Cheeger-Gromov} \lc S^{n-1} \times \R,p_c,g_c \rc \label{E457a}
\end{align}
where $\tilde g_i=r_i^{-2} g_i$ and $R_i(x_i)=\frac{n-1}{2}r_i^{-2}$. For a fixed number $L>1$ and $y \in B_{g_i}(x_i,Lr_i)$, one can prove as \eqref{E453c} that
\begin{align*}
\lim_{i \to \infty} \frac{f_i(y)}{t_i}=1
\end{align*}
uniformly for $y$, where $t_i=f_i(x_i)$. If we set $\psi_i^t(y)=y^t$, then we can prove as \eqref{E452c} that
\begin{align}
\frac{9}{10}\frac{t_i}{1+|t|} \le f_i(y^t) \le \frac{11}{10}\frac{t_i}{1+|t|} \label{E457c}
\end{align}
for any $(y,t) \in B_{g_i}(x_i,Lr_i^2) \times [-Lr_i^2,0]$. Since
\begin{align*}
r_i=\sqrt{ 1+\frac{n-1}{2}\sigma_i t_i} 
\end{align*}
for some 
\begin{align*}
\hat c_1 \ep_{2,i} t_{1,i}^{-1} \le \sigma_i \le \hat c_2 \ep_{2,i} t_{1,i}^{-1},
\end{align*}
it is clear from \eqref{E457aaa} and \eqref{E457c} that
\begin{align*}
f_i(y^t)\ge \frac{9}{10}\frac{t_i}{1+Lr_i^2}= \frac{9}{10}\frac{t_i}{1+L(1+\frac{n-2}{2}\sigma_i t_i)} \ge t_{1,i}.
\end{align*}
In particular, $y^t \in E^1_i$. Therefore, we can apply \eqref{E417} and prove exactly as \eqref{E454ab} that
\begin{align*}
\frac{1}{1+|t|}R_i(y^t) \le \frac{f_i(y^t)}{f_i(y)} R_i(y^t) \le \frac{2}{t_i} \frac{1}{\frac{2}{n-1} u_i^{-1}+\hat c_1\frac{\ep_{2,i}}{t_{1,i}} } \le \frac{K}{\frac{2}{n-1} +\hat c_2\frac{\ep_{2,i}}{t_{1,i}}t_i } \le K r_i^{-2}
\end{align*}
for $(y,t) \in B_{g_i}(x_i,Lr_i^2) \times [-Lr_i^2,0]$, where $u_i=f_i(y^t)$. Therefore, we conclude that for $\tilde g_i(t):=r_i^{-2}g_i(r_i^2t)$, $R_{\tilde g_i}(y,t)$ is uniformly bounded on $B_{\tilde g_i(0)}(y,L) \times [-L,0]$ since 
\begin{align*}
R_{\tilde g_i}(y,t)=\frac{r_i^2}{1+|t|}R_i(y^t) \le K.
\end{align*}

Then it is clear as before that 
\begin{align*}
(M_i, x_i, \tilde g_i(t))_{t \le 0} \longright{C^{\infty}-Cheeger-Gromov} (M^n_{\infty}, x_{\infty}, g_{\infty}(t))_{t \le 0}
\end{align*}
for a complete smooth Ricci flow solution $(M_{\infty}, g_{\infty}(t))_{t \le 0}$ with uniformly bounded curvature. 
From \eqref{E457a} and the backward uniqueness of the Ricci flow \cite[Theorem $1.1$]{Kot10}, we conclude that 
\begin{align*}
(M_{\infty}, g_{\infty}(t))_{t \le 0}=(S^{n-1} \times \R ,g_c(t))_{t \le 0}. 
\end{align*}
However, this implies that $x_i$ is the center of an evolving $\ep$-neck for large $i$, which is a contradiction.
\end{proof}

\subsection*{Cap region}

In this subsection, we assume that \eqref{E416ab} is violated in the other direction and hence there exists a $t_2 \in (t_0,\infty)$ such that on $\Sigma(t_0,t_2)$
\begin{align} \label{E460a}
\left| R -\frac{n-1}{2} \right| < \ep_2
\end{align}
and
\begin{align} \label{E460b}
\max_{\Sigma(t_2)} R = \frac{n-1}{2}+\ep_2.
\end{align}

Now we set $E^2=\Sigma(t_2,s) \subset E$ and prove the following result.

\begin{prop} \label{prop:460}
With the assumptions above, there exists a positive constants $\hat c_3,\hat c_4$ depending only on $n,A$ and $B$ such that 
\begin{align}
\frac{1}{\frac{2}{n-1}-\hat c_3 \ep_2 t_2^{-1}t} \le R(x) \le \frac{1}{\frac{2}{n-1}-\hat c_4 \ep_2 t_2^{-1}t} \label{E461}
\end{align}
for any $x \in E^2$, where $t=f(x)$. Moreover, $R(\psi^t(x))$ is increasing for $t \ge 0$ as long as $\psi^t(x)$ stays in $E^2$.
\end{prop}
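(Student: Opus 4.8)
The plan is to follow the proof of Proposition~\ref{prop:402} essentially verbatim, the sole change being that the detectable deviation of $R$ on the boundary level set is now \emph{above} $\frac{n-1}{2}$ rather than below, which flips the sign of the $\ep_2$-correction in the conclusion. Throughout, all constants $c_i$ depend only on $n,A,B$. First I would record the initial data on $\Sigma(t_2)$: combining \eqref{E460b} with the oscillation estimate of Lemma~\ref{L404} — whose relative error $O(\ep_0^{1/2})$ is of strictly smaller order than $\ep_2=\ep_0^{1/4}$ — one has
\begin{align*}
\frac{n-1}{2}+\tfrac12\ep_2 \;\le\; R \;\le\; \frac{n-1}{2}+\tfrac32\ep_2 \qquad \text{on }\Sigma(t_2).
\end{align*}

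Next I would invoke the ODE from \eqref{E413}--\eqref{E413a}: since $X=O(\ep_0^{1/3})$, on all of $E$ (hence on $E^2$) the scalar curvature along $\varphi_t$ satisfies
\begin{align*}
\Big(\tfrac{2}{n-1}-c_1\ep_0^{1/3}\Big)R^2-R \;\le\; tR_t \;\le\; \Big(\tfrac{2}{n-1}+c_1\ep_0^{1/3}\Big)R^2-R .
\end{align*}
The substitution $u=1/R$ converts each bounding ODE into the linear equation $tu'-u=-\big(\tfrac{2}{n-1}\pm c_1\ep_0^{1/3}\big)$, equivalently $(u/t)'=-\big(\tfrac{2}{n-1}\pm c_1\ep_0^{1/3}\big)t^{-2}$, which integrates explicitly from $t_2$. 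Because $u(t_2)=1/R(t_2)=\tfrac{2}{n-1}-\Theta(\ep_2)$ and $\ep_0^{1/3}\ll\ep_2$, the two bounding solutions take the form $u_\pm(t)=\tfrac{2}{n-1}-\Theta(\ep_2)\,t_2^{-1}t+O(\ep_0^{1/3})$; absorbing the $O(\ep_0^{1/3})$ term and the constant term into the leading coefficient — legitimate since $t\ge t_2$ forces $\ep_2 t_2^{-1}t\ge\ep_2\gg\ep_0^{1/3}$ — and using the standard ordering of solutions of these scalar ODEs gives \eqref{E461}, with $\hat c_3,\hat c_4$ depending only on $n,A,B$ and determined by equations analogous to \eqref{E417d}--\eqref{E417e}.

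It remains to prove monotonicity of $R$ along $\psi^t$. The lower bound in \eqref{E461} just established, together with $t\ge t_2$, yields $R\ge\big(\tfrac{2}{n-1}-\hat c_3\ep_2\big)^{-1}>\tfrac{n-1}{2}$ on all of $E^2$, so $\tfrac{2R}{n-1}-1$ is bounded below by a positive multiple of $\ep_2$, which dominates the error $c_1\ep_0^{1/3}R$. Plugging this into \eqref{E413} gives $tR_t=R\big[(\tfrac{2}{n-1}+X)R-1\big]>0$, hence $R_t>0$ on $E^2$. Consequently, along the flow $\psi^t$ of $\tfrac{1}{1-t}\na f$, for $0\le t<1$ and as long as $\psi^t(x)\in E^2$,
\begin{align*}
\frac{d}{dt}R(\psi^t(x))=\frac{1}{1-t}\,\langle\na R,\na f\rangle\big(\psi^t(x)\big)=\frac{|\na f|^2}{1-t}\,R_t\big(\psi^t(x)\big)>0,
\end{align*}
since $|\na f|^2=f-R>0$; this is the claimed monotonicity.

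I do not expect a genuine difficulty here, since the estimate is the mirror image of Proposition~\ref{prop:402}. The one point that must be handled carefully is the hierarchy of scales: the detectable deviation $\ep_2=\ep_0^{1/4}$ must strictly dominate the coefficient error $\ep_0^{1/3}$ in \eqref{E413}, which is exactly why $\ep_2$ was defined with exponent $1/4$ rather than $1/3$; this separation is what lets the sharp two-sided bounds \eqref{E461} and the strict positivity of $R_t$ survive the $O(\ep_0^{1/3})$ corrections.
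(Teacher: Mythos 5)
Your proposal is correct and follows the paper's proof of Proposition~\ref{prop:460} essentially verbatim: same use of Lemma~\ref{L404} to pin $R$ on $\Sigma(t_2)$, same Riccati comparison \eqref{E417b}/\eqref{E461b} integrated from $t_2$ (the paper leaves the $u=1/R$ substitution implicit), and the same observation that the positive sign of $(\tfrac{2}{n-1}-c_1\ep_0^{1/3})R^2-R$ on $E^2$ gives $R_t>0$ and hence monotonicity along $\psi^t$.
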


\begin{proof}
By our assumption, there exists $z \in \Sigma(t_2)$ such that 
\begin{align*}
R(z)=\frac{n-1}{2}+\ep_2.
\end{align*}
From Lemma \ref{L404}, we may assume on $\Sigma(t_2)$ that
\begin{align}
\frac{n-1}{2}+\frac{1}{2}\ep_2 \le R \le \frac{n-1}{2}+\frac{3}{2}\ep_2. \label{E461a}
\end{align}
Recall from \eqref{E417b} we have
\begin{align}
\lc \frac{2}{n-1}-c_1 \ep_0^{\frac{1}{3}}\rc R^2-R \le tR_t \le \lc \frac{2}{n-1}+c_1 \ep_0^{\frac{1}{3}}\rc R^2-R \label{E461b}
\end{align}
for some constant $c_1=c_1(n)>0$. By considering the ODE with the given initial values in \eqref{E461a}, we obtain
\begin{align}
\frac{1}{\frac{2}{n-1}-c_1\ep_0^{\frac{1}{3}}-c_2 \ep_2 t_2^{-1}t} \le R(t) \le \frac{1}{\frac{2}{n-1}+c_1\ep_0^{\frac{1}{3}}-c_3 \ep_2 t_2^{-1}t} \label{E461c}
\end{align}
for any $t \ge t_2$. Here, the constants $c_2$ and $c_3$ are determined by
\begin{align}
\frac{1}{\frac{2}{n-1}-c_1\ep_0^{\frac{1}{3}}-c_2 \ep_2}=\frac{n-1}{2}+\frac{1}{2}\ep_2. \label{E461d}
\end{align}
and 
\begin{align}
\frac{1}{\frac{2}{n-1}+c_1\ep_0^{\frac{1}{3}}-c_3 \ep_2}=\frac{n-1}{2}+\frac{3}{2}\ep_2. \label{E461e}
\end{align}

It is clear from \eqref{E461c}, \eqref{E461d}, \eqref{E461e} and the definition of $\ep_2$ that \eqref{E461} holds if $\ep_0$ is sufficiently small. Now the last statement follows from \eqref{E461} and \eqref{E461b} since on $E^2$,
\begin{align*}
\lc \frac{2}{n-1}-c_1 \ep_0^{\frac{1}{3}}\rc R^2-R>0.
\end{align*}
\end{proof}

Parallel to Proposition \eqref{prop:405a} and Proposition \ref{prop:406a}, we have the following result. 

\begin{prop} \label{prop:462}
Under the assumptions \eqref{E401}, \eqref{E460a} and \eqref{E460b}, there exists $\sigma_4=\sigma_4(n,\delta_0,A,B)>0$ such that if $\ep_0 \le \sigma_4$, then $s<\infty$ and there exists a point $q \in \Sigma(s)$ such that
\begin{align}
R(q)=\ep_1 f(q), \label{eqn:OH05_5}
\end{align}
if $\ep_0$ is sufficiently small.
\end{prop}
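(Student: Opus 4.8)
The plan is to combine the monotonicity of $R$ along the gradient flow of $f$ on the cap region (Proposition~\ref{prop:460}) with a blow-up argument modeled on the proofs of Propositions~\ref{prop:405a} and~\ref{prop:406a}. The first point is that $s<\infty$. No critical value of $f$ can lie in $[t_0,s]$: at a critical point $q'$ of $f$ one has $R(q')=f(q')$ by~\eqref{E101}, so $R\le\ep_1 f$ on $\Sigma(t_0,s)$ and $\ep_1<1$ would force $f(q')\le 0$, impossible since $f\ge t_0>0$ there. Hence $\Sigma(t_0,s)$ is a collar and $\Sigma(s)$ is a nonempty hypersurface diffeomorphic to $S^{n-1}$. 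Now suppose $s=\infty$; then $\Sigma(t_0,\infty)$ is a noncompact end on which $f$ ranges over $[t_0,\infty)$, so $\Sigma(t)\subset E^2=\Sigma(t_2,\infty)$ is nonempty for every $t\ge t_2$ and~\eqref{E461} applies there. The lower bound in~\eqref{E461} gives $R\ge(\tfrac{2}{n-1}-\hat c_3\ep_2 t_2^{-1}t)^{-1}$ on $\Sigma(t)$, whose right side tends to $+\infty$ as $t$ increases to $t_*:=\tfrac{2t_2}{(n-1)\hat c_3\ep_2}$, while~\eqref{E401} forces $R\le\ep_1 t$, which stays bounded on $[t_2,t_*]$. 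Taking $t\in(t_2,t_*)$ close to $t_*$ yields $R>\ep_1 f$ on $\Sigma(t)$, contradicting~\eqref{E401}. Thus $s<\infty$.

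Since $\Sigma(t_0,s)$ is a collar, extending $s$ is obstructed only by the two pointwise conditions in~\eqref{E401} on $\Sigma(t_0,s)$, so by maximality of $s$ either (i) there is $q\in\Sigma(s)$ with $R(q)=\ep_1 f(q)$ — which is exactly the assertion — or (ii) there are $q\in\Sigma(s)$ and $0\le k\le4$ with $|\na^k Rm|(q)=BR(q)^{k/2+1}$. It remains to rule out (ii). Arguing by contradiction, take Ricci shrinkers $(M_i,g_i,f_i)$ satisfying~\eqref{E401},~\eqref{E460a},~\eqref{E460b} with $\ep_{0,i}\to0$, points $q_i\in\Sigma(s_i)$, a fixed $k$, and $|\na^k Rm_i|(q_i)=BR_i(q_i)^{k/2+1}$. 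Passing to a subsequence, either $\ep_{2,i}s_i/t_{2,i}\to0$, in which case~\eqref{E461} and~\eqref{E460a} give $R_i\to\tfrac{n-1}{2}$ uniformly on $\Sigma(t_{0,i},s_i)$ and the blow-up argument in the proof of Proposition~\ref{prop:405a} applies verbatim to contradict the saturation at $q_i$; or $\liminf_i\ep_{2,i}s_i/t_{2,i}>0$, which is the essential case and is handled as in Proposition~\ref{prop:406a}.

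In that case I would set $r_i^{-2}=\tfrac{2}{n-1}R_i(q_i)$; since $R_i$ increases along the flow lines of $\na f_i$ in $E^2_i$ and $R_i\ge\tfrac{n-1}{2}$ on $\Sigma(t_{2,i})$, one has $r_i\le1$. Using~\eqref{E461} and the ODE~\eqref{E413}--\eqref{E413a} to estimate $f_i$ and $R_i$ along $\na f_i$ through a backward parabolic neighborhood of $q_i$ (as in Claims~2--3 of Proposition~\ref{prop:406a}), one checks that this neighborhood remains inside $E^2_i$ and that $R_{\tilde g_i}$, hence $|Rm_{\tilde g_i}|$ via~\eqref{E401}, is uniformly bounded there, where $\tilde g_i(t)=r_i^{-2}g_i(r_i^2t)$. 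Theorem~\ref{thm:nonc} and Shi's estimates~\cite{Shi89A} then give smooth Cheeger--Gromov convergence on $[-\delta,0)$ to a complete bounded-curvature Ricci flow $(M_\infty,g_\infty(t))$; normalizing $F_i=(1-t)f_i$ as in Claim~4 of Proposition~\ref{prop:406a} produces a limit potential with vanishing Hessian and gradient of unit norm, so $(M_\infty,g_\infty(-\delta))$ splits as $\Sigma_\infty\times\R$, and by~\eqref{E406a} and Lemma~\ref{L404} the factor $\Sigma_\infty$ is a round $S^{n-1}$, non-flat because $R$ stays bounded below. Choosing $\delta=\delta(n)$ small enough to invoke the pseudolocality theorem~\cite[Theorem 24]{LW20} together with~\cite[Theorem 3.1]{CBL07}, the convergence extends to $t=0$, and since $R_{\tilde g_i}(q_i,0)=\tfrac{n-1}{2}$ the limit at $t=0$ is the standard round cylinder $(S^{n-1}\times\R,g_c)$. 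Hence $\tfrac{|\na^k Rm_i|}{R_i^{k/2+1}}(q_i,0)$ converges to the scale-invariant curvature quantity of $g_c$ — namely $0$ if $k>0$ and $\sqrt{\tfrac{2}{(n-1)(n-2)}}$ if $k=0$ — which is smaller than $B$ since $B\ge1$, contradicting the saturation at $q_i$. Therefore alternative (i) holds.

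The main obstacle is this blow-up step. Unlike in the cylindrical and conical subsections, along the cap region $R$ need not stay close to $\tfrac{n-1}{2}$ near $\Sigma(s)$, so one must rescale parabolically by $R(q)$ rather than by a bounded factor; the delicate point is to verify, via~\eqref{E461} and~\eqref{E413}--\eqref{E413a} together with the assumption $(1-\delta_0)s\ge t_0$, that the rescaled backward parabolic neighborhood of $q$ stays inside the region $E^2$ on which~\eqref{E401} and~\eqref{E461} are available — only there do the curvature bounds and the compactness machinery apply. Once the geometry near $\Sigma(s)$ is identified with a round cylinder, the contradiction with the curvature-derivative saturation is routine.
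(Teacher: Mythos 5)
Your proof is essentially correct and follows the same route as the paper: argue $s<\infty$ from the lower bound in~\eqref{E461}, use maximality of $s$ to reduce to ruling out the saturation $|\nabla^k Rm|=BR^{k/2+1}$ at some $q\in\Sigma(s)$, and exclude that case by a contradiction/blow-up argument with parabolic rescaling by $R(q_i)$, compactness, pseudolocality, and identification of the limit with the round cylinder. Your preliminary observation that $\Sigma(t_0,s)$ contains no critical points of $f$ (since $R=f$ at a critical point would contradict $R\le\ep_1 f$ with $\ep_1<1$) is correct and is implicit in the paper's set-up.

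The one place you are vague is the uniform bound on $R_{\tilde g_i}$ on the rescaled backward parabolic neighborhood. You write that this follows from ``\eqref{E461} and the ODE \eqref{E413}--\eqref{E413a}'', but note that invoking~\eqref{E461} alone to compare $R_i(y)$ with $R_i(q_i)$ does \emph{not} give a bound independent of $i$: since $\hat c_3<\hat c_4$, the ratio
\[
\frac{R_i(y)}{R_i(q_i)} \le \frac{\tfrac{2}{n-1}-\hat c_3\,\ep_{2,i}t_{2,i}^{-1}s_i}{\tfrac{2}{n-1}-\hat c_4\,\ep_{2,i}t_{2,i}^{-1}s_i}
\]
can become large as $\ep_{2,i}s_i/t_{2,i}$ approaches the blow-up threshold, which is exactly the regime of interest. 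The paper's mechanism is instead the combination of the monotonicity statement in Proposition~\ref{prop:460} ($R(\psi^t(x))$ increases along $\nabla f$ as long as one stays in $E^2$) with Lemma~\ref{L404} (almost-constancy of $R$ on $\Sigma(s)$): flowing $y$ forward to $\Sigma(s_i)$ gives $R_i(y)\le R_i(\phi^u(y))\approx R_i(q_i)$. You do use the monotonicity earlier (to get $r_i\le 1$), so the ingredient is in your proposal; you just did not connect it to the curvature bound, which is the genuinely new point distinguishing this case from Proposition~\ref{prop:406a}. With this one clarification, your argument matches the paper's.
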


\begin{proof}
By \eqref{E461}, the scalar curvature blowup before the time $t=\frac{2t_2}{(n-1)\ep_2 \hat c_4}$. Therefore, we must have $s<\infty$. 
We move on to show (\ref{eqn:OH05_5}). 

From our definition of $E$ in \eqref{E401}, there exists a point $q \in \Sigma(s)$ such that at $q$ either
\begin{align*}
R=\ep_1 f \quad \text{or} \quad |\na^k Rm|=BR^{\frac{k}{2}+1}
\end{align*}
for some $0 \le k \le 4$. Notice that the second possibility can be excluded exactly as Proposition \ref{prop:406a}, if $\ep_0$ is sufficiently small and we sketch the proof below.

Assume there exists a sequence of Ricci shrinkers $(M_i, g_i, f_i) $ with the end $E_i=\Sigma(t_{0,i},s_i)$ satisfying \eqref{E401}, \eqref{E460a} and \eqref{E460b} with $\ep_{0,i} \to 0$. Moreover, there exists a $q_i \in \Sigma(s_i)$ at which
\begin{align*}
|\na^k Rm_i|=B R_i^{\frac{k}{2}+1}
\end{align*}
for a fixed $0 \le k \le 4$.

\textbf{Claim 1.} 
\begin{align}\label{E463b}
\liminf_{i \to \infty} \ep_{2,i}\frac{s_i}{t_{2,i}}>0.
\end{align}
\emph{Proof of Claim 1:} Otherwise, it follows from the assumption \eqref{E460a} and \eqref{E461} that
\begin{align*}
\limsup_{i \to \infty} \max_{\Sigma(t_{0,i},s_i)} \left| R_i-\frac{n-1}{2} \right|=0.
\end{align*}
Therefore, the same proof of Proposition \eqref{prop:405a} yields a contradiction.

Now we set $r^{-2}_i=\frac{2}{n-1}R_i(q_i)$. Then it is clear from \eqref{E461} that $r_i \le 1$. Moreover, we define $q_i^t=\psi_i^t(q_i)$, $s_i^t=f_i(q_i^t)$ and fix a constant $\delta=\delta(n) \in (0,1/20)$ determined later.

\textbf{Claim 2.} For any $t \in [-2\delta r_i^2,0]$, we have
\begin{align}\label{E463c}
\frac{s_i}{1+|t|} \le s_i^t \le \frac{s_i}{(1+|t|)^{1-\ep_{1,i}}}.
\end{align}
\emph{Proof of Claim 2:} For any $t \in [-2\delta r_i^2,0]$ we have
\begin{align*}
\frac{df_i(q_i^t)}{dt}=\frac{|\na_{g_i} f_i|_{g_i}^2}{1-t}(q_i^t)=\frac{f_i-R_i}{1-t}(q_i^t).
\end{align*}
Since $0 \le R_i(q_i^t) \le \ep_{1,i} f_i(q_i^t)$ by our assumptions, it is clear that
\begin{align*}
(1-\ep_{1,i}) \frac{f_i(q_i^t)}{1-t} \le \frac{df_i(q_i^t)}{dt} \le \frac{f_i(q_i^t)}{1-t}.
\end{align*}
By solving the above ODE, we immediately obtain \eqref{E463c}.

\textbf{Claim 3.} For any fixed number $L>1$, $t \in [-2\delta r_i^2,-L^{-1}r_i^2]$ and $y \in B_{g_i}\lc q^t_i,L(1+|t|)^{-\frac1 2}r_i \rc$, we have
\begin{align}\label{E463d}
\frac{s_i}{1+2\delta} \le f_i(y) \le s_i
\end{align}
for sufficiently large $i$.

\emph{Proof of Claim 3:} We compute 
\begin{align*}
|2\sqrt{f_i(y)}- 2\sqrt{s^t_i}|=|2\sqrt{f_i(y)}- 2\sqrt{f_i(q^t_i)}| \le \frac{Lr_i}{\sqrt{1+|t|}}. 
\end{align*}

Therefore, it follows from \eqref{E463c} that
\begin{align}
2\sqrt{f_i(y)} \le 2\sqrt{s^t_i}+\frac{Lr_i}{\sqrt{1+|t|}} \le \frac{2\sqrt{s_i}}{(1+|t|)^{\frac{1-\ep_{1,i}}{2}}}+\frac{Lr_i}{\sqrt{1+|t|}}. \label{E463f}
\end{align}
Since $L^{-1} r_i^2 \le |t| \le \delta r_i^2 \le \delta$, there exists a universal constant $c_1>0$ such that
\begin{align}
2\sqrt{s_i}-\frac{2\sqrt{s_i}}{(1+|t|)^{\frac{1-\ep_{1,i}}{2}}} \ge c_1 |t| \sqrt{s_i} \ge c_1 L^{-1} r_i^2 \sqrt{s_i} \ge \frac{Lr_i}{\sqrt{1+|t|}}. \label{E463g}
\end{align}
Here, the last inequality follows from the fact that $\limsup_{i \to \infty} s_i^{-1}r^{-2}_i =0$ since $R_i \le \ep_{1,i} f_i$ on $E_i$. Therefore, the second inequality in Claim 3 follows from \eqref{E463f} and \eqref{E463g}. Similarly, we compute
\begin{align*}
2\sqrt{f_i(y)} \ge 2\sqrt{s^t_i}-\frac{Lr_i}{\sqrt{1+|t|}} \ge \frac{2\sqrt{s_i}-L}{\sqrt{1+|t|}} \ge \frac{2\sqrt{s_i}}{\sqrt{1+2\delta}}
\end{align*}
since $\lim_{i \to \infty} s_i=\infty$ and $|t| \le \delta$. Therefore, the first inequality in Claim 3 follows.

Combining \eqref{E463b} and \eqref{E463d}, we conclude that $y \in E^2_i$.

\textbf{Claim 4.} If we set $\tilde g_i(t)=r_i^{-2}g_i(r_i^2 t)$, then
\begin{align*}
(M_i, q_i, \tilde g_i(t))_{t\in [-\delta,0]} \longright{C^{\infty}-Cheeger-Gromov} (S^{n-1} \times \R, p_c, g_c( t))_{t \in [-\delta,0]}
\end{align*}
for a complete smooth Ricci flow $(M^n_{\infty}, g_{\infty}(t))_{t\in [-\delta,0)}$ with uniformly bounded curvature.

\emph{Proof of Claim 4:} For any $L>1$ and $(y,t) \in B_{g_i}\lc q^t_i,L(1+|t|)^{-\frac1 2}r_i \rc \times [-2\delta r_i^2,-L^{-1}r_i^2]$, it follows from Proposition \ref{prop:460} that $R_i(y)$ is bounded above by $R_i(\phi^u(y))$, 
where $u=s-f(y)$. Since the scalar curvature is almost constant on $\Sigma(s)$ by Lemma \ref{L404}, we have
\begin{align*}
\frac{1}{1+|t|} R_i(y) \le (n-1)r_i^{-2}. 
\end{align*}
The rest of the proof follows exactly like that of Proposition \ref{prop:406a}, and we can derive a contradiction as before.

In sum, the proof is complete.
\end{proof}

\begin{cor} \label{cor:sest}
Under the assumptions of Proposition \ref{prop:462}, there exists $\hat c_5=\hat c_5(n,A,B)>0$ such that
\begin{align} \label{Ex:400}
s \ge \frac{\hat c_5}{\ep_2} t_2 \quad \text{and} \quad R(q) \ge \frac{\hat c_5}{ \ep_2}. 
\end{align}
\end{cor}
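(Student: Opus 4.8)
The plan is to obtain both inequalities in \eqref{Ex:400} by evaluating the upper bound in \eqref{E461} at the distinguished point $q$ supplied by Proposition~\ref{prop:462}, and combining it with the identity $R(q)=\ep_1 f(q)$ and the normalization $t_0\ge n\ep_1^{-1}$ built into \eqref{E401}.

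First I would record what Proposition~\ref{prop:462} gives: $s<\infty$, and there is a point $q\in\Sigma(s)$ with $R(q)=\ep_1 f(q)=\ep_1 s$. Since $E^2=\Sigma(t_2,s)$ is closed and $\Sigma(s)$ is its outer boundary, reached by the gradient flow of $f$ from $\Sigma(t_2)$, the estimate \eqref{E461} of Proposition~\ref{prop:460} is valid at $q$ with $t=f(q)=s$. As $R(q)>0$, the right-hand side of \eqref{E461} there must be a finite positive number, so $\frac{2}{n-1}-\hat c_4\ep_2 t_2^{-1}s>0$, and rearranging
\begin{align*}
\ep_1 s=R(q)\le \frac{1}{\frac{2}{n-1}-\hat c_4\ep_2 t_2^{-1}s}
\end{align*}
yields $\hat c_4\ep_2 t_2^{-1}s\ge \frac{2}{n-1}-\frac{1}{\ep_1 s}$.

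Next, since $f(q)=s\ge t_0\ge n\ep_1^{-1}$ we have $\ep_1 s\ge n$, hence $\frac{1}{\ep_1 s}\le \frac1n$, and therefore
\begin{align*}
\hat c_4\ep_2 t_2^{-1}s\ge \frac{2}{n-1}-\frac1n=\frac{n+1}{n(n-1)}>0 .
\end{align*}
Putting $\hat c_5:=\frac{n+1}{n(n-1)\hat c_4}$ — a constant depending only on $n,A,B$, since $\hat c_4=\hat c_4(n,A,B)$ — this is exactly $s\ge \hat c_5\,\ep_2^{-1}t_2$, the first half of \eqref{Ex:400}. For the second half I would plug this back into $R(q)=\ep_1 s$ and invoke $t_2\ge t_0\ge n\ep_1^{-1}$ once more:
\begin{align*}
R(q)=\ep_1 s\ge \hat c_5\,\ep_2^{-1}(\ep_1 t_2)\ge n\,\hat c_5\,\ep_2^{-1}\ge \hat c_5\,\ep_2^{-1}.
\end{align*}

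I expect no substantive obstacle: granted Propositions~\ref{prop:460} and~\ref{prop:462}, this is a short algebraic argument. The one point worth a careful sentence is the applicability of \eqref{E461} at $q$ together with the positivity of its denominator there — both secured by the facts that $q$ lies in the closed region $E^2$ where \eqref{E461} was proved and that the right-hand side of \eqref{E461}, being an upper bound for the positive quantity $R(q)$, is necessarily a positive real number.
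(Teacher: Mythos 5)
Your proof is correct and follows essentially the same route as the paper: evaluate the upper bound of \eqref{E461} at the point $q$ supplied by Proposition \ref{prop:462}, use $R(q)=\ep_1 s$ together with $\ep_1 s\ge n$ (from $s\ge t_0\ge n\ep_1^{-1}$) to force the denominator to be small, and read off the lower bounds on $s$ and then on $R(q)$. The paper's proof compresses the same algebra into two lines, so there is no substantive difference.
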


\begin{proof}
By our assumption \eqref{E401}, $R(q)=\ep_1f(q) \ge n$. Therefore, it follows from \eqref{E461} that
\begin{align*}
s \ge \frac{n+1}{n(n-1) \hat c_4} \frac{t_2}{\ep_2}.
\end{align*}
Consequently, 
\begin{align*}
R(q)=\ep_1 s \ge \frac{n+1}{n(n-1) \hat c_4} \frac{\ep_1 t_2}{\ep_2} \ge \frac{n+1}{(n-1) \hat c_4} \frac{1}{\ep_2}.
\end{align*}
\end{proof}

Next, we show that any point in $E$ far away from $\Sigma(t_0)$ is the center of an evolving $\ep$-neck. We omit the proof as it is similar to the proofs of Proposition \ref{prop:405c} and Proposition \ref{prop:462}.

\begin{prop} \label{prop:463}
Under the assumptions \eqref{E401}, \eqref{E450a} and \eqref{E451a}, for any $\ep>0$, there exists $\eta_5=\eta_5(\ep,n,\delta_0,A,B)>0$ such that if $\ep_0\le \eta_5$, then any point $x \in \Sigma(\eta_5^{-1} t_0,s) \subset E^2$ is the center of an evolving $\ep$-neck.
\end{prop}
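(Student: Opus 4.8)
The strategy mirrors the proof of Proposition~\ref{prop:462}, with the neck-detection scheme of Proposition~\ref{prop:405c} layered on top, and I would argue by contradiction. If the statement failed for some fixed $\ep>0$, there would be a sequence of Ricci shrinkers $(M^n_i,p_i,g_i,f_i)$ satisfying the standing cap-region hypotheses \eqref{E401}, \eqref{E460a}, \eqref{E460b} with $\ep_{0,i}\to 0$, together with points $x_i\in\Sigma(i^2 t_{0,i},s_i)\subset E^2_i$ that are not centers of evolving $\ep$-necks. Write $t_i:=f_i(x_i)$ and $r_i^{-2}:=\frac{2}{n-1}R_i(x_i)$, so that $r_i\le 1$ by \eqref{E461}. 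First I would prove $\liminf_{i\to\infty}\ep_{2,i}\,t_i/t_{2,i}>0$: otherwise \eqref{E460a} and \eqref{E461} force $\max_{\Sigma(t_{0,i},t_i)}|R_i-\frac{n-1}{2}|\to 0$, and then the proof of Proposition~\ref{prop:405b} applies verbatim to show $x_i$ is the center of an evolving normalized $\ep$-neck for large $i$, a contradiction.

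The technical core is a scale-invariant curvature bound for the rescaled flows $\tilde g_i(t):=r_i^{-2}g_i(r_i^2 t)$ on backward parabolic neighborhoods of $x_i$. Tracking $f_i$ along $\psi_i^t$ through $\frac{d}{dt}f_i(\psi_i^t(x_i))=\frac{f_i-R_i}{1-t}(\psi_i^t(x_i))$ together with $0\le R_i\le\ep_{1,i}f_i$ yields, exactly as in \eqref{E463c}, the two-sided comparison $\frac{t_i}{1+|t|}\le f_i(\psi_i^t(x_i))\le\frac{t_i}{(1+|t|)^{1-\ep_{1,i}}}$; combined with the analogue of \eqref{E457c} and the $1$-Lipschitz property of $2\sqrt{f_i}$, this keeps every point of $B_{g_i}(\psi_i^t(x_i),L(1+|t|)^{-1/2}r_i)$ inside $E^2_i$ for $t\in[-2\delta r_i^2,-L^{-1}r_i^2]$, where \eqref{E401} and \eqref{E461} hold. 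Since $R(\psi^t(\cdot))$ is monotone along $\psi^t$ by Proposition~\ref{prop:460}, $R_i$ on this region is bounded by its value at a point of $\Sigma(s_i)$, which by Lemma~\ref{L404} is almost $(n-1)r_i^{-2}$; rescaling gives $R_{\tilde g_i}\le K$ on $B_{\tilde g_i(0)}(x_i,L)\times[-L,0]$ for a universal $K$, and \eqref{E401} then controls $|Rm_{\tilde g_i}|$ too.

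From these bounds, Theorem~\ref{thm:nonc} and Shi's local estimates~\cite{Shi89A}, I would extract a subsequential limit: $(M_i,x_i,\tilde g_i(t))_{t\le 0}$ converges in the $C^\infty$-Cheeger-Gromov sense to a complete smooth bounded-curvature Ricci flow $(M^n_\infty,x_\infty,g_\infty(t))_{t\le 0}$. To identify the time-$0$ slice I would run the potential-function argument of Proposition~\ref{prop:406a}: a suitably normalized rescaling of $f_i-t_i$ converges smoothly to a function $f_\infty$ with $|\na f_\infty|^2=1$ and $\mathrm{Hess}\,f_\infty=0$, so $(M_\infty,g_\infty(0))$ splits isometrically as $\Sigma_\infty\times\R$; Lemma~\ref{L404} then identifies $\Sigma_\infty$ with the round $S^{n-1}$ of scalar curvature $\frac{n-1}{2}$, whence $(M_\infty,g_\infty(0))=(S^{n-1}\times\R,g_c)$. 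Backward uniqueness of the Ricci flow~\cite[Theorem~1.1]{Kot10} upgrades this to $g_\infty(t)=g_c(t)$ for all $t\le 0$, which forces $x_i$ to be the center of an evolving $\ep$-neck for large $i$ and contradicts the choice of $x_i$.

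I expect the main obstacle to be precisely the scale-invariant curvature bound on these backward parabolic neighborhoods: one must simultaneously confine the flowed points to $E^2_i$---so that \eqref{E401} and \eqref{E461} apply---and combine the monotonicity of $R$ along $\psi^t$ from Proposition~\ref{prop:460} with the near-constancy of $R$ on level sets from Lemma~\ref{L404}. Everything after the limit is extracted is the by-now-routine splitting-and-rigidity argument used repeatedly in this section, so I would present that part only in sketch form, citing Propositions~\ref{prop:405c}, \ref{prop:462} and \ref{prop:406a} for the recurring computations.
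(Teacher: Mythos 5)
Your overall strategy is the right one---contradiction, a claim of the form $\liminf_i \ep_{2,i}\,t_i/t_{2,i}>0$, scale-invariant curvature bounds on a rescaled backward parabolic neighborhood, extraction of a smooth limit, identification of the $t=0$ slice as a round cylinder via the potential function, and then backward uniqueness---and that is indeed the proof pattern the paper points to when it says Proposition~\ref{prop:463} is proved like Propositions~\ref{prop:405c} and~\ref{prop:462}. However, the core estimate is wrong as stated. You write that monotonicity of $R$ along $\psi^t$ (Proposition~\ref{prop:460}) bounds $R_i$ on the relevant region by its value on $\Sigma(s_i)$, ``which by Lemma~\ref{L404} is almost $(n-1)r_i^{-2}$.'' That identification only holds in Proposition~\ref{prop:462}, where the base point $q_i$ actually sits \emph{on} $\Sigma(s_i)$, so that $r_i^{-2}=\tfrac{2}{n-1}R_i(q_i)=\tfrac{2}{n-1}\ep_1 s_i$. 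In the present proposition the base point $x_i$ lies on $\Sigma(t_i)$ with $t_i$ possibly far below $s_i$, and $r_i^{-2}=\tfrac{2}{n-1}R_i(x_i)$ is controlled by $t_i$, not by $s_i$. The actual value of $R$ on $\Sigma(s_i)$ is $\approx\ep_1 s_i$, which in general is much larger than $r_i^{-2}$, so the proposed bound gives $R_{\tilde g_i}\lesssim r_i^2\ep_1 s_i$, not a uniform constant. As written the step would fail exactly in the regime where $s_i\gg t_i$, which is precisely the regime that can occur here.

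The correct route does not flow forward to $\Sigma(s_i)$. Going backward in time only decreases $f$ and hence $R$ (Proposition~\ref{prop:460}), so one only needs to control $R_i$ on $B_{g_i}(x_i,Lr_i)$ at time $0$ by $K r_i^{-2}$. Since $2\sqrt{f_i}$ is $1$-Lipschitz and $r_i\le 1$, the $f$-variation over that ball is at most $\sim L r_i\sqrt{t_i}$. Feeding this into the upper bound of \eqref{E461}, one needs $\frac{2}{n-1}-\hat c_4\,\ep_2 t_2^{-1}(t_i+Lr_i\sqrt{t_i})\gtrsim r_i^2$. The first part $\frac{2}{n-1}-\hat c_4\,\ep_2 t_2^{-1}t_i\ge \frac{2}{n-1}r_i^2$ follows from the same upper bound applied at $x_i$. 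The perturbation satisfies $\hat c_4\,\ep_2 t_2^{-1}Lr_i\sqrt{t_i}=L\hat c_4\,(\ep_2 t_i/t_2)\,(r_i/\sqrt{t_i})$, and since $R(x_i)\le\ep_1 t_i$ forces $r_i\sqrt{t_i}\ge\sqrt{\tfrac{n-1}{2\ep_1}}$ while $\ep_2 t_i/t_2$ is bounded by a dimensional constant, this perturbation is $O(L\sqrt{\ep_1}\,r_i^2)$, negligible compared with the main term once $\ep_0$ is small. Thus $r_i^2R_i(y)\le n-1$ on $B_{g_i}(x_i,Lr_i)$, and combined with the backward monotonicity of $R$ and the confinement in $E^2_i$ this yields the needed uniform bound on $R_{\tilde g_i}$; the rest of your argument then goes through.
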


For later applications, we show that any point in $E^2$ has a positive curvature operator.

\begin{prop} \label{prop:464}
Under the assumptions \eqref{E401}, \eqref{E450a} and \eqref{E451a}, there exists $\sigma_5=\sigma_5(n,A,B)>0$ such that if $\ep_0 \le \sigma_5$, then
\begin{align*}
Rm \ge \delta \ep_2 \frac{R^2}{f}
\end{align*}
for $\delta=\delta(n)>0$ on $E^2$.
\end{prop}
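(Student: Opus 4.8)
The plan is to show that on $E^2$ the full curvature operator $Rm$ is bounded below by $\delta\ep_2 R^2/f$, by decomposing $Rm$ into the tangential part $Rm_\Sigma$ (restricted to the level set $\Sigma$ of $f$) and the normal-directional components, then controlling each piece separately. First I would recall from Proposition \ref{P401} and Lemma \ref{L402} that on the neck region all the ``traceless'' tangential pieces $\Rm$, $\Rc$ are $O(\ep_0 R)$, and from Proposition \ref{P201}/Lemma \ref{L401} that the normal components $R_{an}$, $R_{abcn}$, $R_{nn}$, $R_{ancn}$ are $O(f^{-1/2}R^{3/2})$ or $O(f^{-1}R(1+R))$, hence $O(f^{-1/2}R^{3/2})$ since $R\le\ep_1 f$. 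The key point is that on a level set $\Sigma\subset E^2$ the tangential curvature operator $Rm_\Sigma$ is close to $U$, the curvature operator of the round sphere of scalar curvature $R$, whose eigenvalues are all equal to $\frac{R}{(n-1)(n-2)}\sim \frac{R}{(n-1)(n-2)}$ — in particular uniformly positive multiples of $R$. Since $\Rm=Rm_\Sigma - U = O(\ep_0 R)$ and $\ep_0\le\sigma_5$ small, $Rm_\Sigma$ itself is $\ge c(n)R\,\mathrm{Id}$ on two-forms tangent to $\Sigma$, which is much larger than the claimed lower bound $\delta\ep_2 R^2/f$ because $R^2/f\le \ep_1 R$ and $\ep_1\le\ep_0^2$ is far smaller than any fixed multiple of $R$.

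The substance therefore lies entirely in the directions involving $e_n$. Here one cannot simply use the smallness of $R_{an}, R_{ancn}$ against $R$, because the relevant comparison scale is $R^2/f$, which is of the same order $f^{-1}R^{2}$ as the error terms $f^{-1}R(1+R)$ — so a naive estimate only gives $Rm\ge -Cf^{-1}R^2$ in these mixed directions, the wrong sign. This is where Proposition \ref{prop:460} must be exploited: on $E^2$ we have $R>\frac{n-1}{2}$ strictly and, more importantly, $R(\psi^t(x))$ is increasing, equivalently $tR_t = (\tfrac{2}{n-1}+X)R^2 - R \ge c(n)\ep_2 R^2$ by \eqref{E461b} and \eqref{E461a} (the gap $R-\frac{n-1}{2}\ge \tfrac12\ep_2$ on $\Sigma(t_2)$ propagates, via \eqref{E461}, to $R\ge \frac{n-1}{2}+c\ep_2 R/(n-1)\cdot(\text{something})$ — concretely \eqref{E461} gives $\frac{2}{n-1}-\frac1{R}\ge \hat c_4\ep_2 t_2^{-1}t\ge 0$, i.e. $R\ge \frac{n-1}{2}$ with a quantitative surplus controlled by $\ep_2 f/t_2$). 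The point is that $R_{nn}=|\na f|^{-2}Rc(\na f,\na f)$ is, by \eqref{E202a} and \eqref{E202c}, equal to $f^{-1}$ times $(\tfrac12\la\na R,\na f\ra)\cdot(\text{positive})$, and $\la\na R,\na f\ra = |\na f|^2 R_t^{(\text{along }\varphi_t)}$; combining with the ODE one sees $R_{nn}$ has a definite positive sign of size $\ge c\ep_2 R^2/f$ rather than being merely $O(f^{-1}R^2)$. Propagating this through $R_{ancn} = O(f^{-1}(\dots))$ is more delicate; I would instead argue at the level of the curvature operator directly, writing $Rm = U + (\text{tangential error}) + (\text{normal block})$ and noting the normal block is a symmetric operator on the $(n-1)$-dimensional space of two-forms $e_a\wedge e_n$ whose ``trace'' is essentially $Rc(\na f,\na f)/|\na f|^2 = R_{nn}$, which we have just shown is $\ge c\ep_2 R^2/f$ positive; pointwise positivity of each eigenvalue then follows because the traceless part of this block is $O(f^{-1}R^{3/2}\cdot R^{1/2}) = o(\ep_2 R^2/f)$ once we also use that $\ep_2 = \ep_0^{1/4}$ dominates $f^{-1/2}R^{1/2}\lesssim \ep_1^{1/2}= \ep_0$... — so the surplus $\ep_2 R^2/f$ beats the fluctuation $\ep_0 R^2/f$ for $\ep_0$ small. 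I would also need the analogous ODE lower bound from \eqref{E461b}, namely $R_t\ge c\ep_2 R^2/(fn)$ valid throughout $E^2$, not just near $\Sigma(t_2)$, which is exactly what Proposition \ref{prop:460}'s monotonicity statement and estimate \eqref{E461} provide.

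The main obstacle, as indicated above, is the borderline scaling in the $e_a\wedge e_n$ directions: the desired lower bound $\delta\ep_2 R^2/f$ and the a priori error terms $O(f^{-1}R^2(1+R))$ from Proposition \ref{P201} are of the same order $f^{-1}R^2$ up to the factor $\ep_2\ll 1$, so one genuinely needs to extract the sign of the leading term rather than absorb it into error. I expect this to be handled by differentiating the identity $Rc(\na f,\na f) = \tfrac12\la\na R,\na f\ra$ and the Bianchi-type identity \eqref{E202b} to rewrite $R_{ancn}$, together with the ODE \eqref{E413}–\eqref{E413a} and the strict inequality $(\tfrac{2}{n-1}-c_1\ep_0^{1/3})R^2 - R \ge c(n)\ep_2 R^2/(n-1)$ coming from \eqref{E461a}–\eqref{E461} on all of $E^2$ (here using $R\ge\frac{n-1}{2}$ and the propagated gap). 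Once the normal block is shown to be $\ge c\ep_2 R^2/f$ on $\Lambda^2$-vectors of the form $e_a\wedge e_n$, and the pure tangential block is $\ge c(n) R \gg \ep_2 R^2/f$, a standard argument bounding the mixed terms (which are $O(f^{-1/2}R^{3/2})=o(\ep_2 R^2/f\cdot R^{-1/2}f^{1/2})$, i.e. negligible after Cauchy–Schwarz against the two diagonal blocks) yields $Rm\ge \delta\ep_2 R^2/f$ for a suitable $\delta=\delta(n)>0$, completing the proof for $\ep_0\le\sigma_5(n,A,B)$ sufficiently small.
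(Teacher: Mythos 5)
Your overall strategy matches the paper's: decompose $Rm$ into a tangential block (bounded below by $\sim c(n)R$, which overwhelms $\ep_2 R^2/f$), a normal block $(R_{anbn})$ whose trace $R_{nn}$ inherits a sign from the quantitative gap $R-\tfrac{n-1}{2}\ge\tfrac{\ep_2}{2}$ on $E^2$, and the mixed components $R_{abcn}$ to be absorbed by Cauchy--Schwarz. The diagnosis of the borderline scaling in the $e_a\wedge e_n$ directions, and the computation that $R_{nn}=|\nabla f|^{-2}\,Rc(\nabla f,\nabla f)\ge c\,\ep_2 R^2/f$ via \eqref{E202a}, \eqref{E202c}, the ODE \eqref{E413}--\eqref{E413a} and Lemma~\ref{L405}, are correct. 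However there are two quantitative gaps, and both are precisely where the paper does its real work.

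First, the traceless part of $(R_{anbn})$. You assert it is $o(\ep_2 R^2/f)$ because ``$\ep_2=\ep_0^{1/4}$ dominates $f^{-1/2}R^{1/2}\lesssim\ep_1^{1/2}=\ep_0$.'' This does not follow: the naive bound (Proposition~\ref{P201}) gives only $R_{ancn}=O(f^{-1}R^2)$ with no $\ep_0$-factor, which is of the \emph{same} order as the claimed positive lower bound $\ep_2 R^2/f$; the factor $f^{-1/2}R^{1/2}\le\ep_0$ is already used up in producing the overall $f^{-1}R^2$ scaling and does not give a further smallness multiplier for the \emph{traceless} part. To show the traceless part of $(R_{anbn})$ is $o(\ep_2 R^2/f)$ one must expand $R_{ancn}$ via \eqref{E202a}--\eqref{E202d} as in \eqref{E465aa} and estimate each non-round contribution: the algebraic terms give $O(\ep_0 R^2)$ corrections (Proposition~\ref{P401}, Lemma~\ref{L402}), while the $\Delta R_{ac}$ and $\nabla^2_{ac}R$ terms require the dedicated Lemmas~\ref{L405}, \ref{L406} and estimate \eqref{E411b}, which give $O(\ep_0^{1/3}R^2)$. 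These lemmas are not free; they rest on the elliptic interpolation argument behind Lemma~\ref{L402} and the Bianchi identities. Your proposal names the right identities to differentiate but does not carry out or cite these estimates, and the dimensional argument you substitute for them is incorrect.

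Second, the mixed term. You bound $R_{abcn}$ by $O(f^{-1/2}R^{3/2})$ (the Proposition~\ref{P201} bound) and claim this is negligible by Cauchy--Schwarz. It is not: the geometric mean of the two diagonal blocks is $\sqrt{c(n)R\cdot c\,\ep_2 R^2/f}\sim \ep_2^{1/2}f^{-1/2}R^{3/2}$, so the crude mixed-term bound is \emph{larger} by a factor $\ep_2^{-1/2}\to\infty$. The paper avoids this by refining $R_{abcn}$ via the second Bianchi identity \eqref{E202b}: $R_{abcn}|\nabla f|=\nabla_b\overset{\circ}{R}_{ac}-\nabla_a\overset{\circ}{R}_{bc}+\tfrac{1}{n-1}(\nabla_b R\,g_{ac}-\nabla_a R\,g_{bc})$, and the crucial point is that both pieces have the extra $\ep_0^{1/2}$ smallness (Lemma~\ref{L402} for $|\nabla\Rc|$ and Lemma~\ref{L403} for $|\nabla_\Sigma R|$), giving $R_{abcn}=O(\ep_0^{1/2}f^{-1/2}R^{3/2})$. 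Only then does the Cauchy--Schwarz step close, since $\ep_0^{1/2}/\ep_2^{1/2}=\ep_0^{3/8}\to 0$. In short, the skeleton is right and you see where the difficulty lies, but the $\ep_0^{\alpha}$-improved bounds on $|\nabla\Rc|$, $|\nabla_\Sigma R|$, $|\Delta R_{ab}|$, $|\nabla^2_{ab}R|$ are exactly what makes the argument work, and these are not established in your sketch.
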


\begin{proof}
From \eqref{E406a}, we have
\begin{align*}
R_{abcd}=\frac{R}{(n-1)(n-2)}(g_{ac}g_{bd}-g_{ad}g_{bc})+O(\ep_0 R).
\end{align*}

Moreover, it follows from \eqref{E202b} that
\begin{align*}
R_{abcn}=\frac{R_{abcl}f_l}{|\na f|}=\frac{\na_b R_{ac}-\na_a R_{bc} }{|\na f|}=\frac{\na_b \overset{\circ}{R}_{ac}-\na_a \overset{\circ}{R}_{bc}+\frac{\na_b R g_{ac}}{n-1}-\frac{\na_a R g_{bc}}{n-1} }{|\na f|}.
\end{align*}

Therefore, it is clear from \eqref{E408a} and Lemma \ref{L403} that
\begin{align*}
R_{abcn}=O(\ep_0^{\frac 1 2}f^{-\frac{1}{2}}R^{\frac 3 2}).
\end{align*}

In addition, from the computation in the proof of Proposition \ref{P201}, we have
\begin{align}
R_{ancn}=\frac{2R_{akcl}R_{kl}-R_{kc}R_{ka}-R_{ac}/2+\Delta R_{ac}-\na^2_{ac} R/2}{|\na f|^2}. \label{E465aa}
\end{align}

From direct calculations,
\begin{align}
2R_{akcl}R_{kl}=&2R_{abcd}R_{bd}+2R_{abcn}R_{bn}+2R_{ancn}R_{nn} \notag \\
=& 2R_{abcd}R_{bd}+O(f^{-1}R^3)+O(f^{-2}R^4)\notag \\
=& \frac{2R}{(n-1)(n-2)}(g_{ac}g_{bd}-g_{ad}g_{bc}) R_{bd}+O(\ep_0 R^2)+O(\ep_1 R^2)\notag \\
=& \frac{2R^2g_{ac}}{(n-1)(n-2)}- \frac{2RR_{ac}}{(n-1)(n-2)}- \frac{2RR_{nn}}{(n-1)(n-2)}+O(\ep_0 R^2) \notag \\
=& \frac{2R^2g_{ac}}{(n-1)^2}+O(\ep_0 R^2), \label{E465c}
\end{align}
where we have used \eqref{E401ab} and \eqref{E401ac}. In addition,
\begin{align}
R_{kc}R_{ka}=R_{bc}R_{ab}+R_{cn}R_{an}=\frac{R^2}{(n-1)^2}+O(\ep_0 R^2). \label{E465d}
\end{align}

Combining \eqref{E465c}, \eqref{E465d}, \eqref{E411b} and Lemma \ref{L406}, it follows from \eqref{E465aa} that
\begin{align}
R_{ancn}=\frac{1}{|\na f|^2} \lc \frac{R^2}{(n-1)^2}-\frac{R}{2(n-1)} \rc g_{ac}+O(\ep_0^{\frac 1 3}f^{-1}R^2). \label{E465e}
\end{align}

Since $R \ge \frac{n-1}{2}+\frac{\ep_2}{2}$ on $E^2$, we obtain from \eqref{E465e} that
\begin{align*}
R_{anan} \ge c_1 \ep_2 \frac{R^2}{f}
\end{align*}
for some constant $c_1=c_1(n)>0$. For any two-form $w$, we decompose it as
\begin{align*}
w=\tilde w+x_a e^a \wedge e^n,
\end{align*}
where $\tilde w$ is the two-form on $\Sigma$. If we set $|x|=\sqrt{\sum_{a=1}^{n-1}x_a^2}$, then we compute
\begin{align*}
Rm(w,w)=&Rm(\tilde w, \tilde w)+2x_a Rm(\tilde w,e^a \wedge e^n )+x_ax_b R_{anbn} \\
\ge & \frac{R}{(n-1)(n-2)} |\tilde w|^2-c_2 \ep_0 R|\tilde w|^2-c_2|x| |\tilde w| \ep_0^{\frac 1 2}f^{-\frac 1 2} R^{\frac 3 2}+x_ax_b R_{anbn} \\
\ge & \frac{R}{(n-1)(n-2)} |\tilde w|^2-c_2 \ep_0 R|\tilde w|^2-c_2|x| |\tilde w| \ep_0^{\frac 1 2}f^{-\frac 1 2} R^{\frac 3 2}+c_1|x|^2 \ep_2 f^{-1}R^2-c_3|x|^2\ep_0^{\frac 1 3}f^{-1}R^2
\end{align*}
for some constants $c_2$ and $c_3$ depending only on $n,A$ and $B$. Since $\ep_2=\ep_0^{\frac 1 4}$, it follows from the mean value inequality that
\begin{align*}
Rm(w,w) \ge \frac{c_1}{2} \ep_2 f^{-1}R^2 (|x|^2+|\tilde w|^2)= \frac{c_1}{2} \ep_2 f^{-1}R^2 |w|^2,
\end{align*}
if $\ep_0$ is sufficiently small.
\end{proof}

\section{Estimates on the cap region}

Throughout this section, we assume \eqref{E401}, \eqref{E460a}, \eqref{E460b} and $\ep_0$ is small enough such that Proposition \ref{prop:462} holds. 
We will fix $\ep_1$ and choose $\ep_1'$ to be a small parameter which may depend on $\ep_1$. In particular, we assume $\ep_1' \le \ep_1^{\frac 1 2}$ and hence $\ep_0=\ep_1^{\frac{1}{2}}$ which is also fixed. 

Define
\begin{align}
D \coloneqq \textrm{the connected component of $M \backslash \text{Int}\,\Sigma(t_0,s)$ containing $\Sigma(s)$.}
\label{eqn:OH05_6} 
\end{align}
From Proposition \ref{prop:460} there exists a point $q \in \Sigma(s)$ such that
\begin{align*}
R(q)=\ep_1 f(q)=\ep_1 s. 
\end{align*}

First we prove
\begin{prop} \label{prop:500}
Under the above assumptions, there exist positive constants $m_1,m_2$ depending only on $n$ such that
\begin{align}
\frac{0.9}{\ep_1^{-1}+m_2 L} \le \frac{R(y)}{s} \le \frac{1.1}{\ep_1^{-1}+m_1 L} \label{E500a}
\end{align}
for any $L \in [0,s/2]$ and $y \in \Sigma(s-L)$.
\end{prop}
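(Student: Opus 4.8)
\emph{Proof strategy.} The plan is to integrate the scalar curvature ODE \eqref{E413}--\eqref{E413a} along the gradient flow of $f$, starting from $\Sigma(s)$ — where we already know $R$ is essentially constant, equal to $\ep_1 s$ — and running backwards to $\Sigma(s-L)$. First one checks that $\Sigma(s-L)\subset E^2=\Sigma(t_2,s)$ for every $L\in[0,s/2]$: by Corollary \ref{cor:sest} we have $s\ge \hat c_5\ep_2^{-1}t_2$, so once $\ep_0$ (hence $\ep_2$) is small we get $s\ge 2t_2$, whence $t_2\le s/2\le s-L$; since $f$ has no critical point in $E^2$ and $E^2$ is foliated by its (connected, $S^{n-1}$-type) level sets, each $\Sigma(s-L)$ is one such slice and the flow $\varphi_t$ of \eqref{E412} carries it diffeomorphically onto $\Sigma(s)$. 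Fix $y\in\Sigma(s-L)$, write $R=R(t)$ with $t=f$ along the $\varphi_t$-trajectory through $y$, and set $u:=1/R$. From \eqref{E413}--\eqref{E413a} (equivalently \eqref{E461b}) this trajectory satisfies the exact identity
\[
\frac{d}{dt}\!\left(\frac{u}{t}\right)=-\frac{\alpha(t)}{t^2},\qquad \alpha(t)=\frac{2}{n-1}+X(t),\quad |X(t)|=O(\ep_0^{1/3}).
\]

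Integrating from $s-L$ to $s$ and multiplying by $s$ gives
\[
\frac{s}{R(y)}=s\,u(s)\left(1-\frac{L}{s}\right)+\alpha^{\ast}L ,
\]
where $\alpha^{\ast}=\big(\int_{s-L}^{s}\alpha(t)t^{-2}\,dt\big)\big/\big(\int_{s-L}^{s}t^{-2}\,dt\big)$ is a weighted mean of $\alpha$, hence lies in $[\tfrac{2}{n-1}-c_1\ep_0^{1/3},\tfrac{2}{n-1}+c_1\ep_0^{1/3}]$. The trajectory meets $\Sigma(s)$ at a point $y'$, not necessarily equal to $q$, but by Lemma \ref{L404} together with $R(q)=\ep_1 s$ we have $R(y')=(1+O(\ep_0^{1/2}))\ep_1 s$, so $s\,u(s)=(1+O(\ep_0^{1/2}))\ep_1^{-1}$. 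Substituting,
\[
\frac{s}{R(y)}=(1+O(\ep_0^{1/2}))\,\ep_1^{-1}+L\left(\alpha^{\ast}-(1+O(\ep_0^{1/2}))\frac{\ep_1^{-1}}{s}\right).
\]

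To finish, I would note that $\ep_1^{-1}/s\le \ep_2/(\hat c_5 n)$, because $s\ge \hat c_5\ep_2^{-1}t_2$ and $t_2\ge t_0\ge n\ep_1^{-1}$; hence the coefficient of $L$ is within $o(1)$ of $\tfrac{2}{n-1}$ as $\ep_0\to 0$, and after shrinking $\ep_0$ depending on $n,A,B$ it lies in $[\tfrac{1}{n-1},\tfrac{3}{n-1}]$. Writing the last display as $(1+O(\ep_0^{1/2}))\ep_1^{-1}+m^{\ast}L$ with $m^{\ast}\in[\tfrac{1}{n-1},\tfrac{3}{n-1}]$ and taking $\ep_0$ small enough that the $O(\ep_0^{1/2})$ error is below $1/11$ in absolute value, one obtains
\[
\frac{1}{1.1}\Big(\ep_1^{-1}+\tfrac{1}{n-1}L\Big)\le \frac{s}{R(y)}\le \frac{1}{0.9}\Big(\ep_1^{-1}+\tfrac{3}{n-1}L\Big),
\]
which is exactly \eqref{E500a} with $m_1=\tfrac{1}{n-1}$ and $m_2=\tfrac{3}{n-1}$, both depending only on $n$.

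The only genuinely delicate point — and the place I would be most careful — is that the factor $1-L/s$ could a priori be as small as $1/2$, which on its own would ruin the near-equality $\tfrac{s}{R(y)}\approx \ep_1^{-1}+\tfrac{2}{n-1}L$ one is aiming for. The resolution is precisely the inequality $\ep_1^{-1}\ll s$ coming from Corollary \ref{cor:sest} together with $t_0\ge n\ep_1^{-1}$: it lets one absorb the term $\ep_1^{-1}L/s$ into the (nearly $\tfrac{2}{n-1}$) coefficient of $L$ instead of reading it as a loss on the $\ep_1^{-1}$ term. Everything else is routine ODE comparison and bookkeeping of the $O(\ep_0^{1/3})$ and $O(\ep_0^{1/2})$ errors.
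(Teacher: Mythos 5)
Your proposal is correct and follows essentially the same route as the paper's proof: integrate the scalar-curvature ODE \eqref{E413} backward from $\Sigma(s)$ along the gradient flow, use Lemma \ref{L404} to transfer the boundary value $R(q)=\ep_1 s$ to the whole slice $\Sigma(s)$, and absorb the $\ep_1^{-1}L/s$ correction using $\ep_1^{-1}\ll s$ from Corollary \ref{cor:sest}. The only cosmetic difference is your linearizing substitution $u=1/R$, which turns the ODE comparison into an exact integral identity (the paper instead solves the comparison ODE directly and tracks the constant $\delta_1 s\approx 2/(n-1)$), and your use of Corollary \ref{cor:sest} to get $\Sigma(s/2,s)\subset E^2$ rather than re-running the ODE comparison as the paper does.
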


\begin{proof}
We first prove $\Sigma(s/2,s) \subset E^2$. Otherwise, there exist $s_1 \in [s/2,s]$ and a point $x \in \Sigma(s_1)$ such that $\Sigma(s_1,s) \subset E^2$ and 
\begin{align}
R(x)=\frac{n-1}{2}+\ep_2. \label{E500b}
\end{align}
From \eqref{E417b} we have on $E$ that
\begin{align*}
tR_t \ge \lc \frac{2}{n-1}-c_1 \ep_0^{\frac{1}{3}}\rc R^2-R. 
\end{align*}
Consequently, solving the ODE yields that 
\begin{align}
R(q^t) \ge \frac{1}{\frac{2}{n-1}-c_1 \ep_0^{\frac{1}{3}}-\delta_1 t} \label{E500d}
\end{align}
for any $t \in [s_1,s]$, where $q^t=\phi^{t-s}(q) \in \Sigma(t)$ and $\delta_1$ is determined by
\begin{align}
\ep_1s=R(q) =\frac{1}{\frac{2}{n-1}-c_1 \ep_0^{\frac{1}{3}}-\delta_1 s}. \label{E500e}
\end{align}
Since $R(q) \gg 1$ by \eqref{Ex:400}, it follows from \eqref{E500d} and \eqref{E500e} that
\begin{align}
R(q^{s_1}) \ge \frac{n}{2}. \label{E500f}
\end{align}
However, \eqref{E500f} contradicts \eqref{E500b} by Lemma \ref{L404}.

Now it is clear that \eqref{E500d} holds for any $t \in [s/2,s]$. Therefore, it follows from \eqref{E500d} and \eqref{E500e} that for $L \in [0,s/2]$,
\begin{align*}
R(q^{s-L}) \ge \frac{1}{\frac{2}{n-1}-c_1 \ep_0^{\frac{1}{3}}-\delta_1 (s-L)}=\frac{1}{\frac{1}{\ep_1 s}+\delta_1s-\delta_1 (s-L)}=\frac{1}{\frac{1}{\ep_1 s}+\delta_1L}
\end{align*}
and hence
\begin{align}
\frac{R(q^{s-L})}{R(q)} \ge \frac{1}{1+\delta_1s \ep_1 L}. \label{E500g}
\end{align}
From \eqref{E500e} and the fact that $R(q) \gg 1$, we know that $\delta_1s \approx \frac{2}{n-1}$. Combining \eqref{E500g} and Lemma \ref{L404}, we conclude that
\begin{align*}
\min_{\Sigma(s-L)} R\ge \frac{0.9R(q)}{1+m_2 \ep_1 L} 
\end{align*}
for some constant $m_2=m_2(n) \approx \frac{2}{n-1}$. Therefore, 
\begin{align*}
\min_{\Sigma(s-L)} R\ge \frac{0.9s}{\ep_1^{-1}+m_2 L}. 
\end{align*}
Now the other inequality in \eqref{E500a} can be proved similarly by considering
\begin{align*}
tR_t \le \lc \frac{2}{n-1}+c_1 \ep_0^{\frac{1}{3}}\rc R^2-R,
\end{align*}
which follows from \eqref{E417b}.
\end{proof}

Next, we prove

\begin{prop} \label{prop:501}
Under the above assumptions, for any small $\ep_1>0$, there exist $\tau_1=\tau_1(\ep_1,n,\delta_0,A,B)>0$ and $L_1=L_1(\ep_1,n,\delta_0,A,B)>0$ such that if $\ep_1' \le \tau_1$, then
\begin{align*}
\emph{diam}_g D \le \frac{L_1}{\sqrt{s}} \quad \text{and} \quad \sup_{D} |f-s| \le L_1.
\end{align*}
In particular, $D$ is compact and $\partial D=\Sigma(s)$.
\end{prop}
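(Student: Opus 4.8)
The plan is to argue by contradiction, combined with a blow-down of the cap at $q$ to a Ricci steady soliton conifold and the fact that such conifolds have a single end (Theorem~\ref{thm:oneend}). First, a few reductions. By Corollary~\ref{cor:sest} the number $s$ is automatically large in terms of $\ep_1,n,\delta_0,A,B$ once $\ep_1$ (and hence $\ep_0=\ep_1^{1/2}$) is small. On $D$ one has $f\ge s$, and since $R\ge0$ everywhere the function $2\sqrt f$ is $1$-Lipschitz for $g$; hence $2\sqrt{f(y)}\le 2\sqrt s+\operatorname{diam}_g D$ for all $y\in D$. Therefore the bound $\operatorname{diam}_g D\le L_1 s^{-1/2}$ immediately yields $\sup_D|f-s|\le L_1+L_1^2/(4s)\le 2L_1$, and forces $\overline D$ to be compact with $\partial D=\Sigma(s)$. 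So it suffices to prove $\operatorname{diam}_{\hat{g}}D\le L_1$ for the rescaled metric $\hat{g}\coloneqq s\,g$.

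Suppose this fails along a sequence of Ricci shrinkers $(M_i,g_i,f_i)$ satisfying \eqref{E401}, \eqref{E460a}, \eqref{E460b} with $\ep_1$ fixed and $\ep_{1,i}'\to0$, with the associated data $s_i$, $q_i\in\Sigma(s_i)$ (so $R_{g_i}(q_i)=\ep_1 s_i$ by Proposition~\ref{prop:462}) and $D_i$, such that $\operatorname{diam}_{\hat{g}_i}D_i\to\infty$ where $\hat{g}_i\coloneqq s_i g_i$. The metrics $g_i$ are $\kappa(n,A)$-noncollapsed by Theorem~\ref{thm:nonc} and satisfy $\boldsymbol{\mu}(g_i)\ge-A$, and by Proposition~\ref{prop:463} every point just behind $\Sigma(s_i)$ is the center of an evolving $\ep$-neck, with $\hat{g}_i$-scalar curvature comparable to a positive constant depending only on $\ep_1$ (Proposition~\ref{prop:500}). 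Applying the weak-compactness theory for rescaled Ricci shrinkers of \cite{LLW21}\cite{HLW21} — here $Rc_{\hat{g}_i}+\operatorname{Hess}_{\hat{g}_i}f_i=\tfrac{1}{2s_i}\hat{g}_i$, so the shrinker equation degenerates to the steady soliton equation — we may pass to a subsequence so that
\[
(M_i,q_i,\hat{g}_i)\longrightarrow(M_\infty,q_\infty,g_\infty),
\]
where $M_\infty$ is a complete Ricci steady soliton conifold in the sense of Definition~\ref{dfn:B001}. Indeed, with $h_i\coloneqq f_i-s_i$ one has $\operatorname{Hess}_{\hat{g}_i}h_i=\tfrac{1}{2s_i}\hat{g}_i-Rc_{\hat{g}_i}$ and $|\nabla h_i|^2_{\hat{g}_i}=(f_i-R_{g_i})/s_i$; since $f_i/s_i\to1$ and $R_{\hat{g}_i}$ stays bounded on $\hat{g}_i$-bounded sets, $h_i$ converges to a function $h_\infty$ with $\operatorname{Hess}_{g_\infty}h_\infty=-Rc_{g_\infty}$ and $|\nabla h_\infty|^2=1-R_{g_\infty}\le1$. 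Moreover $R_{g_\infty}(q_\infty)=\ep_1>0$, so $M_\infty$ is not flat.

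To reach a contradiction, note that $\Sigma_\infty\coloneqq\lim\Sigma(s_i)$ is a compact smooth hypersurface in $M_\infty$: by Lemma~\ref{L404}, $\Sigma(s_i)$ has $g_i$-diameter $O(R_{g_i}(q_i)^{-1/2})$, i.e.\ $O(\ep_1^{-1/2})$ after rescaling, and $R_{\hat{g}_i}$ is bounded on it, so no singularity forms there. It separates $M_\infty$. On one side lies $D_\infty\coloneqq\lim D_i$: since $D_i$ is connected with $q_i\in\partial D_i$ and $\operatorname{diam}_{\hat{g}_i}D_i\to\infty$, the set $D_\infty$ meets every $\hat{g}_i$-sphere about $q_\infty$ and hence is non-compact. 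On the other side lies the limit of the neck $\Sigma(t_{0,i},s_i)$, whose $\hat{g}_i$-length is of order $s_i\to\infty$ (its $g_i$-length is of order $\sqrt{s_i}$, since $|\nabla f_i|^2=f_i-R_{g_i}$ is of order $f_i$ there), so it too is non-compact. Thus $M_\infty$ has at least two ends, contradicting Theorem~\ref{thm:oneend}. This establishes the diameter bound and, by the reductions above, the proposition.

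The hardest step will be the blow-down itself: extracting a conifold limit over all of $D_i$ without any a priori curvature bound on $D_i$, verifying that it is genuinely a steady soliton conifold, and controlling the rescaled potentials $h_i$, which requires a uniform bound on $R_{\hat{g}_i}$ beyond the $\ep$-neck region. The $\ep$-neck structure behind $\Sigma(s_i)$ (Proposition~\ref{prop:463}) together with the a priori estimates of the weak-compactness machinery of \cite{LLW21}\cite{HLW21} are what make this work; once the limit is available, the one-end input (Theorem~\ref{thm:oneend}) is purely formal.
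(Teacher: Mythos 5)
Your proposal is correct and takes essentially the same approach as the paper: rescale by $s_i$, use the weak-compactness theory of \cite{LLW21}\cite{HLW21} to extract a pointed limit which is a Ricci steady soliton conifold, and invoke Theorem~\ref{thm:oneend} to force a contradiction. The only cosmetic difference is in the final step: you argue that the failure of the diameter bound would produce a second end (from $D_\infty$) opposite the neck-end across the separating hypersurface $\Sigma_\infty$, whereas the paper directly identifies $E_\infty=\lim E_i^2$ as the unique end and concludes $D_\infty=X_\infty\setminus E_\infty$ is compact; these are equivalent reformulations of the same contradiction.
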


\begin{proof}
We prove it by contradiction. Suppose for fixed $\ep_1$ the conclusion does not hold. Then there exists a sequence of Ricci shrinkers $(M^n_i,g_i,f_i)$ satisfying all the assumptions with $\ep'_{1,i} \to 0$ and 
\begin{align}
\textrm{diam}_{g_i} D_i \geq \frac{i^2}{\sqrt{s_i}}, \quad \textrm{or} \quad \sup_{D_i} |f_i-s_i| \ge i^2. \label{eqn:OH04_1}
\end{align}
From Proposition \ref{prop:460}, there exists a point $q_i \in \Sigma(s_i)$ satisfying
\begin{align*}
R_i(q_i)=\ep_1 f_i(q_i)=\ep_1 s_i. 
\end{align*}

Now we define $\tilde f_i:=f_i-s_i$ and $\tilde g_i:=s_i g_i$. From direct calculations, we have 
\begin{align}
R_{\tilde g_i}+|\na_{\tilde g_i} \tilde f_i|^2_{\tilde g_i}=\frac{R_i+|\na_{g_i} f_i|^2_{g_i}}{s_i}=\frac{f_i}{s_i}=1+\frac{\tilde f_i}{s_i} \label{E501b}
\end{align}
and
\begin{align}
\text{Hess}_{\tilde g_i} \tilde f_i+Rc_{\tilde g_i}=\text{Hess}_{g_i} f_i+Rc_i=\frac{g_i}{2}=\frac{\tilde g_i}{2s_i}.
\label{E501c}
\end{align}

For any $L>1$ and $y \in B_{\tilde g_i}(q_i,L)$, it follows from \eqref{E501b} that 
\begin{align}
|\tilde f_i(y)| \le 2L \quad \text{and} \quad R_{\tilde g_i}(y) \le 2
\label{E501d}
\end{align}
if $i$ is sufficiently large. Therefore, it follows from \eqref{E501d} and Theorem \ref{thm:nonc} that
\begin{align}
|B_{\tilde g_i}(q_i,1)|_{\mu_i} \ge v_0
\label{E501e}
\end{align}
for some $v_0=v_0(n,A)>0$, where $\mu_i:=e^{-\tilde f_i}dV_{\tilde g_i}$ is the weighted measure. Combining \eqref{E501b}-\eqref{E501e}, it follows from the main result \cite[Theorem $1.1$]{LLW21} and its variant \cite[Theorem $10.2$]{LLW21} that by taking a subsequence if necessary,
\begin{align}
(M_i, q_i, \tilde g_i, \tilde f_i) \longright{pointed-Gromov-Hausdorff} \left(X_{\infty}, q_{\infty}, d_{\infty}, f_{\infty} \right), \label{E502a}
\end{align}
where $(X_{\infty}, d_{\infty})$ is a length space, $f_{\infty}$ is a Lipschitz function on $(X_{\infty}, d_{\infty})$. The space $X_{\infty}$ has a natural regular-singular decomposition $X_{\infty}=\mathcal{R} \cup \mathcal{S}$ satisfying
\begin{itemize}
\item[(a).] The singular part $\mathcal{S}$ is a closed set of Minkowski codimension at least $4$. Namely, we have
\begin{align}
\dim_{\mathcal{M}} \mathcal{S} \leq n-4. \label{eqn:OH06_5}
\end{align}
\item[(b).] The regular part $\mathcal{R}$ is an open manifold with smooth metric $g_{\infty}$ satisfying Ricci steady soliton equation
\begin{align}
Rc_{\infty} + \text{Hess}_{g_{\infty}} f_{\infty}=0. \label{E502b}
\end{align}
\item[(c).] The convergence \eqref{E502a} can be improved to 
\begin{align}
(M_i, q_i, \tilde g_i, \tilde f_i) \longright{pointed-\hat{C}^{\infty}-Cheeger-Gromov} \left(X_{\infty}, q_{\infty}, g_{\infty}, f_{\infty} \right). \label{E502c}
\end{align}

\item[(d).] On the regular part $\mathcal R$,
\begin{align}
R_{\infty}+|\na f_{\infty}|^2=1. \label{E502d}
\end{align}
\end{itemize}

Note that \eqref{E502c} implies that away from $\mathcal{S}$, the convergence is smooth. 
This follows from the bootstrapping argument based on \eqref{E501b} and \eqref{E501c}, see \cite{LLW21}. Moreover, \eqref{E502b} and \eqref{E502d} follow from \eqref{E501c} and \eqref{E501b}, respectively, by taking the limit.

In addition, it follows from \cite[Theorem $4.10$]{HLW21} that the regular part $\mathcal{R}$ is geodesically convex. Therefore, the limit $(X_{\infty}, q_{\infty}, d_{\infty}, f_{\infty} )$ is a Ricci steady soliton conifold in the sense of Definition \ref{dfn:B001}.

By our assumptions, any point $y \in B_{\tilde g_i}(q_i,L) \cap E^2_i$ satisfies \eqref{E401} and \eqref{E500a}. Furthermore, $y$ is an evolving $\ep$-neck by Proposition \ref{prop:463}. Therefore, the corresponding limit set $E_{\infty}$ of $E^2_i$ under the convergence \eqref{E502c} is an end of $(X_{\infty},d_{\infty})$. On the other hand, it follows from Theorem \ref{thm:oneend} that $(X_{\infty},d_{\infty})$ has only one end. In other words, $D_{\infty}:=X_{\infty} \backslash E_{\infty}$ is a compact set, which fact in turn yields that $\displaystyle \text{diam}_{d_{\infty}} D_{\infty} < \infty$. Since $f_{\infty}$ is $1$-Lipschitz and $f_{\infty}(q_{\infty})=0$, it is clear that $\displaystyle \sup_{D_{\infty}}|f_{\infty}| <\infty$. 
Therefore, for some $L_1>0$ and sufficiently large $i$, we have
\begin{align*}
\text{diam}_{\tilde g_i} D_i \le L_1, \quad \sup_{D_i}|f_i-s_i| \le L_1, 
\end{align*}
which contradicts (\ref{eqn:OH04_1}). The proof is established by this contradiction. 
\end{proof}

We have the following definition, which measures how close the neighborhood of a point in a Ricci shrinker is to a Ricci steady soliton conifold.

\begin{defn}[Center of an $\ep$-steady soliton conifold]\label{def:soli}
Let $(M^n,g,f)$ be a Ricci shrinker and let $\bar x \in M$ such that $f(\bar x)=s$ . We say that $\bar x$ is the center of an $\ep$-steady soliton conifold if, after rescaling the metric by the factor $s$, 
the neighborhood $B_{g}(\bar{x},\ep^{-1} s^{-\frac 1 2}) $ is $\ep$-close in the Gromov-Hausdorff sense to a nontrivial $(X,q,d,f) \in \mathfrak S(n)$.
\end{defn}

From the proof of Proposition \ref{prop:501}, we immediately have

\begin{cor} \label{cor:501}
Under the above assumptions, for any $\ep,\ep_1>0$, there exists $\tau_2=\tau_2(\ep,\ep_1,n,\delta_0,A,B)>0$ such that if $\ep_1' \le \tau_2$, then any point in $D$ is the center of an $\ep$-steady soliton conifold. 
\end{cor}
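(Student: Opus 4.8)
The plan is to extract this statement directly from the compactness argument already carried out in the proof of Proposition \ref{prop:501}, so the work consists of re-examining that proof and keeping track of a quantitative conclusion rather than the qualitative one.

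First I would set up a contradiction: fix $\ep,\ep_1>0$ and suppose there is a sequence of Ricci shrinkers $(M_i^n,g_i,f_i)$ satisfying all the standing assumptions with $\ep'_{1,i}\to 0$, together with points $\bar x_i \in D_i$ that are \emph{not} centers of $\ep$-steady soliton conifolds. By Proposition \ref{prop:501} (applied with its own parameters), we already know $\operatorname{diam}_{g_i} D_i \le L_1/\sqrt{s_i}$ and $\sup_{D_i}|f_i-s_i|\le L_1$ for large $i$, so after the rescaling $\tilde g_i = s_i g_i$ the sets $D_i$ have uniformly bounded diameter in $\tilde g_i$ and the rescaled potential $\tilde f_i = f_i - s_i$ is uniformly bounded on $D_i$. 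Hence the base points $\bar x_i$ all lie in a fixed-size ball $B_{\tilde g_i}(q_i,2L_1)$ around the marked points $q_i\in\Sigma(s_i)$ with $R_i(q_i)=\ep_1 s_i$, to which the compactness machinery of the proof of Proposition \ref{prop:501} applies verbatim.

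Then I would invoke exactly the convergence established there: passing to a subsequence,
\begin{align*}
(M_i,q_i,\tilde g_i,\tilde f_i) \longright{pointed-\hat C^\infty-Cheeger-Gromov} (X_\infty,q_\infty,g_\infty,f_\infty),
\end{align*}
where $(X_\infty,q_\infty,d_\infty,f_\infty)\in\mathfrak S(n)$ is a Ricci steady soliton conifold, and $X_\infty=D_\infty\cup E_\infty$ with $D_\infty$ the compact cap (using that the conifold has one end, Theorem \ref{thm:oneend}). Since $\bar x_i$ lies in a bounded ball, a subsequence of $\bar x_i$ converges to some $\bar x_\infty\in X_\infty$. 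I would then argue that $\bar x_\infty$ cannot lie in the singular set $\mathcal S$ in a way that destroys the statement — but in fact this is not needed, because the definition of ``center of an $\ep$-steady soliton conifold'' (Definition \ref{def:soli}) only requires Gromov-Hausdorff closeness of the rescaled ball $B_g(\bar x,\ep^{-1}s^{-1/2})$ to some nontrivial element of $\mathfrak S(n)$, and $\mathfrak S(n)$ is closed under re-basing and (since $\operatorname{diam}_{\tilde g_i}D_i$ is uniformly bounded) the ball $B_{\tilde g_i}(\bar x_i,\ep^{-1})$ is contained in a fixed neighborhood of $q_i$. Thus for $i$ large the rescaled ball around $\bar x_i$ is Gromov-Hausdorff $\ep$-close to the corresponding ball in $(X_\infty,d_\infty,f_\infty)\in\mathfrak S(n)$ re-based at $\bar x_\infty$, contradicting the choice of $\bar x_i$.

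The only subtlety — and the step I expect to need the most care — is the interaction between the two different ``$\ep$'s'': the $\ep$ that governs the neck structure used in Proposition \ref{prop:463} to identify $E_\infty$ as a genuine end of $X_\infty$, and the $\ep$ in the target closeness of Definition \ref{def:soli}. One must first fix the conifold limit (which only uses $\ep'_{1,i}\to 0$ and a fixed small neck parameter to guarantee one-endedness), and only then let the desired target $\ep$ enter when converting the smooth/GH convergence into the quantitative statement; this is why $\tau_2$ is allowed to depend on $\ep$ in addition to $\ep_1,n,\delta_0,A,B$. With that ordering the argument is just a re-packaging of the proof of Proposition \ref{prop:501}, so no new estimates are required.
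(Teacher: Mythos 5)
Your proposal is correct and is exactly the paper's approach: the authors simply state that the corollary follows from the proof of Proposition \ref{prop:501}, and you have fleshed out that the same contradiction-and-compactness argument, applied to arbitrary points $\bar x_i \in D_i$ sitting in a uniformly bounded $\tilde g_i$-neighborhood of $q_i$, gives pointed GH convergence of the rescaled balls to a nontrivial element of $\mathfrak S(n)$, which is precisely what Definition \ref{def:soli} asks for.
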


Next, we estimate the scalar curvature on the cap region. Before that, we recall the following Sobolev inequality proved in \cite{LW20}.

\begin{thm}\emph{(\cite[Theorem 1]{LW20})}\label{thm:sobo}
For any Ricci shrinker $(M^n,g,f)$ with $\boldsymbol{\mu}(g) \ge -A$, there exists a $C=C(n,A)>0$ such that for any compactly supported locally Lipschitz function $u$, we have
\begin{align*} 
\lc \int u^{\frac{n}{n-2}} \, dV \rc^{\frac{n-2}{n}} \le C\int 4|\na u|^2+R u^2 \,dV.
\end{align*}
\end{thm}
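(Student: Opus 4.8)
The plan is to obtain the Sobolev inequality from the entropy lower bound $\boldsymbol{\mu}(g)\ge -A$ along the classical route ``scale-uniform lower bound for Perelman's $\mu$-functional $\Rightarrow$ scale-invariant logarithmic Sobolev inequality $\Rightarrow$ Sobolev inequality'', carried out for the nonnegative Schr\"odinger operator $\mathcal L:=-4\Delta+R$ (recall $R\ge 0$ by \cite{CBL07}). One works with Perelman's functional $\mathcal W(g,\phi,\tau)=\int_M\lc \tau(R+|\na \phi|^2)+\phi-n\rc(4\pi\tau)^{-n/2}e^{-\phi}\,dV$ for $\tau>0$, and $\mu(g,\tau):=\inf\mathcal W(g,\phi,\tau)$ over $\phi$ with $\int_M(4\pi\tau)^{-n/2}e^{-\phi}\,dV=1$.

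The first and essential step is to prove that $\mu(g,\tau)\ge\boldsymbol{\mu}(g)\ge -A$ for \emph{all} $\tau>0$. One checks that, on a Ricci shrinker with the normalization \eqref{E101}, the function $f+\boldsymbol{\mu}(g)$ is the minimizer of $\mathcal W(g,\cdot,\tau_*)$ at the canonical scale $\tau_*=1$ and that $\mu(g,1)=\boldsymbol{\mu}(g)$; the identity $\int_M f\,(4\pi)^{-n/2}e^{-f}\,dV=\tfrac n2\,e^{\boldsymbol{\mu}(g)}$ (which follows from $\Delta_f f=\tfrac n2-f$ and integration by parts against $e^{-f}\,dV$) makes this computation explicit. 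Next, since the associated Ricci flow $g(t)=(1-t)(\psi^t)^*g$ is self-similar, Perelman's monotonicity of $t\mapsto\mu(g(t),\tau(t))$ under $\dot\tau=-1$, combined with the scaling $\mu(cg,c\tau)=\mu(g,\tau)$, forces $\tau\mapsto\mu(g,\tau)$ to attain its minimum exactly at $\tau_*=1$; hence $\mu(g,\tau)\ge\mu(g,1)=\boldsymbol{\mu}(g)\ge -A$ for every $\tau>0$. In the non-compact case one must also verify that $\mathcal W$ is bounded below and that minimizing sequences lose no mass at infinity, for which the Euclidean logarithmic Sobolev inequality after a cutoff, together with the quadratic growth of $f$ from Lemma~\ref{L201}, are the inputs.

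Unravelling $\mu(g,\tau)\ge -A$ by writing $u^2=(4\pi\tau)^{-n/2}e^{-\phi}$ yields, for every $\tau>0$ and every compactly supported locally Lipschitz $u$ normalized by $\int_M u^2\,dV=1$,
\[
\int_M u^2\log u^2\,dV\ \le\ \tau\int_M\lc 4|\na u|^2+Ru^2\rc dV+\frac n2\log\frac1\tau+C(n,A),
\]
and minimizing the right-hand side in $\tau$ gives the tight (Gross/Nash type) form
\[
\int_M u^2\log u^2\,dV\ \le\ \frac n2\log\lc C'(n,A)\int_M 4|\na u|^2+Ru^2\,dV\rc .
\]
Finally one invokes the Davies--Gross equivalence (equivalently Varopoulos's theorem): the tight logarithmic Sobolev inequality above is equivalent to the ultracontractivity bound $\|e^{-t\mathcal L}\|_{L^1\to L^\infty}\le C''(n,A)\,t^{-n/2}$ for all $t>0$, which in turn is equivalent to the asserted Sobolev inequality with $C=C(n,A)$; the constants are tracked through each step so that the final $C$ depends only on $n$ and $A$.

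The hard part is the first step: converting the \emph{single} number $\boldsymbol{\mu}(g)$ into a lower bound valid at all scales. This needs the precise identification of $\boldsymbol{\mu}(g)$ with $\mu(g,\cdot)$ at the canonical scale via the shrinker identities and Perelman's monotonicity, and --- because $M$ is non-compact --- genuine care in showing that $\mathcal W$ is lower semicontinuous and bounded below and that no mass escapes in the minimization (the properness of $f$ in Lemma~\ref{L201} is what rules this out). Steps two and three are standard machinery, but the bookkeeping of constants must be carried out carefully so the final bound only sees $n$ and $A$.
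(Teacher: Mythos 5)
Your overall route—a uniform lower bound on Perelman's $\mu$-functional at all scales $\tau>0$, hence a one-parameter family of logarithmic Sobolev inequalities for the Schr\"odinger form $\int 4|\na u|^2 + R u^2\,dV$, hence ultracontractivity and the Sobolev inequality—is the standard one and is, as far as I can tell, the same strategy used in \cite{LW20}. The $\tau$-optimization and the Davies--Gross equivalence are correct bookkeeping, and the computation $\mathcal W(g,f+\boldsymbol{\mu}(g),1)=\boldsymbol{\mu}(g)$ using $\int f\,(4\pi)^{-n/2}e^{-f}\,dV = \tfrac{n}{2}e^{\boldsymbol{\mu}(g)}$ is also correct.

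However, there is a genuine gap in the first and key step. The computation above only shows $\mu(g,1)\le\boldsymbol{\mu}(g)$: it exhibits $f+\boldsymbol{\mu}(g)$ as a critical point of the constrained $\mathcal W$-functional, but criticality, plus existence of a minimizer, does not make it \emph{the} minimizer (the Euler--Lagrange equation can have several solutions). What you actually need for the entire argument to start is the reverse inequality $\mu(g,1)\ge\boldsymbol{\mu}(g)$, and this does not follow from the lower semicontinuity and no-mass-escape technicalities you identify as ``the hard part.'' Those would only guarantee that a minimizer exists; they say nothing about which function achieves the infimum. The missing ingredient is a separate structural fact, namely the Bakry--\'Emery logarithmic Sobolev inequality for the weighted probability measure proportional to $e^{-f}\,dV$, which applies because $Rc_f = Rc + \text{Hess}\,f = \tfrac12 g>0$ and $e^{-f}\,dV$ has finite mass (quadratic growth of $f$, Lemma~\ref{L201}); transporting this inequality from the weighted Dirichlet form back to the Schr\"odinger form via $\psi = u\,e^{f/2}$ and the shrinker identities $\Delta f = \tfrac{n}{2}-R$, $|\na f|^2 = f-R$ yields exactly $\mu(g,1)\ge\boldsymbol{\mu}(g)$. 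Without invoking this (or an equivalent variational rigidity argument \`a la Carrillo--Ni), the chain ``$\mu(g,\tau)\ge\mu(g,1)=\boldsymbol{\mu}(g)\ge -A$'' has an unproved link in the middle, and the rest of the proof does not get off the ground.
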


Notice that the Sobolev inequality is scaling-invariant and also holds at different time slices of the Ricci flow associated with a Ricci shrinker. Now we have

\begin{prop} \label{prop:502}
Under the above assumptions, for any small $\ep_1>0$, there exist positive numbers $\tau_3=\tau_3(\ep_1,n,\delta_0,A,B)$ and $L_2=L_2(\ep_1,n,\delta_0,A,B)$ such that if $\ep_1' \le \tau_3$, 
then
\begin{align}
L_2^{-1} s \le R \le s+L_1 \label{E503a}
\end{align}
on $D$, where $L_1$ is the same constant in Proposition \ref{prop:501}.
\end{prop}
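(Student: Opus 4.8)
The plan is to establish the two bounds in \eqref{E503a} separately. The upper bound $R \le s + L_1$ is the easy half: on $D$ we have $\sup_D|f-s| \le L_1$ by Proposition \ref{prop:501}, so $f \le s + L_1$ on $D$, and since $R \le f$ everywhere on any Ricci shrinker (this follows from $R + |\nabla f|^2 = f$ together with $R \ge 0$), we immediately get $R \le s + L_1$ on $D$. The real content is the lower bound $R \ge L_2^{-1} s$.

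For the lower bound I would argue by contradiction in the same spirit as Proposition \ref{prop:501}. Suppose it fails; then there is a sequence of Ricci shrinkers $(M_i^n, g_i, f_i)$ satisfying all the hypotheses with $\ep'_{1,i} \to 0$, points $q_i \in \Sigma(s_i)$ with $R_i(q_i) = \ep_1 s_i$, and points $y_i \in D_i$ with $R_i(y_i)/s_i \to 0$. Rescale as before by $\tilde g_i = s_i g_i$, $\tilde f_i = f_i - s_i$. By Proposition \ref{prop:501} the rescaled caps $D_i$ have $\tilde g_i$-diameter bounded by $L_1$ and $|\tilde f_i| \le L_1$ on $D_i$, so the $y_i$ stay in a fixed $\tilde g_i$-ball around $q_i$. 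As in the proof of Proposition \ref{prop:501}, the convergence \eqref{E502c} gives a limit Ricci steady soliton conifold $(X_\infty, q_\infty, d_\infty, f_\infty) \in \mathfrak S(n)$ with regular part $\mathcal R$ carrying a smooth metric $g_\infty$ satisfying $Rc_\infty + \mathrm{Hess}\, f_\infty = 0$ and $R_\infty + |\nabla f_\infty|^2 = 1$, and the $y_i$ converge (after passing to a subsequence) to a point $y_\infty \in X_\infty$ with $R_\infty(y_\infty) = 0$ — or, if $y_\infty$ lands in the singular set $\mathcal S$, to a point near which $R_\infty$ degenerates. The contradiction should come from the fact that a nontrivial Ricci steady soliton conifold cannot have a zero of scalar curvature in its (convex) regular part: since $R_\infty \ge 0$ on the regular part and satisfies the steady-soliton elliptic identity $\Delta_{f_\infty} R_\infty = -2|Rc_\infty|^2 \le 0$ (the steady-soliton analogue of \eqref{E202c}), the strong maximum principle forces $R_\infty \equiv 0$, hence $Rc_\infty \equiv 0$ and $\mathrm{Hess}\, f_\infty = 0$, so $X_\infty$ splits off a line and its cross-section is Ricci-flat; but the cross-sections here are forced (via Proposition \ref{prop:463} and Lemma \ref{L404}) to be close to round $S^{n-1}$, and in any case such a splitting contradicts the one-end structure of $\mathfrak S(n)$ established in Theorem \ref{thm:oneend} together with the neck structure on $E_\infty$. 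The case $y_\infty \in \mathcal S$ is handled by a capacity/Sobolev argument using that $\mathcal S$ has Minkowski codimension $\ge 4$, so a sequence of interior points with $R \to 0$ in the regular part still produces, after a secondary rescaling, a complete limit with a zero of $R$ and the same maximum-principle contradiction applies; here Theorem \ref{thm:sobo} (scaling-invariant and valid along the associated Ricci flow) is what prevents collapsing or loss of a limit.

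Alternatively — and this may be cleaner — one avoids the contradiction setup entirely and argues quantitatively: take a fixed point in $D$, say the critical point or the point $q$ on $\Sigma(s)$, run the gradient flow of $f$ inward, and use the ODE comparison \eqref{E413}–\eqref{E413a} together with Proposition \ref{prop:500} to control $R$ from below along the flow, then use Lemma \ref{L404} (scalar curvature almost constant on level sets, valid on the neck part $E^2 \cap D$) and Proposition \ref{prop:501}'s diameter bound to propagate the bound across all of $D$. But since $D$ is a cap and the neck estimates of Section 3 do not directly apply on the innermost region, the compactness argument above seems unavoidable for the part of $D$ near the critical set.

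I expect the \textbf{main obstacle} to be the analysis at the singular set of the limit conifold: ruling out that the points $y_i$ with $R_i(y_i)/s_i \to 0$ escape to $\mathcal S$, and showing that even if they do, one still extracts a smooth complete limit with a genuine zero of scalar curvature so that the steady-soliton strong maximum principle bites. This is exactly where one must lean on the codimension-$4$ regularity of $\mathcal S$ from \eqref{eqn:OH06_5}, the geodesic convexity of $\mathcal R$ from \cite[Theorem 4.10]{HLW21}, and the uniform Sobolev inequality of Theorem \ref{thm:sobo}; everything else is a routine adaptation of the already-established compactness machinery.
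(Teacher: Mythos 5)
Your upper bound is the same as the paper's and is fine. For the lower bound you correctly identify the strategy (rescale by $s_i$, extract a steady soliton conifold limit, show it has a zero of scalar curvature in the regular part, and contradict via the strong maximum principle applied to $\Delta_{f_\infty} R_\infty = -2|Rc_\infty|^2 \le 0$). You also correctly identify the main obstacle: the bad points $x_i$ with $R_i(x_i)/s_i \to 0$ might accumulate on the singular set $\mathcal S$ of the limit, in which case the pointwise argument breaks down. But your proposed resolution of that obstacle --- ``a capacity/Sobolev argument'' or ``a secondary rescaling'' --- is a gesture rather than an argument, and as stated it does not work: a secondary rescaling near a putative singular accumulation point produces a Ricci-flat tangent cone, and ``$R=0$ at the vertex'' on a cone with a singular vertex is exactly where the strong maximum principle fails to bite, so this route does not close the gap.

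The mechanism the paper actually uses is a parabolic one, and it is the key idea you are missing. Work with the rescaled Ricci flow $\tilde g_i(t) = s_i\, g_i(s_i^{-1}t)$ and the function $\tilde F_i(t) = F_i(s_i^{-1}t) - s_i$, which satisfies $\tilde\square_i \tilde F_i = -n/(2s_i)$. On the parabolic cylinder $P^2_i$ cut out by $|\tilde F_i| \le 4L_1$ and $-4L_1 \le t \le 0$, the rescaled scalar curvature is uniformly bounded by Proposition~\ref{prop:500} and the upper bound \eqref{E503b}. Since $R_{\tilde g_i(t)}$ is a nonnegative supersolution of the heat equation there, one runs a parabolic Moser iteration (with the scaling-invariant Sobolev inequality of Theorem~\ref{thm:sobo} providing the iteration engine) to get a mean-value inequality on $P^1_i \subset P^2_i$. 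Applied to the function $(2i^{-2} - R_{\tilde g_i(t)})_+$, this upgrades the pointwise smallness $R_i(x_i) < i^{-2}s_i$ to the statement that the \emph{spacetime set} where $R_{\tilde g_i(t)} < 2 i^{-2}$ has measure bounded below by a constant $C_4 > 0$ independent of $i$. Choosing a good time slice $t_i$ and pushing forward by $\psi_i^{s_i^{-1}t_i}$ then produces a set $\Omega_i$ at time zero with $|\Omega_i|_{dV_{\tilde g_i(0)}} \ge C_6 > 0$, contained in a fixed $\tilde f_i$-band so it stays in the region of convergence. Because $\mathcal S$ has Minkowski codimension at least $4$ --- hence measure zero --- a positive-measure set cannot be swallowed by $\mathcal S$, so the limit must contain a \emph{regular} point $z \in \mathcal R$ with $R_{g_\infty}(z) = 0$. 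Then the strong maximum principle gives $R_\infty \equiv 0$ on $\mathcal R$, contradicting simply $R_\infty(q_\infty) = \lim_i s_i^{-1} R_i(q_i) = \ep_1 > 0$. Note that the final contradiction is this one-liner; your detour through the splitting/one-end structure is unnecessary (and shaky, since Theorem~\ref{thm:oneend} is stated for nontrivial conifolds, which is what you are in the process of contradicting). In short: the Moser-iteration-to-positive-measure step is the real content of the lower bound, and it is absent from your proposal.
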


\begin{proof}
Without loss of generality, we assume $\tau_3 \le \tau_1$ and the conclusion of Proposition \ref{prop:501} holds. In particular, we have
\begin{align}
R(y) \le f(y) \le s+L_1, \quad \forall \; y \in D. \label{E503b}
\end{align}
For the other direction, we assume the inequality does not hold. Then there exists a sequence of Ricci shrinkers $(M^n_i,q_i, g_i,f_i)$ satisfying all the assumptions with $\ep'_{1,i} \to 0$ such that 
\begin{align}
R_i(x_i) < i^{-2} s_i \label{E503c}
\end{align}
for some $x_i \in D_i$. For $t \le 0$, we define 
\begin{align*}
\tilde g_i(t) \coloneqq s_i g_i(s_i^{-1}t), \quad \tilde F_i(t) \coloneqq F_i(s_i^{-1}t)-s_i, \quad \tilde \square_i \coloneqq \partial_t-\Delta_{\tilde g_i(t)}, 
\end{align*}
where $F_i(t)=(1-t)f_i(t)$. Direct calculation shows that 
\begin{align*}
|\na_{\tilde g_i(t)} \tilde F_i(t)|^2_{\tilde g_i(t)} = & s_i^{-1} |\na_{ g_i(s_i^{-1} t)} F_i(s_i^{-1} t)|^2_{ g_i(s_i^{-1} t)}= \frac{F_i(s_i^{-1} t)-(1-s_i^{-1}t)^2 R_{g_i(s_i^{-1} t)} }{s_i}.
\end{align*}
Thus we obtain
\begin{align*}
|\na_{\tilde g_i(t)} \tilde F_i(t)|^2_{\tilde g_i(t)}+(1-s_i^{-1}t)^2R_{\tilde g_i(t)}=1+ \frac{\tilde F_i(t)}{s_i}.
\end{align*}

Moreover, we compute
\begin{align}
\partial_t \tilde F_i(t)=s_i^{-1} \partial_t F_i(s_i^{-1}t)=-s_i^{-1}(1-s_i^{-1}t) R_{g_i(s_i^{-1} t)}=-(1-s_i^{-1}t) R_{\tilde g_i(t)} \label{E503e}
\end{align}
and 
\begin{align}
\Delta_{\tilde g_i(t)} \tilde F_i(t)=s_i^{-1}\Delta_{g_i(s_i^{-1} t)} F_i(s_i^{-1}t)=s_i^{-1} \lc \frac{n}{2}-(1-s_i^{-1}t) R_{g_i(s_i^{-1} t)} \rc=\frac{n}{2s_i}-(1-s_i^{-1}t)R_{\tilde g_i(t)}. \label{E503f}
\end{align}
Combining \eqref{E503e} and \eqref{E503f}, we obtain
\begin{align}
\tilde \square_i \tilde F_i(t)=(\partial_t-\Delta_{\tilde g_i(t)}) \tilde F_i(t)=-\frac{n}{2s_i}. \label{E503g}
\end{align}

Now we define the following parabolic balls
\begin{align*}
P^1_i& \coloneqq \{(y,t) \,\mid |\tilde F_i(y,t)| \le 2L_1, \, -2L_1 \le t \le 0\}, \\
P^2_i& \coloneqq \{(y,t) \,\mid |\tilde F_i(y,t)| \le 4L_1, \, -4L_1 \le t \le 0\} . 
\end{align*}

In the following proof, all positive constants $C_i$ in the following proof depend only on $n,\delta_0,A,B$ and $\ep_1$ and the corresponding inequalities hold for sufficiently large $i$. It follows from \eqref{E500a} and \eqref{E503b} that 
\begin{align}
0 \le R_{\tilde g_i(t)} \le C_1 \label{E504a}
\end{align}
on $ P^2_i$, since by the definition we know 
\begin{align*}
R_{\tilde g_i(t)}(z)=s_i^{-1} R_{g_i(s_i^{-1}t)}(z)=s_i^{-1}(1-s_i^{-1}t)^{-1} R_{g_i(0)}(\psi_i^{s_i^{-1}t}(z))
\end{align*}
for any point $z$. 
Since $\tilde F_i(q_i,0)=0$, it follow from \eqref{E503e} and \eqref{E504a} that
\begin{align*}
0 \le \tilde F_i(q_i,t) \le C_2
\end{align*}
for $t \in [-4L_1,0]$.

\textbf{Claim 1.} Suppose $u$ is a nonnegative function such that 
\begin{align*}
\tilde \square_i u \le 0, \quad \textrm{on} \; P^2_i. 
\end{align*}
Then we have
\begin{align}\label{E504c}
\max_{P^1_i} u^2 \le C_3 \iint_{P^2_i} u^2 \,dV_{\tilde g_i(t)} dt
\end{align}
for some $C_3>0$.

\emph{Proof of Claim 1:} The proof follows verbatim as \cite[Lemma $9.7$]{LLW21} by using the Moser iteration. Notice that the Sobolev inequality and the control of $R_{\tilde g_i(t)}$ on $P^2_i$ are guaranteed by Theorem \ref{thm:sobo} and \eqref{E504a} respectively. 
Moreover, the cutoff functions $\eta_k$ in the proof of \cite[Lemma $9.7$]{LLW21} can be defined similarly by using $\tilde F_i$ and $\eta_k$ can be estimated similarly by using \eqref{E503g}. We omit the details.

Next we define for any $\tau>0$,
\begin{align*}
P_i(\tau)=P^2_i\cap \{(z,t) \,\mid R_{\tilde g_i(t)}(z) <\tau \}.
\end{align*}

\textbf{Claim 2.} There exists a constant $C_4>0$ such that 
\begin{align} \label{E504d}
|P_i( 2i^{-2})| \ge C_4. 
\end{align}
Here the volume is with respect to $dV_{\tilde g_i(t)} dt$.

\emph{Proof of Claim 2:} It follows from \eqref{E503c} that $(x_i,0) \in P^1_i$. Then the proof follows exactly as that of \cite[Lemma $9.8$]{LLW21} by using \eqref{E504c}.

It is clear from \eqref{E504d} that for any $i$ sufficiently large, there exists $t_i \in [-2L_1,0]$ such that 
\begin{align*}
|\Sigma_i|_{dV_{\tilde g_i(t_i)}} \ge C_5,
\end{align*}
where $\Sigma_i=M_i \times \{t_i\} \cap P_i( 2i^{-2})$ and $C_5=\frac{C_4}{2L_1}$. Now, we define 
\begin{align*}
\Omega_i:=\psi_i^{s_i^{-1}t_i}\lc \text{pr}(\Sigma_i) \rc
\end{align*}
where $\text{pr}$ is the projection onto $M_i$. Then we compute
\begin{align} \label{E504e}
|\Omega_i|_{dV_{\tilde g_i(0)}}\ge \frac{1}{2}(1-s_i^{-1}t_i)^{\frac n 2} |\Omega_i|_{dV_{\tilde g_i(0)}}= \frac{1}{2} |\Sigma_i|_{dV_{\tilde g_i(t_i)}} \ge C_6,
\end{align}
where $C_6=C_5/2$. On the other hand, for any $(z,t_i) \in \Sigma_i$, we have
\begin{align*}
\tilde F_i(z,t_i)=F_i(z,t)-s_i=(1-s_i^{-1}t_i) f_i(z',0)-s_i
\end{align*}
where $z'=\psi_i^{s_i^{-1}t_i}(z)$. Therefore, it follows from the definition of $P^2_i$ and \eqref{E503a} that
\begin{align} \label{E504f}
|\tilde f_i(z')|=|f_i(z',0)-s_i| \le 4L_1+1.
\end{align}

As in the proof of Proposition \ref{prop:501}, by taking a subsequence if necessary, we have
\begin{align*}
(M_i, q_i, \tilde g_i, \tilde f_i) \longright{pointed-\hat{C}^{\infty}-Cheeger-Gromov} \left(M_{\infty}, q_{\infty}, g_{\infty}, f_{\infty} \right), 
\end{align*}
where $(M_{\infty}, q_{\infty}, g_{\infty}, f_{\infty})$ is a steady soliton conifold. Notice that $\lim_{z \to \infty} f_{\infty}(z) =-\infty$ by our assumptions. Combining \eqref{E504e} and \eqref{E504f}, we conclude that there exists a point $z \in \mathcal R$ such that
\begin{align*}
R_{g_{\infty}}(z)=0.
\end{align*}

By applying the strong maximum principle for $\Delta_{f_{\infty}} R_{\infty}=-2|Rc_{\infty}|^2 \le 0$, we conclude that $R_{\infty} \equiv 0$ on $\mathcal R$. However, this contradicts the fact that $R_{\infty}(q_{\infty})=\ep_1$.

In sum, the proof is complete.
\end{proof}

As observed above, the cap region is modeled on a steady soliton conifold if $\ep_1'$ is sufficiently small. In general, it is difficult to classify all steady soliton conifolds even though there are no singularities. However, under some special conditions, we can classify all such steady soliton conifolds. For the definition of PIC2, see \cite[Section $7.5$]{Bre10}.

\begin{thm} \label{thm:Bryant}
Let $(X,p,d,f) \in \mathfrak S(n)$ be non-compact such that $(\mathcal R,g)$ has \emph{PIC2}. Suppose there exists a compact set $C$ satisfying
\begin{itemize}
\item[(a).] The singular part $\mathcal S \subset C$.
\item[(b).] For any $x \in X\backslash C$, the scalar curvature satisfies $\dfrac{c_1}{d(p,x)} \le R(x) \le \dfrac{c_2}{d(p,x)}$, where $c_1,c_2$ are two positive constants.
\item[(c).] For any sequence $q_i \to \infty$, $q_i$ is the center of an evolving $\ep_i$-neck with $\ep_i \to 0$. 
\end{itemize}
Then $(X,d)$ is isometric to the Bryant soliton up to scaling.
\end{thm}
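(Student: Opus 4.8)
The plan is to follow Brendle's strategy for classifying steady Ricci solitons with \emph{PIC2} (cf.~\cite{Bre14}), adapted to the conifold setting. The crucial point is that the hypotheses (a)--(c) will allow us to bootstrap the weak information ``asymptotically cylindrical with conical-type scalar decay'' into the rigid structural information needed to run the rotational-symmetry argument, and then invoke the classification of rotationally symmetric steady solitons.

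First I would establish that $(X,d)$ is in fact a smooth manifold, i.e.\ $\mathcal{S}=\emptyset$. By assumption (c), near infinity the conifold is a genuine smooth manifold (it is $\ep_i$-close to a neck), so the singular set $\mathcal S$ is contained in the compact set $C$; combined with the Minkowski-codimension-$\ge 4$ bound from the definition of $\mathfrak S(n)$ and the \emph{PIC2} condition, one argues that the singularities must be removable. Concretely, \emph{PIC2} forces nonnegative sectional curvature on $\mathcal R$, so $R\ge 0$ and the steady soliton equation $\Delta_f R=-2|Rc|^2\le 0$ gives a definite sign; using $\ep$-regularity for the associated Ricci flow together with the no-local-collapsing estimate (Theorem~\ref{thm:nonc}) and the curvature bound of assumption (b), one rules out curvature concentration on $\mathcal S$, hence $X$ is a smooth complete steady soliton with \emph{PIC2}. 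From this point on $(X,g,f)$ is a genuine asymptotically cylindrical steady gradient Ricci soliton, and $f$ is (up to sign) proper with a single critical point.

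Next I would run Brendle's symmetry-improvement scheme. The input is exactly the package assembled in the paper: every far-away point is the center of an evolving $\ep_i$-neck with $\ep_i\to 0$, so $X$ has approximate rotational symmetry near infinity that improves as one goes out; \emph{PIC2} is preserved under the flow and propagates the approximate symmetry inward. One constructs (approximate) Killing fields near infinity, solves the linearized Killing equation with the Lichnerowicz-type operator using the curvature positivity and the cylindrical asymptotics to get decay, and patches them to produce an exact $SO(n-1)$-action on $X$; this is the content of the appendix referenced as Appendix~\ref{app:B} and of \cite{Bre14}\cite{Bre20}. I would cite those results rather than reprove them, checking only that the conifold regularization done in the previous step makes the hypotheses applicable (completeness, bounded curvature on compact-plus-neck regions, and the asymptotic geometry).

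Once $(X,g)$ is rotationally symmetric, it is a warped-product steady soliton $dr^2+\phi(r)^2 g_{S^{n-1}}$ on $(0,\infty)$ (with a smooth cap at $r=0$ coming from the unique critical point of $f$), and the soliton ODE system for $(\phi,f)$ has, up to scaling, the Bryant soliton as its only complete solution with the correct behavior at the cap and the prescribed asymptotics at infinity; this is classical (cf.~\cite{Bry}, and the discussion in \cite{Bre14}). The scalar-curvature pinching in assumption (b) and the $\ep$-neck condition (c) single out the Bryant branch (as opposed to flat $\R^n$ or a cylinder), completing the identification $(X,d)\cong$ Bryant soliton up to scaling. The main obstacle I anticipate is the first step: making the singular set removable under only \emph{PIC2} plus the codimension bound, since one must propagate the good curvature control across a potentially singular compact core; I would handle it by a contradiction/blow-up argument, extracting from any curvature concentration a nonflat \emph{PIC2} ancient or tangent-flow limit that is incompatible with the Minkowski-codimension-$\ge 4$ bound on $\mathcal S$ and with no-local-collapsing.
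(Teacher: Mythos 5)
Your three-step plan (desingularize, then symmetry improvement, then warped-product ODE classification) differs from the paper in two substantive ways, and the first step as you propose it has a genuine gap.

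The paper establishes $\mathcal S = \emptyset$ by a quite different mechanism than the blow-up/curvature-concentration argument you sketch. Note that once you observe \emph{PIC2} $\Rightarrow \mathrm{sec} \ge 0$ and $R+|\nabla f|^2 = 1$, the curvature is already \emph{uniformly bounded} on all of $\mathcal R$; there is no curvature concentration to rule out, and $\varepsilon$-regularity or no-local-collapsing give nothing new. The danger is that a bounded-curvature incomplete manifold can still have a singular metric completion (e.g.\ flat cones $\R^n/\Gamma$). Your ``nonflat PIC2 ancient/tangent-flow limit'' contradiction therefore does not materialize: tangent cones at any $q\in\mathcal S$ are flat orbifold cones $\R^n/\Gamma$, entirely consistent with bounded curvature and with $\dim_{\mathcal M}\mathcal S \le n-4$. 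The hard part is showing $\Gamma$ is trivial, and your proposal doesn't touch it. What the paper actually does: it defines $\lambda(x)$ as the distance of $Rm(x)$ to the boundary of the \emph{PIC2} cone, shows $\Delta_f\lambda\le 0$ via Brendle's ODE-invariance inequality, and uses the quantitative strong maximum principle for conifolds (Proposition~\ref{prop:B006}) to get $\inf_K\lambda>0$ on every compact $K$. This turns $(X,d)$ into an Alexandrov space of locally strictly positive curvature on which $f$ is a (genuinely) concave function with a unique critical point $p$. For a non-critical putative singular point $q$, the tangent cone splits a line (since $f_\infty$ is both concave and convex with $|\nabla f_\infty|\ne 0$), forcing smoothness; for the critical point $p$ the tangent cone is $\R^n/\Gamma$, and a short $h$-cobordism/homotopy-equivalence argument comparing the level-set foliation of $f$ with the link $S^{n-1}/\Gamma$ forces $\Gamma = 1$. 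None of this is in your proposal.

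Your second step is also a detour: once $\mathcal S=\emptyset$ the space is a smooth complete asymptotically cylindrical steady soliton with \emph{PIC2}, and the paper simply invokes Brendle's classification \cite[Theorem~1.2]{Bre14} — there is no need to re-run the Neck Improvement Theorem or the Killing-field construction. That machinery (Appendix~\ref{app:B}) is used for the main shrinker rigidity theorem, not here. So the proposal both over-complicates the smooth endgame and under-specifies the genuinely hard part (desingularization via Alexandrov geometry and the topological $\Gamma=1$ argument).
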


\begin{proof}
Since $(R,g)$ has PIC2 on $\mathcal R$, it implies that $sec \ge 0$ on $\mathcal R$. Combined with the identity $R+|\na f|^2=1$ as $(X,d)$ is nontrivial, we conclude that $|Rm|$ is uniformly bounded on $\mathcal R$. 
For any $x \in \mathcal R$, we define $\lambda=\lambda(x)$ to be the largest constant such that $S:=Rm-\lambda I$ has weakly PIC2. By our assumption, $\lambda>0$ on $\mathcal R$. 

\textbf{Claim 1.} $\lambda$ satisfies the following inequality on $\mathcal R$.
\begin{align*}
\Delta_f \lambda \le 0
\end{align*}

\emph{Proof of the Claim 1:} By our definition, $S$ is contained in the boundary of PIC2 cone. For fixed $x \in \mathcal R$, there exists a four-frame $\{e_1,e_2,e_3,e_4\} \in T_x M$ and constants $u,v \in [0,1]$ such that
\begin{align*}
S_{1313}+u^2 S_{1414}+v^2 S_{2323}+u^2v^2 S_{2424}-2uv S_{1234}=0.
\end{align*}

Therefore, it follows from \cite[Proposition $7.21$]{Bre10} that
\begin{align}\label{E505ab}
Q(S)_{1313}+u^2 Q(S)_{1414}+v^2 Q(S)_{2323}+u^2v^2 Q(S)_{2424}-2uv Q(S)_{1234}\ge 0,
\end{align}
where $Q$ is a quadratic term of the curvature tensor, see \cite{Ham86} for its precise definition.

In addition, we have
\begin{align}\label{E505ac}
Q(S)=Q(Rm)-2\lambda Rc \owedge \text{id}+2(n-1)\lambda^2I,
\end{align}
where $\owedge$ is the Kulkarni-Nomizu product. Therefore, it follows from \eqref{E505ab} and \eqref{E505ac} that
\begin{align*}
& Q(Rm)_{1313}+u^2 Q(Rm)_{1414}+v^2 Q(Rm)_{2323}+u^2v^2 Q(Rm)_{2424}-2uv Q(Rm)_{1234} \\
\ge & 2\lambda \lc R_{11}+R_{33}+u^2(R_{11}+R_{44})+v^2(R_{22}+R_{33})+u^2v^2(R_{22}+R_{44}) \rc \\
& -2(n-1)\lambda^2 (1+u^2+v^2+u^2v^2) \ge 0,
\end{align*}
where for the last inequality, we have used $R_{ii} \ge (n-1)\lambda$ since $S$ has weakly PIC2. Therefore, we conclude that
\begin{align*}
\Delta_f \lambda \le& \frac{\Delta_f(Rm)_{1313}+u^2 \Delta_f(Rm)_{1414}+v^2 \Delta_f(Rm)_{2323}+u^2v^2 \Delta_f(Rm)_{2424}-2uv \Delta_f(Rm)_{1234}}{(1+u^2)(1+v^2)} \\
=& -\frac{Q(Rm)_{1313}+u^2 Q(Rm)_{1414}+v^2 Q(Rm)_{2323}+u^2v^2 Q(Rm)_{2424}-2uv Q(Rm)_{1234} }{(1+u^2)(1+v^2)} \le 0
\end{align*}
at $x$, where we have used the equation $\Delta_f Rm=-Q(Rm)$.

Hence, we have on $\mathcal R$,
\begin{align*}
\Delta_f \lambda \le 0.
\end{align*}

Since $\lambda$ is positive on $\mathcal R$, it follows from Proposition \ref{prop:B006} that
\begin{align} \label{E505a}
\inf_{K \cap \mathcal R} \lambda>0
\end{align}
for any compact set $K$. Since the regular part $\mathcal R$ is geodesically convex and $(X,d)$ is the completion of $(\mathcal R, g)$, it follows from \cite{Pet16} that $(X,d)$ is an Alexandrov space with $sec \ge 0$.

\textbf{Claim 2.} $f$ is a concave function on $(X,d)$ with a unique critical point.

\emph{Proof of the Claim 2:} Since $\text{Hess}\, f=-Rc <0$, it is clear that $f$ is concave on $(\mathcal R,g)$ in the sense that for any geodesic $\gamma(t)$ contained in $\mathcal R$, $f(\gamma(t))$ is concave. Now we fix a geodesic $\gamma(t)_{t \in [0,L]}$ in $X$ 
such that $\gamma(0)=x$ and $\gamma(L)=y$. For any small number $\ep>0$, we set $x'=\gamma(\ep)$ and $y'=\gamma(L-\ep)$. Since $\mathcal R$ is dense, we can take two sequences $x_n \to x'$ and $y_n \to y'$ such that $x_n$ and $y_n$ are contained in $\mathcal R$. From the geodesical convexity of $\mathcal R$, there exists a geodesic $\gamma_n \subset \mathcal R$ connecting $x_n$ and $y_n$. Since the geodesic in an Alexandrov space is not branching, $\gamma_n$ converges to $\gamma(t)_{t \in [\ep,L-\ep]}$. 
By taking the limit, we immediately conclude that $f(\gamma(t))$ is concave for $t \in [\ep,L-\ep] $. Since $\ep>0$ is arbitrary, $f(\gamma(t))$ is concave for any $t\in [0,L]$. Therefore, $f$ is concave in the sense of \cite[Definition $1.1$]{Pet07}.

We fix a point $x \in X$. Starting from $x$, we denote the gradient flow of $f$ by $\alpha(t)$ for $t \ge 0$, see \cite[Section 2]{Pet07}. If $\alpha(t)_{t \in [0 ,\infty)}$ is contained in a compact set $K$ of $X$, then there exists a sequence $t_i \to \infty$ such that $|\na f|(\alpha(t_i)) \to 0$, since $f$ is bounded on $K$. By taking a subsequence, we may assume $\alpha(t_i)$ converges to a point $y \in K$. By the lower semicontinuity of $|\na f|$ \cite[Corollary $1.3.5$]{Pet07}, $y$ is a critical point of $f$. If $\alpha(t)_{t \in [0 ,\infty)}$ is not contained in any compact set, there exists a sequence $s_i$ such that $\alpha(s_i) \to \infty$. By our assumption (b), $R(\alpha(s_i)) \to 0$. However, this contradicts the fact that $R$ is increasing along $\alpha(t)$ as long as $\alpha(t)$ is outside $C$ since $\la \na R, \na f \ra=2Rc(\na f, \na f) \ge 0$. In any case, $f$ has a critical point. Without loss of generality, we set $p$ to be a critical point of $f$.

It follows from \eqref{E505a} that for any constant $c$, there exists a constant $\delta>0$ such that $(X_c,d)$ is an Alexandrov space with $sec \ge \delta$, where $X_c=\{x\in C\mid f(x) \ge c\}$. By this reason, $f$ is a strictly concave function on $X$. Therefore, it is a standard fact that $p$ is the unique critical point which is also the maximum point.

\textbf{Claim 3.} $X\backslash \{p\} \subset \mathcal R$.

\emph{Proof of the Claim 3:} Otherwise, we set $q \in \mathcal S$ and $q \ne p$ such that $f(q)=t$ is minimal. Notice that by our assumption (b), the choice of $t$ is possible. By our assumption, $|\na f|(q)>0$ since $p$ is the only critical point of $f$. From the lower semicontinuity of $|\na f|$ \cite[Corollary $1.3.5$]{Pet07}, we conclude that there exists a small $\delta>0$ such that 
\begin{align}
|\na f| \ge \delta \label{E505b}
\end{align}
on $B(q,\delta)$. Now we take any positive sequence $r_i \to 0$ and define $d_i=r_i^{-1}d$, $g_i=r_i^{-2}g$ and $f_i=r_i^{-1}(f-f(q))$. From the definition, it is clear that on $\mathcal R$
\begin{align} \label{E505c}
\text{Hess}_{g_i} f_i=-r_i^{-1}Rc_i.
\end{align}
Now we have
\begin{align*}
(X, q, d_i, f_i) \longright{pointed-Gromov-Hausdorff} \left( K_q, 0, d_{\infty}, f_{\infty} \right),
\end{align*}
where $(K_q,d_{\infty})$ is the tangent cone at $q$ and $f_{\infty}$ is a locally Lipschitz function. In addition, it follows from \eqref{E505c} and the same arguments in Claim 1 that $f_{\infty}$ is both concave and convex on $K_q$.

For any $s<t$, it is clear from our choice of $t$ that $f^{-1}(s)\cap B(q,\delta) \subset \mathcal R$ on which the $|Rm|$ is uniformly bounded. Moreover, for any $x \in f^{-1}(s)\cap B(q,\delta)$, it follows from \eqref{E505b} that the second fundamental form at $x$, which is $\frac{\text{Hess}\, f}{|\na f|}(x)$, is uniformly bounded. Therefore, we conclude that 
\begin{align*}
\left(f_i^{-1}((-\infty,0)), q, g_i,f_i \right) \longright{C^{\infty}-Cheeger-Gromov} \lc f_{\infty}^{-1}((-\infty,0)), 0,d_{\infty}, f_{\infty} \rc.
\end{align*}
Here, for the smooth convergence, we have used $\Delta_f Rm=-Q(Rm)$ to obtain the higher order estimates of $Rm$ on $f^{-1}(s)\cap B(q,\delta)$. Moreover, we have
\begin{align*} 
( f_{\infty}^{-1}((-\infty,0)), d_{\infty})=(\R^n \cap \{x_n<0\},g_E).
\end{align*}
In particular, it follows from the smooth convergence of $f_i$ to $f_{\infty}$ on $f_{\infty}^{-1}((-\infty,0))$ and \eqref{E505b} that $f_{\infty}$ is not a constant. Since $f_{\infty}$ is both convex and concave, it is clear (see \cite[Lemma $2.1.4$]{Pet07}) that $(K_q,d_{\infty})$ is isometric to $(Y \times \R, d' \times g_E)$, where $Y$ is a metric cone isometric to any level set of $f_{\infty}$. Therefore, it is immediate from \eqref{E505c} that $Y$ is smooth and hence isometric to $\R^{n-1}$. From this, we conclude that $(K_q,d_{\infty})$ is isometric to $(\R^n,g_E)$. However, this implies that $q \in \mathcal R$, which contradicts our assumption.

\textbf{Claim 4.} $p \in \mathcal R$.

\emph{Proof of the Claim 4:} From Claim 2, any point in $X \backslash \{p\}$ is regular with uniformly bounded $|Rm|$. Therefore, for any sequence $r_i \to 0$, the convergence
\begin{align*}
(X, p,r_i^{-1}d) \longright{pointed-Gromov-Hausdorff} \left(K_p, 0, d_{\infty} \right)
\end{align*}
is smooth away from the vertex. Moreover, 
\begin{align}\label{E505e}
(K_p,d)=(\R^n/\Gamma,g_E)
\end{align}
where $\Gamma \le O(n)$ is a finite subgroup acting freely on $S^{n-1}$. If we set the nontrivial level set of $f$ by $\Sigma$, then near $p$ we have two foliations by $\Sigma$ and $S^{n-1}/\Gamma$ respectively. 
Then it is easy to see that $\Sigma$ and $S^{n-1}/\Gamma$ are $h$-cobordant. 
Indeed, by our assumption, there exist a region in $M$ homeomorphic to $\Sigma \times [0,4]$ and an embedding of $N \times [1,3]$
 into the interior of $\Sigma \times [1,4]$ such that $N \times \{k\}$ separates $\Sigma \times \{k\}$ and $\Sigma \times \{k+1\}$ for $1 \le k \le 3$. By an easy topological argument, the compact manifold bounded 
 by $N \times \{1\}$ and $\Sigma \times \{3\}$ is a cobordism such that the boundary inclusions are homotopy equivalences. In particular, $\Sigma$ and $N$ are homotopic and hence $\Gamma$ must be trivial. 
 From \eqref{E505e}, we have
\begin{align*}
(K_p,d)=(\R^n,g_E)
\end{align*}
and hence $p \in \mathcal R$.

In sum, we have proved that $\mathcal S=\emptyset$ and hence $(X,p,d,f)$ is a smooth steady soliton. The rest of the proof follows from the classification theorem of Brendle \cite[Theorem $1.2$]{Bre14}.
\end{proof}

Combining the results in Section 3 and Section 4, we are able to prove Theorem~\ref{T101} now. 
Let us first recall and fix notations. 

For any constants $n,\delta_0,A, B$ and $\ep$, we define the following constants:
\begin{itemize}
\item Let $\bar \sigma \coloneqq \min\{\sigma_1(n,B), \sigma_2(n,\delta_0,A,B), \sigma_3(n,\delta_0,A,B), \sigma_4(n,\delta_0,A,B), \sigma_5(n,A,B) \}$, where $\sigma_1$, $\sigma_2$, $\sigma_3$, $\sigma_4$ and $\sigma_5$ are functions in Proposition \ref{P401}, Proposition \ref{prop:405a}, Proposition \ref{prop:406a}, Proposition \ref{prop:462} and Proposition \ref{prop:464}, respectively.

\item Let $\bar \eta \coloneqq \min\{ \eta_3(\ep,n,\delta_0,A,B), \eta_4(\ep,n,\delta_0,A,B), \eta_5(\ep,n,\delta_0,A,B) \}$, where $\eta_3$, $\eta_4$ and $\eta_5$ are functions in Proposition \ref{prop:405b}, Proposition \ref{prop:405c} and Proposition \ref{prop:463}, respectively.
\item Let $\sigma \coloneqq \min\{\bar \sigma^2,\bar \eta^2 \}$.
\item Let $\eta \coloneqq \min\{ \sigma^{\frac 1 2}, \bar \eta, \tau_1(\sigma,n,\delta_0,A,B),\tau_2(\sigma,n,\delta_0,A,B), \tau_3(\sigma,n,\delta_0,A,B) \}$, where $\tau_1$, $\tau_2$ and $\tau_3$ are functions in Proposition \ref{prop:501}, Corollary \ref{cor:501} and Proposition \ref{prop:502}, respectively.
\item Let $L \coloneqq \max \{ L_1(\sigma,n,\delta_0,A,B), L_2(\sigma,n,\delta_0,A,B) \}$, where $L_1$ and $L_2$ are functions in Proposition \ref{prop:501} and Proposition \ref{prop:502}, respectively.
\end{itemize}

We close this section with the proof of Theorem~\ref{T101}. 

\begin{proof}[Proof of Theorem \ref{T101}]
From our assumptions, the condition \eqref{E001} in particular holds for $\ep_1=\sigma$ and $\ep_1'=\eta$. With our choice of $\sigma, \eta$ and $L$, it is clear from the results of Section $3$ and Section $4$ that the conclusion holds. 
\end{proof}

\section{Proof of the main theorem}

The purpose of this section is to prove Theorem~\ref{T100}. As we discussed in the introduction, the key ingredient is to show each point in the Ricci shrinker is either in a cylinder-like neighborhood or in a Bryant-soliton-like neighborhood, 
then we apply the symmetry improvement argument. 

We recall the following definition of the curvature cone from \cite{BHS11}. Here $\mathscr C_{B}(\R^n)$ denotes the vector space of algebraic curvature tensor.
\begin{defn} \label{def:cone}
A cone $\hat C$ in $\mathscr C_{B}(\R^n)$ has the property $(*)$ if it satisfies the following conditions:
\begin{itemize}
\item $\hat C$ is closed, convex, $O(n)$-invariant, and of full-dimension.
\item $\hat C$ is transversally invariant under Hamilton's ODE: $\dfrac{d Rm}{dt}=Q(Rm)$.
\item Every algebraic curvature tensor $Rm \in \hat C \backslash \{0\}$ has positive scalar curvature.
\item The identity $I$ lies in the interior of $\hat C$.
\end{itemize}
\end{defn}

For later applications, we mainly consider the following example from \cite[Definition $2.2$]{Bre18}.
\begin{exmp} \label{exam:cone}
For any parameter $\sigma \in (0,2)$ and $\theta \ge 0$, we define the cone $C_{\sigma,\theta}$ as the set
\begin{align*}
\left\{ Rm=S+H\owedge \emph{id} \mid S \in \emph{PIC2}\,, Rc_0(S)=0,\, \emph{tr}(H)\emph{id}-(n-2\sigma)H\ge 0,\, \emph{tr}(H)-\theta\emph{scal}(S) \ge 0 \right\}, 
\end{align*}
where $H$ is a symmetric bilinear form, $\owedge$ is the Kulkarni-Nomizu product, $Rc_0$ is the traceless Ricci, and \emph{PIC2} denotes the cone consists of curvature tensor satisfying the weakly PIC2 condition. It follows from \emph{\cite[Theorem $2.5$]{Bre18}} and the definition that $C_{\sigma,\theta}$ has the property $(*)$ if $\sigma \in (1,2)$ and $\theta \in (0,\bar \theta(n))$ for a constant $\bar \theta$ depending only on $n$.

We denote $C_{1+\theta,\theta}$ by $\hat C(\theta)$ for $\theta \in (0, \min\{1/2,\bar \theta\})$. It is clear from the definition that
\begin{align} \label{ex:001}
Rc \ge -C(n) \theta R
\end{align}
for any $Rm \in \hat C(\theta)$. Furthermore, it is easy to see that the curvature tensor of $S^{n-1} \times \R$ lies in $\hat C(\theta)$. Indeed, we can write the curvature tensor as $Rm=H\owedge \emph{id}$, where $H_{ij}=\delta_{ij} $ for $i,j\ge 2$ and $H_{1i}=-\delta_{1i}$ and hence it is easy to check from the definition.
\end{exmp}

If a cone $\hat C$ has the property $(*)$, we have the following result (see, e.g., \cite[Lemma $4.2$]{Na19}).

\begin{lem}\label{lem:601}
Let $\hat C$ in $\mathscr C_{B}(\R^n)$ be a cone with property $(*)$. Then there exists two positive constants $\tau$ and $K$ depending only on the cone such that
\begin{align*}
Q(Rm)-\tau R^2 I \in T_{Rm}\hat C \quad \text{and} \quad |Rm| \le KR
\end{align*}
for any $Rm \in \hat C$.
\end{lem}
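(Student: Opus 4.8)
The plan is to extract both conclusions from the abstract properties defining a cone with property $(*)$: closedness, convexity, $O(n)$-invariance, full-dimensionality, transversal invariance under Hamilton's ODE, positivity of scalar curvature on $\hat C\setminus\{0\}$, and the fact that $I$ lies in the interior of $\hat C$. The key observation is that these properties force a uniform ``quantitative'' version of both the transversality and the positivity, via a compactness argument on the unit scalar-curvature slice of $\hat C$.

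\textbf{Step 1: the linear bound $|Rm|\le KR$.} First I would reduce to the compact set $\mathcal K:=\{Rm\in\hat C : R(Rm)=1\}$, where $R(Rm)$ denotes the scalar curvature of the algebraic curvature tensor. Since scalar curvature is a continuous linear functional that is strictly positive on $\hat C\setminus\{0\}$ and $\hat C$ is a closed convex cone, $\mathcal K$ is closed; it is also bounded, because if there were $Rm_j\in\hat C$ with $R(Rm_j)=1$ and $|Rm_j|\to\infty$, then $Rm_j/|Rm_j|$ would subconverge (using closedness of $\hat C$ and the fact that $\hat C$ is a cone) to some nonzero $Rm_\infty\in\hat C$ with $R(Rm_\infty)=\lim 1/|Rm_j|=0$, contradicting positivity of scalar curvature on $\hat C\setminus\{0\}$. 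Hence $\mathcal K$ is compact, so $K:=\max_{\mathcal K}|Rm|<\infty$, and scaling gives $|Rm|\le KR$ for all $Rm\in\hat C$ (the case $R=0$ forcing $Rm=0$).

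\textbf{Step 2: the quantitative transversality $Q(Rm)-\tau R^2 I\in T_{Rm}\hat C$.} Recall that transversal invariance under Hamilton's ODE means that for every boundary point $Rm\in\partial\hat C$, the vector $Q(Rm)$ points strictly into $\hat C$, i.e.\ $Q(Rm)\in \mathrm{int}\,T_{Rm}\hat C$ (the tangent cone at $Rm$); equivalently, since $I\in\mathrm{int}\,\hat C$, there is some $\varepsilon(Rm)>0$ with $Q(Rm)-\varepsilon(Rm)I\in T_{Rm}\hat C$. I would now upgrade this to a uniform $\varepsilon$ on $\mathcal K$. Define $\tau(Rm):=\sup\{\varepsilon\ge 0 : Q(Rm)-\varepsilon I\in T_{Rm}\hat C\}$ for $Rm\in\mathcal K$; by transversality and the fact that $I$ is interior, $\tau(Rm)>0$ for every $Rm\in\mathcal K$ (at interior points $T_{Rm}\hat C$ is the whole space so $\tau=+\infty$; at boundary points it is a positive finite number). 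The function $Rm\mapsto\tau(Rm)$ is lower semicontinuous on the compact set $\mathcal K$ — this follows because the set-valued map $Rm\mapsto T_{Rm}\hat C$ behaves appropriately under limits of points in $\hat C$ ($T_{Rm_\infty}\hat C$ contains the limits of tangent vectors along $Rm_j\to Rm_\infty$, which is exactly what is needed) together with closedness of $\hat C$ and continuity of the quadratic map $Q$. A lower semicontinuous positive function on a compact set attains a positive minimum $\tau:=\min_{\mathcal K}\tau(Rm)>0$. Finally, by the $O(n)$-invariance and scaling homogeneity — $Q$ is quadratic, so $Q(cRm)=c^2Q(Rm)$, and $T_{cRm}\hat C=T_{Rm}\hat C$ for $c>0$ — one transfers the estimate on $\mathcal K$ to all of $\hat C$: for $Rm\in\hat C$ with $R>0$, write $Rm=R\cdot\widetilde{Rm}$ with $\widetilde{Rm}\in\mathcal K$, so $Q(Rm)-\tau R^2 I=R^2(Q(\widetilde{Rm})-\tau I)\in R^2\,T_{\widetilde{Rm}}\hat C=T_{Rm}\hat C$; the case $R=0$ gives $Rm=0$ and is trivial. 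Taking the minimum of the two constants produced in Steps 1 and 2 gives the single pair $(\tau,K)$ claimed.

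\textbf{Main obstacle.} The routine parts are the compactness in Step 1 and the scaling transfer; the genuinely delicate point is the semicontinuity of $Rm\mapsto\tau(Rm)$ in Step 2, i.e.\ controlling how the tangent cone $T_{Rm}\hat C$ varies as $Rm$ moves within $\hat C$. One must be careful that along a sequence $Rm_j\to Rm_\infty$ in $\mathcal K$ the admissible ``inward'' directions cannot suddenly collapse; this is where convexity and full-dimensionality of $\hat C$ are essential, since for convex bodies the tangent cones are outer semicontinuous as a set-valued map, which yields exactly the lower semicontinuity of $\tau$. I would cite the standard convex-geometry fact (or the analogous argument in \cite[Lemma~4.2]{Na19} and the original reference \cite{BHS11}) rather than reprove it. With that in hand the rest is bookkeeping.
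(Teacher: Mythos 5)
Your overall strategy --- rescale to the compact slice $\mathcal K = \{Rm \in \hat C : R = 1\}$, argue by compactness there, then scale back using the homogeneity of $Q$ and the scale-invariance of the tangent cone --- is the natural approach, and Step 1 (the bound $|Rm|\le KR$) is correct. The paper itself offers no proof of this lemma (it simply cites Naff's Lemma~4.2), so there is nothing internal to compare against, but the gap is exactly at the spot you flagged as the ``genuinely delicate point,'' and as written it is a real gap rather than a matter of citation.

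You assert that for convex bodies the tangent cone map is outer semicontinuous --- that limits of tangent vectors along $Rm_j\to Rm_\infty$ land in $T_{Rm_\infty}\hat C$ --- and claim this is ``exactly what is needed.'' Both halves are wrong. Outer semicontinuity of tangent cones fails for general closed convex sets: take $\hat C$ the nonnegative orthant in $\R^3$ and $Rm_j=(1,1/j,0)\to Rm_\infty=(1,0,0)$; then $T_{Rm_j}\hat C=\{z\ge 0\}$ contains the constant sequence $(0,-1,0)$, whose limit does not lie in $T_{Rm_\infty}\hat C=\{y\ge 0,\,z\ge 0\}$. Tangent cones of convex sets can shrink discontinuously when approaching a lower-dimensional face. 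Moreover, even granted the claim, it runs the wrong way: from $Q(Rm_j)-\tau(Rm_j)I\in T_{Rm_j}\hat C$ and outer semicontinuity you would deduce $Q(Rm_\infty)-(\lim\tau(Rm_j))I\in T_{Rm_\infty}\hat C$, i.e.\ $\tau(Rm_\infty)\ge\limsup\tau(Rm_j)$ --- \emph{upper} semicontinuity of $\tau$, which is useless for bounding $\tau$ from below on $\mathcal K$. What you actually need is an inner-semicontinuity fact for interior directions: if $v\in\mathrm{int}\,T_{Rm_\infty}\hat C$, then $v\in T_{Rm}\hat C$ for all $Rm\in\hat C$ sufficiently close to $Rm_\infty$. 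This is true and elementary, because for a closed convex set with nonempty interior one has $\mathrm{int}\,T_x\hat C=\bigcup_{t>0}t^{-1}(\mathrm{int}\,\hat C-x)$; so $v\in\mathrm{int}\,T_{Rm_\infty}\hat C$ gives $Rm_\infty+t_0 v\in\mathrm{int}\,\hat C$ for some $t_0>0$, and then $Rm+t_0 v\in\mathrm{int}\,\hat C$ for $Rm$ nearby, since $\mathrm{int}\,\hat C$ is open. Combining this with the observations that $Q(Rm_\infty)-\varepsilon I\in\mathrm{int}\,T_{Rm_\infty}\hat C$ whenever $\varepsilon<\tau(Rm_\infty)$ (because $I\in\mathrm{int}\,\hat C\subset\mathrm{int}\,T_{Rm_\infty}\hat C$ and the interior of a convex cone is absorbing: a tangent vector plus an interior vector is interior) and that $Q$ is continuous does yield lower semicontinuity of $\tau$, after which the rest of your argument goes through unchanged.
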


We prove the following theorem, which implies that locally the Ricci flow associated with a Ricci shrinker almost preserves the cone $\hat C$. Recall that on the Ricci flow associated with a Ricci shrinker, $F(x,t):=(1-t)f(x,t)$ for any $t<1$. Moreover, the constant $\tau$ and $K$ are from Lemma \ref{lem:601} which depends only on the cone $\hat C$.

\begin{thm} \label{thm:T601}
Let $\hat C$ in $\mathscr C_{B}(\R^n)$ be a cone with $(*)$. There exist constants $\delta=\delta(n,\tau,K)>0$ and $C=C(n)>0$ satisfying the following property. For any Ricci flow $(M^n,g(t))_{t<1}$ associated with a Ricci shrinker, we define $\lambda(x,t)$ be the minimal number such that $Rm(x,t)+\lambda(x,t)I \in \hat C$, for any $(x,t) \in M \times (-\infty,1)$. For any $r>0,\ep>0$, if $\ep+r^{-2} \le \delta R$ on $P_2:=\{(x,t) \mid F(x,t) \le 2r,\, 0\le t <1\}$ and $\lambda \le \ep$ on $\{(x,0) \mid F(x,0)\le 2r\}$, then
\begin{align*}
\lambda \le \ep+Cr^{-2}
\end{align*}
on $P_1:=\{(x,t)\,\mid F(x,t) \le r, 0\le t <1\}$.
\end{thm}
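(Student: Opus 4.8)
The plan is to derive a reaction--diffusion inequality for $\lambda$ whose leading zero-order term is the negative quantity $-\tau R^{2}$ supplied by the transversal invariance of $\hat C$, and then to run a localized maximum principle on the time-dependent region $\{F<2r\}$, keeping the error terms small by a continuity argument. \emph{Step 1 (evolution of $\lambda$).} Work in the Uhlenbeck frame, where $\partial_t Rm=\Delta Rm+Q(Rm)$. Fix a space-time point and let $\ell$ be a unit support functional of $\hat C$ at $\mu:=Rm+\lambda I\in\partial\hat C$; since $I$ is interior there is $c_{0}(\hat C)>0$ with $\ell(I)\ge c_{0}$, and since $\mu\in\hat C$ Lemma \ref{lem:601} gives $|\mu|\le KR(\mu)$ and $\ell\big(Q(\mu)\big)\ge\tau R(\mu)^{2}\ell(I)$ (because $\ell\ge0$ on the tangent cone $T_{\mu}\hat C$ and $Q(\mu)-\tau R(\mu)^{2}I\in T_{\mu}\hat C$). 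Polarizing the quadratic $Q$ as $Q(Rm)=Q(\mu)-2\lambda Q(\mu,I)+\lambda^{2}Q(I)$, applying $\ell$, dividing by $\ell(I)$, and using $R(\mu)=R+O(\lambda)$ and $|Q(\mu,I)|\le cKR(\mu)$, one finds that $u_{\ell}:=-\ell(Rm)/\ell(I)$ (which equals $\lambda$ at the marked point and is $\le\lambda$ nearby) satisfies $\partial_t u_{\ell}\le\Delta u_{\ell}-\tau R^{2}+C(n,K)(\lambda R+\lambda^{2})$ there. Hence, by the usual Hamilton-type argument (a smooth function touching $\lambda$ from above also touches $\lambda-u_{\ell}$ from above at its minimum), $\lambda$ is a viscosity subsolution of
\begin{align*}
\partial_t\lambda\le\Delta\lambda-\tau R^{2}+C(n,K)\big(\lambda R+\lambda^{2}\big)
\end{align*}
on $P_{2}$; compare Claim 1 in the proof of Theorem \ref{thm:Bryant}.

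\emph{Step 2 (bootstrap on the error terms).} Fix the dimensional constant $C=C(n)$ that will be produced in Step 3, and choose $\delta=\delta(n,\tau,K)$ so small that $C(n,K)\big((1+C)\delta+(1+C)^{2}\delta^{2}\big)\le\tau/2$. Let $T^{\ast}\in(0,1]$ be the supremum of all $t$ for which $\lambda\le\ep+Cr^{-2}$ holds on $P_{1}\cap\{s\le t\}$; by the hypothesis at $t=0$ and continuity $T^{\ast}>0$, and we assume for contradiction $T^{\ast}<1$. Wherever $\lambda\le\ep+Cr^{-2}$ one has $\lambda\le(1+C)(\ep+r^{-2})\le(1+C)\delta R$ using $\ep+r^{-2}\le\delta R$, so $C(n,K)(\lambda R+\lambda^{2})\le\tfrac{\tau}{2}R^{2}$ and, with $R\ge r^{-2}/\delta$,
\begin{align*}
\partial_t\lambda\le\Delta\lambda-\tfrac{\tau}{2}R^{2}\le\Delta\lambda-\tfrac{\tau}{2\delta^{2}}\,r^{-4}
\end{align*}
on $P_{1}\cap\{t\le T^{\ast}\}$, while everywhere on $P_{2}$ the weaker inequality of Step 1 is available.

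\emph{Step 3 (localization).} Use the shrinker identities for $F$ recorded just before Proposition \ref{prop:502}: $(\partial_t-\Delta_{g(t)})F=-\tfrac n2$, $|\nabla_{g(t)}F|^{2}_{g(t)}\le F$, and $F\ge0$; note also that each slice $\{F(\cdot,t)\le2r\}$ is compact by Lemma \ref{L201}. Compare $\lambda$ with a bounded barrier $\beta$ built from $F$ and adapted to the annulus $\{r\le F\le2r\}$, on the (growing) domain $\{F<2r\}\times[0,T^{\ast})$: at $t=0$ one has $\beta\ge\ep\ge\lambda$ on $\{F\le2r\}$, the defect $\partial_t\beta-\Delta_{g(t)}\beta$ is controlled by $(\partial_t-\Delta_{g(t)})F=-n/2$ and $|\nabla F|^{2}\le F$ (so $C=C(n)$ depends only on the barrier's geometry), and in the interior the damping $-\tfrac{\tau}{2}R^{2}\le-\tfrac{\tau}{2\delta^{2}}r^{-4}$ together with $R\ge r^{-2}/\delta$ dominates that defect and absorbs the influence of the lateral boundary $\{F=2r\}$. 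Equivalently this may be run as a localized maximum principle / Moser iteration with cutoff $\eta(F)$ exactly as in the proof of Proposition \ref{prop:502}, using Theorem \ref{thm:sobo}. One concludes $\lambda\le\beta\le\ep+Cr^{-2}$ on $\{F\le r\}$ for $t<T^{\ast}$, contradicting $T^{\ast}<1$. (One may first reduce to $r=1$ by parabolically rescaling the associated flow, which again has shrinker form with singular time $1$.)

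\emph{The main obstacle} is precisely Step 3: the comparison domain $\{F<2r\}$ expands in time and its lateral boundary $\{F=2r\}$ carries no a priori bound on $\lambda$ — indeed $|Rm|$, hence $\lambda$, is generally unbounded on $P_{2}$ as $t\to1$. What rescues the argument is the quadratic term $-\tau R^{2}$ coming from transversal invariance of $\hat C$: combined with the uniform lower bound $R\ge r^{-2}/\delta$ it forces $\lambda$ down at a definite rate, which is exactly enough to make a bounded barrier (or a localized heat-kernel estimate) effective despite the open boundary. A lesser technical point is making Step 1 fully rigorous at corners of $\hat C$ and at points where $\lambda$ is only Lipschitz, which is routine via viscosity solutions or a mild smoothing of the cone; the only genuinely new bookkeeping is arranging the barrier in Step 3 so that this damping beats the boundary contribution.
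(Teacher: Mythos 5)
Your Step 1 is sound and delivers essentially the same reaction--diffusion inequality for $\lambda$ that the paper extracts from the transversal invariance of $\hat C$; you keep the full zero-order term $-\tau R^2 + C(n,K)(\lambda R+\lambda^2)$ whereas the paper simplifies it to $\square\lambda\le-\lambda^2$ under the side condition $0\le\lambda\le\delta_1 R$, but both work, and your support-functional formulation together with the viscosity reading is a legitimate alternative to the paper's argument that $\square S$ cannot lie in the interior of $T_S\hat C$.

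The genuine gap is Step 3, and it is precisely the obstacle you flag yourself. A \emph{bounded} barrier $\beta$ on the parabolic region $\{F<2r\}\times[0,T^{*})$ can only dominate $\lambda$ if $\lambda\le\beta$ on the lateral boundary $\{F=2r\}$, and that boundary control is exactly what is missing: $\lambda$ is a priori uncontrolled there, and the error $+C(n,K)\lambda^{2}$ in your Step 1 inequality overwhelms $-\tau R^{2}$ precisely where $\lambda\gg R$. So the quadratic damping cannot ``absorb the influence of the lateral boundary''; it only helps where $\lambda$ is already small relative to $R$, which is the very thing you are trying to prove. (Compare the 1-D model $\partial_t u\le u_{xx}-u^{2}$ on an interval: the ``universal'' supersolution $1/t$ need not dominate $u$ on the endpoints, so there is no boundary-free conclusion.) The paper sidesteps this completely by multiplying by the cutoff $\phi=\eta(F/r)$, which vanishes on $\{F\ge 2r\}$. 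Then $\phi\lambda$ is zero on the lateral boundary and $\le\ep$ at $t=0$, so the maximum principle applies; the zero-order damping is used only to absorb, via AM--GM, the cutoff errors coming from $|\square\phi|\le C\phi^{1/2}r^{-1}$ and $|\nabla\phi|^{2}/\phi\le C\phi^{1/2}r^{-1}$ (these bounds rely on $\square F=-\tfrac n2$ and $|\nabla F|^{2}\le F$), giving $\square(\phi\lambda)\le Cr^{-2}-2\langle\nabla\log\phi,\nabla(\phi\lambda)\rangle$. You mention the cutoff route parenthetically, but your primary line (bounded barrier) would fail, and the Moser-iteration alternative you cite would need an initial $L^p$ bound on $\lambda$ near $\{F=2r\}$, which is likewise unavailable. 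The fix is to promote the parenthetical remark to the argument: work with $\phi\lambda$ and apply the maximum principle exactly as the paper does.
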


\begin{proof}
We fix a smooth function $\eta$ on $[0,\infty)$ such that $0 \le \eta \le 1$, $\eta=1$ on $[0,1]$ and $\eta=0$ on $[2,\infty)$. Furthermore, $|\eta'|^2 \le C \eta^{\frac 3 2}$ and $|\eta''| \le C \eta^{\frac 1 2}$ 
for a universal constant $C$. If we set $\phi=\phi_r=\eta \lc \frac{F}{r} \rc$, it follows from the identities of $F$ (see \cite[Section $3$]{LW20} for details) that
\begin{align}\label{E601b}
\quad |\na \phi|^2 \le C\phi^{\frac 3 2} r^{-1} \quad \text{and} \quad |\square \phi| \le C \phi^{\frac 1 2} r^{-1}
\end{align}
for some constant $C$ depending only on $n$.

From our definition, the curvature operator 
\begin{align}
S \coloneqq Rm+\lambda I \label{eqn:OH07_1}
\end{align}
lies on the boundary of $\hat{C}$. Therefore, it follows from Lemma \ref{lem:601} that
\begin{align}\label{E601c}
Q(S)-\tau \text{scal}^2(S)I \in T_S \hat C,
\end{align}
where $\text{scal}(S)$ is the scalar curvature of $S$. 
Direct calculation yields that 
\begin{align}
&Q(S)=Q(Rm)+2\lambda Rc \owedge \text{id}+2(n-1)\lambda^2I, \label{E601d}\\
&\text{scal}(S)=R+n(n-1) \lambda. \label{E601e}
\end{align}
Combining \eqref{E601c}, \eqref{E601d} and \eqref{E601e}, we have
\begin{align}\label{E601f}
Q(Rm)+2\lambda Rc \owedge \text{id}+2(n-1)\lambda^2I-\tau (R+n(n-1) \lambda)^2 I \in T_S \hat C.
\end{align}
Moreover, it follows from Lemma \ref{lem:601} again that
\begin{align*}
|S| \le K \text{scal}(S)=K\lc R+n(n-1) \lambda \rc, 
\end{align*}
which implies that
\begin{align}
\label{E601g}
|Rm| \le C_1(R+|\lambda|)
\end{align}
for some $C_1=C_1(n,K)>0$. 
In light of (\ref{eqn:OH07_1}), direct calculation(with Uhlenbeck's trick) yields that
\begin{align}
\label{E602a}
\square S=(\partial_t -\Delta) S=Q(Rm)+(\square \lambda) I= \left\{ Q(Rm)-\lambda^2 I \right\} +(\square \lambda+\lambda^2) I.
\end{align}

Note that $Q(Rm)-\lambda^2 I \in T_S \hat C$ if $0 \leq \lambda \leq \delta_1 R$ for some small constant $\delta_1=\delta_1(n,K,\tau)>0$. 
Actually, under this condition, it follows from \eqref{E601g} that
\begin{align*}
2\lambda Rc \owedge \text{id}+2(n-1)\lambda^2I \le C_2 \lambda (\lambda+R) I
\end{align*}
for some $C_2=C_2(n,K)>0$. Combining the above inequality with \eqref{E601f}, we have
\begin{align*}
Q(Rm)+ \lc C_2 \lambda (\lambda+R)-\tau (R+n(n-1) \lambda)^2\rc I \in T_S \hat C.
\end{align*}
If $\delta_1$ is sufficiently small, then $C_2 \lambda (\lambda+R)-\tau (R+n(n-1) \lambda)^2 \le -\lambda^2$. 
Consequently, we have proved that
\begin{align}
Q(Rm)-\lambda^2 I \in T_S \hat C \label{eqn:OH07_2}
\end{align}
if $0 \leq \lambda \leq \delta_1 R$. 

On the other hand, the choice of $\lambda$ in (\ref{eqn:OH07_1}) implies that $\square S$ is not in the interior of $T_S \hat C$, see \cite[Claim 3 on Page 11]{CL20} for details. 
Therefore, it follows from (\ref{eqn:OH07_2}) and (\ref{E602a}) that
\begin{align*}
\square \lambda \le -\lambda^2
\end{align*}
wherever $0 \le \lambda \le \delta_1 R$. Thus we have
\begin{align*}
\square (\phi \lambda)=&(\square \phi) \lambda+\phi(\square \lambda)-2\la \na \phi,\na \lambda \ra 
\le (\square \phi) \lambda-\phi \lambda^2-2\la \na \log \phi,\na( \phi \lambda) \ra+2\frac{|\na \phi|^2}{\phi}\lambda. 
\end{align*}
Plugging \eqref{E601b} into the above inequality, we obtain 
\begin{align}
\square (\phi \lambda) \le& -\phi \lambda^2+Cr^{-1} \sqrt \phi \lambda-2\la \na \log \phi,\na( \phi \lambda) \ra 
\le Cr^{-2}-2\la \na \log \phi,\na( \phi \lambda) \ra \label{E602b}
\end{align}
for some $C=C(n)$. Applying the maximum principle to \eqref{E602b}, we obtain
\begin{align*}
\phi \lambda \le \ep+Cr^{-2}
\end{align*}
as long as $\ep+Cr^{-2} \le \delta_1 R$. If we set $\delta=\frac{\delta_1}{C}$ in our assumption, then the conclusion follows.
\end{proof}

\begin{rem}
In the proof of Theorem \ref{thm:T601}, a key fact we use is that $\hat C$ is transversally invariant under Hamilton's ODE. This restriction excludes many well-known cones like $\{Rm \ge 0\}$, PIC1, and PIC2. Another critical fact we use is the existence of a good cutoff function from \eqref{E601b}, which is not available for general non-compact Ricci flows.
\end{rem}

The following lemma follows directly from Definition \ref{def:neck_2} and Definition \ref{neck_symmetry}.

\begin{lem}\label{L601}
For any $n$ and $\ep>0$, there exists a constant $\tau_4=\tau_4(\ep,n)>0$ satisfying the following property.

Suppose $(M^n, g(t))$ is an $n$-dimensional Ricci flow solution and $(\bar x, \bar t)$ is a spacetime point. Then $(\bar x, \bar t)$ is $\ep$-symmetric in the sense of Definition \ref{neck_symmetry} if $(\bar x, \bar t)$ is the center of an evolving $\tau_4$-neck with respect to $S^{n-1 }\times \R$.
\end{lem}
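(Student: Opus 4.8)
The plan is to unwind the definitions on both sides and observe that the statement is essentially a tautology once the two notions of ``$\epsilon$-symmetry'' (Definition~\ref{neck_symmetry}) and ``center of an evolving $\epsilon$-neck'' (Definition~\ref{def:neck_2}) are compared. First I would recall that being the center of an evolving $\tau_4$-neck means that, after rescaling by the scalar curvature factor $r^{-2}$ with $R(\bar x,\bar t)=\tfrac{n-1}{2}r^{-2}$, the parabolic neighborhood $B_{g(\bar t)}(\bar x,\tau_4^{-1}r)\times[\bar t-\tau_4^{-1}r^2,\bar t]$ is $\tau_4$-close in $C^{[\tau_4^{-1}]}$ to the standard shrinking cylinder $(S^{n-1}\times\R,g_c(t))$. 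The model cylinder carries the obvious isometric $SO(n-1)$-action (rotation of the $S^{n-1}$ factor), which realizes the exact symmetry that Definition~\ref{neck_symmetry} asks to be approximated. So the content is: $C^{[\tau_4^{-1}]}$-closeness to a space with exact symmetry forces approximate symmetry in whatever (weaker, finitely many derivatives) sense Definition~\ref{neck_symmetry} requires.

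The key steps, in order, would be: (1) write out precisely what Definition~\ref{neck_symmetry} demands — namely the existence of vector fields (or a local group action) that are approximately Killing and that approximately fix $\nabla f$ (or the analogous structure on the Ricci flow), quantified by some finite $C^k$-norm bound $\le\epsilon$ on a ball of radius $\ge\epsilon^{-1}$ after rescaling; (2) on the model $(S^{n-1}\times\R,g_c(t))$ exhibit the honest rotational Killing fields $\{V_\alpha\}$ and note they satisfy the defining conditions with error exactly $0$; (3) transport these fields through the $C^{[\tau_4^{-1}]}$-approximation provided by the evolving-neck hypothesis: pull them back by the diffeomorphism implementing the closeness, and estimate that the Lie-derivative-of-metric and the derivative-along-$\nabla f$ terms pick up an error controlled by the closeness parameter $\tau_4$ times a constant depending only on $n$ and the fixed number of derivatives; (4) choose $\tau_4=\tau_4(\epsilon,n)$ small enough (smaller than $\epsilon$ divided by that constant, and small enough that $[\tau_4^{-1}]$ exceeds the finite number of derivatives controlled in Definition~\ref{neck_symmetry} and $\tau_4^{-1}\ge\epsilon^{-1}$ for the radius) so that all errors are $\le\epsilon$. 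This is a standard ``closeness in many derivatives implies closeness in the weaker symmetry-defect norm'' compactness-free argument.

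I expect the only mildly delicate point — the ``main obstacle,'' though it is routine rather than deep — to be bookkeeping the derivative count and the radius/time-interval: Definition~\ref{neck_symmetry} is stated for spacetime points of a Ricci flow and presumably involves a parabolic neighborhood of a definite size in the rescaled metric, while Definition~\ref{def:neck_2} gives $C^{[\tau_4^{-1}]}$-closeness on $B(\bar x,\tau_4^{-1}r)\times[\bar t-\tau_4^{-1}r^2,\bar t]$; one must check that the scales match (they do, since both are normalized by $R=\tfrac{n-1}{2}r^{-2}$) and that the order of closeness $[\tau_4^{-1}]$ dominates whatever fixed order appears in Definition~\ref{neck_symmetry}, which is automatic for $\tau_4$ small. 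A secondary point is that the symmetry vector fields must be transplanted as genuine vector fields on $M$ (not just on the model), which is immediate from the diffeomorphism in the definition of the neck, together with the observation that their flows nearly preserve $g(t)$ because the pulled-back metric is $C^{[\tau_4^{-1}]}$-close to $g_c(t)$ and $\mathcal L_{V_\alpha}g_c=0$.

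Finally I would conclude by stating the quantitative choice explicitly: given $\epsilon>0$ and $n$, there is $c=c(n)$ and an integer $k=k(n)$ coming from Definition~\ref{neck_symmetry} so that taking $\tau_4:=\min\{\epsilon,\ c^{-1}\epsilon,\ (k+1)^{-1}\}$ (or any such explicit small value) guarantees that a center of an evolving $\tau_4$-neck is $\epsilon$-symmetric, which is exactly the assertion of the lemma. Since nothing here uses properties of Ricci shrinkers beyond the two cited definitions, the proof is short and self-contained given the earlier material.
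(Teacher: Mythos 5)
Your approach is the natural way to flesh out what the paper leaves implicit: the paper gives no proof of Lemma~\ref{L601}, stating only that it ``follows directly from Definition~\ref{def:neck_2} and Definition~\ref{neck_symmetry},'' and your plan --- pull back the exact rotational Killing fields of the model cylinder through the diffeomorphism implementing $C^{[\tau_4^{-1}]}$-closeness, then bound all the error terms in Definition~\ref{neck_symmetry} by a fixed constant times $\tau_4$ --- is exactly the routine argument that phrase is alluding to. Two small corrections: the isometry group acting on the $S^{n-1}$ factor is $SO(n)$, not $SO(n-1)$; its dimension $\binom{n}{2}$ matches the $\binom{n}{2}$ vector fields $U^{(a)}$ that Definition~\ref{neck_symmetry} requires. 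Also, your stated choice $\tau_4=\min\{\epsilon, c^{-1}\epsilon,(k+1)^{-1}\}$ should additionally force $\tau_4 \le 1/100$, since the estimates in Definition~\ref{neck_symmetry} live on a parabolic ball of radius $100r$ while the evolving-neck closeness is only furnished on radius $\tau_4^{-1}r$. Finally, be aware that the quantities in Definition~\ref{neck_symmetry} are measured against the \emph{intrinsic} Hamilton CMC foliation of $(M,g(\bar t))$, not the pulled-back slices $S^{n-1}\times\{z\}$; to transfer the exact identities from the model you implicitly use that the intrinsic CMC leaves are $C^k$-close to the pulled-back round slices, which follows from the uniqueness and stability of the CMC foliation in an $\epsilon$-neck (cf.\ the reference to \cite[Proposition D.1]{BK20} elsewhere in the paper). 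None of these affect the soundness of the argument, which is correct and matches the paper's intent.
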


\begin{prop}
There exists a small constant $\hat{\epsilon}=\hat{\epsilon}(n)>0$ satisfying the following property.

Suppose $(M^n,p,g,f)$ is a Ricci shrinker and $(M^n, g(t))_{t \leq 0}$ is the associated ancient Ricci flow solution. 
If 
\begin{align}
d_{PGH} \left\{ (M^n,p,g), (S^{n-1}\times \R,p_c,g_c)\right\}<\hat{\epsilon}, \label{eqn:OH06_1}
\end{align} 
then one of the following conclusions hold:
\begin{itemize}
\item[(A).] The flow is $\delta_1$-symmetric of type A (cf. Definition~\ref{def:sym1} and Theorem~\ref{Tneck}). 
\item[(B).] The flow is $\delta_2$-symmetric of type B, and Assumption~\ref{assum1} and Assumption~\ref{assum2} are satisfied (cf. Definition~\ref{def:sym2} and Theorem~\ref{Thm:improve2}). 
\item[(C).] The flow is $\delta_4$-symmetric of type C, and Assumption~\ref{assum1} and Assumption~\ref{assum3} are satisfied (cf. Definition~\ref{def:sym3} and Theorem~\ref{Thm:improve3}). 
\end{itemize}

\label{prn:OH06_3}
\end{prop}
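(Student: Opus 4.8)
\textbf{Proof proposal for Proposition~\ref{prn:OH06_3}.}

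The plan is to run the contradiction/compactness machinery of Proposition~\ref{P210b} to first place the base point $p$ in the center of an evolving normalized $\ep_c$-neck for $\ep_c$ as small as we like, then feed the resulting quantitative control into Theorem~\ref{T101}, applied to \emph{both} neck directions emanating from a level set $\Sigma(t_0)$ near $p$. Concretely, I would fix constants $n,A,B,\delta_0$ first: the entropy lower bound $\boldsymbol\mu(g)\ge -A$ comes from Proposition~5.8 of \cite{LLW21} once $\hat\epsilon$ is small, the curvature derivative bound $B$ and $\delta_0$ can be chosen from the model $S^{n-1}\times\R$ since the $C^\infty$-Cheeger-Gromov convergence of Proposition~\ref{P210b} makes $(M,g)$ arbitrarily close to $g_c$ on a large ball around $p$, and then choose $t_0$ large enough (but $\hat\epsilon$ small enough that the closeness region reaches $f=t_0$) so that the hypotheses \eqref{E001} hold on $\Sigma(t_0,s_0)$ for both components of $M\setminus\Sigma(t_0)$, with $\ep_1,\ep_1'$ as small as the thresholds $\sigma,\eta$ of Theorem~\ref{T101} demand. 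One must also check that $\Sigma(t_0)$ is diffeomorphic to $S^{n-1}$: this follows because near $p$ the manifold is $C^\infty$-close to the cylinder, so a level set of $f$ slightly away from the minimum is a small perturbation of the $S^{n-1}$-fiber.

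Next I would apply Theorem~\ref{T101} separately to each of the two ends/directions determined by $\Sigma(t_0)$. Each direction produces one of the three alternatives (a), (b), (c) of that theorem. Enumerating the $3\times 3$ possibilities for the pair of ends, and discarding symmetric duplicates, one arrives at exactly the three global pictures (1)--(3) described in the introduction: two cylindrical/conical ends (both (a) or (b)), one such end plus one cap (one of $\{$(a),(b)$\}$ paired with (c)), or two caps (both (c), forcing $M$ compact since the complement of the two caps is a neck region with two boundary spheres). In case (3) one also needs that the caps match up so the union is a closed manifold, which follows from compactness of each $D_i$ in \eqref{E002} together with the fact that $\Sigma(t_0,s)$ is a topological cylinder. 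These three pictures are precisely what the symmetry definitions of type A, B, C (Definitions~\ref{def:sym1}, \ref{def:sym2}, \ref{def:sym3}) are set up to axiomatize, so I would match them: the ``two-ended, everywhere a neck'' case gives type A with $\delta_1$-symmetry via Lemma~\ref{L601} (using $\tau_4$ to convert $\ep$-neck centers into $\ep$-symmetric points), the one-cap case gives type B, and the two-cap (compact) case gives type C.

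The remaining work, and the place I expect the real obstacle, is verifying Assumption~\ref{assum1}, Assumption~\ref{assum2}, Assumption~\ref{assum3} in cases (B) and (C) — these presumably require that the cap region is modeled on the Bryant soliton with PIC2, not merely on an arbitrary steady soliton conifold. To get this one combines Proposition~\ref{prop:464} (the curvature operator is positive on the cap-side neck region $E^2$, with the quantitative lower bound $Rm\ge \delta\ep_2 R^2/f$) with Theorem~\ref{thm:T601} applied to the cone $\hat C(\theta)$ of Example~\ref{exam:cone}: the associated Ricci flow almost preserves $\hat C(\theta)$ locally, and since the initial data on $\Sigma(t_0)$ is $\ep_1'$-close to the cylinder (whose curvature tensor lies in $\hat C(\theta)$) while the cap blow-up scale is $\sim s^{-1}\gg r^{-2}$, the blow-up limit of the cap inherits weak PIC2. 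The limit is then a steady soliton conifold with PIC2 satisfying hypotheses (a)--(c) of Theorem~\ref{thm:Bryant} (scalar curvature decay from Proposition~\ref{prop:500}, neck structure at infinity from Corollary~\ref{cor:501}), hence is the Bryant soliton. Threading the quantifiers so that one fixed $\hat\epsilon(n)$ simultaneously forces $\ep_1\le\sigma$, $\ep_1'\le\eta$, triggers Theorem~\ref{thm:T601}, and yields the specified $\delta_i$-symmetry is the bookkeeping core of the argument; I would organize it by choosing $\delta_1,\delta_2,\delta_4$ first, reading off the $\ep$ needed in Theorem~\ref{T101} and Lemma~\ref{L601}, then the $\hat\epsilon$ needed in Proposition~\ref{P210b}.
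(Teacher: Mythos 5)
Your proposal follows the same overall structure as the paper: upgrade the pointed-Gromov--Hausdorff closeness to $C^\infty$-Cheeger--Gromov closeness, apply Theorem~\ref{T101} to both sides of a level set near $p$, enumerate the end/cap combinations to match types A, B, C, convert $\ep$-neck centers to $\delta_i$-symmetric points via Lemma~\ref{L601}, and identify the cap with the Bryant soliton by a PIC2-propagation argument using Theorem~\ref{thm:T601}, Proposition~\ref{prop:464}, and Theorem~\ref{thm:Bryant}. (The paper phrases the constant-threading as a contradiction/compactness argument with a sequence of shrinkers, but this is the same bookkeeping you describe.)

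The one place where your sketch is genuinely incomplete---and where the paper works hardest---is the claim that the blow-up limit at the cap inherits weak PIC2. Theorem~\ref{thm:T601} only controls the cone deviation on the parabolic region $\{F\le r,\ 0\le t<1\}$; pushing this forward along the gradient flow of $f$ reaches only the subset $A_i$ of cap points lying in the image of $\{f\le r\}$, not the deep interior of the cap. The paper handles the complementary piece (its $Y_i$) by an elliptic maximum principle for $\Delta_{f_i}R_i = R_i - 2|Rc_i|^2$, establishing $R_i\ge(1-C_2\theta)s_i$ there; combining this with the cone estimate on $A_i\cup Z_i$ yields the pointwise dichotomy ``$Rm_\infty\in\hat C(\theta)$ or $R_\infty\ge 1-C_2\theta$'' on the rescaled limit, and only after sending $\theta\to 0$ plus a continuity argument (using that $\{R_\infty=1\}$ has empty interior) does one obtain $Rm_\infty\in\hat C(0)\subset\text{PIC2}$ everywhere. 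Your appeal to ``initial data close to the cylinder while the cap scale is $\sim s^{-1}\gg r^{-2}$'' does not by itself cover this complementary region, so the PIC2 conclusion, and hence the applicability of Theorem~\ref{thm:Bryant}, is not yet justified by your argument as written.
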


\begin{proof}
First, we fix some constants and parameters as follows:
\begin{align*}
A=-\boldsymbol{\mu}(S^{n-1} \times \R,g_c)+1,\quad B=&1, \quad \delta_0=\frac{1}{2}, \\
\ep_1= \sigma=\sigma(n,\delta_0,A,B), \quad \ep_0=&\ep_1^{\frac 1 2}, \quad \ep_2=\ep_0^{\frac 1 4},\\
L=L(n,\delta_0,A,B), \quad \ep_1'=&\eta=\eta(n,\delta_0,A,B,\ep), 
\end{align*}
where $\sigma,L$ and $\eta$ are functions defined in Theorem \ref{T101}.
Notice that $\ep_1'$ and $\eta$ depend on $n$ and $\ep$ and all other constants depend only on $n$.

We argue by contradiction.

If the Proposition were wrong, we could find a sequence of Ricci shrinkers $(M^n_i,p_i,g_i,f_i)$ which does not satisfy any situation of (A), (B), or (C) and satisfy
\begin{align*}
(M_i, p_i, g_i, f_i) \longright{pointed-Gromov-Hausdorff} (S^{n-1} \times \R, g_c,p_c).
\end{align*}
Note that the above convergence can be improved (cf. \cite[Theorem $1.1$]{LLW21}) as follows
\begin{align}
(M_i, p_i, g_i) \longright{C^{\infty}-Cheeger-Gromov} \lc S^{n-1} \times \R,p_c,g_c \rc. \label{E603a}
\end{align}
Furthermore, by \cite[Proposition $8.8$]{LLW21}, we know $\boldsymbol{\mu}(M_i,g_i) \to \boldsymbol{\mu}(S^{n-1} \times \R,g_c)$. 
Consequently, there exists a sequence $r_i \to +\infty$ satisfying the following properties if $i$ is sufficiently large.
\begin{enumerate}
\item $\{x \in M_i \mid r_i \le f_i(x) \le 2r_i \}$ consists of two components, denoted by $\Sigma_i^1(r_i,2r_i)$ and $\Sigma_i^2(r_i,2r_i)$, which satisfy the condition \eqref{E001} of Theorem \ref{T101}.
\item Each point in $\{x \in M_i \mid f_i(x) \le \eta^{-1}r_i \}$ is the center of an evolving normalized $\ep$-neck.
\end{enumerate}
From Theorem \ref{T101}, there exist sets $E_i^1$ and $E_i^2$ with $\partial E^1_i=\Sigma_i^1(r_i)$ and $\partial E^2_i=\Sigma_i^2(r_i)$ such that one of the following holds:
\begin{itemize}
\item[(A).] Both $E_i^1$ and $E_i^2$ are ends of $M_i$.
\item[(B).] $E_i^1$ is compact and $E_i^2$ is an end of $M_i$.
\item[(C).] Both $E_i^1$ and $E_i^2$ are compact. 
\end{itemize}
We shall discuss the above situations case by case. \\

\textit{Case A: If (A) happens, then the flow $(M_i, g_i(t))_{t \leq 0}$ is $\delta_1$-symmetric of type A.}

We take $\ep \leq \tau_4(\delta_1,n)$, where $\tau_4$ is from Lemma \ref{L601} and $\delta_1$ is the constant in Theorem \ref{Tneck}. It follows from Theorem \ref{T101} that any point in $\Sigma_i^1(\eta^{-1}r_i,\infty)$ or $\Sigma_i^2(\eta^{-1}r_i,\infty)$ is the center of an evolving normalized $\tau_4$-neck. Combining with property 2 above, we conclude from Lemma \ref{L601} that each point $(x,0) \in M_i \times \{0\}$ is $\delta_1$-symmetric. Moreover, it is easy to see $M_i$ is diffeomorphic to $S^{n-1} \times \R$. Therefore, $(M_i,g_i)$ is $\delta_1$-symmetric of type-A at time $0$, see Definition \ref{def:sym1}.

Since the associated Ricci flow $g_i(t)$ of $(M_i,g_i)$ is self-similar, we claim that each spacetime point $(\bar x,\bar t)\in M_i \times (-\infty,0]$ is also $\delta_1$-symmetric. Indeed, it follows from the definition of the diffeomorphisms $\psi_i^t$ \eqref{E211} that
\begin{align*}
\psi_i^{\theta(s)} \circ \psi_i^{\bar t}=\psi_i^{s},
\end{align*}
where $\theta(s)=\frac{s-\bar t}{1-\bar t}$. Therefore, for any $s \le \bar t$,
\begin{align*}
g_i(s)=(1-s)(\psi_i^s)^*g_i=(1-\bar t)(1-\theta(s))(\psi_i^{\bar t})^*(\psi_i^{\theta(s)})^*g_i=(1-\bar t)(\psi_i^{\bar t})^*g_i(\theta(s)).
\end{align*}
We set $\psi_i^{\bar t}(\bar x)=x$, then $(x,0)$ is $\delta_1$-symmetric. By comparing the similar parabolic neighborhoods in the Definition \ref{neck_symmetry} based at $(x,0)$ and $(\bar x, \bar t) $ respectively, it is easy to see $(\bar x,\bar t)$ is also $\delta_1$-symmetric, since the CMC foliation is unique, see \cite[Proposition D.1]{BK20}. In particular, if $\mathcal U = \{U^{(a)} : 1 \leq a \leq {n \choose 2}\}$ is the collection of vector fields in the Definition \ref{neck_symmetry} around $(x,0)$, then $ \bar{\mathcal U} = \{\bar U^{(a)}=(\psi_i^{\bar t})^{-1}_* U^{(a)}: 1 \leq a \leq {n \choose 2}\}$ is the collection around $(\bar x,\bar t)$. In summary, we know the flow $(M_i, g_i(t))_{t \leq 0}$ is $\delta_1$-symmetric of type-A . \\

\textit{Case B: If (B) happens, then the flow $(M_i, g_i(t))_{t \leq 0}$ is $\delta_2$-symmetric of type B, and Assumption \ref{assum1} and Assumption \ref{assum2} in Appendix~\ref{app:B} are satisfied.}

In this case, it follows from Theorem \ref{T101} that there exists $s_i \ge 2r_i$ and a point $q_i \in \Sigma(s_i)$ with $f_i(q_i)=\ep_1 R_i(q_i)$. By our choice of parameters, any point $x \in M_i \backslash D_i$ is the center of an evolving normalized $\ep$-neck, where $D_i=E^1_i \backslash \Sigma_i^1(r_i,s_i)$. Moreover, we fix a point $z_i \in D_i$ to be a maximum point of $f_i$.

\textbf{Claim 1}: On $P_{2,i}=\{(x,t) \mid F_i(x,t) \le 2r_i,\, 0\le t <1\}$, we have
\begin{align}\label{E603b}
R_i \ge \frac{1}{2}.
\end{align}

\emph{Proof of Claim 1}: For any $(x,t) \in M_i \times [0,1)$ with $F_i(x,t) \le 2r_i$, we have $(1-t)f_i(\psi^t_i(x)) \le 2r_i$ by the definition of $F_i$. If $f_i(\psi^t_i(x)) \le r_i$, then $\psi^t_i(x)$ is the center of an evolving normalized $\ep$-neck. In particular, $R_i(\psi^t_i(x)) \ge 1$ and hence
\begin{align*}
R_i(x,t)=\frac{1}{1-t}R_i(\psi^t_i(x)) \ge 1.
\end{align*}
If $f_i(\psi^t_i(x)) \ge r_i$, then it follows from Lemma \ref{L401} that
\begin{align*}
R(\psi^t_i(x)) \ge \frac{r_i}{f_i(\psi^t_i(x))}
\end{align*}
and hence
\begin{align*}
R_i(x,t)=\frac{1}{1-t}R_i(\psi^t_i(x)) \ge \frac{1}{1-t}\frac{r_i}{f_i(\psi^t_i(x))} \ge \frac{1}{2}.
\end{align*}

\textbf{Claim 2}: If we set $\tilde f_i=f_i-s_i$ and $\tilde g_i=s_ig_i$, then $(M_i,q_i,\tilde g_i,\tilde f_i)$ converges smoothly in the Cheeger-Gromov sense to the Bryant soliton.

\emph{Proof of Claim 2}: We consider a continuous family of cones $\hat C(\theta)$ from Example \ref{exam:cone}, for $\theta$ a small positive number.

For any small $\theta$, since the curvature tensor of $S^{n-1} \times \R$ is contained in $\hat C(\theta)$, then there exists a sequence $v_i \to 0^+$ such that
\begin{align*}
Rm_i+v_i I \in \hat C(\theta).
\end{align*}
on $\{(x,0) \in M_i \times \{0\} \mid F_i(x,0)\le 2r_i\}$. Therefore, it follows from \eqref{E603b} and Theorem \ref{thm:T601} that
\begin{align}\label{E603d}
Rm_i+(v_i+C_1r_i^{-2}) I \in \hat C(\theta)
\end{align}
on $P_{1,i}=\{(x,t) \mid F_i(x,t) \le r_i,\, 0\le t <1\}$, for some constant $C_1=C_1(n)>0$, if $i$ is sufficiently large.

Now we define the set $X_i \coloneqq E_i^1\backslash \Sigma_i^1(r_i,2r_i)$ and the following subsets
\begin{align*}
\begin{cases}
A_i & \coloneqq \{x\in X_i \mid \text{there exists a point } y \text{ with } f_i(y) \le r_i \text{ such that } \psi_i^t(y)=x \text{ for some } t \in (0,1) \}, \\
B_i & \coloneqq \{x\in X_i \mid |\na f_i|(x)=0 \},\\
C_i & \coloneqq X_i\backslash (A_i \cup B_i).
\end{cases}
\end{align*}
\noindent
It is clear that $A_i,B_i$ and $C_i$ are disjoint and $A_i$ is open. Moreover, from our construction $B_i \cup C_i \subset D_i$. For any $x \in A_i$ with $\psi_i^t(y)=x$ for some $f_i(y) \le r_i$, we compute 
\begin{align*}
F_i(y,t)=(1-t)f_i(y,t)=(1-t)f_i(\psi_i^t(y)) \le f_i(y) \le r_i,
\end{align*}
where we have used the ODE inequality
\begin{align*}
\frac{df_i(\psi_i^t(y))}{dt} = \frac{|\na f_i|^2(\psi_i^t(y))}{1-t} \le \frac{f_i(\psi_i^t(y))}{1-t}.
\end{align*}

Therefore, it follows from \eqref{E603d} that
\begin{align*}
Rm_i(x)+(1-t)(v_i+C_1r_i^{-2}) I \in \hat C(\theta).
\end{align*}
In particular, we have for any $x \in A_i$,
\begin{align} \label{E603e}
Rm_i(x)+(v_i+C_1r_i^{-2}) I \in \hat C(\theta).
\end{align}

Next we define 
\begin{align*}
&Y_i \coloneqq \textrm{the closure of the interior part of $B_i \cup C_i$.} \\
&Z_i \coloneqq (B_i \cup C_i)\backslash Y_i. 
\end{align*}
It is clear that $Z_i$ has no interior point and $Z_i \subset \partial A_i$. From the continuity, we conclude from \eqref{E603e} that on $Z_i$
\begin{align} \label{E603f}
Rm_i+(v_i+C_1r_i^{-2}) I \in \hat C(\theta).
\end{align}

Now we claim that there exists a constant $C_2=C_2(n)>0$ such that 
\begin{align*} 
R_i \ge (1-C_2 \theta) s_i
\end{align*}
on $\partial Y_i$ if $i$ is sufficiently large.

Indeed, if $x \in B_i$, then $R_i(x)=f_i(x)$ and hence by \eqref{E002} that
\begin{align*}
\lim_{i \to \infty} \frac{R_i(x)}{s_i}=\lim_{i \to \infty} \frac{f_i(x)}{s_i}=1
\end{align*}
uniformly since $x \in D_i$. If $x \in C_i$, we denote the gradient flow generated by $\frac{\na f_i}{|\na f_i|^2}$ by $\phi_i^t$ and there exist $y \in B_i$ and $t_i<0$ such that
\begin{align*}
\lim_{t \to t_i^+}\phi_i^{t}(x)=y.
\end{align*}
Moreover, it follows from \eqref{E002} again that $t_i>-L$ since both $x,y \in D_i$. From the choice of $x$, there exists a sequence $x_j \in A_i \to x$. We define $y_j=\phi_i^{t_i}(x_j)$. Then it is clear that $y_j \to y$. Now we have the ODE:
\begin{align}\label{E603g}
\frac{d R_i(\phi_i^t(x_j))}{dt}=\frac{\la \na R_i,\na f_i \ra}{|\na f_i|^2}(\phi_i^t(x_j))=\frac{2 Rc_i (\na f_i,\na f_i)}{|\na f_i|^2}(\phi_i^t(x_j)).
\end{align}

From the definition of $\hat C(\theta)$ and \eqref{ex:001}, it is clear that $Rc \ge -C\theta R$ for any $Rm \in \hat C(\theta)$. Combining this fact with \eqref{E603f}, we have on $A_i$ that
\begin{align} \label{E603h}
Rc_i \ge -C\theta R_i
\end{align}
for some constant $C=C(n)>0$, if $i$ is sufficiently large. Hence, it follows from \eqref{E603g} and \eqref{E603h} that
\begin{align*}
\frac{d R_i(\phi_i^t(x_j))}{dt} \ge -C\theta R_i(\phi_i^t(x_j))
\end{align*}
and hence
\begin{align*}
R_i(x_j) \ge e^{-C\theta |t_i|} R_i(\phi_i^{t_i}(x_j))=e^{-C\theta |t_i|} R_i(y_j) \ge e^{-C\theta} R_i(y_j).
\end{align*}

Notice that $R_i(y)=f_i(y)$ since $y$ is a critical point of $f_i$. By taking $j \to \infty$, we conclude 
\begin{align*}
R_i(x) \ge e^{-C\theta} R_i(y)= e^{-C\theta} f_i(y)
\end{align*}
and hence by \eqref{E002}
\begin{align} \label{E604a}
R_i(x) \ge (1-C_2\theta) s_i
\end{align}
if $i$ is sufficiently large. Now we have the elliptic equation 
\begin{align*}
\Delta_{f_i} R_i=R_i-2|Rc_i|^2 \le R_i-\frac{2}{n} R_i^2 \le 0
\end{align*}
on $Y_i$, where for the last inequality we have used $Y_i \subset D_i$ and \eqref{E002}. From \eqref{E604a} and the maximum principle, we have on $Y_i$,
\begin{align} \label{E604b}
R_i \ge (1-C_2\theta) s_i.
\end{align}

From Theorem \ref{T101} (c), we have
\begin{align*}
(M_i, q_i, \tilde g_i, \tilde f_i) \longright{pointed-\hat{C}^{\infty}-Cheeger-Gromov} \left(X_{\infty}, q_{\infty}, d_{\infty}, f_{\infty} \right),
\end{align*}
where $(X_{\infty}, q_{\infty}, d_{\infty}, f_{\infty}) \in \mathfrak S(n)$ is a nontrivial steady soliton conifold. From \eqref{E603e}, \eqref{E603f} and \eqref{E604b}, we have
\begin{align} \label{E604c}
Rm_{\tilde g_i}+s_i^{-1}(v_i+C_1r_i^{-2}) I \in \hat C(\theta)
\end{align}
on $A_i \cup Z_i$ and
\begin{align} \label{E604d}
R_{\tilde g_i} \ge 1-C_2\theta
\end{align}
on $Z_i$. Therefore, for any $x \in \mathcal R$, it follows from \eqref{E604c}, \eqref{E604d} and the smooth convergence that
\begin{align*}
Rm_{\infty}(x) \in \hat C(\theta) \quad \text{or} \quad R_{\infty}(x) \ge 1-C_2\theta.
\end{align*}
Now we let $\theta \to 0$ and conclude that
\begin{align*} 
Rm_{\infty}(x) \in \hat C(0) \quad \text{or} \quad R_{\infty}(x) = 1.
\end{align*}
Here, we have used the fact that $|\na f_{\infty}|^2+ R_{\infty}(x)=1$ on $\mathcal R$. Notice that the set $\{x \in \mathcal R \mid R_{\infty}(x) = 1\}$ has no interior point since otherwise from $\Delta_{f_{\infty}} R_{\infty}=-2|Rc_{\infty}|^2$ 
and the analyticity that $Rc_{\infty} \equiv 0$ on $\mathcal R$, which contraditcs the fact that $R_{\infty}(q_{\infty})=\ep_1$. By continuity, we have
\begin{align*} 
Rm_{\infty}(x) \in \hat C(0) 
\end{align*}
for any $x \in \mathcal R$. In addition, it follows from \cite[Proposition $2.3$]{Bre18} that $\hat C(0) \subset \text{PIC2}$. Therefore, it implies that on $\mathcal R$
\begin{align*}
Rm_{\infty} \in \text{PIC2}.
\end{align*}

Now we claim that $(\mathcal R,g)$ has strictly PIC2. Indeed, it follows from Proposition \ref{prop:464}, Proposition \ref{prop:500} and the smooth convergence that
\begin{align*}
Rm_{\infty}(x) >\frac{c(n)}{t} R_{\infty}(x)>0
\end{align*}
if $f_{\infty}(x)=-t<0$. Therefore, it follows from the geodesic convexity of $\mathcal R$ and the strong maximum principle (see \cite[Proposition $9$]{BS08}) that $(\mathcal R,g)$ has strictly PIC2. 

Moreover, one can show as \cite[Lemma $6.1$]{CN09} that $-f_{\infty}$ is comparable to the distance function $d_{\infty}(z_{\infty},\cdot)$ outside a compact set, where $z_{\infty}$ is the limit point of $z_i$. From Proposition \ref{prop:500} we conclude
\begin{align*}
\frac{c_1}{d_{\infty}(z_{\infty},x)} \le R_{\infty}(x) \le \frac{c_2}{d_{\infty}(z_{\infty},x)} 
\end{align*}
for $x$ outside a compact set, where $c_1,c_2$ are positive constants. 

Combined with Proposition \ref{prop:463}, it is clear that all assumptions of Theorem \ref{thm:Bryant} are satisfied. Therefore, we conclude that the limit steady soliton conifold $ (X_{\infty}, d_{\infty})$ must be isometric to the Bryant soliton. In particular, it implies that $(M_i,q_i,\tilde g_i,\tilde f_i)$ converges smoothly in the Cheeger-Gromov sense to the Bryant soliton whose maximum of the scalar curvature is $1$.

In addition, if we set $\tilde g_i(t)=s_i g_i(s_i^{-1}t)$, then the Ricci flow $(M_i,q_i,\tilde g_i(t))_{t \le 0}$ converges smoothly to the Ricci flow associated with the Bryant soliton.

\textbf{Claim 3}: There exists a constant $\theta_0=\theta_0(n)>0$ such that for sufficiently large $i$ if $(x,t) \in M_i \times (-\infty,0]$ satisfies
\begin{align*}
\lambda_i(x,t) \le \theta_0 R_i(x,t),
\end{align*}
where $\lambda_i$ is the minimal eigenvalue of $Rc_i$, then $x$ is $\delta_1$-symmetric in the sense of Definition \ref{neck_symmetry}.

\emph{Proof of Claim 3}: We may assume that the parameter $\ep <\tau_4(\delta_1,n)$ so that any point $x \in M_i \backslash D_i$ is $\delta_1$-symmetric. From Claim 2, it is clear that $(D_i,s_i g_i)$ converges smoothly to a cap in the Bryant soliton. Therefore, there exists a constant $\theta_0$ depending only on $n$ that
\begin{align*}
\lambda_i \ge \theta_0 R_i,
\end{align*}
on $D_i$, if $i$ is sufficiently large. In other words, any point $x \in M_i$ with $\lambda_i(x) \le \theta_0 R_i(x)$ is contained in $M_i \backslash D_i$ and hence $\delta_1$-symmetric.

Since the Ricci flow $(M_i,g_i(t))_{t \le 0}$ associated with $(M_i,g_i)$ is self-similar, it is clear that if a spacetime point $(x,t)$ satisfies
\begin{align*}
\lambda_i(x,t) \le \theta_0 R_i(x,t),
\end{align*}
then $(x,t)$ is $\delta_1$-symmetric.

\textbf{Claim 4}: There exist constants $\ep'=\ep'(n)>0$ and $\Lambda_0=\Lambda_0(n)>0$ such that for $\ep \le \ep'$ and sufficiently large $i$, if $(x,\bar t) \in M_i \times (-\infty,0]$ and $d_{g_i(\bar t)}(z_i,x) \ge \Lambda_0 R_i(z_i,\bar t)^{-\frac 1 2}$, then
\begin{align*}
\lambda_i(y,t) \le \frac{\theta_0}{2} R_i(y,t),
\end{align*}
for any $(y,t) \in B_{g_i(\bar t)}(x, \bar L R_i(x,\bar t)^{-\frac 1 2}) \times [-\bar L R_i( x,\bar t)^{-1},\bar t]$, where $\bar L$ is the constant from Theorem \ref{Tneck}.

\emph{Proof of Claim 4}: We choose our parameter $\ep $ small enough so that if a point $x \in M_i$ is the center of an evolving normalized $\ep$-neck, then $\lambda_i(x) \le \frac{\theta_0}{2} R_i(x)$ and $R_i(u) \ge 0.9 R_i(v)$ for any $u,v \in B_{g_i}(x,\bar L R_i(x)^{-\frac 1 2})$.

Assume $y\in B_{g_i}(x,\bar L R_i(x)^{-\frac 1 2}) \cap D_i$, then 
\begin{align*}
R_i(x) \ge \frac{9}{10} R_i(y) \ge \frac{9s}{10 L},
\end{align*}
where the last inequality follows from \eqref{E002}. Therefore,
\begin{align*}
d_{g_i}(z_i,x) \le d_{g_i}(z_i,y)+d_{g_i}(y,x) \le \lc L+\bar L \lc \frac{10L}{9} \rc^{\frac 1 2}\rc s^{-\frac 1 2}.
\end{align*}
On the other hand, since $z_i$ is a critical point of $f_i$, it follows from \eqref{E002} that 
\begin{align*}
\lim_{i \to \infty} \frac{R_i(z_i)}{s_i}=\lim_{i \to \infty} \frac{f_i(z_i)}{s_i}=1.
\end{align*}
From \eqref{E002} again, if we take $\Lambda_0=2L+2\bar L \lc \frac{10L}{9} \rc^{\frac 1 2}$, then for any point $x$ with $d_i(x,z_i) \ge \Lambda_0 R_i(z_i)^{-\frac 1 2}$, $B_{g_i}(x,\bar L R_i(x)^{-\frac 1 2})$ must be contained in $M_i \backslash D_i$ and hence 
\begin{align*}
\lambda_i(y) \le \frac{\theta_0}{2} R_i(y)
\end{align*}
for any $y \in B_{g_i}(x,\bar L R_i(x)^{-\frac 1 2})$.

In addition, for any $(y,t) \in B_{g_i}(x,\bar L R_i(x)^{-\frac 1 2}) \times [-\bar L R_i(x)^{-1},0]$, $\psi_i^t(y) \notin D_i$ since $f_i(\psi_i^t(y)) \le f_i(y)$. 
Therefore, for the associated Ricci flow $(M_i,g_i(t))_{t \le 0}$, if $(x,0) \in M_i \times (-\infty,0]$ and $d_{g_i(0)}(z_i,x) \ge \Lambda_0 R_i(z_i,0)^{-\frac 1 2}$, then
\begin{align*}
\lambda_i(y,t) \le \frac{\theta_0}{2} R_i(y,t),
\end{align*}
for any $(y,t) \in B_{g(0)}(x, \bar L R_i(x,0)^{-\frac 1 2}) \times [-\bar L R_i( x,0)^{-1},0]$. Therefore, the Claim follows from the self-similarity of $(M_i,g_i(t))_{t \le 0}$.

Now we fix $\ep=\min\{ \tau_4(\delta_1,n),\tau_4(\delta_2,n),\ep'(n)\}$, where $\delta_2$ is from Theorem \ref{Thm:improve2}. From Claim 2-Claim 4, it is clear that Assumption \ref{assum1} and Assumption \ref{assum2} in Appendix~\ref{app:B} are satisfied.
Moreover, if $i$ is sufficiently large, $(M_i,g_i(t))$ is $\delta_2$-symmetric of type-B at time $0$ in the sense of Definition \ref{def:sym2}. From the self-similarity of $(M_i,g_i(t))$ as in Case A, we immediately conclude that $(M_i,g_i(t))$ is $\delta_2$-symmetric of type-B at time $t$ for any $t \le 0$.\\

\textit{Case C: If (C) happens, then the flow $(M_i, g_i(t))_{t \leq 0}$ is $\delta_4$-symmetric of type C, and Assumption \ref{assum1} and Assumption \ref{assum3} in Appendix~\ref{app:B} are satisfied.}

In this case, it follows from Theorem \ref{T101} that there exist $s^1_i,s^2_i \ge 2r_i$ and points $q^1_i \in \Sigma_i^1(s_i^1)$ and $q_i^2 \in \Sigma_i^2(s_i^2)$ with $f_i(q^1_i)=\ep_1 R_i(q^1_i)$ and $f_i(q^2_i)=\ep_1 R_i(q^2_i)$ . By our choice of parameters, any point $x \in M_i \backslash D_i$ is the center of an evolving normalized $\ep$-neck, where $D_i=D_i^1 \cup D_i^2=(E^1_i \backslash \Sigma_i^1(r_i,s^1_i)) \cup (E^2_i \backslash \Sigma_i^2(r_i,s^2_i))$. Moreover, we fix points $z^1_i \in D^1_i$ and $z_i^2 \in D_i^2$ to be a maximum point of $f_i$ on $D^1_i$ and $D^2_i$, respectively.

If we set $\tilde g_i^1=s_i^1g_i$, $\tilde g_i^2=s_i^2 g_i$, $\tilde f_i^1=f_i-s_i^1$ and $\tilde f_i^2=f_i-s_i^2$.
Following the the discussion in case B, it is clear that both $(M_i,q^1_i,\tilde g^1_i,\tilde f^1_i)$ and $(M_i,q^2_i,\tilde g^2_i,\tilde f^2_i)$ converge smoothly in the Cheeger-Gromov sense to the Bryant soliton.
Furthermore, there exist constants $\ep''=\ep''(n)>0$ and $\Lambda_1=\Lambda_1(n)>0$ such that for $\ep \le \ep''$ and sufficiently large $i$, if $(x,\bar t) \in M_i \times (-\infty,0]$, $d_{g_i(\bar t)}(z^1_i,x) \ge \Lambda_1 R_i(z_i,\bar t)^{-\frac 1 2}$ and $d_{g_i(\bar t)}(z^2_i,x) \ge \Lambda_1 R_i(z_i,\bar t)^{-\frac 1 2}$, then
\begin{align*}
\lambda_i(y,t) \le \frac{\theta_0}{2} R_i(y,t),
\end{align*}
for any $(y,t) \in B_{g_i(\bar t)}(x, \bar L R_i(x,\bar t)^{-\frac 1 2}) \times [-\bar L R_i( x,\bar t)^{-1},\bar t]$.

Now we fix $\ep=\min\{ \tau_4(\delta_1,n),\tau_4(\delta_4,n),\ep''(n)\}$, where $\delta_4$ is from Theorem \ref{Thm:improve3}. From Claim 3, Claim 4 and the discussion above, it is clear that Assumption \ref{assum1} and Assumption \ref{assum3} in Appendix~\ref{app:B} are satisfied. 
Moreover, if $i$ is sufficiently large, $(M_i,g_i(t))$ is $\delta_4$-symmetric of type-C at time $0$ in the sense of Definition \ref{def:sym3}. Applying the self-similarity of $(M_i,g_i(t))$ as in Case A, we immediately conclude that $(M_i,g_i(t))$ is $\delta_4$-symmetric of type-C at time $t$ for any $t \le 0$.\\

In conclusion, for each large $i$, the associated ancient Ricci flow $(M_i, g_i(t))_{t \leq 0}$ must locate in one of the situations of (A), (B), or (C).
However, this contradicts our assumption at the beginning. This contradiction establishes the proof of this Proposition. 
\end{proof}

Now we are able to finish the proof of Theorem~\ref{T100}. 

\begin{proof}[Proof of Theorem~\ref{T100}:]
It follows from the combination of Theorem~\ref{Tneck}, Theorem~\ref{Thm:improve2} and Theorem~\ref{Thm:improve3} that each ancient Ricci flow solution $(M^n, g(t))_{t \leq 0}$ 
satisfying one of (A), (B) or (C) in Proposition~\ref{prn:OH06_3} is rotationally symmetric. 
Then we can apply Kotschwar's classification~\cite{Kot08} to obtain that $(M,g)$ must be isometric to $S^n$, $S^{n-1} \times \R$, or $\R^{n}$. 
Applying (\ref{E100}) again, we know that $(M,g)$ can only be $S^{n-1} \times \R$. 
\end{proof}

\section{Further discussion}

In this paper, we mainly focus on the model space $(S^{n-1} \times \R,g_c)$. One can also consider the model space $\lc (S^{n-1}/\Gamma) \times \R,g_c \rc$, where $\Gamma \le O(n)$ is a finite subgroup acting freely on $S^{n-1}$. 
Then we can also define a spacetime $(x,t)$ in a Ricci flow $(M,g(t))$ to be the center of an evolving (normalized) $\ep$-neck if the model space $S^{n-1} \times \R$ is replaced by $(S^{n-1}/\Gamma) \times \R$. 

The following theorem is another version of Theorem \ref{T101} in this case, whose proof is almost identical and hence omitted.

\begin{thm}\label{T701}
For any positive constants $n,N,A,B,\ep$ and $\delta_0\in (0,1)$, there exist positive constants $\sigma=\sigma(n,N,\delta_0,A,B)$, $L=L(n,N,\delta_0,A,B)$ and $\eta=\eta(n,N, \delta_0,A,B,\ep)$ satisfying the following property.

Let $(M^n,g,f)$ be a Ricci shrinker with
\begin{align*}
\begin{cases}
|\na^i Rm| \le BR^{\frac{i}{2}+1}, \quad \forall \, 0\le i \le 4 \quad &\text{on}\quad \Sigma(t_0,s_0), \\
R\le \ep_1 f \quad &\text{on} \quad \Sigma(t_0,s_0), \\
|\Rm| \le \ep_1' R \quad &\text{on}\quad \Sigma(t_0), \\
\left |R-\frac{n-1}{2} \right| \le \ep'_1 \quad &\text{on}\quad \Sigma(t_0), \\
(1-\delta_0)s_0\ge t_0 \ge n \ep_1^{-1}, \\
\boldsymbol{\mu}(g) \ge -A,\\
\Sigma(t_0) \text{ is diffeomorphic to } S^{n-1}/\Gamma \text{ with } |\Gamma |\le N.
\end{cases}
\end{align*}
If $\ep_1 \le \sigma$ and $\ep_1' \le \eta$, then one of the following statements holds.
\begin{itemize}
\item[(a).] There exists an end $E$ with $\partial E=\Sigma(t_0)$ such that any point in $\Sigma(\eta^{-1} t_0,\infty) \subset E$ is the center of an evolving normalized $\ep$-neck. Moreover, $E$ is asymptotic to $\lc (S^{n-1}/\Gamma) \times \R,g_c \rc$ with rate $O(r^{-\tau(n)})$. 
\item[(b).] There exists an end $E$ with $\partial E=\Sigma(t_0)$ such that any point in $\Sigma(\eta^{-1} t_0,\infty) \subset E$ is the center of an evolving $\ep$-neck. Moreover, $E$ is asymptotic to a regular cone with cross section diffeomorphic to $S^{n-1}/\Gamma$. 
\item[(c).] There exists a compact set $E$ with $\partial E=\Sigma(t_0)$, a number $s\ge s_0$ and a point $q \in \Sigma(s)$ such that $R(q)=\sigma s$. Moreover, any point in $\Sigma(\eta^{-1} t_0,s) \subset E$ is the center of an evolving $\ep$-neck and any point in the cap $D:=E \backslash \Sigma(t_0,s)$ is the center of an $\ep$-steady soliton conifold. Furthermore, 
\begin{align*}
\emph{diam}_g D \le \frac{L}{\sqrt{s}}, \quad \sup_D|f-s| \le L \quad \text{and} \quad L^{-1}s \le \inf_D R \le \sup_D R \le s+L.
\end{align*} 
\end{itemize}
\end{thm}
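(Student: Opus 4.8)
The plan is to reproduce the proof of Theorem~\ref{T101} almost word for word, keeping track of the one place where the cross-section enters, namely the replacement of $S^{n-1}$ by $S^{n-1}/\Gamma$ with $|\Gamma|\le N$. The first point to record is that all of the estimates of Section~3 are purely local: since $S^{n-1}/\Gamma$ is locally isometric to $S^{n-1}$, the hypotheses $|\Rm|\le\ep_1'R$ and $|R-\tfrac{n-1}{2}|\le\ep_1'$ on $\Sigma(t_0)$ carry exactly the same analytic content, so Lemmas~\ref{L401}, \ref{LL402}, \ref{L402}, \ref{L403}, \ref{L404}, \ref{L405}, \ref{L406} and Propositions~\ref{P401}, \ref{prop:460}, \ref{prop:464} go through verbatim; in particular Lemma~\ref{L404} only needs an upper bound on the diameter of $\Sigma$, and the Myers-type bound for $S^{n-1}/\Gamma$ is no worse than for $S^{n-1}$. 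Consequently the ODE \eqref{E413}--\eqref{E413a} for the scalar curvature along the gradient flow of $f$ is unchanged, and the same case division --- whether $R$ stays inside $[\tfrac{n-1}{2}-\ep_2,\tfrac{n-1}{2}+\ep_2]$, exits below, or exits above --- yields the alternatives (a), (b), (c).

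Next I would re-run each compactness/blow-up argument (the analogues of Propositions~\ref{P210b}, \ref{prop:405a}, \ref{prop:405b}, \ref{prop:406}, \ref{prop:406a}, \ref{prop:405c}, \ref{prop:462}, \ref{prop:463}, \ref{prop:501}, \ref{prop:502}) with the model space $(S^{n-1}/\Gamma)\times\R$ in place of $S^{n-1}\times\R$. The entropy lower bound $\boldsymbol{\mu}(g)\ge -A$ is already assumed, and the no-collapsing constant of Theorem~\ref{thm:nonc} and the Sobolev constant of Theorem~\ref{thm:sobo} depend only on $n$ and $A$, so these are untouched and the limits are non-collapsed smooth Ricci flows. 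In case (a) the blow-up limit splits as $(\Sigma_\infty\times\R,g'_\infty\times g_E)$ with $g'_\infty$ of constant sectional curvature $\tfrac{1}{2(n-2)}$ by \eqref{E406a}; since $\Sigma_\infty$ is a smooth Cheeger--Gromov limit of the level sets $\Sigma(s_i)$, all diffeomorphic to $S^{n-1}/\Gamma$ with uniformly bounded geometry and diameter, it is diffeomorphic --- hence isometric --- to $(S^{n-1}/\Gamma,g_c)$, and the quotient analogue of \cite[Proposition~5.2]{Naber} identifies the limiting flow with that of $(S^{n-1}/\Gamma)\times\R$; the backward-uniqueness step of \cite{Kot10} is insensitive to the cross-section. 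In case (b) the estimates \eqref{E417} force quadratic curvature decay, so the tangent cone at infinity is a metric cone over a smooth cross-section diffeomorphic to $\Sigma(t_0)\cong S^{n-1}/\Gamma$. In case (c) the rescalings $(M_i,q_i,\tilde g_i,\tilde f_i)$ converge to a nontrivial steady soliton conifold $(X_\infty,q_\infty,d_\infty,f_\infty)\in\mathfrak S(n)$ exactly as before; the surrounding $\ep$-neck region still produces an end of $X_\infty$, and since every steady soliton conifold has exactly one end by Theorem~\ref{thm:oneend}, the cap $D$ is compact and \eqref{E002} follows --- note that here only the one-end property, not the Bryant classification, is used. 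The final bookkeeping then produces $\sigma,L$ depending on $(n,N,\delta_0,A,B)$ and $\eta$ depending in addition on $\ep$, exactly as in the paragraph preceding the proof of Theorem~\ref{T101}.

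The step requiring the most care --- though it remains routine --- is the diffeomorphism bookkeeping. One must check that the level sets $\Sigma(t)$ stay diffeomorphic to $S^{n-1}/\Gamma$ as $t$ increases, which holds because $f$ has no critical value on the neck region (there $|\na f|\ge\tfrac12\sqrt f$), and that this diffeomorphism type is inherited by every smooth limit, so that the conclusions ``$E$ is asymptotic to $(S^{n-1}/\Gamma)\times\R$'' and ``cross-section diffeomorphic to $S^{n-1}/\Gamma$'' hold on the nose and not merely up to a finite cover. It is precisely the bound $|\Gamma|\le N$ that keeps the relevant geometry uniformly non-collapsed so that no such limit degenerates; everything else is a transcription of Sections~3 and~4.
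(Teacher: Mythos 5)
The paper gives no proof of Theorem~\ref{T701}, declaring it ``almost identical'' to the proof of Theorem~\ref{T101}; your proposal is exactly such a transcription, correctly tracking the only places where the cross-section type enters (the identification of blow-up limits as $(S^{n-1}/\Gamma)\times\R$, the finiteness of conjugacy classes with $|\Gamma|\le N$ to stabilize the compactness arguments, and the observation that case~(c) of Theorem~\ref{T101} relies only on the one-end Theorem~\ref{thm:oneend}, not the Bryant classification). One small imprecision: ``diffeomorphic hence isometric'' for constant-curvature space forms is not literally true in general, but it is immaterial here since the conclusion of the theorem is naturally read as ``asymptotic to a round quotient cylinder with cross-section diffeomorphic to $S^{n-1}/\Gamma$.''
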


However, the following conjecture originally proposed in \cite{LWs1} is still open.

\begin{conj}\label{conj:1}
For any $n>0$ and $N>0$, there exists a small constant $\epsilon=\epsilon(n,N)>0$ with the following property.

Suppose $(M^n,p,g,f)$ is a Ricci shrinker such that
\begin{align*}
d_{PGH} \left\{ (M^n,p,g), \lc (S^{n-1}/\Gamma) \times \R, p_c, g_c \rc \right\}<\epsilon
\end{align*} 
with $|\Gamma|\le N$, then $(M,g)$ is isometric to $\lc (S^{n-1}/\Gamma) \times \R, p_c, g_c \rc$. 
\end{conj}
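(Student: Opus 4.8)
The plan is to mirror the four-step proof of Theorem~\ref{T100}, replacing the model $S^{n-1}\times\R$ by $(S^{n-1}/\Gamma)\times\R$ throughout, and to reduce the new phenomena caused by a nontrivial $\Gamma$ either by passing to the universal cover or by an equivariant refinement of the symmetry-improvement machinery. First, exactly as in Proposition~\ref{P210b}, the weak-compactness theory of \cite{LLW21} upgrades the pointed-Gromov-Hausdorff hypothesis to $C^{\infty}$-Cheeger-Gromov closeness; a compactness argument over the finitely many conjugacy classes of subgroups $\Gamma\le O(n)$ with $|\Gamma|\le N$ acting freely on $S^{n-1}$ (Jordan's theorem) then shows, for $\epsilon$ small depending on $n$ and $N$, that the base point $p$ lies at the center of an evolving normalized $\epsilon$-neck modeled on $(S^{n-1}/\Gamma)\times\R$ and that the two neck regions emanating from $p$ satisfy the hypotheses of Theorem~\ref{T701}. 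Applying Theorem~\ref{T701} to both sides yields the three global possibilities: (1) $M$ has two ends, each point at the center of an evolving $\epsilon$-neck with cross-section $S^{n-1}/\Gamma$; (2) $M$ has one such end together with a compact cap $D$; (3) $M$ is compact and consists of two caps joined by a neck region.

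In case (1) the level sets are all diffeomorphic to $S^{n-1}/\Gamma$, so $M$ is diffeomorphic to $(S^{n-1}/\Gamma)\times\R$ and $\pi_1(M)=\Gamma$. One then checks — using that the $C^{\infty}$-Cheeger-Gromov convergence lifts to the corresponding exhaustion of the $\Gamma$-cover, and that the pulled-back potential again normalizes a Ricci shrinker with entropy $\ge -A+\log|\Gamma|$ — that the universal cover $(\tilde M,\tilde g,\tilde f)$ is a Ricci shrinker within $\hat\epsilon(n)$ of $S^{n-1}\times\R$, hence, by Theorem~\ref{T100}, is isometric to it. Thus $M=(S^{n-1}\times\R)/\Gamma$ with $\Gamma$ acting by isometries; since $M$ has two ends and is $\epsilon$-close to $(S^{n-1}/\Gamma)\times\R$, the $\R$-component of the $\Gamma$-action is forced to be trivial, so $M\cong(S^{n-1}/\Gamma')\times\R$ and the closeness pins down $\Gamma'=\Gamma$.

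It remains to rule out (or treat) cases (2) and (3). As in Section~4 the cap region is modeled, after rescaling, on a nontrivial Ricci steady soliton conifold $(X,p,d,f)\in\mathfrak S(n)$ asymptotically cylindrical to $(S^{n-1}/\Gamma)\times\R$, and the arguments of Theorem~\ref{thm:T601} and Proposition~\ref{prop:464} show that its regular part has PIC2, indeed strictly PIC2 away from the critical point of $f$. One therefore needs a $\Gamma$-equivariant analogue of Theorem~\ref{thm:Bryant}: any such conifold should be the quotient of the Bryant soliton by a copy of $\Gamma\le SO(n)$ acting on the $S^{n-1}$-factor of the tangent cone at the tip. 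This is plausible — lift the regular part to its universal cover, run Brendle's classification \cite{Bre14}, and descend — but the $h$-cobordism step (Claim~4 in the proof of Theorem~\ref{thm:Bryant}), which in the trivial case forces the cross-section group to be trivial, must now instead identify that group with $\Gamma$; moreover when $\Gamma\ne\{1\}$ the resulting conifold genuinely carries an orbifold singularity at the tip, so one must verify that a smooth Ricci shrinker can nonetheless have a cap region Cheeger-Gromov-close to it (this is consistent with the codimension-$4$ singular set of the weak-compactness theory, but requires care). Granting this, the symmetry-improvement argument of Appendix~\ref{app:B} must be carried out $\Gamma$-equivariantly: one produces rotational vector fields on the $\Gamma$-cover of the relevant parabolic neighborhoods that are $\Gamma$-invariant and hence descend, yielding that $M$ is ``$\Gamma$-equivariantly rotationally symmetric''; a $\Gamma$-equivariant version of Kotschwar's classification \cite{Kot08} then says $(\tilde M,\tilde g)$ is $S^{n}$, $S^{n-1}\times\R$, or $\R^{n}$ with $\Gamma$ acting isometrically, and since $M=\tilde M/\Gamma$ is a smooth manifold while $S^{n}/\Gamma$ and $\R^{n}/\Gamma$ carry orbifold points when $\Gamma\ne\{1\}$, only $M\cong(S^{n-1}/\Gamma)\times\R$ survives, which also rules out (2) and (3).

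I expect the symmetry-improvement step to be the main obstacle. Brendle's framework — the CMC foliation, the neck curvature estimates, and the evolution of the ``error'' of the rotational Killing fields — is built around genuine $S^{n-1}\times\R$ necks carrying the full $\binom n2$-dimensional space of rotation fields; when the cross-section is $S^{n-1}/\Gamma$ one only retains the centralizer of $\Gamma$ in $\mathfrak{so}(n)$, which for a generic finite $\Gamma$ is far from spanning, so the linear-algebraic heart of the improvement estimate has no direct analogue, and the naive ``lift to the cover'' remedy breaks down in cases (2)--(3) because the universal cover of a capped shrinker is no longer globally close to $S^{n-1}\times\R$ (its model has one end, not two). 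A secondary difficulty is the equivariant soliton classification with a genuine orbifold tip, where the topological input replacing the $h$-cobordism argument — identifying the link of the singular point with $S^{n-1}/\Gamma$ — is not automatic. Resolving these two points is, I believe, precisely what keeps Conjecture~\ref{conj:1} open.
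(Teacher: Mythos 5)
The statement you are attempting is Conjecture~\ref{conj:1}, which the paper explicitly states to be \emph{open}; the paper provides no proof of it, and you correctly recognize this yourself, concluding that the obstacles you identify are ``precisely what keeps Conjecture~\ref{conj:1} open.'' So the appropriate comparison is not proof-against-proof but your diagnosis of the obstruction against the paper's. Your case~(1) argument (two ends, pass to the universal cover, invoke Theorem~\ref{T100}) is in substance the paper's Theorem~\ref{T702}, which proves the conjecture under the additional hypothesis $|\nabla f|>0$ outside a compact set, i.e., precisely the hypothesis that forces the two-ended topology and excludes caps.

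Where your diagnosis and the paper's diverge is in the emphasis placed on the cap cases (2)--(3). The paper's stated obstruction is topological and localized at the cap: the blow-up model is $(\R^n/\Gamma, g_{Br})$, which for nontrivial $\Gamma$ has an orbifold singularity at the tip, and ``a deeper bubble may exist and converge to the singularity,'' so one cannot conclude the cap in the smooth shrinker is a disk with boundary $S^{n-1}/\Gamma$ — the $h$-cobordism argument (Claim~4 in the proof of Theorem~\ref{thm:Bryant}) identifies the cross-section group as $\Gamma$ rather than forcing it to be trivial, and the weak-compactness theory's codimension-$4$ singular set permits exactly such a bubble. You do flag this as your ``secondary difficulty.'' Your primary flagged obstruction — that the centralizer of $\Gamma$ in $\mathfrak{so}(n)$ is too small to run Brendle's improvement estimate — is a real concern but is not the one the paper singles out; if the two-sheeted structure were under control one could hope to run the improvement on a $\Gamma$-cover of the relevant parabolic neighborhoods, as you suggest, and the paper does not claim this part fails. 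In short, the paper locates the obstruction upstream of symmetry-improvement, in the unresolved topology of the cap near the orbifold tip; you locate a second obstruction in the improvement step itself. Both observations are consistent with the conjecture being open, but the paper's is the more structural of the two.
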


Notice that the same proof of Theorem \ref{T100} does not go through. The key point is another version of Theorem \ref{thm:Bryant} corresponding to $(S^{n-1}/\Gamma) \times \R$ does not hold. 
In fact, in this case one can show $(X,d)$ in Theorem \ref{thm:Bryant} is isometric to $(\R^n/\Gamma, g_{Br})$, where $g_{Br}$ is the quotient metric of the Bryant soliton. 
In other words, even though the cap region is modeled by $(\R^n/\Gamma, g_{Br})$, the underlying topology of the cap is unclear. A deeper bubble may exist and converge to the singularity of $(\R^n/\Gamma, g_{Br})$. 
For this reason, one cannot apply the theorems of symmetry as done in the proof of Theorem \ref{T100}.

However, if we assume $f$ has no critical point outside a compact set, we have the following weaker theorem.

\begin{thm} \label{T702}
There exists a small constant $\ep=\hat \ep(n,N)>0$ satisfying the following property.

Suppose $(M^n,p,g,f)$ is a Ricci shrinker such that $|\na f|>0$ on $\{x\in M \mid f(x)\ge n\}$ and 
\begin{align*}
d_{PGH} \left\{ (M^n,p,g), \lc (S^{n-1}/\Gamma) \times \R, p_c, g_c \rc \right\}<\hat \ep 
\end{align*} 
with $|\Gamma|\le N$, then $(M,g)$ is isometric to $\lc (S^{n-1}/\Gamma) \times \R, p_c, g_c \rc$. 
\end{thm}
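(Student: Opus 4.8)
The plan is to run the proof of Theorem~\ref{T100} essentially verbatim for Theorem~\ref{T702}, the decisive point being that the hypothesis $|\na f|>0$ on $\{f\ge n\}$ rules out the cap alternative of Theorem~\ref{T701}, so that the obstruction discussed after Conjecture~\ref{conj:1} --- the absence of a Bryant-type classification in the quotient setting, which enters \emph{only} through the cap region --- never appears. I would argue by contradiction. Suppose there is a sequence of Ricci shrinkers $(M^n_i,p_i,g_i,f_i)$ with $|\na f_i|>0$ on $\{f_i\ge n\}$, with $d_{PGH}\{(M_i,p_i,g_i),((S^{n-1}/\Gamma_i)\times\R,p_c,g_c)\}\to 0$ and $|\Gamma_i|\le N$, but with $(M_i,g_i)$ not isometric to $((S^{n-1}/\Gamma_i)\times\R,g_c)$; after a subsequence, $\Gamma_i\equiv\Gamma$. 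By the weak-compactness theory of~\cite{LLW21} the convergence upgrades to $C^\infty$-Cheeger--Gromov and $\boldsymbol{\mu}(g_i)\to\boldsymbol{\mu}((S^{n-1}/\Gamma)\times\R,g_c)\ge\boldsymbol{\mu}(S^{n-1}\times\R,g_c)-\log N$, so that exactly as in the proof of Proposition~\ref{prn:OH06_3} I would fix $A=-\boldsymbol{\mu}(S^{n-1}\times\R,g_c)+\log N+1$, $B=1$, $\delta_0=\tfrac12$, invoke the constants $\sigma,L,\eta$ of Theorem~\ref{T701} for these data, and choose $r_i\to\infty$ so that $\{r_i\le f_i\le 2r_i\}$ has exactly two connected components, each satisfying the hypotheses of Theorem~\ref{T701}, while every point of $\{f_i\le\eta^{-1}r_i\}$ is the center of an evolving normalized $\ep$-neck modeled on $(S^{n-1}/\Gamma)\times\R$.

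Next I would apply Theorem~\ref{T701} to each of the two neck regions, obtaining $E^1_i,E^2_i$ with $\partial E^j_i=\Sigma^j_i(r_i)$, and I claim case~(c) cannot occur for either $j$: there the cap $D^j_i$ would be compact with $s\ge 2r_i$ and $\sup_{D^j_i}|f_i-s|\le L$; running the gradient flow of $\na f_i$ inward across $\partial D^j_i=\Sigma^j_i(s)$ shows $f_i>s$ just inside $D^j_i$, so $f_i$ attains an interior maximum on $D^j_i$ at a critical point of value $\ge s\ge 2r_i\ge n$, contradicting $|\na f_i|>0$ on $\{f_i\ge n\}$ once $i$ is large. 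Hence each $E^j_i$ is of type~(a) or~(b), and in either case every point of $\Sigma^j_i(\eta^{-1}r_i,\infty)$ is the center of an evolving $\ep$-neck modeled on $(S^{n-1}/\Gamma)\times\R$. Since $\{r_i\le f_i\le 2r_i\}$ has two components and $f_i$ has no critical value $\ge n$, the level sets of $f_i$ foliate $M_i\setminus\{f_i<n\}$, so $M_i$ has exactly two ends, is diffeomorphic to $(S^{n-1}/\Gamma)\times\R$, and $M_i=\{f_i\le\eta^{-1}r_i\}\cup\Sigma^1_i(\eta^{-1}r_i,\infty)\cup\Sigma^2_i(\eta^{-1}r_i,\infty)$; thus \emph{every} point of $M_i$ is the center of an evolving $\ep$-neck. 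Taking $\ep\le\tau_4(\delta_1,n)$ and using Lemma~\ref{L601} together with the self-similarity of $g_i(t)$ (as in Case~A of Proposition~\ref{prn:OH06_3}), every spacetime point of the associated ancient flow $(M_i,g_i(t))_{t\le0}$ is $\delta_1$-symmetric, i.e.\ the flow is $\delta_1$-symmetric of type~A in the sense of Definition~\ref{def:sym1}, with model $(S^{n-1}/\Gamma)\times\R$.

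It then remains to invoke the symmetry-improvement argument. Since only type-A symmetry is present, Theorem~\ref{Tneck} applies to $(M_i,g_i(t))_{t\le0}$ without change in the $\Gamma$-quotient setting: improving $\delta_1$-symmetry to $\tfrac{\delta_1}{2}$-symmetry and iterating yields $2^{-k}\delta_1$-symmetry for every $k$, hence an exact rotational symmetry, so $(M_i,g_i)$ is rotationally symmetric. Kotschwar's classification~\cite{Kot08}, applied to the universal cover and descended, then forces $(M_i,g_i)$ to be isometric to a quotient of $S^n$, of $(S^{n-1}\times\R,g_c)$, or of $(\R^n,g_E)$ by a finite group acting freely and preserving the shrinker structure; of these only the quotients of $S^{n-1}\times\R$ are two-ended, and such a quotient is necessarily $((S^{n-1}/\Gamma')\times\R,g_c)$ with $\Gamma'\le O(n)$ acting freely on $S^{n-1}$. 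Since $M_i$ has two ends, $(M_i,g_i)$ is isometric to such a space, and the Gromov--Hausdorff hypothesis forces $\Gamma'$ to be conjugate to $\Gamma$. This contradicts the choice of the sequence and proves Theorem~\ref{T702}.

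The main obstacle is not any individual estimate --- all of those are supplied by Theorem~\ref{T701} and by the Case-A analysis of Proposition~\ref{prn:OH06_3} --- but rather checking that the symmetry-improvement machinery (the existence and uniqueness of the CMC foliation, the construction of the approximate Killing fields, and the space-localized iteration of~\ref{app:B}) is genuinely insensitive to the free $\Gamma$-action, so that the necks close up into a $\Gamma$-invariant rotational structure rather than merely an $S^{n-1}\times\R$-symmetric one, and in the accompanying topological bookkeeping (two-endedness of $M_i$, and the precise matching of the limiting quotient group with the one in the hypothesis). I expect that verification, rather than any new analytic input, to be the delicate part.
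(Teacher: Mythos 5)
Your overall strategy matches the paper's --- contradiction, reduce to fixed $\Gamma$, rule out the cap alternative via the no-critical-point hypothesis, apply symmetry improvement, then Kotschwar --- and the cap-exclusion argument (that a cap $D$ with $\sup_D|f_i-s|\le L$, $s\ge 2r_i\gg n$, compact and bounded by a level set, would force an interior critical point of $f_i$ with value $\ge n$) is the right mechanism. The gap is precisely the step you flag as "the delicate part," and it does not survive scrutiny: Theorem~\ref{Tneck} and the $\ep$-symmetry notion in Definition~\ref{neck_symmetry} do \emph{not} apply "without change" to necks modeled on $(S^{n-1}/\Gamma)\times\R$. Definition~\ref{neck_symmetry} requires a family $\mathcal U=\{U^{(a)}:1\le a\le\binom{n}{2}\}$ of vector fields defined on the whole ball $\bar B_{g(\bar t)}(\bar x,100r)$, satisfying the approximate-orthonormality condition over every CMC leaf $\Sigma$ in that ball. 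When $\Gamma$ is nontrivial, the ball contains entire leaves $\Sigma\cong S^{n-1}/\Gamma$, and there is no globally defined $\binom{n}{2}$-dimensional family of approximate Killing fields on $S^{n-1}/\Gamma$ to approximate (the Killing algebra of $S^{n-1}/\Gamma$ is the centralizer of $\Gamma$ in $\mathfrak{so}(n)$, generically of strictly smaller dimension). So "$\delta_1$-symmetric of type A" in the sense of Definition~\ref{def:sym1} simply fails to make sense on the quotient neck, and the iteration cannot even start there.

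The paper's proof avoids this by passing to the universal cover $(\tilde M_i,\tilde g_i,\tilde f_i)$, where every point is the center of an evolving $\ep$-neck modeled on the \emph{standard} $S^{n-1}\times\R$; Theorem~\ref{Tneck} then applies verbatim and yields rotational symmetry of $(\tilde M_i,\tilde g_i)$, Kotschwar's classification gives $(\tilde M_i,\tilde g_i)\cong(S^{n-1}\times\R,g_c)$, and this descends to $(M_i,g_i)\cong((S^{n-1}/\Gamma)\times\R,g_c)$. If you insert that detour --- lift to the cover, improve symmetry there, classify, then descend --- your argument aligns with the paper's and closes the gap. (Note that this also cleans up your final bookkeeping: you no longer need Kotschwar to classify quotients directly, only the simply connected case, and the matching of $\Gamma'$ with $\Gamma$ is then forced by the deck group.)
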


\begin{proof}
Suppose the conclusion does not hold, there exists a sequence of Ricci shrinkers $(M^n_i,p_i,g_i,f_i)$ such that
\begin{align*}
(M_i, p_i, g_i, f_i) \longright{pointed-Gromov-Hausdorff} \lc (S^{n-1}/\Gamma_i) \times \R,p_c,g_c, f_c\rc
\end{align*}
for $|\Gamma_i| \le N$, but no $(M_i,g_i)$ is isometric to $\lc (S^{n-1}/\Gamma_i) \times \R,g_c \rc$. Since there are only finitely many conjugacy classes of subgroups of $O(n)$ with order no larger than $N$, we may assume $\Gamma_i=\Gamma$ for a fixed $\Gamma \le O(n)$ acting freely on $S^{n-1}$. 

If $i$ is large, the conclusion of Theorem \ref{T701} holds. Since $f_i$ is assumed to be regular on $\{x\in M_i \mid f_i(x) \ge n\}$, we conclude as in the proof of Theorem \ref{T100} that each $M_i$ is diffeomorphic to $(S^{n-1}/\Gamma) \times \R$ and each point is the center of an evolving normalized $\delta_1$-neck. By considering the universal cover $(\tilde M_i, \tilde g_i,\tilde f_i)$ of $(M_i,g_i,f_i)$. We can apply Theorem \ref{Tneck} by iteration and conclude that $(\tilde M_i, \tilde g_i,\tilde f_i)$ is rotationally symmetric. From \cite[Theorem $1$]{Kot08}, $(\tilde M_i, \tilde g_i)$ is isometric to $(S^{n-1} \times \R, g_c)$. This implies that $(M_i,g_i)$ is isometric to $\lc (S^{n-1}/\Gamma) \times \R,g_c \rc$, which is a contradiction.
\end{proof}

On the other hand, inspired by the main theorem of \cite{CIM15}, it is natural to guess that all generalized round cylinders are also rigid in Ricci shrinkers. 

\begin{conj}\label{conj:2}
For any $n$ and $2\le k \le n-2$, there exists a small constant $\ep=\ep(n)>0$ with the following property.

Suppose $(M^n,p,g,f)$ is a Ricci shrinker such that
\begin{align*}
d_{PGH} \left\{ (M^n,p,g), (S^{n-k} \times \R^k,p_c,g_c) \right\}<\epsilon,
\end{align*} 
then $(M,g)$ is isometric to $(S^{n-k} \times \R^k,g_c)$. 
\end{conj}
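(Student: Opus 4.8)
The plan is to follow the four-step scheme behind Theorem~\ref{T100}, replacing the model $S^{n-1}\times\R$ by $S^{n-k}\times\R^k$ at every stage. \emph{Step 1.} One first has to set up the analogue of Definition~\ref{def:neck_2} for $S^{n-k}\times\R^k$, and then, using the weak-compactness theory of~\cite{LLW21} exactly as in Proposition~\ref{P210b} (the entropy is bounded below once $(M,p,g)$ is PGH-close to the model), upgrade the hypothesis to $C^\infty$-Cheeger-Gromov closeness on a large ball about the base point $p$; hence $p$ lies at the center of an evolving normalized $\ep$-``neck'' modeled on $S^{n-k}\times\R^k$. On such a generalized neck one has $R$ close to $\tfrac{n-k}{2}$ (the value for the model), $|\na f|\ge\tfrac12\sqrt f$, the level sets of $f$ diffeomorphic to $S^{n-k}\times S^{k-1}$, and the curvature operator decomposing, up to $O(R/f)$ error, into a ``spherical'' block carried by the round $S^{n-k}$-factor plus a block supported on the $S^{k-1}$-factor of the level set together with the normal direction of $f$, the latter being small because $R/f$ is small.

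\emph{Step 2.} Repeating the estimates of Section~3 on the generalized neck, the traceless part of the spherical curvature operator should be $O(\ep_0 R)$, $R$ should be almost constant on each level set, and from $\Delta_f R=R-2|Rc|^2$ together with $|Rc|^2\approx\tfrac{R^2}{n-k}$ one derives along the $\na f$-flow an ODE of the form $t\,R_t=\bigl(\tfrac{2}{n-k}+X\bigr)R^2-R$ with $|X|$ small, whose only relevant equilibrium is $R=\tfrac{n-k}{2}$. This yields the same trichotomy as in Theorem~\ref{T101}: either $R$ stays near $\tfrac{n-k}{2}$ and the end is smoothly asymptotic to $S^{n-k}\times\R^k$; or $R$ drops below and decays quadratically, producing a conical end with cross-section $S^{n-k}\times S^{k-1}$; or $R$ rises above $\tfrac{n-k}{2}$, increases, and forces a compact cap $D$ whose geometry, at the natural scale, is modeled on a Ricci steady soliton conifold asymptotic to $S^{n-k}\times\R^k$ — the compactness of $D$ again coming out of the compactness and geodesic-convexity results of~\cite{LLW21,HLW21}, \emph{provided} such a conifold has only one end.

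The main obstacle is the analogue of Step 3 in the proof of Theorem~\ref{T100}: identifying and classifying the cap model. For $k=1$ this is exactly Theorem~\ref{thm:Bryant}, a refinement of Brendle's rigidity of the Bryant soliton~\cite{Bre14} under PIC2; its proof uses that the level sets near the cap are \emph{spheres} and that PIC2 is preserved along the associated Ricci flow. For $k\ge2$ both inputs degenerate: the level sets $S^{n-k}\times S^{k-1}$ need not be simply connected (e.g.\ $S^{n-2}\times S^1$ when $k=2$), the curvature positivity surviving along the flow is weaker than PIC2 — the flat $\R^k$-directions of the model already sit on the boundary of every reasonable curvature cone — and, exactly as in the discussion of Conjecture~\ref{conj:1}, a ``deeper bubble'' could nucleate inside the cap and collapse onto a more singular model such as a quotient of a product soliton, so that neither the topology nor even the smoothness of the cap is controlled. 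A classification of asymptotically-$(S^{n-k}\times\R^k)$ steady soliton conifolds with the relevant curvature condition is not known, and this is where the argument stalls.

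\emph{Step 4} would then require the higher-codimension counterpart of the symmetry-improvement machinery of Appendix~\ref{app:B}: one must propagate not a single $SO(n)$-action but the $SO(n-k+1)$-rotational symmetry of the $S^{n-k}$-factor (together, ideally, with the $SO(k)$-symmetry of the $\R^k$-base), which needs a foliation adapted to the $S^{n-k}\times S^{k-1}$ level sets in place of the CMC foliation by $(n-1)$-spheres. Granting such an improvement, one would conclude that $(M,g)$ is $SO(n-k+1)$-symmetric and invoke a Kotschwar-type classification of rotationally symmetric shrinkers~\cite{Kot08} — which for $k\ge2$ must itself be extended, since the quotient of the shrinker by $SO(n-k+1)$ is a $(k+1)$-dimensional soliton-type object that has to be analyzed directly — to force $(M,g)=S^{n-k}\times\R^k$ using the PGH hypothesis. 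In summary, Steps~1–2 and the qualitative structure theorem go through essentially verbatim, but the conjecture remains open precisely because the two rigidity inputs it rests on — the classification of the cap model and the symmetry-improvement/classification package — are not yet available in codimension $\ge2$.
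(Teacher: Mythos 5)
The statement you are asked about is labeled Conjecture~\ref{conj:2} in the paper, not a theorem, and the authors provide no proof of it; they only remark that it is ``natural to guess'' by analogy with the mean-curvature-flow result of Colding--Ilmanen--Minicozzi. Your proposal correctly recognizes this, and, rather than claiming a proof, it delivers a careful diagnosis of where the scheme of Theorem~\ref{T100} breaks down in codimension $k\ge 2$ --- which is exactly the right thing to do.

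Your analysis of Steps~1--2 is sound: the scalar curvature of the model is indeed $\tfrac{n-k}{2}$ since for the shrinker normalization $Rc_S=\tfrac12 g_S$ on the sphere factor one gets $|Rc|^2=\tfrac{R^2}{n-k}$, so the ODE $tR_t=\bigl(\tfrac{2}{n-k}+X\bigr)R^2-R$ has the same trichotomy structure, and the weak-compactness and no-local-collapsing inputs of~\cite{LLW21}, \cite{LW20} are model-independent. The two obstructions you single out --- (i) the absence of a Brendle-type classification (Theorem~\ref{thm:Bryant}) for steady soliton conifolds asymptotic to $S^{n-k}\times\R^k$, with PIC2 no longer preserved because the $\R^k$-directions already sit on the boundary of every admissible curvature cone, and the level sets $S^{n-k}\times S^{k-1}$ no longer simply connected; and (ii) the absence of a symmetry-improvement machinery adapted to $SO(n-k+1)$-rotational symmetry and a foliation by $S^{n-k}\times S^{k-1}$ in place of Hamilton's CMC foliation by spheres --- are precisely the same type of difficulty the paper itself acknowledges for Conjecture~\ref{conj:1} in Section~6 (``the same proof of Theorem~\ref{T100} does not go through... a deeper bubble may exist''), and for $k\ge 2$ they are compounded. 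Your observation that even the Kotschwar endgame needs an extension (the quotient by $SO(n-k+1)$ is now a $(k+1)$-dimensional soliton rather than a $2$-dimensional one) is an additional correct point the paper does not spell out. In short: your proposal does not and cannot close the argument, but it correctly identifies the statement as open and pinpoints, in terms consistent with the paper's own framework, the two rigidity inputs that are currently unavailable.
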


\newpage

\appendixpage
\addappheadtotoc
\appendix
\section{Steady soliton conifold} 
\label{app:A}

Following Chen-Wang~\cite{CW17}, we introduce the concept of the steady soliton conifold and prove many fundamental properties. 

\begin{defn}
Let $\mathfrak S(n)$ be the collection of geodesic spaces $(X^n,p , d,g, f)$ with the following properties.

\begin{enumerate}
\item $f$ is a locally Lipschitz function defined on $X$ and $f(p)=0$. 

\item $X$ has a disjoint regular-singular decomposition $X=\mathcal{R} \cup \mathcal{S}$, where $\mathcal{R}$ is the regular part, $\mathcal{S}$ is the singular part.
A point is called regular if it has a neighborhood that is isometric to a totally geodesic convex domain of some smooth Riemannian manifold. A point is called singular
if it is not regular.

\item $(\mathcal{R}, g)$ is an open (possibly incomplete) Riemannian manifold of dimension $n$ with $R \ge 0$ and satisfies the Ricci steady soliton equation
\begin{align}
\label{BE000}
Rc_f:=Rc+ \emph{Hess}\, f=0.
\end{align}

\item $\mathcal{R}$ is geodesically convex, i.e., for any pair of points $x,y \in \mathcal{R}$, there exists a shortest geodesic in $\mathcal R$ connecting $x$ and $y$.

\item $\dim_{\mathcal{M}} \mathcal{S} \le n-4$, where $\dim_{\mathcal{M}}$ means Minkowski dimension.

\item Every tangent space of $x \in \mathcal S$ is a metric cone of Hausdorff dimension $n$. Moreover, if $Y$ is a tangent cone of $x$, then the unit ball $B(\hat x,1)$ centered at vertex $\hat x$ must satisfy
\begin{align*}
|B(\hat x,1)| \le (1-\delta_0) \omega_n
\end{align*}
for some uniform positive number $\delta_0=\delta_0(n)>0$. Here the volume is the $n$-dimensional Hausdorff measure and $\omega_n$ is the volume of the unit ball in $\R^n$.
\end{enumerate}
\label{dfn:B001}
\end{defn}

Note that $(X, d)$ is called a (Riemannian) conifold if all the properties in the above definition except equation (\ref{BE000}) are satisfied (cf.~Definition 1.2 of~\cite{CW17}). 
The geodesic space $(X,d)$ is the completion of $(\mathcal R,g)$.   The equation (\ref{BE000}) justifies the ``steady soliton" in the name. 
By taking the divergence of \eqref{BE000}, we obtain $R+|\na f|^2=\lambda $ on $\mathcal R$ for a constant $\lambda \ge 0$. If $\lambda=0$, then $R \equiv 0$ and $f$ is a constant. From the equation $\Delta_f R=-2|Rc|^2$ on $\mathcal R$, we conclude that $Rc \equiv 0$ on $\mathcal R$. In this case, $(X,d)$ is a Ricci-flat Riemannian conifold and is called a \textbf{trivial} steady soliton conifold. If $\lambda>0$, we rescale the metric such that $\lambda =1$ and hence
\begin{align} \label{BE002}
R+|\na f|^2=1
\end{align}
on $\mathcal R$. In this case, $(X,d)$ is called a \textbf{nontrivial} Ricci steady soliton. In the following, we derive some geometric and analytic properties of $(X,d) \in \mathfrak S(n)$. Most results are generalizations of the corresponding results in \cite{HLW21} for smooth metric measure spaces and can be proved similarly as in \cite{CW17}. Moreover, we define the weighted measure $e^{-f} \,dV$ by $\mu$, where $dV$ is the $n$-dimensional Hausdorff measure and denote the volume with respect to $d\mu$ by $|\cdot|_{\mu}$.

\begin{prop}(Volume comparison) \label{prop:B001}
Let $(X,p,d,f) \in \mathfrak S(n)$. For any $0 <r_1<r_2$ and $x \in X$, we have
\begin{align} \label{BE003}
\frac{|B(x,r_2)|_{\mu }}{r_2^n} \le e^{r_2} \frac{|B(x,r_1)|_{\mu}}{r_1^n}.
\end{align}
\end{prop}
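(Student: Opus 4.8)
The plan is to reduce \eqref{BE003} to the weighted (Bakry--Émery) Bishop--Gromov inequality carried out on the regular part $\mathcal R$, exactly as the corresponding estimates of \cite{HLW21} are transplanted to the conifold setting in \cite{CW17}. The only soliton-specific input is that $|\nabla f| \le 1$ on $\mathcal R$, which is immediate from \eqref{BE002} together with $R \ge 0$ (property~(3) of Definition~\ref{dfn:B001}). Consequently, for a unit-speed geodesic $\gamma$ in $\mathcal R$ issuing from any base point one has $\partial_r f = (f\circ\gamma)' \ge -1$.

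Fix first a regular base point $x \in \mathcal R$ and write $r(\cdot) = d(x,\cdot)$. On $\mathcal R$ the Bochner formula and the steady soliton equation $Rc_f = Rc + \mathrm{Hess}\,f = 0$ give, along a minimal geodesic from $x$, the Riccati inequality for the $f$-mean curvature $m_f := \Delta r - \partial_r f$, namely $m_f' \le -\tfrac{1}{n-1}(\Delta r)^2 \le 0$; feeding in $\partial_r f \ge -1$ one obtains the Bakry--Émery mean curvature comparison $\Delta_f r \le \tfrac{n-1}{r} + 1$ in the barrier sense on $\mathcal R \setminus\{x\}$ (the case of a vanishing lower Ricci bound with $\nabla f$ bounded). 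Integrating this against the weighted measure via the co-area formula over the segment domain of $x$ shows that $\displaystyle r \longmapsto |B(x,r)|_\mu \Big/ \int_0^r e^{t}\,t^{n-1}\,dt$ is non-increasing. Since $\int_0^{r_2} e^{t}t^{n-1}\,dt \le e^{r_2}\,r_2^n/n$ while $\int_0^{r_1} e^{t}t^{n-1}\,dt \ge r_1^n/n$, this monotonicity yields $|B(x,r_2)|_\mu / r_2^n \le e^{r_2}\,|B(x,r_1)|_\mu / r_1^n$, which is \eqref{BE003} for regular $x$.

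Two features of Definition~\ref{dfn:B001} make this rigorous. Because $\mathcal R$ is geodesically convex (property~(4)), any two regular points are joined by a minimal geodesic lying in $\mathcal R$, so the segment-domain and co-area arguments proceed as in the complete smooth case despite the incompleteness of $\mathcal R$; and because $\mathcal S$ is closed of Minkowski codimension $\ge 4$ (property~(5)), it carries zero $n$-dimensional Hausdorff measure, hence does not charge $|\cdot|_\mu$, and zero capacity, hence produces no extra boundary contribution when the Laplacian comparison is integrated --- this is precisely where one invokes the machinery of \cite{CW17} and \cite{HLW21}. Finally, for a general, possibly singular $x \in X$, one approximates by regular points $x_j \to x$ (possible since $\mathcal R$ is dense): the metric balls converge, and $|B(x_j,r)|_\mu \to |B(x,r)|_\mu$ for every $r$ with $|\partial B(x,r)|_\mu = 0$, which is all but countably many $r$ and suffices by the monotonicity in $r$ already established; passing to the limit in \eqref{BE003} for $x_j$ gives the general case.

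I expect the main difficulty to lie not in the Riemannian estimate --- which is routine once $|\nabla f|\le 1$ is in hand --- but in the bookkeeping around the singular set: confirming that the barrier/distributional Laplacian comparison for $r(\cdot)$ holds across $X \setminus\{x\}$ with no contribution from $\mathcal S$, using the codimension bound and the metric-cone structure of the tangent spaces at singular points, and that the approximation step loses no volume near $\mathcal S$.
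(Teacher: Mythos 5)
Your argument is essentially the paper's: the authors simply cite Wei--Wylie's Theorem~1.2 (the Bakry--\'Emery volume comparison under $Rc_f\ge 0$ and $|\nabla f|\le a$) for the smooth case, and Chen--Wang's Proposition~2.3 for the conifold extension across the high-codimension singular set, which is exactly the Laplacian-comparison-plus-singular-bookkeeping route you spell out. Your derivation of the mean-curvature comparison $\Delta_f r\le\frac{n-1}{r}+1$ from the Riccati inequality glosses over the actual Wei--Wylie manipulation (one cannot just integrate $m_f'\le 0$ from $r=0$ because $m_f$ blows up there), but since you are explicitly invoking that known theorem rather than reproving it, this is a presentational gloss rather than a gap, and the rest of the reduction matches the paper.
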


\begin{proof}
If $(X,d)$ is smooth, then \eqref{BE003} follows from \cite[Theorem $1.2$]{WW09} since $Rc_f\ge 0$ and $|\na f| \le 1$ by \eqref{BE002}. For the general case, we can derive the result as \cite[Proposition $2.3$]{CW17}.
\end{proof}

Next, we have the following segment inequality, which is a generalization of \cite[Theorem $2.6$]{HLW21} by following the argument of \cite[Proposition $2.6$]{CW17}

\begin{prop}(Segment inequality) \label{prop:B002}
Let $(X,p,d,f) \in \mathfrak S(n)$. For any $L>0$, there exists a constant $C_1=C_1(n,L)>0$ such that if $A_1,A_2$ are two subsets of $B(q,r) \subset \subset B(p,L)$, we have
\begin{align*}
\int_{A_1 \times A_2} \mathcal F_u(x,y)\,d\mu(x) d\mu(y) \le C_1r(|A_1|_{\mu}+|A_2|_{\mu}) \int_{B(q,3r)} u \,d\mu,
\end{align*}
where $u$ is a nonnegative continuous function on $B(q,3r)$ and \
\begin{align*}
\mathcal F_u(x,y):=\inf_{\gamma} \int_0^{d(x,y)} u(\gamma(t)) \,dt
\end{align*}
with infimum being taken over all geodesics connecting $q_1$ and $y$. 
\end{prop}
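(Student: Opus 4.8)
The plan is to follow the integral-geometric proof of the Cheeger--Colding segment inequality in its weighted form, transplanted to the conifold setting exactly as in \cite[Proposition $2.6$]{CW17}; the smooth weighted statement is \cite[Theorem $2.6$]{HLW21}, and the task is to upgrade it so as to accommodate the singular set $\mathcal S$.

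First I would dispose of $\mathcal S$. Since $\dim_{\mathcal M}\mathcal S\le n-4<n$, we have $|\mathcal S|_{\mu}=0$, so it costs nothing to assume $A_1,A_2\subset\mathcal R$. Because $\mathcal R$ is geodesically convex, any two of its points are joined by a shortest geodesic lying entirely in $\mathcal R$; replacing $\mathcal F_u(x,y)$ by the infimum of $\int_0^{d(x,y)}u$ over \emph{such} geodesics only enlarges it, and this is the quantity I would estimate. The gain of this reduction is that every geodesic appearing in the argument avoids $\mathcal S$, so the smooth Jacobian comparison can be applied along it pointwise.

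Next I would set up geodesic polar coordinates at a fixed $x\in A_1$: for a unit direction $v$ let $\rho(v)$ be the largest radius for which $t\mapsto\exp_x(tv)$ is minimizing and stays in $\mathcal R$, write $e^{-f}\,dV=\mathcal A_x(v,t)\,dt\,d\sigma(v)$ on the resulting star-shaped domain $\mathcal D_x=\{(v,t):t<\rho(v)\}$ (whose image covers $\mathcal R$ up to a $\mu$-null set), and record the weighted Bishop inequality $\mathcal A_x(v,t_1)\le e^{\,t_2-t_1}\mathcal A_x(v,t_2)$ for $0<t_1\le t_2<\rho(v)$, which follows from $Rc_f\ge 0$ and the bound $|\na f|\le 1$ coming from \eqref{BE002}. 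From here the estimate is the classical one: for $x\in A_1$ and $y\in A_2$ the chosen segment from $x$ to $y$ lies in $B(q,3r)$ since $A_1,A_2\subset B(q,r)$; bounding $\mathcal F_u(x,y)$ by the integral of $u$ along that segment, integrating over $A_1\times A_2$, changing variables into the geodesic bundle over $A_1$, splitting each segment at its midpoint and using the $\mathcal A_x$-comparison on the appropriate half to move the integration of $u$ onto $B(q,3r)$, and then adding the two symmetric contributions (the roles of $A_1$ and $A_2$ interchanged), yields the asserted bound with $C_1$ depending only on $n$ and $L$, the $L$-dependence entering through the volume comparison of Proposition \ref{prop:B001}.

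The main obstacle is the reduction step, and it is purely technical: one must verify that for $\mu$-almost every direction $v\in U_x\mathcal R$ the geodesic $\exp_x(tv)$ neither meets $\mathcal S$ before time $\rho(v)$ nor fails to minimize because of a cut point lying in $\mathcal S$, i.e.\ that these ``bad'' directions form a null subset of $U_x\mathcal R$ so that $\exp_x(\mathcal D_x)$ exhausts $\mathcal R$ up to a $\mu$-null set. This is precisely where the Minkowski-codimension bound $\dim_{\mathcal M}\mathcal S\le n-4$ and the geodesic convexity of $\mathcal R$ are used in tandem, following the argument for \cite[Theorem $2.6$]{HLW21} and \cite[Proposition $2.6$]{CW17}; once this is granted, every remaining step is the standard weighted computation.
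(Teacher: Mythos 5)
Your proposal correctly reconstructs the approach the paper takes, which is simply to transplant the weighted Cheeger--Colding segment inequality of \cite[Theorem 2.6]{HLW21} to the conifold setting via the techniques of \cite[Proposition 2.6]{CW17}; the paper itself gives no proof beyond those citations. Your sketch fills in the same ingredients those references use: discarding the $\mu$-null singular set, using geodesic convexity of $\mathcal R$ so comparison applies along every relevant segment, the weighted Bishop inequality coming from $Rc_f\ge 0$ together with $|\nabla f|\le 1$, and the standard polar-coordinate integral-geometric computation, with the measure-theoretic control of ``bad'' directions hitting $\mathcal S$ being the one genuinely new point.
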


\begin{defn} \label{def:sobo}
The Sobolev space $N^{1,2}(X,\mu)$ is the subspace of $L^2(X,\mu)$ consisting of functions $u$ for which the norm
\begin{align*}
\| u \|^2_{N^{1,2}}=\|u\|^2_{L^2}+\inf_{u_i} \liminf_{i \to \infty} \| h_i \|^2_{L^2}<\infty,
\end{align*}
where the limit infimum is taken over all upper gradients $h_i$ of the functions $u_i$ satisfying $\|u_i-u\|_{L^2} \to 0$. Notice that for any domain $\Omega \subset X$, one can define $N^{1,2}(\Omega,\mu)$ similarly. Also, $N^{1,2}_c$, $N^{1,2}_0$ and $N^{1,2}_{\text{loc}}$ can be defined as the usual Sobolev space.
\end{defn}

The definition of the upper gradient can be found in Cheeger \cite[Definition $1.1$]{CH99}. Notice that the only difference between $N^{1,2}(X,\mu)$ and the one defined in \cite[Definition $2.10$]{CW17} is the former uses the measure $d\mu$. Similar to \cite[Corollary $2.12$]{CW17} and \cite[Proposition $2.12$]{CW17}, we have

\begin{prop}\label{prop:B003}
$C^{\infty}_c(\mathcal R) \cap N^{1,2}(X,\mu)$ is dense in $N^{1,2}(X,\mu)$. Moreover, for any open set $\Omega \subset X$, the restriction map $N^{1,2}(\Omega,\mu) \to W^{1,2}(\mathcal R \cap \Omega,\mu)$ is an isomorphic isometry.
\end{prop}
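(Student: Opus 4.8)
The plan is to follow the scheme of \cite[Corollary 2.12 and Proposition 2.12]{CW17}, adapted to the weighted measure $d\mu=e^{-f}\,dV$, the only extra inputs being the volume comparison of Proposition \ref{prop:B001} and the segment inequality of Proposition \ref{prop:B002}. The whole argument reduces to one structural fact: the singular set $\mathcal S$ is negligible for the $2$-capacity.

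First I would establish that capacity fact. Since $\dim_{\mathcal{M}}\mathcal S\le n-4$, for each compact $K\subset X$ and each small $r>0$ one can cover $\mathcal S\cap K$ by at most $Cr^{-(n-4+\delta)}$ balls of radius $r$ (any fixed $\delta\in(0,1)$); combined with the upper volume bound $|B(x,r)|_\mu\le Cr^n$ — obtained from Proposition \ref{prop:B001} by letting the inner radius tend to $0$ and using property (6) of Definition \ref{dfn:B001} at singular points — this gives $|\{x\in K:d(x,\mathcal S)\le r\}|_\mu\le Cr^{4-\delta}$. Writing $\rho=d(\cdot,\mathcal S)$ and taking the Lipschitz cutoff $\phi_\ep$ that equals $0$ on $\{\rho\le\ep\}$, equals $1$ on $\{\rho\ge 2\ep\}$, and is linear in $\rho$ in between, one gets $\int_K|\na\phi_\ep|^2\,d\mu\le\ep^{-2}\,|\{x\in K:\ep\le\rho\le 2\ep\}|_\mu\le C\ep^{2-\delta}\to0$. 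Hence $\mathcal S$ has zero $2$-capacity, so $\mathrm{Mod}_2$-a.e.\ rectifiable curve in $X$ avoids $\mathcal S$; together with the density and geodesic convexity of $\mathcal R$ this shows that for $u\in N^{1,2}_{\mathrm{loc}}(X,\mu)$ one has $u|_{\mathcal R}\in W^{1,2}_{\mathrm{loc}}(\mathcal R,\mu)$ and that the minimal $2$-weak upper gradient of $u$ agrees $\mu$-a.e.\ with $|\na u|$ on $\mathcal R$. Since $|\mathcal S|_\mu=0$, the restriction map $N^{1,2}(\Omega,\mu)\to W^{1,2}(\mathcal R\cap\Omega,\mu)$ is then a norm-preserving linear bijection for every open $\Omega$, its inverse being extension by the same function; this is the second assertion.

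For the density statement I would, given $u\in N^{1,2}(X,\mu)$, first truncate, $u_M=\max\{-M,\min\{M,u\}\}$, so $u_M\to u$ in $N^{1,2}$; then cut off at infinity with $\chi_R$ equal to $1$ on $B(p,R)$, supported in $B(p,2R)$, $|\na\chi_R|\le R^{-1}$, using $\|u_M\na\chi_R\|_{L^2}\le R^{-1}\|u_M\|_{L^2(B(p,2R)\setminus B(p,R))}\to0$ to conclude $u_M\chi_R\to u_M$ in $N^{1,2}$; then multiply by the cutoff $\phi_\ep$ above with $K=\overline{B(p,2R)}$ (compact since $X$ is proper), using boundedness of $u_M\chi_R$ together with $\|\na\phi_\ep\|_{L^2(K)}\to0$. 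The resulting $v=u_M\chi_R\phi_\ep$ is compactly supported in $\mathcal R$ and lies in $W^{1,2}(\mathcal R)$, so a standard mollification in the smooth Riemannian manifold $\mathcal R$ (partition of unity plus convolution in charts) yields $v_\delta\in C^\infty_c(\mathcal R)$ with $v_\delta\to v$ in $W^{1,2}(\mathcal R)$, hence in $N^{1,2}(X,\mu)$ by the isometry just established. A diagonal choice of $M,R,\ep,\delta$ then produces the required approximating sequence.

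The hard part will be the first paragraph: making rigorous, in the possibly non-smooth conifold $X$, that $\mathcal S$ has vanishing $2$-capacity and that the minimal upper gradient of a Sobolev function coincides with its Riemannian gradient off $\mathcal S$. This is exactly where the defining structure of $\mathfrak S(n)$ — Minkowski codimension at least $4$, the volume comparison, and the segment inequality — is used, following \cite{CW17} and \cite{HLW21}; once it is in hand the remaining steps are routine manipulations with cutoffs and mollifiers.
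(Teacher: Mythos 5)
Your proposal is correct and follows essentially the same route the paper intends: the paper gives no written proof but just cites \cite[Corollary 2.12, Proposition 2.12]{CW17}, and your argument is exactly the Chen--Wang scheme (zero $2$-capacity of $\mathcal S$ from the Minkowski codimension bound plus volume comparison, hence $\mathrm{Mod}_2$-negligibility of curves through $\mathcal S$, hence the isometry $N^{1,2}\cong W^{1,2}(\mathcal R)$; then the standard truncate--cutoff--mollify chain for density) transplanted to the weighted measure $d\mu=e^{-f}\,dV$. The only detail worth flagging is that the uniform $|B(x,r)|_\mu\le Cr^n$ on compacts, which you derive by sending the inner radius in Proposition~\ref{prop:B001} to zero, indeed closes because $f$ is locally bounded and property (6) of Definition~\ref{dfn:B001} caps the volume density at singular points; that is precisely the role of the volume‑ratio gap assumption, and your use of it is correct.
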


Combining Proposition \ref{prop:B001} and Proposition \ref{prop:B002}, we obtain the following local $L^2$-Poincar\'e inequality and local $L^2$-Sobolev inequalities as \cite[Proposition $2.7,2.9$]{HLW21}

\begin{prop}(Local $L^2$-Poincar\'e inequality) \label{prop:B004}
Let $(X,p,d,f) \in \mathfrak S(n)$. For any $L>0$, there exists a constant $C_2=C_2(n,L)>0$ such that for any $B(q,r) \subset B(p,L)$, we have
\begin{align*}
\aint_{B(q,r)} \left| u-\aint_{B(q,r)} u \,d\mu \right|^2 \,d\mu \le C_2r^2 \aint_{B(q,r)} |\na u|^2 \,d\mu,
\end{align*}
for any $u \in N^{1,2}_{\text{loc}}(X,\mu)$, where
\begin{align*}
\aint_{B(q,r)} u \,d\mu:=\frac{1}{|B(q,r)|_{\mu}} \int_{B(q,r)} u \,d\mu.
\end{align*}
\end{prop}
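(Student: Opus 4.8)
The plan is to derive Proposition~\ref{prop:B004} from the segment inequality (Proposition~\ref{prop:B002}) together with the volume comparison (Proposition~\ref{prop:B001}), following the scheme of \cite[Proposition~2.7]{HLW21} (which itself goes back to Cheeger--Colding). First I would record that \eqref{BE003} yields a \emph{local doubling} property: whenever $B(x,2r)\subset B(p,L)$ one has $|B(x,2r)|_{\mu}\le C(n,L)\,|B(x,r)|_{\mu}$, the constant being controlled because $e^{r}\le e^{L}$ on the relevant scales. Next, given $u\in N^{1,2}_{\text{loc}}(X,\mu)$, I would use Proposition~\ref{prop:B003} to identify $u|_{\mathcal R}$ with an element of $W^{1,2}(\mathcal R\cap B(q,3r),\mu)$ and $|\na u|$ with its gradient there, and (multiplying by a local cutoff and using density of $C^{\infty}_c(\mathcal R)$) reduce to the case where $u$ is smooth on $\mathcal R$. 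After this reduction every integral below is an honest integral over the smooth, possibly incomplete, manifold $\mathcal R$, and the singular set $\mathcal S$ — of Minkowski codimension $\ge 4$, hence of zero $\mu$-measure — plays no role.

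The core estimate is then classical. By Jensen's inequality,
\[
\aint_{B(q,r)}\Big|u-\aint_{B(q,r)}u\,d\mu\Big|^2 d\mu\ \le\ \aint_{B(q,r)}\aint_{B(q,r)}|u(x)-u(y)|^2\,d\mu(x)\,d\mu(y).
\]
For $x,y\in B(q,r)$, the geodesic convexity of $\mathcal R$ (property~(4) of Definition~\ref{dfn:B001}) provides a minimizing geodesic $\gamma\subset\mathcal R$ from $x$ to $y$; since its length is $<2r$ it lies in $B(q,3r)$, and the fundamental theorem of calculus together with Cauchy--Schwarz gives $|u(x)-u(y)|^2\le d(x,y)\int_0^{d(x,y)}|\na u|^2(\gamma(t))\,dt\le 2r\,\mathcal F_{|\na u|^2}(x,y)$. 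Applying Proposition~\ref{prop:B002} with $A_1=A_2=B(q,r)$ and with $|\na u|^2$ in place of $u$ bounds $\int_{B(q,r)\times B(q,r)}\mathcal F_{|\na u|^2}\,d\mu\,d\mu$ by $2C_1(n,L)\,r\,|B(q,r)|_{\mu}\int_{B(q,3r)}|\na u|^2\,d\mu$, so the double integral above is $\le 4C_1 r^2\,|B(q,r)|_{\mu}^{-1}\int_{B(q,3r)}|\na u|^2\,d\mu$. The local doubling property then converts $|B(q,r)|_{\mu}^{-1}\int_{B(q,3r)}$ into a constant (depending on $n,L$) times $\aint_{B(q,3r)}$, yielding a weak $(2,2)$-Poincar\'e inequality with the enlarged ball $B(q,3r)$ on the right. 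Passing to the same ball $B(q,r)$ on the right is the standard telescoping self-improvement, valid since $(X,d)$ is a length space with the local doubling property; tracking the ball enlargements only inflates $L$ by a fixed factor, which is absorbed into the dependence of $C_2$ on $n$ and $L$.

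The main obstacle is not any single estimate but the bookkeeping that lets these \emph{smooth} arguments run on the conifold: one needs that (almost every) pair of points in $B(q,r)$ is joined by a minimizing geodesic inside $\mathcal R$, that $|\na u|$ is unambiguously defined for Sobolev $u$, and that the segment inequality holds with a constant of the asserted form. These are exactly Definition~\ref{dfn:B001}(4), Proposition~\ref{prop:B003}, and Proposition~\ref{prop:B002}, so the proof is essentially an assembly of these inputs together with the doubling consequence of Proposition~\ref{prop:B001}; the only point requiring care beyond the smooth case is verifying that the cutoff and telescoping steps respect the regular--singular decomposition, which is immediate because $\mathcal S$ is closed, $\mu$-null, and of codimension $\ge 4$.
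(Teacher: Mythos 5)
Your proposal is correct and follows essentially the same route the paper intends: the paper offers no independent proof, stating only that Propositions~\ref{prop:B001} and~\ref{prop:B002} combine to give the result ``as \cite[Proposition~2.7]{HLW21}'', and your argument is precisely that Cheeger--Colding segment-inequality scheme (Jensen, geodesic convexity, segment inequality, doubling, then the standard telescoping pass from $B(q,3r)$ to $B(q,r)$ on the right). One small point of notation: in the chain $|u(x)-u(y)|^2\le d(x,y)\int_0^{d(x,y)}|\nabla u|^2(\gamma(t))\,dt\le 2r\,\mathcal F_{|\nabla u|^2}(x,y)$ the middle term should be understood after taking the infimum over minimizing geodesics $\gamma$ (otherwise the second inequality reverses), but this is clearly what you intend.
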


\begin{prop}(Local $L^2$-Sobolev inequality) \label{prop:B005}
Let $(X,p,d,f) \in \mathfrak S(n)$. For any $L>0$, there exists a constant $C_3=C_3(n,L)>0$ such that for any $B(q,r) \subset B(p,L)$, we have
\begin{align} \label{BE005}
\lc \int_{B(q,r)} u^{\frac{n}{n-2}} \,d\mu \rc^{\frac{n-2}{n}} \le \frac{C_3r^2}{|B(q,r)|_{\mu}^{\frac{2}{n}}} \int_{B(q,r)} |\na u|^2+r^{-2}u^2 \,d\mu,
\end{align}
for any $u \in N^{1,2}_{\text{loc}}(X,\mu)$.
\end{prop}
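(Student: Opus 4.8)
The plan is to deduce the Sobolev inequality from the two geometric ingredients already established --- the volume comparison of Proposition \ref{prop:B001} and the segment inequality of Proposition \ref{prop:B002} --- by running the standard ``doubling plus Poincar\'e implies Sobolev'' argument for metric measure spaces, exactly as in \cite[Proposition $2.9$]{CW17} and \cite[Proposition $2.9$]{HLW21}. First I would reduce the statement to the regular part: since $\mathcal S$ has Minkowski dimension at most $n-4$ it has zero $2$-capacity, so it is invisible to the $N^{1,2}$-norm, and combined with Proposition \ref{prop:B003} it suffices to prove \eqref{BE005} for $u\in C^\infty_c(\mathcal R)\cap N^{1,2}(X,\mu)$, where every quantity is computed by honest Riemannian calculus on $(\mathcal R,g)$.

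The next step is to extract from Proposition \ref{prop:B001} the two facts needed on $B(p,L)$: \emph{(i)} local volume doubling, $|B(x,2r)|_\mu\le 2^n e^{2L}|B(x,r)|_\mu$ whenever $B(x,2r)\subset B(p,L)$, by taking $r_1=r$ and $r_2=2r$; and \emph{(ii)} the relative volume lower bound of dimension $n$, $|B(x,s)|_\mu\ge e^{-L}(s/r)^n|B(x,r)|_\mu$ for $0<s\le r$ with $B(x,r)\subset B(p,L)$, by taking $r_1=s$ and $r_2=r$. Together with the $L^2$-Poincar\'e inequality of Proposition \ref{prop:B004}, these are exactly the hypotheses guaranteeing the scale-invariant Neumann--Sobolev inequality with the sharp exponent $\tfrac{n}{n-2}$; it is the lower bound \emph{(ii)}, rather than merely the doubling constant, that pins the exponent to $\tfrac{n}{n-2}$, because the ``dimension'' of the space is exactly $n$.

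The technical heart is passing from the segment inequality to a pointwise potential estimate. Following \cite[\S 2]{CW17}, I would combine Proposition \ref{prop:B002} with \emph{(i)} --- via a telescoping argument over the balls $B(x,2^{-j}r)$, together with the segment inequality applied to $|\na u|$ --- to obtain, for $B(q,r)\subset B(p,L)$ and $u\in C^\infty_c(\mathcal R)$,
\begin{align*}
\left| u(x)-\aint_{B(q,r)}u\,d\mu \right| \le \frac{C(n,L)\,r^n}{|B(q,r)|_\mu}\int_{B(q,3r)}\frac{|\na u|(y)}{d(x,y)^{n-1}}\,d\mu(y)\qquad\text{for a.e. }x\in B(q,r).
\end{align*}
One then estimates this Riesz-type potential: splitting the integral over dyadic annuli centred at $x$ and using \emph{(ii)} to control $\int_{B(x,s)}|\na u|\,d\mu$ gives a Hedberg-type inequality, and the resulting weak-type $(2,\tfrac{2n}{n-2})$ bound for the potential operator, followed by Marcinkiewicz interpolation, yields
\begin{align*}
\lc \int_{B(q,r)}\Big|u-\aint_{B(q,r)}u\,d\mu\Big|^{\frac{n}{n-2}}d\mu \rc^{\frac{n-2}{n}} \le \frac{C(n,L)\,r^2}{|B(q,r)|_\mu^{\frac2n}}\int_{B(q,3r)}|\na u|^2\,d\mu .
\end{align*}
Absorbing the average term by \emph{(ii)} and enlarging the ball on the right by a Vitali covering argument --- which replaces $B(q,3r)$ by $B(q,r)$ at the cost of the $r^{-2}u^2$ term --- completes the proof.

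The main obstacle is the first displayed estimate: the segment-inequality chaining has to be carried out on the possibly incomplete manifold $(\mathcal R,g)$ while staying inside its geodesically convex regular part and keeping the contribution near $\mathcal S$ under control. This is precisely where the geodesic convexity of $\mathcal R$ (Definition \ref{dfn:B001}(4)) and the smallness of $\mathcal S$ are used, and it is handled exactly as in \cite[\S 2]{CW17} and \cite[\S 2]{HLW21}; the remaining steps are the routine real-variable analysis of Riesz potentials on doubling spaces.
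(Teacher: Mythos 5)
Your proposal is correct and follows the same route as the paper: Proposition~\ref{prop:B005} is derived from the volume comparison (Proposition~\ref{prop:B001}) and the segment inequality (Proposition~\ref{prop:B002}) via the standard doubling\,+\,Poincar\'e\,$\Rightarrow$\,Sobolev machinery, which the paper delegates to \cite[Propositions 2.7, 2.9]{HLW21}; you have simply spelled out the chaining/Riesz-potential argument that this citation compresses into one line. The only cosmetic slip is the constant in your volume lower bound: with $B(x,r)\subset B(p,L)$ one only knows $r\le 2L$, so Proposition~\ref{prop:B001} gives $|B(x,s)|_\mu \ge e^{-2L}(s/r)^n|B(x,r)|_\mu$ rather than $e^{-L}$, which does not affect the argument.
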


\begin{defn} \label{def:weak}
Suppose $u \in N^{1,2}_{\text{loc}}(\Omega,\mu)$ and $h \in L^{2}_{\text{loc}}(\Omega,\mu)$. Then $\Delta_f u \le h$ in the weak sense if for any nonnegative $v \in N^{1,2}_{c}(\Omega,\mu)$,
\begin{align*}
-\int_{\Omega \cap \mathcal R} \la \na u ,\na v \ra \,d\mu \le \int_{\Omega} v h \,d\mu.
\end{align*}
Moreover, we say $u$ is a harmonic function if $\Delta_f u=0$ in the weak sense.
\end{defn}

As the singular part $\mathcal S$ has high codimension, we can extend any bounded subharmonic function on the regular part globally. The following lemma can be proved similarly as \cite[Proposition $2.19$]{CW17}.

\begin{lem}\label{lem:extend}
Let $(X,p,d,f) \in \mathfrak S(n)$ and $\Omega$ a bounded open domain. Suppose $u$ is a bounded function on $\mathcal R \cap \Omega$ satisfying 
\begin{align*}
\Delta_f u = h \ge 0.
\end{align*}
Then $u \in N^{1,2}_{\text{loc}}(\Omega,\mu)$ and $\Delta_f u=h$ in the weak sense on $\Omega$.
\end{lem}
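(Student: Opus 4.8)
The plan is to adapt the removable-singularity argument of \cite[Proposition~2.19]{CW17} to the present weighted setting, using that the singular set $\mathcal S$ has Minkowski codimension at least $4$ — in particular far larger than $2$ — so that it carries vanishing $2$-capacity and bounded $f$-subharmonic functions extend across it. Throughout it suffices to work on a fixed ball $B(q,2r)\subset\subset\Omega$.

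First I would construct, for small $\epsilon>0$, a Lipschitz cutoff $\phi_\epsilon$ with $\phi_\epsilon\equiv 0$ on the $\epsilon$-tube $U_\epsilon$ around $\mathcal S$, $\phi_\epsilon\equiv 1$ off $U_{2\epsilon}$, $0\le\phi_\epsilon\le1$ and $|\na\phi_\epsilon|\le C\epsilon^{-1}$. Since $\dim_{\mathcal M}(\mathcal S\cap\overline{B(q,2r)})\le n-4$, this set is covered by $N_\epsilon\le C\epsilon^{-(n-4)}$ balls of radius $\epsilon$, and the volume comparison of Proposition~\ref{prop:B001} bounds the $\mu$-measure of each by $C\epsilon^n$; hence $|U_{2\epsilon}|_\mu\le C\epsilon^{4}$ and
$$\int_{B(q,2r)}|\na\phi_\epsilon|^2\,d\mu\ \le\ C\epsilon^{-2}\,|U_{2\epsilon}|_\mu\ \le\ C\epsilon^{2}\ \longrightarrow\ 0 .$$
This is the step I expect to be the main obstacle: the whole argument hinges on producing cutoffs of vanishing energy, and this is exactly where the codimension hypothesis enters.

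With these cutoffs the rest is routine. Put $K=\sup_{\mathcal R\cap B(q,2r)}|u|<\infty$ and $w=u+K\ge0$, so that $\df w=h\ge0$ on $\mathcal R$. Testing this equation against $\zeta^2\phi_\epsilon^2 w$, for a standard spatial cutoff $\zeta\in C^\infty_c(B(q,2r))$ with $\zeta\equiv1$ on $B(q,r)$, and using $h\ge0$ together with the Cauchy--Schwarz inequality yields a Caccioppoli estimate
$$\int_{B(q,r)\cap\mathcal R}\phi_\epsilon^2|\na u|^2\,d\mu\ \le\ CK^2\Big(\int_{B(q,2r)}|\na\zeta|^2\,d\mu+\int_{B(q,2r)}|\na\phi_\epsilon|^2\,d\mu\Big) .$$
Letting $\epsilon\to0$ (monotone convergence, plus the energy decay above) gives $\int_{B(q,r)\cap\mathcal R}|\na u|^2\,d\mu<\infty$. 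Since $\mathcal S$ has vanishing $n$-dimensional measure, $u$ — bounded, hence in $L^2_{\mathrm{loc}}$ — has a locally $L^2$ gradient on $\Omega$, and by the isometric identification $N^{1,2}(\Omega,\mu)\cong W^{1,2}(\mathcal R\cap\Omega,\mu)$ of Proposition~\ref{prop:B003} we conclude $u\in N^{1,2}_{\mathrm{loc}}(\Omega,\mu)$.

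Finally, to propagate the equation to all of $\Omega$, take any nonnegative $v\in N^{1,2}_c(\Omega,\mu)$. Then $v\phi_\epsilon$ has support in a compact subset of $\mathcal R\cap\Omega$, so it is an admissible test function for $\df u=h$ on the smooth open manifold $\mathcal R$, giving
$$-\int_{\mathcal R\cap\Omega}\la\na u,\na(v\phi_\epsilon)\ra\,d\mu=\int_{\mathcal R\cap\Omega}v\phi_\epsilon h\,d\mu .$$
Writing $\na(v\phi_\epsilon)=\phi_\epsilon\na v+v\na\phi_\epsilon$, the cross term is bounded by $\|v\|_\infty\big(\int_{U_{2\epsilon}}|\na u|^2\,d\mu\big)^{1/2}\big(\int|\na\phi_\epsilon|^2\,d\mu\big)^{1/2}$, which tends to $0$ by Step~1 and the local energy bound, while the remaining terms converge by dominated convergence since $|\na u|,|\na v|\in L^2_{\mathrm{loc}}$ and $h\in L^1_{\mathrm{loc}}$. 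Passing to the limit yields $-\int\la\na u,\na v\ra\,d\mu=\int vh\,d\mu$ for every such $v$, which is exactly $\df u=h$ in the weak sense of Definition~\ref{def:weak}. (One could alternatively invoke the analogous extension statements already developed in \cite{HLW21} and \cite{CW17}, but the argument above is self-contained given the tools of Appendix~\ref{app:A}.)
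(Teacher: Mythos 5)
Your proof is correct and matches the approach the paper invokes: the paper itself gives no details, simply citing \cite[Proposition 2.19]{CW17}, and your argument is exactly the standard removable-singularity/capacity argument from that reference (low-energy cutoffs via Minkowski codimension $\geq 4$, a Caccioppoli estimate, then passing to the limit in the weak formulation). Two small points worth tightening: Minkowski dimension $\leq n-4$ gives $|U_{2\epsilon}|_\mu \leq C_\eta\,\epsilon^{4-\eta}$ for every $\eta>0$ rather than $C\epsilon^4$ outright (still sufficient, since you only need the energy to vanish); and in the last step you should restrict first to \emph{bounded} $v\in N^{1,2}_c$ (or to $v\in C^\infty_c(\mathcal R)$, which is dense by Proposition~\ref{prop:B003}) and also observe that the Caccioppoli identity applied to $w=u+2K>0$ yields $h\in L^1_{\mathrm{loc}}(\Omega,\mu)$, so the dominated-convergence step and the weak formulation in Definition~\ref{def:weak} are both meaningful.
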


Next, we prove a quantitative version of the strong maximum principle.

\begin{prop}\label{prop:B006}
Let $(X,p,d,f) \in \mathfrak S(n)$. Given any $L$ and $q \in B(p,L)$, suppose $u$ is a bounded, nonnegative, continuous function on $B(q,3) \cap \mathcal R$ such that 
\begin{align} \label{BE006}
\Delta_f u \le 0
\end{align}
in the weak sense on $B(q,2) \cap \mathcal R$. Then there exists a constant $\delta_1=\delta_1(n,L)>0$ such that if $u(y)=\tau$ for some $y \in B(q,1) \cap \mathcal R$, then
\begin{align*}
|\{u \le 2\tau \} \cap B(q,2)|_{\mu} \ge \delta_1 |B(q,2)|_{\mu}.
\end{align*}
In particular, either $u \equiv 0$ on $B(q,2) \cap \mathcal R$ or 
\begin{align*}
\inf_{B(q,1) \cap \mathcal R} u >0.
\end{align*}
\end{prop}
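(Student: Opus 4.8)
The statement is a quantitative form of the strong minimum principle, and the plan is to establish it via the De Giorgi--Nash--Moser iteration, which is available on $(X,d)$ thanks to the volume doubling property (Proposition~\ref{prop:B001}), the local $L^2$-Poincar\'e inequality (Proposition~\ref{prop:B004}) and the local $L^2$-Sobolev inequality (Proposition~\ref{prop:B005}); these hold with constants depending only on $n$ and $L$ after enlarging $L$ by an absolute amount to absorb the radius $3$. The first step is to move freely between $\mathcal R$ and $X$: by the extension argument behind Lemma~\ref{lem:extend}, a bounded function on $B(q,2)\cap\mathcal R$ satisfying $\Delta_f(\cdot)\le 0$ (or $\ge 0$) weakly lies in $N^{1,2}_{\mathrm{loc}}(B(q,2),\mu)$ and satisfies the same inequality weakly on all of $B(q,2)$; by Proposition~\ref{prop:B003} the cutoff functions used in the iteration may be taken in $C^{\infty}_c(\mathcal R)$ and the Sobolev and Poincar\'e inequalities apply to such functions; and since $\mu(\mathcal S)=0$, measure statements on $B(q,2)$ and on $B(q,2)\cap\mathcal R$ coincide.

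The analytic core consists of two standard estimates, to be proved exactly as in \cite{CW17} and \cite{HLW21}. The first is a local maximum principle for subsolutions: if $w\ge 0$ is bounded on $B(q,2)\cap\mathcal R$ with $\Delta_f w\ge 0$ weakly, then
\begin{align*}
\sup_{B(q,1)\cap\mathcal R} w \le C(n,L)\left(\aint_{B(q,2)} w^2\,d\mu\right)^{1/2},
\end{align*}
obtained by testing with $\eta^2 w^{2\beta-1}$ to get the Caccioppoli bound $\int\eta^2|\nabla(w^\beta)|^2\,d\mu\le C\beta^2\int|\nabla\eta|^2 w^{2\beta}\,d\mu$, feeding it into Proposition~\ref{prop:B005}, and iterating over the balls $B(q,1+2^{-k})$ with the volume ratios controlled by Proposition~\ref{prop:B001}. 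The second is a weak Harnack inequality for supersolutions: if $u\ge 0$ is bounded on $B(q,3)\cap\mathcal R$ with $\Delta_f u\le 0$ weakly, then there exist $p_0=p_0(n,L)>0$ and $C=C(n,L)>0$ with
\begin{align*}
\left(\aint_{B(q,2)} u^{p_0}\,d\mu\right)^{1/p_0}\le C\,\inf_{B(q,1)\cap\mathcal R} u.
\end{align*}
For this one replaces $u$ by $u+\varepsilon$, observes that $(u+\varepsilon)^{-1}$ is a nonnegative subsolution so that the previous estimate bounds $\inf(u+\varepsilon)$ from below in terms of $\aint(u+\varepsilon)^{-p_0}$, and then uses the Caccioppoli estimate for $\log(u+\varepsilon)$ together with Proposition~\ref{prop:B004} and the John--Nirenberg lemma (valid on a doubling space supporting a Poincar\'e inequality) to obtain $\aint(u+\varepsilon)^{p_0}\cdot\aint(u+\varepsilon)^{-p_0}\le C$ for $p_0$ small, finally letting $\varepsilon\to 0$.

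Granting these, the proposition follows quickly. For the measure estimate, put $w:=(2\tau-u)_+=2\tau-\min\{u,2\tau\}$; since $\min\{u,2\tau\}$ is again a bounded supersolution, $w$ is a bounded nonnegative subsolution on $B(q,2)\cap\mathcal R$ with $0\le w\le 2\tau$ and $w(y)=\tau$. The local maximum principle then gives
\begin{align*}
\tau^2=w(y)^2\le \sup_{B(q,1)\cap\mathcal R}w^2\le C(n,L)^2\aint_{B(q,2)}w^2\,d\mu\le 4C(n,L)^2\tau^2\,\frac{|\{u<2\tau\}\cap B(q,2)|_\mu}{|B(q,2)|_\mu},
\end{align*}
whence $|\{u\le 2\tau\}\cap B(q,2)|_\mu\ge|\{u<2\tau\}\cap B(q,2)|_\mu\ge\delta_1|B(q,2)|_\mu$ with $\delta_1:=(4C(n,L)^2)^{-1}$. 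For the dichotomy, if $\inf_{B(q,1)\cap\mathcal R}u=0$ then the weak Harnack inequality forces $\int_{B(q,2)}u^{p_0}\,d\mu=0$, so $u=0$ $\mu$-a.e.\ on $B(q,2)$ and therefore $u\equiv 0$ on $B(q,2)\cap\mathcal R$ by continuity; otherwise $\inf_{B(q,1)\cap\mathcal R}u>0$. The only point requiring genuine care is that the classical iteration survives the passage across the singular stratum $\mathcal S$, and this is precisely what Lemma~\ref{lem:extend} and Proposition~\ref{prop:B003} are designed to handle, so no new difficulty arises beyond what is already present in \cite{CW17} and \cite{HLW21}.
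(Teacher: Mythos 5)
Your argument for the quantitative measure estimate is essentially the paper's: both pass through Lemma~\ref{lem:extend} and Proposition~\ref{prop:B003} to work on $B(q,2)$ rather than $B(q,2)\cap\mathcal R$, both feed the bounded nonnegative subsolution $(2\tau-u)^+$ into Moser iteration (via Proposition~\ref{prop:B005} and the doubling inequality of Proposition~\ref{prop:B001}) to get an $L^\infty$--$L^2$ bound on $B(q,1)$, and both then evaluate at $y$ to extract the lower bound on $|\{u\le 2\tau\}\cap B(q,2)|_\mu$. Where you diverge is the dichotomy at the end. You deduce it from a full weak Harnack inequality for supersolutions, which you obtain by the standard but nontrivial two-step chain: the $L^\infty$ bound applied to $(u+\varepsilon)^{-1}$, then a BMO estimate for $\log(u+\varepsilon)$ plus John--Nirenberg (valid on a doubling space carrying an $L^2$-Poincar\'e inequality) to bridge the positive and negative small powers. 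The paper instead takes a more economical route that never invokes the crossover/John--Nirenberg machinery: it first disposes of the case that $u$ vanishes at any regular point by the \emph{classical} strong maximum principle on the smooth manifold $\mathcal R$, reducing to $u>0$ on $B(q,2)\cap\mathcal R$; then, assuming $\inf_{B(q,1)\cap\mathcal R}u=0$, it picks $y_i$ with $u(y_i)=\tau_i\to 0$ and applies the already-proved measure estimate at each scale $\tau_i$, so the nested sets $\{u\le 2\tau_i\}\cap B(q,2)$ all have $\mu$-measure at least $\delta_1|B(q,2)|_\mu$ and their intersection $\{u=0\}\cap B(q,2)$ therefore has positive measure, forcing a regular zero of $u$ and a contradiction. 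Your route is heavier but has the minor advantage that it doesn't rely on the classical strong maximum principle on $\mathcal R$ (and implicitly on $B(q,2)\cap\mathcal R$ being connected), concluding directly from $u=0$ $\mu$-a.e.\ and density of $\mathcal R$; the paper's route is shorter because it reuses the measure estimate it has just proved rather than developing the weak Harnack inequality from scratch. Both are correct.
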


\begin{proof}
It follows from Lemma \ref{lem:extend} that $u \in N^{1,2}(B(q,2),\mu)$ and \eqref{BE006} holds on $B(q,2)$, by the same argument of \cite[Proposition $2.19$]{CW17}.
If $u=0$ at some point in $B(q,2) \cap \mathcal R$, it follows from the classical strong maximum principle that $u \equiv 0$ on $B(q,2) \cap \mathcal R$. Therefore, we may assume $u >0$ on $B(q,2) \cap \mathcal R$. 

Since $(2\tau-u)^+$ is a bounded subharmonic function (with respect to $\Delta_f$) in $N^{1,2}(B(q,2),\mu)$. We can apply \eqref{prop:B005}, \eqref{BE005} and the standard Moser iteration to obtain
\begin{align} \label{BE007}
\|(2\tau-u)^+\|_{L^{\infty}(B(q,1))} \le C \lc \frac{1}{|B(q,1)|_{\mu}} \int_{B(q,2)} |(2\tau-u)^+|^2 \,d\mu \rc^{\frac 1 2} \le C \lc \aint_{B(q,2)} |(2\tau-u)^+|^2 \,d\mu \rc^{\frac 1 2}
\end{align}
for some constant $C=C(n,L)>0$, where we have used \eqref{BE003} for the last inequality. Since $u$ is continuous on $B(q,2) \cap \mathcal R$ and $u(y)=\tau$ for some $y \in B(q,1) \cap \mathcal R$, it follows from \eqref{BE007} that
\begin{align*} 
\tau \le C \lc \aint_{B(q,2)} |(2\tau-u)^+|^2 \,d\mu \rc^{\frac 1 2} \le 2C\tau \lc \frac{|\{u \le 2\tau \} \cap B(q,2)|_{\mu}}{|B(q,2)|_{\mu}} \rc^{\frac 1 2}.
\end{align*}
Therefore, the proof of \eqref{BE006} is complete. For the last conclusion, if $\inf_{B(q,1) \cap \mathcal R} u =0$, then there exists a sequence $y_i \in B(q,1) \cap \mathcal R$ with $u(y_i)=\tau_i \to 0$.
Applying \eqref{BE006} for $\tau=\tau_i$, we conclude that there exists a point $z \in \bar B(q,1) \cap \mathcal R$ such that $u(z)=0$, which is a contradiction. 
\end{proof}

As an application, we have

\begin{prop}\label{prop:B007}
Let $(X,p,d,f) \in \mathfrak S(n)$ be compact. Then $Rc \equiv 0$ on $\mathcal R$. 
\end{prop}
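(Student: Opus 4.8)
The plan is to integrate the steady-soliton identity $\Delta_f R = -2|Rc|^2$ over the compact space $X$ against the constant test function and conclude $\int_X |Rc|^2\,d\mu = 0$; the only genuinely nontrivial point is to legitimize this integration by parts across the singular set $\mathcal S$. First I would record that $R$ is bounded on $\mathcal R$: taking the divergence of \eqref{BE000} gives $R+|\na f|^2 = \lambda$ on $\mathcal R$ for a constant $\lambda \ge 0$ (as in the discussion following Definition \ref{dfn:B001}), whence $0 \le R \le \lambda$. Thus $-R$ is a bounded function on $\mathcal R$ satisfying, classically on the regular part, $\Delta_f(-R) = 2|Rc|^2 \ge 0$. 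Applying Lemma \ref{lem:extend} with $\Omega = X$ — which is itself a bounded open domain since $X$ is compact — I conclude that $-R \in N^{1,2}_{\mathrm{loc}}(X,\mu)$ and that $\Delta_f(-R) = 2|Rc|^2$ holds in the weak sense of Definition \ref{def:weak} on all of $X$; in particular $|Rc|^2 \in L^2_{\mathrm{loc}}(X,\mu) = L^2(X,\mu) \subset L^1(X,\mu)$, using compactness of $X$.

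Next I would unwind the weak equation: ``$\Delta_f(-R) = 2|Rc|^2$ weakly'' means $-\int_{X\cap\mathcal R} \langle \na(-R),\na v\rangle\,d\mu = \int_X 2v\,|Rc|^2\,d\mu$ for every $v \in N^{1,2}_c(X,\mu)$. Since $X$ is compact, $|X|_\mu < \infty$ (Proposition \ref{prop:B001}) and the constant function $v \equiv 1$ lies in $N^{1,2}_c(X,\mu)$ with $\na v = 0$; plugging it in yields $0 = \int_X 2|Rc|^2\,d\mu$. Hence $|Rc| \equiv 0$ $\mu$-almost everywhere on $\mathcal R$, and since $g$ is smooth on $\mathcal R$ this forces $Rc \equiv 0$ on $\mathcal R$. (In the trivial case $\lambda = 0$ one already has $R \equiv 0$ and $f$ constant, so $Rc = -\mathrm{Hess}\,f \equiv 0$; the argument above subsumes this.)

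The main obstacle — essentially the only place any work hides — is Lemma \ref{lem:extend}: the removable-singularity statement that a bounded $\Delta_f$-subsolution on $\mathcal R$ extends to a Sobolev function on $X$ for which the weak equation remains valid across $\mathcal S$. This is exactly where $\dim_{\mathcal M} \mathcal S \le n-4$ is used, together with the Sobolev inequality of Proposition \ref{prop:B005} and cutoff functions supported away from $\mathcal S$ whose $L^2$-gradients tend to $0$. If one prefers not to invoke Lemma \ref{lem:extend} as a black box, the same conclusion follows from a direct cutoff argument: test the classical identity against $\phi_\varepsilon^2$ for cutoffs $\phi_\varepsilon \to 1$ on $\mathcal R$ that vanish near $\mathcal S$, use $|\na R| = 2|Rc(\na f,\cdot)| \le 2|Rc|$ (from $\na_i R = 2R_{ik}f_k$ and $|\na f| \le 1$) together with Cauchy--Schwarz to get $\int_X \phi_\varepsilon^2 |Rc|^2\,d\mu \le 4\int_X |\na\phi_\varepsilon|^2\,d\mu$, and let $\varepsilon \to 0$; the existence of such cutoffs is again precisely the codimension-$\ge 4$ input.
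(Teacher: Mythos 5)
Your proof is correct, but it takes a genuinely different route from the paper's. The paper applies the quantitative strong maximum principle (Proposition~\ref{prop:B006}) to $u = f - \inf_X f$: since $\Delta_f u = -(R + |\nabla f|^2) \le 0$ and $u \ge 0$ with infimum $0$, $f$ must be constant, and then $Rc = -\mathrm{Hess}\,f \equiv 0$ directly from the soliton equation. You instead integrate the identity $\Delta_f R = -2|Rc|^2$ against the constant test function, using Lemma~\ref{lem:extend} (or your direct cutoff variant) to push the weak formulation across $\mathcal S$. Both routes rest on the same removable-singularity input --- $\dim_{\mathcal M}\mathcal S \le n-4$ --- but access it through different tools: the paper via the Moser-iteration-based strong maximum principle, you via the extension lemma. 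Your argument sidesteps any case distinction over whether $\inf_X f$ is attained at a regular or singular point and immediately produces $Rc \in L^2(X,\mu)$; the paper's works with $f$, which is globally Lipschitz on $X$, whereas yours needs the extra observation that $-R$, a priori defined only on $\mathcal R$, extends across $\mathcal S$. Your self-contained cutoff argument, absorbing the cross term via $|\nabla R| = 2|Rc(\nabla f,\cdot)| \le 2|Rc|$ and Cauchy--Schwarz, cleanly avoids invoking Lemma~\ref{lem:extend} as a black box and is correct as written.
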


\begin{proof}
Since $X$ is compact and $f$ is locally Lipschitz, there exists a point $q \in X$ such that
\begin{align*}
f(q)=\inf_X f.
\end{align*}
From the equation $\Delta_f(f-f(q))=-(|\na f|^2+R) \le 0$, we conclude from Proposition \ref{prop:B006} that $f \equiv f(q)$. In this case, it follows from the steady soliton equation that $Rc \equiv 0$ on $\mathcal R$.
\end{proof}

Next, we have the following existence result and gradient estimates for harmonic functions. The proof follows verbatim from \cite[Proposition $2.26$]{CW17}.

\begin{prop}\label{prop:B008}
Let $\Omega \subset X$ be a bounded domain and $v$ a continuous function in $N^{1,2}(\Omega,\mu)$. Then there is a unique solution $u \in N^{1,2}(\Omega,\mu)$ solving the Dirichlet problem
\begin{align*}
\Delta_f u=0 \text{ in } \Omega \text{ and } u-v \in N_0^{1,2}(\Omega,\mu).
\end{align*}
Moreover, the weak maximum principle holds for $u$, that is,
\begin{align*}
\sup_{\Omega} u=\sup_{\partial \Omega} u,\quad \inf_{\Omega} u=\inf_{\partial \Omega} u.
\end{align*}
\end{prop}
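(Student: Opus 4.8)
The plan is to obtain $u$ by the direct method in the calculus of variations on the energy space $N^{1,2}(\Omega,\mu)$ and then to extract the weak maximum principle from this variational characterisation, following the template of Chen--Wang \cite[Proposition $2.26$]{CW17}. Two structural facts make this routine. First, since $f$ is locally Lipschitz, $e^{-f}$ is bounded above and below on the bounded set $\Omega$, so $\mu$ is comparable there to the Hausdorff measure, and the weighted Laplacian $\Delta_f u = e^{f}\operatorname{div}(e^{-f}\nabla u)$ is a genuine divergence-form operator with respect to $\mu$. Second, by Proposition \ref{prop:B003} the singular set $\mathcal S$ is invisible to the energy space: $C^\infty_c(\mathcal R)\cap N^{1,2}$ is dense in $N^{1,2}(\Omega,\mu)$ and restriction to $\mathcal R\cap\Omega$ is an isometric isomorphism onto $W^{1,2}(\mathcal R\cap\Omega,\mu)$. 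Hence every computation may be carried out on the smooth (possibly incomplete) manifold $\mathcal R\cap\Omega$, where standard Riemannian elliptic theory applies, together with the intrinsic Poincar\'e and Sobolev inequalities of Proposition \ref{prop:B004} and Proposition \ref{prop:B005}.

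\textbf{Existence and uniqueness.} I would minimise the Dirichlet energy $E(w)=\int_{\mathcal R\cap\Omega}|\nabla w|^2\,d\mu$ over the nonempty affine set $\mathcal A:=v+N^{1,2}_0(\Omega,\mu)$. Writing $w=v+\psi$ with $\psi\in N^{1,2}_0(\Omega,\mu)$ and enclosing $\bar\Omega$ in a ball, the Poincar\'e/Sobolev inequalities give $E(w)\ge c\|\psi\|^2_{N^{1,2}}-C$, so $E$ is coercive on $\mathcal A$; since $E$ is convex and weakly lower semicontinuous, a minimiser $u\in\mathcal A$ exists. Varying $u$ by $t\varphi$ with $\varphi\in C^\infty_c(\mathcal R)\cap N^{1,2}$ and using density yields the weak Euler--Lagrange identity $\int_{\mathcal R\cap\Omega}\langle\nabla u,\nabla\varphi\rangle\,d\mu=0$ for all $\varphi\in N^{1,2}_0(\Omega,\mu)$, i.e.\ $\Delta_f u=0$ in the weak sense of Definition \ref{def:weak}; interior elliptic regularity on $\mathcal R$ (where the drift coefficient $\nabla f$ is smooth) makes $u$ smooth there. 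For uniqueness, if $u_1,u_2$ both solve the problem then $w:=u_1-u_2\in N^{1,2}_0(\Omega,\mu)$ is weakly harmonic; testing against $w$ itself (legitimate by approximating $w$ in $N^{1,2}$ by functions in $C^\infty_c(\mathcal R)$) gives $\int_{\mathcal R\cap\Omega}|\nabla w|^2\,d\mu=0$, hence $\nabla w\equiv 0$, and then the Poincar\'e inequality forces $w\equiv 0$.

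\textbf{Weak maximum principle.} Assuming $M:=\sup_{\partial\Omega}u$ is finite (otherwise there is nothing to prove) and interpreting it through $v$ as in the statement, the key point is that $\phi:=(u-M)^+\in N^{1,2}_0(\Omega,\mu)$. Testing $\Delta_f u=0$ against $\phi$ then gives
\begin{align*}
0=-\int_{\mathcal R\cap\Omega}\langle\nabla u,\nabla\phi\rangle\,d\mu=-\int_{\{u>M\}\cap\mathcal R}|\nabla u|^2\,d\mu,
\end{align*}
so $\nabla\phi\equiv 0$, and since $\phi\in N^{1,2}_0$ the Poincar\'e inequality yields $\phi\equiv 0$; thus $u\le M$ on $\Omega$, and by continuity of $u$ on $\mathcal R$ this becomes $\sup_\Omega u=\sup_{\partial\Omega}u$. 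Applying the same argument to $-u$ (equivalently to the solution with boundary data $-v$) handles the infimum.

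The main obstacle — and the only place where conifold-specific care is required — is the assertion that the truncation $\phi=(u-M)^+$, and more generally the difference $w$ used above, genuinely lie in $N^{1,2}_0(\Omega,\mu)$ and may be inserted as test functions, even though such functions may be nonzero on a neighbourhood of the singular set $\mathcal S$. This is precisely where one invokes that $\mathcal S$ has zero $W^{1,2}$-capacity, a consequence of $\dim_{\mathcal M}\mathcal S\le n-4$ combined with the volume comparison of Proposition \ref{prop:B001}: it guarantees that $N^{1,2}_0(\Omega,\mu)$ is exactly the $N^{1,2}$-closure of functions supported in $\mathcal R$ away from $\partial\Omega$, so the classical truncation lemmas for Sobolev functions transfer unchanged. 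Once this is in place the argument reduces verbatim to the smooth Riemannian (and, locally, Euclidean) case, exactly as in \cite{CW17}.
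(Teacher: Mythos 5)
Your proof is correct and follows the same variational template — direct-method existence, energy testing for uniqueness, and truncation by $(u-M)^+$ for the weak maximum principle — that the paper invokes by simply citing \cite[Proposition $2.26$]{CW17} verbatim, which is the same reference you point to. Both arguments reduce everything to the regular part via Proposition~\ref{prop:B003} and lean on Propositions~\ref{prop:B001}, \ref{prop:B004}, \ref{prop:B005} together with the high codimension of $\mathcal S$ to justify the test functions, so there is no substantive difference in route.
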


Next, we have the following splitting result.

\begin{prop}\label{prop:B009}
Let $(X,p,d,f) \in \mathfrak S(n)$. Suppose there exists a smooth function $b$ on $\mathcal R$ satisfying
\begin{align*}
\Delta_f b=s \quad \text{and} \quad |\na b|=t 
\end{align*}
on $\mathcal R$, where $s$ and $t$ are constants with $t>0$. Then $(X,d)$ is isometric to $(Y \times \R,d' \times d_{g_E})$, where $(Y,d')$ is a Ricci steady soliton conifold.
\end{prop}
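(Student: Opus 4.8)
The plan is to first extract from the weighted Bochner formula that $\he b$ vanishes on the regular part, and then to promote the resulting local de Rham decomposition into a global splitting of the completion $X$, using the geodesic convexity of $\mathcal R$, its density in $X$, and the Minkowski codimension bound on $\mathcal S$.

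First, after replacing $b$ by $t^{-1}b$ we may assume $|\na b|\equiv 1$ (the equation $\df b=t^{-1}s$ is still constant). On $(\mathcal R,g)$ equipped with the weighted measure $e^{-f}\,dV$ the Bochner identity reads
\begin{align*}
\df |\na b|^2 = 2|\he b|^2 + 2\la \na b, \na (\df b)\ra + 2\left( Rc+\he f\right)(\na b,\na b).
\end{align*}
The last term vanishes by \eqref{BE000}, while $|\na b|^2$ and $\df b$ are constant, so the left side and the gradient term are zero; hence $\he b\equiv 0$ on $\mathcal R$. Consequently $\na b$ is a parallel unit vector field, so $\na^3 b=0$ and the Ricci identity gives $R_{ijkl}\na^l b=0$, hence $Rc(\na b,\cdot)=0$ on $\mathcal R$; the integral curves of $\na b$ are unit-speed geodesics along which $b$ grows linearly, and the orthogonal distribution $\ker db$ is parallel with totally geodesic leaves. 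Moreover $b$ is $1$-Lipschitz, hence extends to a $1$-Lipschitz function on $X=\overline{\mathcal R}$.

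Second, I would fix a level set $\Sigma=b^{-1}(0)\cap\mathcal R$ with the induced metric. Since $\Sigma$ is totally geodesic and $R_{ijkl}\na^l b=0$, the Gauss equations give $Rm_{\Sigma}=Rm_X|_{\Sigma}$, $Rc_{\Sigma}=Rc_X|_{\Sigma}$, and the intrinsic Hessian of $f|_{\Sigma}$ agrees with the restriction of $\he f$; thus $Rc_{\Sigma}+\nabla^2_{\Sigma}(f|_{\Sigma})=0$ and $R_{\Sigma}=R_X|_{\Sigma}\ge 0$, so each fiber carries a Ricci steady soliton structure. Let $(Y,d')$ be the metric completion of $(\Sigma,g|_{\Sigma})$.

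Third — and this is the main step — let $\phi_r$ denote the flow of $\na b$ on $\mathcal R$. Because $\na b$ is parallel, $\phi_r$ is a local isometry wherever defined and preserves lengths of curves; together with the geodesic convexity of $\mathcal R$ this makes $\phi_r$ distance non-increasing on its domain, so it passes to the completion. One must then show that the flow is complete: a maximal flow line defined on a finite interval would, by completeness of $X$, converge to a point of $\mathcal S$, and I would exclude this as in \cite{CW17}, using that $\mathcal S$ has Minkowski codimension at least $4$ and that every tangent cone at such a limit point, on which $b$ descends to a nonconstant affine function, must again split off that direction — a blow-up and iteration argument. Granting completeness, the map $\Phi:\Sigma\times\R\to\mathcal R$, $\Phi(y,r)=\phi_r(y)$, is a bijective local isometry (surjectivity: $x=\Phi(\phi_{-b(x)}(x),b(x))$; injectivity: distinct flow lines do not meet and $b$ separates distinct fibers), hence an isometry onto $\mathcal R$; passing to completions yields an isometry $Y\times\R\to X$, so $(X,d)$ is isometric to $(Y\times\R,d'\times d_{g_E})$. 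Finally one checks $(Y,d')\in\mathfrak S(n-1)$ in the sense of Definition \ref{dfn:B001}: the regular–singular decomposition respects the product, so $\dim_{\mathcal M}\mathcal S_Y=\dim_{\mathcal M}\mathcal S_X-1\le n-5$; geodesic convexity of the regular part of $Y$ follows from that of $\mathcal R$ since the leaves are totally geodesic; and the tangent cone of $Y$ at a singular point is the de-suspension of the tangent cone of $X$ at the corresponding point, a metric cone of Hausdorff dimension $n-1$ inheriting the volume-ratio gap. I expect the only real obstacle to be the completeness of the $\na b$-flow — equivalently, that $b:X\to\R$ is onto and all fibers are mutually isometric — which should require precisely the tangent-cone and codimension machinery used for the Einstein-conifold splitting theorems in \cite{CW17}.
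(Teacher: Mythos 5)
Your proof is correct and follows essentially the same route as the paper: both derive $\text{Hess}\,b=0$ from the weighted Bochner formula (using $Rc_f=0$ from \eqref{BE000} and the constancy of $|\nabla b|$ and $\Delta_f b$) and then appeal to \cite[Lemma~2.31]{CW17} to show the gradient flow of $\nabla b$ preserves the regular set, from which the product splitting follows. The paper compresses the second half into ``one can prove as \cite[Lemma~2.31]{CW17}\dots\ Now the conclusion follows easily,'' whereas you spell out the flow map $\Phi$, the inherited steady-soliton structure on the level set $\Sigma$, and the verification that $(Y,d')\in\mathfrak S(n-1)$; but the approach and the key external input are identical.
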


\begin{proof}
It follows from the Weitzenb\"{o}ck formula that on $\mathcal R$
\begin{align*}
0=\frac{1}{2}\Delta_f |\na b|^2=|\text{Hess}\,b|^2+Rc_f(\na b,\na b)+\la \na \Delta_f b, \na b \ra=|\text{Hess}\,b|^2.
\end{align*}
Therefore, $\text{Hess}\,b=0$ and one can prove as \cite[Lemma $2.31$]{CW17} that the gradient flow of $\na b$ preserves the regular point. Now the conclusion follows easily.
\end{proof}

For any $u,v \in N^{1,2}(X,\mu)$, we define a nonnegative symmetric bilinear from $\mathscr{E}$ by 
\begin{align*}
\mathscr{E}(u,v) := \int_{\mathcal R} \la \na u, \na v \ra \,d\mu.
\end{align*}

It can be proved (see \cite[Proposition $2.15$]{CW17}) that $\mathscr E$ is an irreducible, strongly local and regular Dirichlet form. The associated semigroup $(P_t)_{\ge 0}$ can be expressed as
\begin{align*} 
P_t(u)(y)=\int u(x) p(t,x,y) \,d\mu(x), \quad \forall u \in L^2(X,\mu),\,t>0.
\end{align*}
Here, $p(t,x,y)$ is called the heat kernel with respect to $\mathscr E$. Notice the $\Delta_f$ can be regarded as the unique generator concerning $\mathscr E$ and the domain of $\Delta_f$, denoted by $\Sigma(\Delta_f)$, is a subspace of $L^2(X,\mu)$.

We have the following estimates which follow from \cite{Stu95} and \cite{Stu96} as \cite[Proposition $2.20$]{CW17}, see also \cite[Theorem $3.1$]{HLW21}.

\begin{thm}(Heat kernel estimates)\label{thm:B009}
There exists a unique heat kernel $p(t, x, y)$ with respect to the Dirichlet form $\mathscr E$.
\begin{enumerate}
\item $p(t,x,y)$ is stochastically complete. That is,
\begin{align*}
\int p(t,x,y) \, d\mu(x)=1.
\end{align*}
\item For any $L>0$, there exists a constant $C_5=C_5(n,L)>0$ such that 
\begin{align*}
\frac{C_5^{-1}}{|B(x,\sqrt t)|_{\mu}} \exp \lc -\frac{d^2(x,y)}{C_5^{-1}t} \rc \le p(t,x,y) \le \frac{C_5}{|B(x,\sqrt t)|_{\mu}} \exp \lc -\frac{d^2(x,y)}{C_5 t} \rc
\end{align*}
for any $x,y \in B(p,L)$ and $0 <t<L^2$. 
\end{enumerate}
\end{thm}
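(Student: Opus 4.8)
The plan is to derive both assertions from the general theory of strongly local regular Dirichlet forms satisfying a \emph{local} volume doubling property together with a \emph{local} Poincar\'e inequality, via Sturm~\cite{Stu95,Stu96}, exactly along the lines of \cite[Proposition 2.20]{CW17} and \cite[Theorem 3.1]{HLW21}. The first observation is that on any fixed ball $B(p,L)$ the weighted measure $d\mu=e^{-f}\,dV$ is comparable to the Hausdorff measure $dV$: since $f(p)=0$ and $|\na f|\le 1$ by \eqref{BE002}, we have $|f|\le L$ on $B(p,L)$, hence
\begin{align*}
e^{-L}\le e^{-f}\le e^{L}
\end{align*}
there, with constants depending only on $L$. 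This reduces every local estimate for $\mu$ to the corresponding one for $dV$, so that the modifications of the conifold arguments of \cite[Section 2]{CW17} and \cite[Section 2]{HLW21} to the weighted setting are routine. Next I would record the two structural ingredients already established: from the volume comparison Proposition~\ref{prop:B001}, choosing $r_2=2r_1$ gives the \emph{local doubling} bound
\begin{align*}
|B(x,2r)|_{\mu}\le 2^{n}e^{2L}\,|B(x,r)|_{\mu}
\end{align*}
for all balls with $B(x,2r)\subset B(p,2L)$, with doubling constant depending only on $n$ and $L$; and the local $L^{2}$-Poincar\'e inequality Proposition~\ref{prop:B004} supplies the matching Poincar\'e estimate on the same scale. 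It was shown in \cite{CW17} that $\mathscr E$ is irreducible, strongly local and regular, Proposition~\ref{prop:B003} identifies its domain with $N^{1,2}(X,\mu)$, and one checks as in \cite[Section 2]{CW17} that the intrinsic metric of $\mathscr E$ coincides with the geodesic distance $d$, so the abstract balls appearing in Sturm's framework are the metric balls $B(x,r)$.

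With these inputs, Sturm's localized equivalence between the parabolic Harnack inequality and the conjunction (local doubling)$\,+\,$(local Poincar\'e) applies on $B(p,L)$, and yields simultaneously the existence and uniqueness of the heat kernel $p(t,x,y)$ of $\mathscr E$ together with the two-sided Gaussian bound of part (2) for $x,y\in B(p,L)$ and $0<t<L^{2}$, with $C_5=C_5(n,L)$. Here one must note that $\Delta_f$ is precisely the generator of $\mathscr E$ — immediate from the definition of $\mathscr E$ and the weak formulation in Definition~\ref{def:weak} — so that $p(t,x,y)$ is indeed the heat kernel for the $\Delta_f$-heat equation. The codimension bound $\dim_{\mathcal M}\mathcal S\le n-4$ ensures that $\mathcal S$ has zero $2$-capacity, so the Moser iteration underpinning Sturm's argument (which needs only Proposition~\ref{prop:B005} and cutoff functions as in the proof of Proposition~\ref{prop:B006}) can be run on the smooth part $\mathcal R$ and then extended across $\mathcal S$ by Lemma~\ref{lem:extend}; geodesic convexity of $\mathcal R$ (Definition~\ref{dfn:B001}) is what makes the chaining arguments behind the Poincar\'e inequality legitimate.

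For the stochastic completeness in part (1), the key point is that $(X,d)$ is complete, being the completion of $(\mathcal R,g)$, and has at most exponential weighted volume growth: from $|\na f|\le 1$ and Proposition~\ref{prop:B001} one gets $|B(x,r)|_{\mu}\le e^{r}r^{n}|B(x,1)|_{\mu}$ for $r\ge 1$, hence $\log|B(x,r)|_{\mu}=O(r)$ and
\begin{align*}
\int^{\infty}\frac{r\,dr}{\log|B(x,r)|_{\mu}}=\infty .
\end{align*}
The standard volume test for conservativeness of the associated Markov process then gives $\int p(t,x,y)\,d\mu(x)=1$; alternatively, this is a direct consequence of the Gaussian upper bound combined with the volume growth estimate. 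In either case one concludes $P_{t}1=1$, which is the stochastic completeness claimed.

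I expect the main obstacle to be the verification that the hypotheses of Sturm's abstract theorem are genuinely satisfied on the singular space $X$: namely that the intrinsic metric of $\mathscr E$ really equals $d$, that $\mathscr E$ is regular and strongly local despite the singular set, and that the local doubling/Poincar\'e package survives the presence of $\mathcal S$. This is exactly where one leans on the codimension-$4$ condition and on the geodesic convexity of $\mathcal R$, and where the arguments of \cite[Section 2]{CW17} and \cite[Section 3]{HLW21} must be transcribed to the weighted measure $e^{-f}\,dV$ — a transcription which, as noted above, is harmless on bounded sets because $e^{-f}$ is there bounded above and below.
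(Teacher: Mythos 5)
Your proposal is correct and takes essentially the same route as the paper, which simply states that the estimates ``follow from \cite{Stu95} and \cite{Stu96} as \cite[Proposition $2.20$]{CW17}, see also \cite[Theorem $3.1$]{HLW21}'' without further elaboration. You have merely supplied the verification that the hypotheses of Sturm's localized parabolic Harnack framework (local doubling from Proposition~\ref{prop:B001}, local Poincar\'e from Proposition~\ref{prop:B004}, regularity and strong locality of $\mathscr E$, intrinsic metric equal to $d$) really hold on the weighted conifold, together with a standard Grigor'yan-type volume argument for stochastic completeness, all of which is exactly what the cited references deliver.
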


Next, we have the following Bakry-\'Emery condition.

\begin{prop}(Bakry-\'Emery condition)\label{prop:BE}
Let $(X,p,d,f) \in \mathfrak S(n)$. Then for any $u \in \Sigma(\Delta_f)$ with $\Delta_f u \in N^{1,2}(X,\mu)$ and $v \in L^{\infty} \cap \Sigma(\Delta_f)$ with $v \ge 0$ and $\Delta_f v \in L^{\infty}$,
\begin{align*}
\frac{1}{2} \int |\na u|^2 \Delta_f v \,d\mu \ge \int v\la \na u,\na \Delta_f u \ra \,d\mu.
\end{align*}
\end{prop}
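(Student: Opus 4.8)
The plan is to reduce the asserted inequality to the classical Bochner formula on the regular part $\mathcal R$ and then absorb the singular set by a cutoff argument, exploiting the codimension bound $\dim_{\mathcal M}\mathcal S\le n-4$. First I would use local elliptic regularity on the smooth open manifold $(\mathcal R,g)$: since $\df u$ restricts to a function in $W^{1,2}_{\mathrm{loc}}(\mathcal R)$ (via Proposition~\ref{prop:B003}) and $\df v$ restricts to an $L^\infty_{\mathrm{loc}}$ function, one gets $u\in W^{3,2}_{\mathrm{loc}}(\mathcal R)$ and $v\in W^{2,p}_{\mathrm{loc}}(\mathcal R)$ for all $p$, in particular $v\in C^{1,\alpha}_{\mathrm{loc}}(\mathcal R)$ and $|\na u|^2\in W^{1,1}_{\mathrm{loc}}(\mathcal R)$. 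The steady soliton equation (\ref{BE000}) forces $Rc_f\equiv0$ on $\mathcal R$, so the weighted Bochner identity collapses to
\begin{align*}
\tfrac12\,\df|\na u|^2=|\he u|^2+\la\na u,\na\df u\ra\ \ge\ \la\na u,\na\df u\ra\qquad\text{on }\mathcal R,
\end{align*}
valid in the distributional sense against nonnegative test functions in $N^{1,2}_c(\mathcal R,\mu)$.

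Next I would insert cutoffs. Using $\dim_{\mathcal M}\mathcal S\le n-4$ together with the volume comparison of Proposition~\ref{prop:B001}, one produces Lipschitz functions $\chi_j:X\to[0,1]$ that vanish near $\mathcal S$, increase to $1$ on $\mathcal R$, are supported in shrinking neighbourhoods of $\mathcal S$, and satisfy $\int_X(|\na\chi_j|^2+|\na\chi_j|)\,d\mu\to0$ and more generally $\int_X|\na\chi_j|^q\,d\mu\to0$ for every $q<4$; since $X$ need not be compact I would also multiply by an additional cutoff in the variable $f$, which is harmless because $|\na f|\le1$ by (\ref{BE002}) and because $v$, $|\na u|^2$ and $\la\na u,\na\df u\ra$ are all integrable. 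Applying the pointwise Bochner inequality against the admissible test function $\chi_j v\ge0$, integrating over $\mathcal R$ with respect to $d\mu$, and integrating by parts twice on the left --- moving $\df$ off $|\na u|^2$ onto $\chi_j v$ and then expanding $\df(\chi_j v)=\chi_j\df v+2\la\na\chi_j,\na v\ra+v\,\df\chi_j$ --- one obtains
\begin{align*}
\tfrac12\int\chi_j|\na u|^2\,\df v\,d\mu\ +\ \mathrm{Err}_j\ \ge\ \int\chi_j v\,\la\na u,\na\df u\ra\,d\mu,
\end{align*}
where $\mathrm{Err}_j$ gathers the terms carrying a factor $\na\chi_j$ or $\df\chi_j$. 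Letting $j\to\infty$, the first term converges to $\tfrac12\int|\na u|^2\df v\,d\mu$ by dominated convergence (using $\df v\in L^\infty$ and $|\na u|^2\in L^1$), the right-hand side converges to $\int v\,\la\na u,\na\df u\ra\,d\mu$, and $\mathrm{Err}_j\to0$ by H\"older's inequality combined with the cutoff estimates and the local integrability of $|\na u|^2$, $\na|\na u|^2$ and $\na v$; this yields the claim.

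The main obstacle will be the rigorous passage across $\mathcal S$: justifying the two integrations by parts with only the regularity $u\in W^{3,2}_{\mathrm{loc}}(\mathcal R)$ available (so that $\na|\na u|^2$ is a priori merely $L^1_{\mathrm{loc}}$ in high dimensions), and verifying that each error term really vanishes in the limit. This is exactly where $\dim_{\mathcal M}\mathcal S\le n-4$ is used, and it is handled by the same analytic toolbox --- good cutoff functions, the local Poincar\'e and Sobolev inequalities of Propositions~\ref{prop:B004} and \ref{prop:B005}, and the density statement of Proposition~\ref{prop:B003} --- developed for metric measure spaces in \cite{HLW21} and for conifolds in \cite{CW17}. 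A technically cleaner but essentially equivalent alternative is to observe that, since $\mathcal S$ has vanishing $2$-capacity, the Dirichlet form $\mathscr E$ (hence its heat semigroup) coincides with the one of $(\mathcal R,g,\mu)$, and then to invoke the classical smooth Bakry--\'Emery estimate on $(\mathcal R,g,\mu)$, where $Rc_f\equiv0$.
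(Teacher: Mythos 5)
Your primary cutoff argument (putting the cutoff $\chi_j$ on $v$ and keeping $u$ general) has a gap that you flag yourself but do not close. Testing the distributional Bochner inequality against $\chi_j v$ gives
\begin{align*}
-\tfrac12\int_{\mathcal R}\langle\nabla|\nabla u|^2,\nabla(\chi_j v)\rangle\,d\mu \ \ge\ \int_{\mathcal R}\chi_j v\,\langle\nabla u,\nabla\Delta_f u\rangle\,d\mu,
\end{align*}
and after expanding $\nabla(\chi_j v)=\chi_j\nabla v+v\nabla\chi_j$ and integrating the $\chi_j\langle\nabla|\nabla u|^2,\nabla v\rangle$ term by parts once more, you are left with the two error terms $\tfrac12\int|\nabla u|^2\langle\nabla\chi_j,\nabla v\rangle\,d\mu$ and $-\tfrac12\int v\,\langle\nabla|\nabla u|^2,\nabla\chi_j\rangle\,d\mu$. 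The second of these is the trouble: with only $u\in W^{3,2}_{\mathrm{loc}}(\mathcal R)$ one has $\nabla|\nabla u|^2=2\,\mathrm{Hess}\,u(\nabla u,\cdot)$ with $\mathrm{Hess}\,u\in L^2_{\mathrm{loc}}$ and $\nabla u\in L^2_{\mathrm{loc}}$, so $\nabla|\nabla u|^2$ is in general only $L^1_{\mathrm{loc}}$ when $n>4$. Pairing an $L^1$ function against $\nabla\chi_j$, which is supported on shrinking collars of $\mathcal S$ but is \emph{not} uniformly bounded, cannot be controlled by H\"older even though $\int|\nabla\chi_j|^q\,d\mu\to0$ for every $q<4$: the conjugate exponent of $1$ is $\infty$, and $\|\nabla\chi_j\|_\infty\to\infty$. (The alternative expansion you write, $\Delta_f(\chi_j v)=\chi_j\Delta_f v+2\langle\nabla\chi_j,\nabla v\rangle+v\,\Delta_f\chi_j$, is worse: for Lipschitz $\chi_j$ the function $\chi_j v$ is not in $\Sigma(\Delta_f)$ and $\Delta_f\chi_j$ is only a distribution.) So the codimension-$4$ bound is not being used at the right place, and the error estimate does not come for free from \emph{``the same analytic toolbox.''}

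The paper sidesteps the entire difficulty by cutting off $u$ rather than $v$. It first takes $u\in C_c^\infty(\mathcal R)$, in which case $|\nabla u|^2$ is smooth and compactly supported inside $\mathcal R$, so the Bochner identity can be tested directly against $v$ and integrated by parts twice with no boundary terms near $\mathcal S$ at all; then it extends to general $u$ by density of $C_c^\infty(\mathcal R)$ in $N^{1,2}(X,\mu)$ (Proposition~\ref{prop:B003}). In the final inequality the quantities $\nabla u$, $\nabla\Delta_f u$, $\Delta_f v$, $v$ are all stable under the relevant convergence, so the limit passage is harmless. Your closing alternative --- vanishing $2$-capacity of $\mathcal S$ implies that the Dirichlet form $\mathscr E$ is generated by $C_c^\infty(\mathcal R)$, so one reads off the Bakry--\'Emery estimate for the (incomplete) smooth manifold $(\mathcal R,g,e^{-f}dV)$ with $Rc_f=0$ --- is precisely the paper's argument rephrased in capacity language, and is the version you should actually write out; the hands-on cutoff of $v$ should be dropped.
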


\begin{proof}
If $u \in C_c^{\infty}(\mathcal R)$, then it follows from the Weitzenb\"{o}ck formula that
\begin{align*}
\frac{1}{2} \Delta_f|\na u|^2=|\text{Hess}\,u|^2+Rc_f(\na u, \na u)+\la \na u,\na \Delta_f u \ra \ge \la \na u,\na \Delta_f u \ra.
\end{align*}
Then for any $v \in L^{\infty} \cap \Sigma(\Delta_f)$ with $v \ge 0$ and $\Delta_f v \in L^{\infty}$, it is clear from the integration by parts that
\begin{align*}
\frac{1}{2} \int |\na u|^2 \Delta_f v \,d\mu \ge \int v\la \na u,\na \Delta_f u \ra \,d\mu.
\end{align*}
Now the general case follows by approximating $u$ by functions in $C_c^{\infty}(\mathcal R)$ as Proposition \ref{prop:B003}.
\end{proof}

Now we obtain the following result. For the definition of RCD$(K,\infty)$ space, see \cite[Definition $2.1$]{Gi18}.

\begin{thm}\label{thm:RCD}
Let $(X,p,d,f) \in \mathfrak S(n)$. Then $(X,d,\mu)$ is a \emph{RCD}$(0,\infty)$ space.
\end{thm}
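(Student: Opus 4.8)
The plan is to deduce Theorem~\ref{thm:RCD} from the Eulerian characterization of RCD spaces: a complete metric measure space with at most Euclidean–exponential volume growth is RCD$(0,\infty)$ provided it is infinitesimally Hilbertian, enjoys the Sobolev-to-Lipschitz property, and satisfies the weak Bakry--\'Emery condition BE$(0,\infty)$ (cf.~\cite{Gi18}, and the argument in \cite{HLW21} for smooth metric measure spaces). Thus it suffices to check these three structural properties for $(X,d,\mu)$, the metric space $(X,d)$ being complete since it is the completion of $(\mathcal R,g)$.

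First I would record infinitesimal Hilbertianity together with the background facts it carries. By Proposition~\ref{prop:B003} the restriction map $N^{1,2}(X,\mu)\to W^{1,2}(\mathcal R,\mu)$ is an isometric isomorphism; since $(\mathcal R,g)$ is a smooth Riemannian manifold, $W^{1,2}(\mathcal R,\mu)$ is Hilbert, the Cheeger energy of $(X,d,\mu)$ coincides with the quadratic form $\mathscr E(u)=\int_{\mathcal R}|\na u|^2\,d\mu$, the minimal weak upper gradient of $u$ equals $|\na u|$ $\mu$-a.e., and the metric-measure Laplacian is precisely the generator $\Delta_f$ of $\mathscr E$. The volume growth bound $\mu(B(x,r))\le c_1 e^{c_2 r^2}$ is immediate from Proposition~\ref{prop:B001}, which in fact gives the stronger estimate $|B(x,r)|_\mu\le r^n e^r|B(x,1)|_\mu$ for $r\ge1$. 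For the Sobolev-to-Lipschitz property, take $u\in N^{1,2}_{\mathrm{loc}}(X,\mu)$ with $|\na u|\le1$ $\mu$-a.e.; on the Riemannian manifold $\mathcal R$ it admits a locally Lipschitz representative, and for $x,y\in\mathcal R$ the geodesic convexity of $\mathcal R$ (item~4 of Definition~\ref{dfn:B001}) supplies a minimizing curve inside $\mathcal R$ along which $|u(x)-u(y)|\le d(x,y)$; since $\mathcal R$ is dense in $X$, this uniformly continuous function extends to a $1$-Lipschitz function on all of $X$.

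The Bakry--\'Emery condition is then exactly Proposition~\ref{prop:BE} with $K=0$: for $u\in\Sigma(\Delta_f)$ with $\Delta_f u\in N^{1,2}(X,\mu)$ and $v\in L^{\infty}\cap\Sigma(\Delta_f)$ nonnegative with $\Delta_f v\in L^{\infty}$,
\[
\frac12\int|\na u|^2\,\Delta_f v\,d\mu\ \ge\ \int v\,\la\na u,\na\Delta_f u\ra\,d\mu,
\]
which is the weak form of $\Gamma_2\ge0$. Combined with Theorem~\ref{thm:B009} (heat kernel existence and stochastic completeness, which identify the semigroup generated by $\Delta_f$ with the conservative heat flow of the Cheeger energy), the three properties above let us invoke the RCD$(0,\infty)$ characterization and conclude. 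The step requiring the most care is the Sobolev-to-Lipschitz property: one must confirm that no information is lost across the singular set $\mathcal S$ — which has Minkowski codimension at least $4$ and is therefore $W^{1,2}$-negligible — when passing between $W^{1,2}(\mathcal R,\mu)$ and $N^{1,2}(X,\mu)$, and this is precisely where the geodesic convexity of $\mathcal R$ inside its completion $X$, built into Definition~\ref{dfn:B001}, is indispensable.
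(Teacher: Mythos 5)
Your proof is correct and follows essentially the same approach as the paper: both verify the four conditions of Gigli's Definition 2.1 of RCD$(0,\infty)$ (infinitesimal Hilbertianity, exponential volume growth, Sobolev-to-Lipschitz, weak Bakry--\'Emery), citing Proposition~\ref{prop:B003}, Proposition~\ref{prop:B001}, and Proposition~\ref{prop:BE} respectively. The only difference is that you spell out the Sobolev-to-Lipschitz argument via the geodesic convexity of $\mathcal R$, whereas the paper compresses this into the phrase ``follows from Proposition~\ref{prop:B003} and the corresponding result on $\mathcal R$.''
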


\begin{proof}
(i), (ii) and (iv) in \cite[Definition $2.1$]{Gi18} follow from the definition of $\mathscr E$, Proposition \ref{prop:B001} and Proposition \ref{prop:BE}, respectively. (iii) in \cite[Definition $2.1$]{Gi18} follows from Proposition \ref{prop:B003} and the corresponding result on $\mathcal R$.
\end{proof}

For applications, we have the following gradient estimate for the positive harmonic function, see also \cite[Theorem $1.1$]{MW11}.

\begin{prop}[Cheng-Yau estimate]\label{prop:cheng}
Let $(X,p,d,f) \in \mathfrak S(n)$. Suppose $\Omega=B(x,4r)$ and $u \in L^{\infty}(\Omega)\cap N^{1,2}(\Omega,\mu)$ satisfies
\begin{align*}
\Delta_f u=0.
\end{align*} 
Then there exists a constant $C_4=C_4(n)>0$ such that if $x \in \mathcal R$,
\begin{align*}
|\na u|(x) \le \frac{C_4}{r} \| u\|_{L^{\infty}(\Omega)}.
\end{align*} 
\end{prop}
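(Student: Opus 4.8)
The plan is to reduce everything to interior elliptic estimates on the smooth open manifold $\mathcal R$ and to deal with $\mathcal S$ by the capacity argument already used throughout this appendix. By interior elliptic regularity on $\mathcal R$, the weakly $\Delta_f$-harmonic function $u$ is smooth there and satisfies $\Delta_f u=0$ classically; by Proposition \ref{prop:B003} the hypothesis $u\in N^{1,2}(\Omega,\mu)$ gives $\int_{\mathcal R\cap\Omega}|\nabla u|^2\,d\mu<\infty$. Since $\dim_{\mathcal M}\mathcal S\le n-4$, we have cutoff functions $\psi_\delta$ equal to $1$ outside a $\delta$-neighborhood of $\mathcal S$, vanishing near $\mathcal S$, with $\int|\nabla\psi_\delta|^2\,d\mu\to 0$ as $\delta\to 0$, exactly as in \cite{CW17}. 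The strategy is: (i) prove a Caccioppoli inequality for $u$; (ii) show $|\nabla u|^2$ is $\Delta_f$-subharmonic on $\mathcal R$ and extends to a weak subsolution on $\Omega$; (iii) feed these into a Moser mean-value inequality, which is available thanks to Propositions \ref{prop:B001} and \ref{prop:B005}.

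For (i), fix a spatial cutoff $\phi$ with $\phi\equiv 1$ on $B(x,3r)$, $\operatorname{supp}\phi\subset B(x,4r)=\Omega$, $|\nabla\phi|\le C/r$. Testing $\Delta_f u=0$ against $u\,\phi^2\psi_\delta^2$, expanding, absorbing, and letting $\delta\to 0$ (the $\psi_\delta$-errors vanish because $\int_{\mathcal R\cap\Omega}|\nabla u|^2\,d\mu<\infty$) yields
\[
\int_{B(x,3r)\cap\mathcal R}|\nabla u|^2\,d\mu\le \frac{C}{r^2}\int_\Omega u^2\,d\mu\le \frac{C}{r^2}\,\|u\|_{L^\infty(\Omega)}^2\,|B(x,4r)|_\mu .
\]
For (ii), the weighted Bochner (Weitzenböck) formula together with the steady soliton equation $Rc_f=Rc+\text{Hess}\,f=0$ and $\Delta_f u=0$ gives, on $\mathcal R$,
\[
\tfrac12\,\Delta_f|\nabla u|^2=|\text{Hess}\,u|^2+Rc_f(\nabla u,\nabla u)+\langle\nabla u,\nabla\Delta_f u\rangle=|\text{Hess}\,u|^2\ge 0 ,
\]
so $w:=|\nabla u|^2$ is a nonnegative $\Delta_f$-subharmonic function on $\mathcal R$. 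For each $k>0$ the truncation $w_k:=\min\{w,k\}$ is a bounded nonnegative $\Delta_f$-subsolution on $\mathcal R$, hence, by the extension principle of Lemma \ref{lem:extend} (again using the high codimension of $\mathcal S$), a weak $\Delta_f$-subsolution on all of $\Omega$.

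For (iii), running the De Giorgi--Nash--Moser iteration for the subsolution $w_k$ on the balls between $B(x,2r)$ and $B(x,3r)$ — legitimate because of the local Sobolev inequality of Proposition \ref{prop:B005} and the volume doubling of Proposition \ref{prop:B001} — gives
\[
\sup_{B(x,r)}w_k\le \frac{C}{|B(x,2r)|_\mu}\int_{B(x,3r)}w_k\,d\mu\le \frac{C}{|B(x,2r)|_\mu}\int_{B(x,3r)\cap\mathcal R}w\,d\mu .
\]
Letting $k\to\infty$, inserting the Caccioppoli bound from step (i), and using $|B(x,4r)|_\mu\le C|B(x,2r)|_\mu$ from Proposition \ref{prop:B001}, one obtains $\sup_{B(x,r)}|\nabla u|^2\le C r^{-2}\|u\|_{L^\infty(\Omega)}^2$; since $w$ is continuous at the regular point $x$, this is the asserted estimate. (Alternatively, one could run the classical Cheng--Yau argument on $v:=u+2\|u\|_{L^\infty(\Omega)}>0$, bounding $|\nabla\log v|$ via Bochner for $\log v$ and the localization with a distance cutoff; that route additionally uses $|\nabla f|\le1$ from \eqref{BE002}, but the one above is more economical as it uses only $Rc_f\ge0$.)

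The only non-routine point — and hence the main thing to be careful about — is the interplay with the singular set: one must know the vanishing-energy cutoffs $\psi_\delta$ exist, that $w_k$ genuinely becomes a weak subsolution on $\Omega$ through $\mathcal S$, and that the Sobolev and doubling constants feeding the Moser iteration are uniform (indeed scale-invariant in nature, which is what makes $C_4$ depend only on $n$). All of these are precisely what the structural hypotheses defining $\mathfrak S(n)$ and the preparatory results of this appendix (Propositions \ref{prop:B001}, \ref{prop:B003}, \ref{prop:B005} and Lemma \ref{lem:extend}) supply, so the argument is then a standard one, following \cite{CW17} and \cite{MW11}.
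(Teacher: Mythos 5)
Your proof is genuinely different from the paper's. The paper defers entirely to Chen--Wang \cite[Prop.\ 2.24]{CW17}, going through a heat-kernel representation (``$\delta$-function property of the heat kernel'') plus Bakry's weighted Sobolev inequality, which is available because $(X,d,\mu)$ is RCD$(0,\infty)$ (Theorem \ref{thm:RCD}). You instead run a purely elliptic argument: Caccioppoli, then the Bochner identity with $Rc_f=0$ to see $w=|\nabla u|^2$ is $\Delta_f$-subharmonic, then Moser iteration. This is a legitimate and rather transparent alternative; the Bochner step is exactly where the steady soliton equation enters, and, unlike the heat-kernel route, it does not invoke the RCD machinery.

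That said, there is a quantitative gap you should be aware of: your argument as written does not obviously deliver $C_4=C_4(n)$, which is what the proposition states and what Corollary \ref{cor:Liou} (Liouville, obtained by letting $r\to\infty$) requires. Proposition \ref{prop:B001} gives weighted doubling with the factor $e^{r_2}$, so your step ``$|B(x,4r)|_\mu\le C|B(x,2r)|_\mu$ from Proposition \ref{prop:B001}'' hides a factor $e^{4r}2^n$ with an $r$-dependence. Likewise, Proposition \ref{prop:B005} only supplies a Sobolev constant $C_3(n,L)$ depending on the ambient radius $L$ (and the proof of Proposition \ref{prop:B005} passes through the segment inequality Proposition \ref{prop:B002}, whose constant already carries an $L$). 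Feeding these into Moser iteration produces a constant that grows with $r$ and with $d(p,x)$, which is strictly weaker than the stated dimension-only $C_4(n)$. The paper's detour through the Bakry weighted Sobolev inequality is precisely designed to obtain a scale-free constant; your alternative route via classical Cheng--Yau on $v=u+2\|u\|_{L^\infty}$ with a distance cutoff and the Laplacian comparison (using $|\nabla f|\le 1$) is closer in spirit to producing a constant uniform in location, though even that route, in the Munteanu--Wang form, yields $|\nabla\log v|\le c(n)\bigl(1+\tfrac1r\bigr)$ rather than $c(n)/r$, so the $r$-dependence needs to be tracked carefully in any approach.

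A second, more minor point: you apply Lemma \ref{lem:extend} to the truncation $w_k=\min\{w,k\}$, but that lemma is stated for bounded functions satisfying $\Delta_f u=h\ge 0$ pointwise on $\mathcal R$, whereas $w_k$ only satisfies the distributional inequality $\Delta_f w_k\ge 0$. The underlying capacity argument (high codimension of $\mathcal S$ plus boundedness and a Caccioppoli estimate for $w_k$) does carry over to weak subsolutions, so this is easily repaired, but the lemma as stated cannot be cited verbatim for $w_k$.
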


\begin{proof}
The proof follows verbatim from \cite[Proposition $2.24$]{CW17} by using the $\delta$-function property of the heat kernel and the weighted Sobolev inequality. The latter can be derived from \cite[Proposition $2.1$]{Ba97}, since $(X,d,\mu)$ is a RCD$(0,\infty)$ space.
\end{proof}

From Proposition \ref{prop:cheng}, we immediately obtain

\begin{cor}[Liouville theorem]\label{cor:Liou}
Suppose $u$ is a harmonic function on $\mathcal R$ with sublinear growth, then $u \equiv C$ on $\mathcal R$.
\end{cor}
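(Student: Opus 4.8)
\textbf{Proof proposal for Corollary \ref{cor:Liou}.}

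The plan is to deduce the Liouville property directly from the Cheng--Yau type gradient estimate of Proposition \ref{prop:cheng}, exactly as in the classical Riemannian case. First I would fix a point $x \in \mathcal R$ and a radius $r>0$ large enough that $B(x,4r) \subset X$; since $(X,d)$ is a complete length space this is automatic for all $r$. The function $u$ is harmonic on all of $\mathcal R$, and by Lemma \ref{lem:extend} (applied to $u$ and $-u$, both of which are subharmonic and bounded on each fixed ball once we know sublinear growth) $u$ lies in $N^{1,2}_{\text{loc}}(X,\mu)$ and $\Delta_f u = 0$ holds in the weak sense on each bounded domain. In particular $u \in L^\infty(B(x,4r)) \cap N^{1,2}(B(x,4r),\mu)$, so Proposition \ref{prop:cheng} applies on $\Omega = B(x,4r)$ and gives
\begin{align*}
|\na u|(x) \le \frac{C_4}{r}\, \|u\|_{L^\infty(B(x,4r))}.
\end{align*}

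The next step is to exploit the sublinear growth hypothesis. By hypothesis there is a function $\omega(r)$ with $\omega(r)/r \to 0$ as $r \to \infty$ such that $\sup_{B(x,4r)} |u| \le \omega(4r)$ (after normalizing, say, $u(x)=0$, or simply absorbing the value $|u(x)|$ into $\omega$). Then
\begin{align*}
|\na u|(x) \le \frac{C_4\, \omega(4r)}{r} = 4C_4 \cdot \frac{\omega(4r)}{4r} \longrightarrow 0 \quad \text{as } r \to \infty.
\end{align*}
Since the left-hand side does not depend on $r$, we conclude $|\na u|(x) = 0$. As $x \in \mathcal R$ was arbitrary, $\na u \equiv 0$ on $\mathcal R$, and since $\mathcal R$ is connected (which follows from its geodesic convexity, Definition \ref{dfn:B001}(4)), $u$ is constant on $\mathcal R$.

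I do not anticipate a genuine obstacle here: the only point requiring a little care is checking that the boundedness hypothesis needed to invoke Proposition \ref{prop:cheng} is in force on each ball, which is where Lemma \ref{lem:extend} and the sublinear (hence locally bounded) growth enter; once $u \in L^\infty(\Omega) \cap N^{1,2}(\Omega,\mu)$ on every bounded $\Omega$, the scaling $r \to \infty$ in the gradient estimate does all the work. If one wants to avoid even the mild appeal to Lemma \ref{lem:extend}, one can instead note that a harmonic function with sublinear growth on $\mathcal R$ is automatically bounded on bounded sets directly from the definition, so the extension step is only a convenience. The constant $C_4=C_4(n)$ from Proposition \ref{prop:cheng} is in particular independent of $r$, which is the crucial feature that makes the limit argument go through.
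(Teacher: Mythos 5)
Your proof is correct and is precisely the argument the paper has in mind: the paper gives no written-out proof beyond the phrase ``From Proposition~\ref{prop:cheng}, we immediately obtain,'' and the intended elaboration is exactly what you wrote — extend to $N^{1,2}_{\text{loc}}$ via Lemma~\ref{lem:extend}, apply the Cheng--Yau estimate on $B(x,4r)$, and send $r\to\infty$ using sublinear growth to kill $\nabla u$ at every $x\in\mathcal R$, with connectedness of $\mathcal R$ (from geodesic convexity) finishing the argument.
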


Now we recall

\begin{defn} \label{def:eign}
The first eigenvalue fo $\Delta_f$ is defined by
\begin{align*}
\lambda:=\inf_{u \in N^{1,2}(X,\mu) } \frac{\mathscr E(u,u)}{\int u^2\, d\mu}=\inf_{u \in C_c^{\infty}(\mathcal R)} \frac{ \int |\na u|^2\,d\mu}{\int u^2\, d\mu}.
\end{align*}
\end{defn}
Notice that the last equality holds from Proposition \ref{prop:B003}. For any non-compact Ricci steady soliton conifold, we have the following theorem, which is a generalization of \cite[Proposition $2.1$]{MW11}.

\begin{thm}\label{thm:B010}
Let $(X,p,d,f) \in \mathfrak S(n)$ be non-compact. Then $\lambda=\frac{1}{4}$.
\end{thm}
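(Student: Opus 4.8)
The claim $\lambda = \tfrac14$ for a non-compact $(X,p,d,f) \in \mathfrak S(n)$ splits naturally into the two inequalities $\lambda \le \tfrac14$ and $\lambda \ge \tfrac14$, and the plan is to mimic the smooth argument of Munteanu-Wang \cite[Proposition 2.1]{MW11}, replacing every analytic ingredient by its conifold counterpart established in this appendix.

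For the upper bound $\lambda \le \tfrac14$, the idea is to test the Rayleigh quotient in Definition \ref{def:eign} with functions built from $e^{-f/2}$. Since $(X,d)$ is non-compact and $R + |\na f|^2 = 1$ on $\mathcal R$ with $R \ge 0$, the potential $f$ is a proper, $1$-Lipschitz exhaustion (this uses $|\na f| = \sqrt{1-R} \le 1$ and the properness that follows from non-compactness together with the volume comparison Proposition \ref{prop:B001}); in particular $f \to +\infty$ at infinity along $\mathcal R$. One computes on $\mathcal R$ that $\Delta_f(e^{-f/2}) = e^{-f/2}\big(\tfrac14|\na f|^2 - \tfrac12 \Delta_f f\big)$, and since $\Delta_f f = \Delta f - |\na f|^2 = -R - |\na f|^2 = -1$ (using $\Delta f = -R$ from the trace of \eqref{BE000}), this gives $\Delta_f(e^{-f/2}) = e^{-f/2}\big(\tfrac14|\na f|^2 + \tfrac12\big)$. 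The approach is then to cut off $e^{-f/2}$ by a Lipschitz function of $f$ supported where $f$ is large (so that $|\na f|^2 = 1 - R$ is close to $1$, since $R \to 0$ at infinity by $R \le 1 - |\na f|^2$ and properness... here one must be slightly careful: one actually only needs that the cutoff region can be pushed to infinity and that the cross terms are controlled), integrate by parts using Proposition \ref{prop:B003} to move onto $\mathcal R$, and let the cutoff exhaust $X$. The singular set contributes nothing because $\dim_{\mathcal M}\mathcal S \le n-4$ and $C^\infty_c(\mathcal R) \cap N^{1,2}$ is dense, so all integrations by parts are legitimate. Passing to the limit yields $\lambda \le \tfrac14$.

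For the lower bound $\lambda \ge \tfrac14$, the plan is the standard Barta-type / integration-by-parts argument: for any $u \in C^\infty_c(\mathcal R)$, write $u = e^{-f/2} w$ and compute
\begin{align*}
\int_{\mathcal R} |\na u|^2 \, d\mu = \int_{\mathcal R} \Big( e^{-f} |\na w|^2 - e^{-f} w \la \na w, \na f\ra + \tfrac14 e^{-f} w^2 |\na f|^2 \Big) dV,
\end{align*}
and then handle the cross term by integrating by parts: $-\int e^{-f} w \la \na w, \na f\ra\, dV = \tfrac12 \int w^2 \,\la \na f, \na(e^{-f})\ra\, dV$, wait — more cleanly, $-\int e^{-f} w\la\na w,\na f\ra = -\tfrac12\int e^{-f}\la\na(w^2),\na f\ra = \tfrac12\int w^2 \Delta_f(e^{-f})\cdots$; in any case the bookkeeping reproduces, after using $\Delta_f f = -1$, the identity $\int_{\mathcal R}|\na u|^2 d\mu = \int_{\mathcal R} e^{-f}|\na w|^2 dV + \tfrac14\int u^2 d\mu - \tfrac14\int R\, u^2 d\mu + (\text{boundary terms that vanish for compactly supported }u)$. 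Since $R \ge 0$, wait — that would give $\lambda \le \tfrac14$, not $\ge$; the correct sign arrangement (as in \cite{MW11}) uses $R+|\na f|^2 = 1$ to write $\tfrac14|\na f|^2 = \tfrac14 - \tfrac14 R$ and absorb it so that $\int|\na u|^2 d\mu \ge \tfrac14 \int u^2 d\mu$ holds precisely because the $\int e^{-f}|\na w|^2$ term is nonnegative and the $R$-term enters with the favorable sign; I will carry out the exact computation following \cite[Proposition 2.1]{MW11} and then extend from $C^\infty_c(\mathcal R)$ to all of $N^{1,2}(X,\mu)$ via the density in Proposition \ref{prop:B003} and Definition \ref{def:eign}.

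The main obstacle I anticipate is not the formal computation but the \emph{justification of the boundary/cutoff limits near the singular set and at infinity}: one must ensure that when $e^{-f/2}$ is cut off and the cutoff is removed, no mass concentrates on $\mathcal S$ and the cross terms involving $\na(\text{cutoff})$ genuinely vanish in the limit. For the singular set this is handled by $\dim_{\mathcal M}\mathcal S \le n - 4$ together with Proposition \ref{prop:B003} and a capacity argument as in \cite{CW17}; for the behavior at infinity one uses the properness of $f$, the volume comparison \eqref{BE003} to control the weighted volume growth of the super-level sets $\{f \ge T\}$, and the decay $R \to 0$. A secondary subtlety is confirming that $f$ is proper and an exhaustion in the first place — for this I would argue that if $f$ were bounded, then $\{f \le \inf f + 1\}$ combined with $|\na f|\le 1$ and Proposition \ref{prop:B006} applied to $f - \inf f$ would force $f$ constant (as in Proposition \ref{prop:B007}), contradicting non-compactness via $R + |\na f|^2 = 1$ and $\Delta_f R = -2|Rc|^2$. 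Once properness is in hand, the two inequalities above combine to give $\lambda = \tfrac14$.
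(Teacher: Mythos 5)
You split into the two inequalities and adapt the Munteanu--Wang argument, which is the same overall strategy as the paper, but the test functions you propose sit on the wrong side of the weight $e^{-f}$, and this produces two concrete gaps.

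For the lower bound, the paper uses $v=e^{+f/2}$, not $e^{-f/2}$, as its positive supersolution: from $\Delta_f f=-1$ and $|\nabla f|^2\le1$ one has $\Delta_f(e^{f/2})=e^{f/2}\bigl(\tfrac14|\nabla f|^2-\tfrac12\bigr)\le-\tfrac14 e^{f/2}$, and the Barta-type argument from \cite[Lemma 2.2]{MW11} then gives $\lambda\ge\tfrac14$. Your $e^{-f/2}$ has the opposite sign: you correctly compute $\Delta_f(e^{-f/2})=e^{-f/2}\bigl(\tfrac14|\nabla f|^2+\tfrac12\bigr)\ge\tfrac12 e^{-f/2}>0$, but this is a \emph{sub}solution and gives no lower bound on the spectrum. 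Equivalently, carrying out your substitution $u=e^{-f/2}w$ with $w\in C^\infty_c(\mathcal R)$ and integrating by parts (using $\Delta f=-R$ and $R+|\nabla f|^2=1$) gives
\begin{align*}
\int|\nabla u|^2\,d\mu=\int e^{-2f}|\nabla w|^2\,dV-\tfrac34\int u^2\,d\mu+\tfrac14\int R\,u^2\,d\mu,
\end{align*}
which is not the identity you wrote and does not yield $\lambda\ge\tfrac14$ without an additional Hardy-type inequality for the remaining weighted Dirichlet integral. The substitution that closes in one step is $u=e^{+f/2}w$, for which the cross term integrates by parts with the favorable sign and one obtains $\int|\nabla u|^2\,d\mu=\int|\nabla w|^2\,dV+\tfrac14\int R\,u^2\,d\mu+\tfrac14\int u^2\,d\mu\ge\tfrac14\int u^2\,d\mu$. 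You half-notice this yourself (``wait --- that would give $\lambda\le\tfrac14$, not $\ge$''), but the fix is not a rearrangement with the same function; it is switching from $e^{-f/2}$ to $e^{+f/2}$.

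For the upper bound, the paper does not use a potential-based test function at all: it takes $u(x)=e^{-c\,d(p,x)}\phi(x)$ with $\phi$ a compactly supported cutoff, and the volume comparison \eqref{BE003} supplies both the integrability and the vanishing of the cutoff error terms as the cutoff radius grows. Your plan instead hinges on $f$ being a proper exhaustion with $f\to+\infty$, and your proposed justification (via Proposition~\ref{prop:B006} and Proposition~\ref{prop:B007}) does not deliver this. In fact, with the convention $Rc+\text{Hess}\,f=0$ used here, $f$ tends to $-\infty$ along the end on the Bryant soliton and in every example relevant to this paper (whenever $Rc\ge0$, $\text{Hess}\,f=-Rc\le 0$, so $f$ is concave and decreases along rays). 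Consequently $e^{-f/2}$ grows rather than decays, $\{f\ge T\}$ is a bounded set rather than an exterior region, and $\int e^{-2f}\,dV$ diverges. Nothing in Appendix~A establishes that $f$ is an increasing proper exhaustion, and no such statement holds in this generality. The distance function $d(p,\cdot)$ sidesteps the asymptotics of $f$ and $R$ entirely and needs only the metric-measure volume bound of Proposition~\ref{prop:B001}, which is exactly why the paper uses it.
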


\begin{proof}
First we show that $\lambda \le 1/4$. This can be done by considering the test function $u(x):=e^{-\frac{1}{4}(1+\ep)d(p,x)} \phi(x)$, where $\phi$ is a cutoff function with $\phi=1$ on $B(p,L-1)$ and $\phi=0$ outside $B(p,L)$. 
Since the volume $|B(p,L)| \le L^n e^L|B(p,1)|_{\mu}$ by \eqref{BE003}, one can compute directly that $\lambda \le \frac{1}{4}(1+\ep)^2$ for any $\ep>0$.

Next we compute on $\mathcal R$,
\begin{align*}
\Delta_f e^{\frac{f}{2}}=\lc -\frac{1}{2}+\frac{|\na f|^2}{4} \rc e^{\frac{f}{2}} \le -\frac{1}{4}e^{\frac{f}{2}}.
\end{align*}
Since $f$ is locally bounded, the above equation also holds on $X$ in the weak sense. Therefore, we have $\lambda \ge 1/4$ by the same argument of \cite[Lemma $2.2$]{MW11}.
\end{proof}

Theorem \ref{thm:B010} implies, in particular, that any non-compact $(X,p,d,f) \in \mathfrak S(n)$ admits a positive Green's function (see, e.g. \cite[Lemma $5.2$]{GH14}):
\begin{align*}
G(x,y)= \int_0^{\infty}p(t,x,y) \,dt.
\end{align*}

Next, we recall the following definition.

\begin{defn} \label{def:end}
Let $(X,p,d,f) \in \mathfrak S(n)$ be non-compact and $E$ an end of $X$. $E$ is said to be parabolic if it does not admit a positive
harmonic function $h$ satisfying
\begin{align*}
h=1
\end{align*}
on $\partial E$ and 
\begin{align*}
\liminf_{x \to E(\infty)} h(x)<1,
\end{align*}
where $E(\infty)$ denotes the infinity of $E$. Otherwise, $E$ is said to be nonparabolic.
\end{defn}

It is clear from the existence of a positive Green's function that $X$ has at least one nonparabolic end. 

Now we can follow the same proof as in \cite[Theorem $4.1$]{MW11} to prove the following result.

\begin{thm}
\label{thm:oneend}
Let $(X,p,d,f) \in \mathfrak S(n)$ be non-compact and nontrivial. Then $X$ has only one end.
\end{thm}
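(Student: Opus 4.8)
The goal is to show that a non-compact, nontrivial steady soliton conifold $(X,p,d,f)\in\mathfrak S(n)$ has only one end. The plan is to run the Li--Tam type argument for the weighted Laplacian $\Delta_f$, exactly as in \cite[Theorem~4.1]{MW11}, but carried out in the conifold setting where the analytic toolkit from Appendix~\ref{app:A} replaces the smooth Bochner machinery. First I would recall that $X$ admits at least one nonparabolic end, since Theorem~\ref{thm:B010} gives $\lambda=\tfrac14>0$ and hence a positive Green's function $G(x,y)=\int_0^\infty p(t,x,y)\,dt$. Suppose, for contradiction, that $X$ has two or more ends. Using the existence of $G$ together with the heat-kernel and volume estimates (Theorem~\ref{thm:B009}, Proposition~\ref{prop:B001}), one constructs a bounded $f$-harmonic function $u$ that is non-constant: concretely, if $E_1$ is a nonparabolic end one builds $u$ by exhausting $X$ with domains $\Omega_k$, solving the Dirichlet problem $\Delta_f u_k=0$ with $u_k=1$ on $\partial E_1\cap\partial\Omega_k$ and $u_k=0$ on the rest of $\partial\Omega_k$ (Proposition~\ref{prop:B008} guarantees existence and the weak maximum principle gives $0\le u_k\le 1$), and passing to a limit using the interior gradient estimate of Proposition~\ref{prop:cheng} to get local uniform convergence on $\mathcal R$. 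One must check the limit $u$ is non-constant, i.e. $\inf u<\sup u$; this is where nonparabolicity of $E_1$ and (in the case all ends are parabolic but there are $\ge 2$ of them) a separate Li--Tam barrier argument enter. By Lemma~\ref{lem:extend} and the high codimension of $\mathcal S$, $u$ extends across the singular set and is $f$-harmonic in the weak sense on all of $X$.

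\textbf{From a bounded harmonic function to a splitting.} With a non-constant bounded $f$-harmonic $u$ in hand, the second stage is an integral Bochner argument. On the regular part $\mathcal R$ the Weitzenb\"ock/Bochner formula gives, for an $f$-harmonic function,
\begin{align*}
\tfrac12\Delta_f|\na u|^2=|\text{Hess}\,u|^2+Rc_f(\na u,\na u)+\la\na u,\na\Delta_f u\ra=|\text{Hess}\,u|^2,
\end{align*}
since $Rc_f=0$ for a steady soliton. Thus $|\na u|^2$ is $f$-subharmonic on $\mathcal R$; combined with the gradient bound $|\na u|\le C\|u\|_{L^\infty}$ from Proposition~\ref{prop:cheng} it is bounded, so by Lemma~\ref{lem:extend} it extends subharmonically across $\mathcal S$. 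The standard Li--Tam integral estimate (multiply by an appropriate cutoff defined via $f$ and the distance function, integrate by parts using the $L^2$-Poincar\'e and Sobolev inequalities of Propositions~\ref{prop:B004}--\ref{prop:B005} and the volume comparison Proposition~\ref{prop:B001}) then forces $\text{Hess}\,u\equiv 0$ on $\mathcal R$ and $|\na u|$ to be a positive constant $t>0$; rescaling $u$ we may take $|\na u|\equiv t$ and $\Delta_f u\equiv 0$. Proposition~\ref{prop:B009} (the splitting proposition) now applies verbatim and yields $(X,d)\cong (Y\times\R,d'\times d_{g_E})$ for a steady soliton conifold $(Y,d')$.

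\textbf{Deriving the contradiction.} A product $Y\times\R$ has the $\R$-coordinate as a globally defined linear function, which contradicts that $X$ is nontrivial in the following way: on the splitting the potential satisfies $\text{Hess}\,f=-Rc$, and the $\R$-direction is flat, so the $\R$-component of $\na f$ is a constant; but then $R+|\na f|^2=1$ together with $R\ge 0$ and $R\not\equiv 0$ (nontriviality) must be reconciled with the fact that on $Y\times\R$ one can slide along the line and compare scalar curvatures. The cleanest route, following \cite{MW11}, is: if $X=Y\times\R$ then $Y$ must itself be non-compact (else $\lambda$ computed on the product could not equal $\tfrac14$, or one directly checks $X$ would then have exactly two ends, each of which is seen to be nonparabolic, and one iterates the splitting until reaching a line factor forcing $Rc\equiv 0$, i.e. triviality). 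In every case one contradicts either nontriviality or the assumption of $\ge 2$ ends, so $X$ has exactly one end.

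\textbf{Main obstacle.} The routine analytic inputs (Poincar\'e, Sobolev, heat kernel, gradient estimate, extension across $\mathcal S$) are all supplied by Appendix~\ref{app:A}, so the genuine difficulty is making the Li--Tam construction of the bounded non-constant $f$-harmonic function fully rigorous in the conifold category: one must ensure the Dirichlet solutions $u_k$ have gradient bounds that survive the limit near—but not on—$\mathcal S$, that the limit is non-constant (the nonparabolicity/barrier step), and that the integral Bochner estimate closes despite the integration by parts being performed only on the open manifold $\mathcal R$ and then extended. I expect the delicate point to be controlling the boundary/cutoff terms in the Bochner integration so that they vanish in the limit, using that $\dim_{\mathcal M}\mathcal S\le n-4$ and the volume comparison; once $\text{Hess}\,u\equiv 0$ is established the rest is essentially formal via Proposition~\ref{prop:B009}.
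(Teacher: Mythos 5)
Your overall strategy — reduce to a Li--Tam argument adapted to $\Delta_f$, then split and contradict nontriviality — is the right starting point, and it is indeed what the paper does for the case of two nonparabolic ends (there Li--Tam gives a bounded nonconstant harmonic function of \emph{finite Dirichlet energy}, which contradicts the Liouville theorem, Corollary~\ref{cor:Liou}, with no need for the Bochner/splitting detour you propose). The genuine gap is in how you propose to treat the remaining case: exactly one nonparabolic end $E$ plus a parabolic end $F$, which is precisely the residual case the paper must handle. For a parabolic end, the Li--Tam exhaustion does \emph{not} produce a global bounded nonconstant $f$-harmonic function on $X$. The exhaustion solutions with data $1$ on $\partial K$ and $0$ on $\partial\Omega_k$ converge, on the parabolic end, identically to $1$, and they are only $f$-harmonic on $X\setminus K$; extending by $1$ across $K$ fails to be harmonic, and the standard Li--Tam barrier on a parabolic end is \emph{unbounded} (proper, $\to+\infty$), so you cannot feed it into either the Liouville theorem or your Bochner step. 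This is why the paper abandons harmonic functions altogether in this case and instead works with the Busemann function $\beta$ of a ray in $F$.

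Two secondary points your plan does not address, and which the paper has to work for. First, before even defining $\beta$ one must know that $F$ contains a geodesic ray lying entirely in $\mathcal R$; the paper's ``Claim~1'' proves this using the continuity of tangent cones along a ray together with the volume-gap property (item~6 of Definition~\ref{dfn:B001}), and this has no analogue in your outline. Second, your ``standard Li--Tam integral Bochner estimate'' to force $\text{Hess}\,u\equiv 0$ and $|\nabla u|\equiv\text{const}$ needs some global integrability of $|\nabla u|^2\,d\mu$ or a quantitative volume decay to kill the cutoff boundary terms; in the mixed parabolic/nonparabolic case this is not available for a harmonic function. The paper instead proves weighted volume growth $|B(p,L)\cap E|_\mu\le Ce^{L}$ and decay $|F\setminus B(p,L)|_\mu\le Ce^{-L}$ for the Busemann slices, then combines these with $\lambda_1(\Delta_f)=\tfrac14$ (Theorem~\ref{thm:B010}) and integration by parts against $e^{\beta/2}$ to force $\Delta_f e^{\beta}=0$, $\Delta_f\beta=-1$, $|\nabla\beta|=1$, at which point Proposition~\ref{prop:B009} splits off a line. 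Your final contradiction step should also be tightened: once $X\cong Y\times\R$ with $\ge 2$ ends, $Y$ is compact, so Proposition~\ref{prop:B007} gives $Rc\equiv 0$, contradicting nontriviality — the considerations you list about $R+|\nabla f|^2=1$ and sliding along the line are not the mechanism used.
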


\begin{proof}
We assume $X$ has at least two ends and derive a contradiction. We first show $X$ has at most one nonparabolic end. Indeed, if $X$ has two nonparabolic end, one can follow the arguments of \cite[Theorem $21.1,21.3$]{Peter12} to construct a positive, bounded, nonconstant, harmonic function $u$ satisfying
\begin{align*}
\mathscr E(u,u)=\int |\na u|^2 \,d\mu <\infty.
\end{align*}
However, this contradicts Corollary \ref{cor:Liou}. Hence, $X$ has exactly one nonparabolic end $E$. Suppose $X$ has more than one end, then we can choose $F$ as a parabolic end. 
For simplicity, we may assume that $X$ has exactly two ends $E$ and $F$. 

\textbf{Claim 1:} $F$ contains a geodesic ray $\gamma \subset F \cap \mathcal{R}$. 

Actually, we fix a point $q \in \mathcal{R} \cap F$
and a sequence of points $y_i \in \mathcal{R} \cap F$. Let $\alpha_i \subset \mathcal R$ be a shortest geodesic connecting $q$ and $y_i$, whose existence is guaranteed by item 4 of Definition A.1. Then
$\alpha_i$ converges to a geodesic $\gamma \subset F$, naturally parametrized by arc length, such that $\gamma(0)=q$. Since $q$ is regular point, there exists a small constant $\epsilon$ such that
$\gamma([0, \epsilon]) \subset \mathcal{R}$. In particular, at each point $y \in \gamma([0, \epsilon])$, each tangent space at $y$ is $\R^n$. 
By volume comparison and heat kernel estimate(cf. \cite[Theorem 4.5]{HLW21}), for each sequence $r_{\alpha} \to 0^{+}$, the unit balls in tangent spaces associated to this sequence and points $y=\gamma(t)$ varies continuously in Gromov-Hausdorff topology for $t \in [0.5 \epsilon, \infty)$. 
In particular, the volume of unit balls in tangent spaces is then a continuous function of $t$. Thus, by the gap property(Item 6 of Definition A.1), it is a constant independent of $t$. This means that for each $t \in [0.5 \epsilon, \infty)$, a tangent space of $\gamma(t)$ is $\R^n$. Therefore, it follows from definition that $\gamma(t)$ is a regular point, i.e., $\gamma(t) \in \mathcal{R}$ for any $t \ge 0$. 
We define the Busemann function 
\begin{align*}
\beta(x) =\lim_{t \to \infty} (t-d(x, \gamma(t))). 
\end{align*}
It follows from the Laplacian comparison that
\begin{align*}
\Delta_{f} \beta \geq -1, \quad |\nabla \beta|=1, 
\end{align*}
which in turn implies that
\begin{align}
\Delta_{f} e^{\beta} \geq 0. \label{eqn:TG01_4}
\end{align}

\textbf{Claim 2:} Fix $p \in X$, for all $L>0$ large enough, we have
\begin{align}
|B(p,L) \cap E|_{\mu} \leq C e^{L}. \label{eqn:TG01_2}
\end{align}

On the end $E$, the function $d(p, \cdot)+\beta$ is uniformly bounded. Therefore, it suffices to show that for some large $s>0$, we have
\begin{align}
|\{s<-\beta<t\} \cap E|_{\mu} \leq C e^{t}, \quad \forall \; t>s. \label{eqn:TG01_1}
\end{align}
Actually, by perturbing distance function and the high co-dimension assumption of $\mathcal{S}$(cf. \cite[Corollary B.3]{CW17}), for each small $\delta>0$, we can find an almost ``tubular neighborhood" $T_{\delta}$ of $\mathcal{S}$.
The distance from any point in $\partial T_{\delta}$ to $\mathcal{S}$ is comparable to $\delta$,
and the $(n-1)$-dimensional Hausdorff measure of $\partial T_{\delta} \cap K$ is bounded by $C(K) \delta^2$ for dimensional reason, where $K$ is any compact set. Applying integration by parts away from $T_{\delta}$, we have 
\begin{align*}
0 \leq \int_{ \{ s<-\beta< t\} \backslash T_{\delta}} (\Delta_f e^{\beta}) e^{-f} =\int_{\{-\beta=s\} \cap (E \backslash T_{\delta})} e^{\beta} e^{-f} - \int_{\{-\beta=t\} \cap (E \backslash T_{\delta})} e^{\beta} e^{-f} 
+\int_{\partial T_{\delta} \cap E} \langle \nabla \beta, \vec{n} \rangle e^{\beta} e^{-f}, 
\end{align*}
where $\vec{n}$ is the outward unit normal vector of $\partial T_{\delta}$. 
It follows that 
\begin{align*}
|\{\beta=-t\} \cap (E \backslash T_{\delta})|_{\mu} \leq e^{t-s} |\{\beta=-s\} \cap E|_{\mu} + C(E,s,t) \delta^2 e^{t},
\end{align*}
where $\mu$ is the induced measure on the hypersurfaces. Integrating the above inequality, we obtain 
\begin{align*}
|\{s<-\beta<t\} \cap (E \backslash T_{\delta}) |_{\mu} \leq e^{t-s} |\{\beta=-s\} \cap E|_{\mu} + C(E,s,t) \delta^2 e^{t}. 
\end{align*}
Letting $\delta \to 0$, we arrive at (\ref{eqn:TG01_1}). 
By choosing proper cutoff functions, integration by parts(cf. \cite[Proposition 2.17]{CW17}, \cite[Theorem 1.4]{PLWang01}) then implies that
\begin{align}
|F \backslash B(p,L)|_{\mu} \leq C e^{-L}. \label{eqn:TG01_3}
\end{align}
Then we verbatim follow the proof in~\cite{MW11}. Define a cut-off function $\phi$ with support in $B(p, 2L)$ such that $\phi=1$ on $B(p,L)$ and $|\nabla \phi| \leq CL^{-1}$. Using the fact $\lambda_1(\Delta_{f})=\frac{1}{4}$, and the volume estimate (\ref{eqn:TG01_1}) and (\ref{eqn:TG01_3}), integration by parts implies that
\begin{align*}
\frac12 \int_{X} \left\{ \Delta_{f} e^{\beta} \right\} \phi^2 e^{-f}= \int_{X} e^{\frac12 \beta} \left( \Delta_{f} e^{\frac12 \beta} + \frac14 e^{\frac12 \beta} \right) \phi^2 e^{-f} \leq \int_{X} |\nabla \phi|^2 e^{\beta} e^{-f} \leq \frac{C}{L} \to 0. 
\end{align*}
It then follows from the combination of (\ref{eqn:TG01_4}) and the above inequality that on $\mathcal R$,
\begin{align*}
\Delta_{f} e^{\beta}=0, \quad \Delta_f \beta =-1 \quad \text{and} \quad |\na \beta|=1.
\end{align*}
It follows from Proposition \ref{prop:B009} that $(X,d)$ is isometric to $(Y \times \R,d' \times d_{g_E})$, where $(Y,d')$ is a Ricci steady soliton conifold. Since we assume $X$ has at least two ends, $Y$ must be compact. Therefore, it follows from Proposition \ref{prop:B007} that $Rc_Y$ and hence $Rc_X$ are identically $0$ on $\mathcal R$.

In sum, we obtain a contradiction, and the original conclusion holds.

\end{proof}

\section{Improvement theorems of symmetry} 
\label{app:B}

We recall the following definition, see \cite[Definition $4.2$]{BK20}.

\begin{defn}[Neck-symmetry]\label{neck_symmetry}
Let $(M^n, g(t))$ be an $n$-dimensional Ricci flow solution and let $(\bar x, \bar t)$ be a spacetime point with $R(\bar x, \bar t) = \frac{n-1}{2}r^{-2}$. Assume that $(\bar x, \bar t)$ is the center of an evolving $\bar \ep$-neck for some small positive number $\bar \ep$. We say $(\bar x, \bar t)$ is $\ep$-symmetric if there exists a smooth, time-independent family of vector fields $\mathcal U = \{U^{(a)} : 1 \leq a \leq {n \choose 2}\}$ defined on the closed ball $\bar B_{g(\bar t)}(\bar x, 100r)$ with the following properties: 
\begin{enumerate}
\item[$\bullet$] In $B_{g(\bar t)}(\bar x, 100 r) \times [\bar t - 100 r^2, \bar t]$, we have the estimate
\[
\sum_{l = 0}^2 \sum_{a = 1}^{{n \choose 2}} r^{2l} \big|D^l (\mathcal L_{U^{(a)}}(g(t)))|^2 \leq \ep^2.
\]
\item[$\bullet$] If $t \in [\bar t - 100 r^2, \bar t]$ and $\nu$ denotes the unit normal vector to Hamilton's CMC foliation of the $\bar \ep$-neck at time $t$, then in $B_{g(\bar t)}(\bar x, 100 r)$, we have the estimate
\[
\sum_{a = 1}^{{n \choose 2}} r^{-2} |\langle U^{(a)}, \nu \rangle|^2 \leq \ep^2. 
\]
\item[$\bullet$] If $t \in [\bar t -100 r^2, \bar t]$ and $\Sigma \subset B_{g(\bar t)}(\bar x, 100 r)$ is a leaf of Hamilton's CMC foliation of the $\bar \ep$-neck at time $t$, then 
\[
\sum_{a, b = 1}^{{n \choose 2}} \bigg| \delta_{ab} - \mathrm{area}_{g(t)}(\Sigma)^{-\frac{n+1}{n-1}} \int_\Sigma \langle U^{(a)}, U^{(b)} \rangle_{g(t)} \, d\mu_{g(t)} \bigg|^2 \leq \ep^2. 
\]
\end{enumerate}
\end{defn}

Now we state the Neck Improvement Theorem proved by Brendle \cite[Theorem $8.6$]{Bre20} in dimension $3$ and Brendle-Naff \cite[Theorem $4.8$]{BK20} in general dimension. 

\begin{thm}[Improvement Theorem A]
\label{Tneck}
There exists a large constant $\bar L$ (depending only upon $n$) and a small constant $\delta_1$ (depending only upon $\bar L$ and $n$) with the following property. Let $(M^n, g(t))$ be a Ricci flow solution, and let $(x_0, t_0)$ be a spacetime point that is the center of an evolving $\delta_1$-neck and satisfies $R(x_0, t_0) = \frac{n-1}{2}r^{-2}$. Moreover, suppose that every point in the parabolic neighborhood $B_{g(t_0)}(x_0, \bar L r) \times [t_0 - \bar L r^2, t_0)$ is $\ep$-symmetric, where $\ep \leq \delta_1$. Then $(x_0, t_0)$ is $\frac{\ep}{2}$-symmetric. 
\end{thm}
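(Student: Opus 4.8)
\textbf{Proof proposal for Improvement Theorem A (Theorem~\ref{Tneck}).}

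The plan is to follow the structure of Brendle's neck improvement argument, whose essential idea is that on a sufficiently precise cylindrical neck, the approximate Killing fields $U^{(a)}$ can be corrected to better approximate genuine Killing fields of the round cylinder, and this correction is governed by a parabolic (Rellich-type) estimate on the neck. First I would fix the normalization $r=1$ by parabolic rescaling and work on the parabolic region $B_{g(t_0)}(x_0,\bar L)\times[t_0-\bar L,t_0)$, which by assumption is foliated by Hamilton's CMC foliation of a $\delta_1$-neck and in which every point is $\ep$-symmetric. Since $\delta_1$ is as small as we like (chosen after $\bar L$), the metric $g(t)$ on this region is $C^{\lceil\delta_1^{-1}\rceil}$-close to the shrinking round cylinder, so all curvature quantities, the CMC foliation, and its normal $\nu$ are close to their cylindrical counterparts with errors controlled by $\delta_1$.

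The key steps, in order, are: (1) On each leaf $\Sigma$ of the CMC foliation, decompose the restriction of each $U^{(a)}$ into its tangential part and normal part, and use the three bulleted estimates in Definition~\ref{neck_symmetry} together with the near-roundness of $\Sigma$ to expand $U^{(a)}$ in terms of the eigenfunctions of the Laplacian on the round $S^{n-1}$; the rotation Killing fields of $S^{n-1}$ span the eigenspace for the first nonzero eigenvalue, and the $L^2$-normalization condition (third bullet) pins down the leading term. (2) Define improved vector fields $\tilde U^{(a)}$ by projecting out the non-Killing modes leaf-by-leaf (or, following \cite{BK20}, by solving an auxiliary elliptic/parabolic problem for the correction), and show that $\mathcal L_{\tilde U^{(a)}}g$ satisfies the evolution inequality obtained by commuting $\mathcal L_{U^{(a)}}$ with the Ricci flow operator $\partial_t-\Delta-\text{(curvature)}$; the point is that $h^{(a)}:=\mathcal L_{U^{(a)}}g$ nearly satisfies the Lichnerowicz-type heat equation, so $\snorm{h^{(a)}}{}$ obeys a differential inequality of the form $(\partial_t-\Delta)\snorm{h^{(a)}}{}^2\le C\snorm{h^{(a)}}{}^2 + (\text{error of size }\delta_1\ep)$. (3) Feed the hypothesis (every point $\ep$-symmetric, so $\snorm{h^{(a)}}{}\le\ep$ on the whole parabolic neighborhood of radius $\bar L$) into a parabolic mean-value / interpolation argument on the smaller region to gain the factor $\tfrac12$: the spectral gap on the cross-sphere gives exponential decay in the "backward time" or "radial" variable of the non-symmetric modes, and choosing $\bar L$ large absorbs the constant $C$ and the $\delta_1\ep$ error into a net improvement from $\ep$ to $\tfrac{\ep}{2}$. (4) Finally verify that the improved family $\tilde{\mathcal U}=\{\tilde U^{(a)}\}$ still satisfies all three bullets of Definition~\ref{neck_symmetry} at $(x_0,t_0)$ with constant $\tfrac{\ep}{2}$, using that the correction is itself $O(\delta_1\ep + \text{decayed }\ep)$ in $C^2$.

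The main obstacle is step (3): one must extract a genuine quantitative improvement, not merely a bound, from the differential inequality, and this requires carefully tracking how the spectral gap of the round $(n-1)$-sphere controls the decay of the "non-rotational" part of $h^{(a)}$ along the neck direction while simultaneously controlling the tangential reparametrization ambiguity in the $U^{(a)}$ (the choice of CMC foliation and its parametrization). In Brendle--Naff this is handled by a delicate iteration combining an energy estimate on annular regions of the neck with the pointwise parabolic estimate; reproducing it here amounts to quoting \cite[Theorem~4.8]{BK20} essentially verbatim, since the hypotheses of that theorem — a Ricci flow, a $\delta_1$-neck at $(x_0,t_0)$, and $\ep$-symmetry of the parabolic neighborhood $B_{g(t_0)}(x_0,\bar L r)\times[t_0-\bar L r^2,t_0)$ with $\ep\le\delta_1$ — match ours exactly. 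I would therefore present the proof as a direct citation of \cite[Theorem~8.6]{Bre20} (for $n=3$) and \cite[Theorem~4.8]{BK20} (general $n$), after noting that Definition~\ref{neck_symmetry} and Definition~\ref{def:neck_2} coincide with the definitions used there.

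\begin{proof}[Proof of Theorem~\ref{Tneck}]
This is precisely the Neck Improvement Theorem established by Brendle in dimension three (cf. \cite[Theorem~8.6]{Bre20}) and by Brendle--Naff in all dimensions (cf. \cite[Theorem~4.8]{BK20}). Our Definition~\ref{neck_symmetry} of $\ep$-symmetry and Definition~\ref{def:neck_2} of an evolving $\ep$-neck agree with those used in \cite{BK20}, so the statement follows verbatim.
\end{proof}
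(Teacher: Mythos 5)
Your proof proposal is correct and takes the same approach as the paper: the paper itself does not reprove this result, but simply states it as the Neck Improvement Theorem of Brendle (\cite[Theorem~8.6]{Bre20}, dimension $3$) and Brendle--Naff (\cite[Theorem~4.8]{BK20}, general $n$), after setting up Definition~\ref{neck_symmetry} to match theirs. Your preamble sketch of the underlying mechanism (CMC foliation, spectral gap on $S^{n-1}$, parabolic decay of non-rotational modes) accurately describes what is happening inside the cited proofs but is not itself required here.
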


Next, we define the global symmetry of the Ricci flow. We have the following three types.

\begin{defn}[$\ep$-symmetry of type-A]
\label{def:sym1}
Let $(M^n, g(t))_{t\le \bar t}$ be an $n$-dimensional Ricci flow solution. $(M^n, g(t))$ is said to be $\ep$-symmetric of type-A at time $\bar t$ if
\begin{enumerate}
\item[$\bullet$] $M$ is diffeomorphic to $S^{n-1} \times \R$.

\item[$\bullet$] $(x,\bar t)$ is $\ep$-symmetric in the sense of Definition \ref{neck_symmetry} for any $x \in M$.
\end{enumerate}
\end{defn}

Now we make the following assumptions for ancient solutions to the Ricci flow.

\begin{assum}\label{assum1}
There exists a constant $\theta_0>0$ such that if spacetime point $(x,t)$ of the Ricci flow $(M^n, g(t))_{t \le 0}$ satisfies
\begin{align*}
\lambda_1(x,t) \le \theta_0 R(x,t),
\end{align*}
where $\lambda_1$ to be the minimal eigenvalue of $Rc$, then $(x,t)$ is $\delta_1$-symmetric in the sense of Definition \ref{neck_symmetry}, where $\delta_1$ is the constant in Theorem \ref{Tneck}.
\end{assum}

\begin{assum}\label{assum2}
There exists a constant $\Lambda_0>0$ such that for the Ricci flow solution $(M^n,g(t))_{t \le 0}$ with a marked point $z$, if $(\bar x,\bar t) \in M \times (-\infty,0]$ and $d_{g(\bar t)}(z,\bar x) \ge \Lambda_0 R(z,\bar t)^{-\frac 1 2}$, then
\begin{align*}
\lambda_1(x,t) \le \frac{\theta_0}{2} R(x,t),
\end{align*}
for any $(x,t) \in B_{g(\bar t)}(\bar x, \bar L R(\bar x,\bar t)^{-\frac 1 2}) \times [\bar t-\bar L R(\bar x,\bar t)^{-1},\bar t]$, where $\bar L$ is the constant in Theorem \ref{Tneck}.
\end{assum}

The definition of the symmetry of type-B follows from \cite[Definition $5.2$]{BK20}. In the following, we set $r_{\max}(t)=(\max_M R(t))^{-\frac 1 2}$.

\begin{defn}[$\ep$-symmetry of type-B]
\label{def:sym2}
The Ricci flow $(M^n, g(t))_{t \le \bar t}$ with Assumption \ref{assum1} and Assumption \ref{assum2} is called $\ep$-symmetric of type-B at time $\bar t$ if $M$ is diffeomorphic to $\R^n$ and there exists a compact domain $D \subset M$ and a family of time-independent vector fields $\mathcal U = \{U^{(a)} : 1 \leq a \leq {n \choose 2}\}$ which are defined on an open subset containing $D$ such that the following statements hold: 
\begin{enumerate}
\item[$\bullet$] There exists a point $x \in \partial D$ such that $\lambda_1(x, \bar t) < \theta_0 R( x, \bar t)$. 
\item[$\bullet$] For each $x \in D$, we have $\lambda_1(x, \bar t) > \frac{1}{2} \theta_0 R(x, \bar t)$. 
\item[$\bullet$] The boundary $\partial D$ is a leaf of Hamilton's CMC foliation at time $\bar t$. 
\item[$\bullet$] For each $x \in M \setminus D$, the point $(x, \bar t)$ is $\ep$-symmetric in the sense of Definition \ref{neck_symmetry}.
\item[$\bullet$] In $D \times [\bar t - r_{\max}(\bar t)^{2}, \bar t]$, we have the estimate
\[
\sum_{l = 0}^2 \sum_{a = 1}^{{n \choose 2}} r_{\max}(\bar t)^{2l} \big|D^l (\mathcal L_{U^{(a)}}(g(t)))|^2 \leq \ep^2.
\]
\item[$\bullet$] If $\Sigma \subset D$ is a leaf of Hamilton's CMC foliation of $(M, g(\bar t))$ that has distance at most $50\,r_{\mathrm{neck}}(\partial D)$ from $\partial D$, then 
\[
\sup_{\Sigma} \sum_{a = 1}^{{n \choose 2}} r_{\max}(\bar t)^{-2}|\langle U^{(a)}, \nu \rangle|^2 \leq \ep^2, 
\]
where $\nu$ is the unit normal vector to $\Sigma$ in $(M, g(\bar t))$ and $r_{\mathrm{neck}}(\partial D)$ is defined by the identity $\mathrm{area}_{g(\bar t)}(\partial D) = \mathrm{area}_{g_{S^{n-1}}}(S^{n-1}) r_{\mathrm{neck}}(\partial D)^{n-1}$. 
\item[$\bullet$] If $\Sigma \subset D$ is a leaf of Hamilton's CMC foliation of $(M, g(\bar t))$ that has distance at most $50 \,r_{\mathrm{neck}}(\partial D)$ from $\partial D$, then 
\[
\sum_{a, b = 1}^{{n \choose 2}} \bigg| \delta_{ab} - \mathrm{area}_{g(\bar t)}(\Sigma)^{-\frac{n+1}{n-1}} \int_\Sigma \langle U^{(a)}, U^{(b)} \rangle_{g(\bar t)} \, d\mu_{g(\bar t)} \bigg|^2 \leq \ep^2, 
\]
where $r_{\mathrm{neck}}(\partial D)$ is defined by the identity $\mathrm{area}_{g(\bar t)}(\partial D) = \mathrm{area}_{g_{S^{n-1}}}(S^{n-1}) r_{\mathrm{neck}}(\partial D)^{n-1}$. 
\end{enumerate}
\end{defn}

Now we can state the improvement theorem for the second type. The proof follows verbatim from \cite[Section $9$]{Bre20} and \cite[Section $5$]{BK20}.

\begin{thm}[Improvement Theorem B]
\label{Thm:improve2}
There exist positive constants $\delta_2$ and $\delta_3$ depending only $\theta_0$, $\Lambda_0$ and $n$ with the following property. Let $(M^n, g(t))_{t \le \bar t}$ be a Ricci flow solution satisfying Assumption \ref{assum1} and Assumption \ref{assum2}. Suppose $(M^n, g(t))$ is $\ep$-symmetric of type-B at time $t$ for any $t \le \bar t$ with $\ep \le \delta_2$ and the marked point $z$ in the Assumption \ref{assum2} belongs to $D$. Moreover, after rescaling the metric by $r^{-2}$, the parabolic neighborhood $B_{g(\bar t)}(z,\delta_3^{-1}r) \times [\bar t-\delta_3^{-1}r,\bar t]$ is $\delta_3$-close in the $C^{[\delta_3^{-1}]}$-topology to the Bryant soliton based at the tip whose scalar curvature is $1$, where $R(z,\bar t)=r^{-2}$. Then $(M^n, g(t))$ is $\frac{\ep}{2}$-symmetric of type-B at $\bar t$. 
\end{thm}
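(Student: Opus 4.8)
\textbf{Proposal for the proof of Theorem~\ref{Thm:improve2}.}

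The plan is to follow the structure of Brendle's argument for the Bryant soliton cap region (cf.~\cite[Section~9]{Bre20} and \cite[Section~5]{BK20}) and adapt it to our setting, where the only new input is that the cap $D$ is modeled on the Bryant soliton by a \emph{quantitative} rather than exact comparison. First I would observe that the hypotheses split the manifold into two regions: the neck region $M\setminus D$, where every point is already $\ep$-symmetric in the sense of Definition~\ref{neck_symmetry}, and the cap $D$, where $\lambda_1>\tfrac12\theta_0 R$ and, after rescaling by $r^{-2}=R(z,\bar t)$, the parabolic neighborhood of the marked point $z$ is $\delta_3$-close to the Bryant soliton. On the neck, the Neck Improvement Theorem~\ref{Tneck} immediately upgrades $\ep$-symmetry to $\tfrac{\ep}{2}$-symmetry for every point whose $\bar L r$-parabolic neighborhood lies in $M\setminus D$; Assumption~\ref{assum1} and Assumption~\ref{assum2} are exactly what guarantee that the points near $\partial D$ still see enough neck structure (via $\lambda_1\le\tfrac{\theta_0}{2}R$ on $B(\bar x,\bar L R^{-1/2})\times[\bar t-\bar L R^{-1},\bar t]$) for this to apply.

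The heart of the matter is the cap $D$ itself. Here I would construct the improved family of vector fields $\bar{\mathcal U}=\{\bar U^{(a)}\}$ by the same device as Brendle: on the portion of $D$ within controlled distance of $\partial D$ one has a CMC foliation by nearly round spheres, and the rotational vector fields are obtained by projecting the ambient candidate fields $U^{(a)}$ onto the foliation and then extending inward using the Bryant soliton structure. The Bryant soliton (being itself rotationally symmetric with a unique rotationally symmetric tip) carries an exact $SO(n)$ action, and the $\delta_3$-closeness furnishes, after unwinding, a genuine family of vector fields on $D$ whose Lie derivatives of the metric are $O(\delta_3)$-small; combining this with the incoming $\ep$-symmetry data on a collar near $\partial D$ and applying the elliptic/parabolic estimates of \cite[Section~9]{Bre20} (or \cite[Section~5]{BK20}), one glues the two families together and improves the constant from $\ep$ to $\tfrac{\ep}{2}$, provided $\delta_2$ and $\delta_3$ are taken small enough in terms of $\theta_0,\Lambda_0,n$. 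The precise bookkeeping of the matching on the collar — ensuring the normalization conditions on $\langle U^{(a)},U^{(b)}\rangle$ over CMC leaves and the orthogonality to $\nu$ are simultaneously controlled — is where all the work lies, but it is entirely parallel to the cited references.

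The main obstacle I anticipate is not conceptual but one of verifying that our hypotheses genuinely supply the inputs demanded by Brendle's machinery. Specifically, Brendle's cap argument is stated for ancient solutions that are \emph{exactly} asymptotic to (a quotient of) the Bryant soliton near the tip; in our case the comparison is only $\delta_3$-close in $C^{[\delta_3^{-1}]}$, and one must check that this finite-order, finite-precision closeness suffices to run the construction of the foliation, the projection of the vector fields, and the a priori estimates. Concretely I would need: (i) that the CMC foliation of $D$ extends from $\partial D$ far enough inward and stays nearly round (this follows from the Bryant soliton comparison plus $\lambda_1>\tfrac12\theta_0 R$ on $D$, which prevents necks from forming inside); (ii) that the scalar curvature on $D$ is comparable to $r^{-2}$ with constants depending only on $n$ (again from the comparison and from the self-similarity/monotonicity of $R$ along $\nabla f$ established in Section~3–4, e.g.~Proposition~\ref{prop:500}, Proposition~\ref{prop:502}); and (iii) that the time-direction estimates over $D\times[\bar t-r_{\max}(\bar t)^2,\bar t]$ hold, which uses that the flow is self-similar and that $r_{\max}(\bar t)\sim r$. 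Once these are in hand, the improvement step is a citation to \cite[Section~9]{Bre20}, \cite[Section~5]{BK20} with the constants renamed, and the theorem follows.
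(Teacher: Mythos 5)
Your proposal is correct and takes essentially the same approach as the paper: the paper's entire proof of Theorem~\ref{Thm:improve2} is the remark that it ``follows verbatim from \cite[Section 9]{Bre20} and \cite[Section 5]{BK20},'' which is exactly the neck/cap splitting, Neck Improvement near $\partial D$, and Cap Improvement on $D$ that you outline. The one place your anticipated obstacle is misplaced: Brendle's Cap Improvement Theorem (Theorem~9.3 of \cite{Bre20}, and its higher-dimensional analogue in \cite{BK20}) is already stated with a finite-precision $C^{[\delta^{-1}]}$-closeness hypothesis to the Bryant soliton, not exact asymptotics, so the hypotheses of Theorem~\ref{Thm:improve2} feed directly into that machinery without any adaptation from ``exact'' to ``approximate.''
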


\begin{assum}\label{assum3}
There exists a constant $\Lambda_1>0$ such that for the Ricci flow solution $(M^n,g(t))_{t \le 0}$ with two marked points $z_1,z_2$, if $(\bar x,\bar t) \in M \times (-\infty,0]$, $d_{g(\bar t)}(z_1,\bar x) \ge \Lambda_1 R(z_1,\bar t)^{-\frac 1 2}$ and $d_{g(\bar t)}(z_2,\bar x) \ge \Lambda_1 R(z_2,\bar t)^{-\frac 1 2}$, then
\begin{align*}
\lambda_1(x,t) \le \frac{\theta_0}{2} R(x,t),
\end{align*}
for any $(x,t) \in B_{g(\bar t)}(\bar x, \bar L R(\bar x,\bar t)^{-\frac 1 2}) \times [\bar t-\bar L R(\bar x,\bar t)^{-1},\bar t]$, where $\bar L$ is the constant in Theorem \ref{Tneck}.
\end{assum}

Next, we have the following definition, see \cite[Definition $3.1$]{BDNS21}.

\begin{defn}[$\ep$-symmetry of type-C]
\label{def:sym3}
The Ricci flow $(M^n, g(t))_{t \le \bar t}$ with Assumption \ref{assum1} and Assumption \ref{assum3} is called $\ep$-symmetric of type-C at time $\bar t$ if $M$ is diffeomorphic to $S^n$ and there exists a compact domain $D \subset M$ and a family of time-independent vector fields $\mathcal U = \{U^{(a)} : 1 \leq a \leq {n \choose 2}\}$ which are defined on an open subset containing $D$ such that the following statements hold: 
\begin{itemize}
\item The domain $D$ is a disjoint union of two domains $D_1$ and $D_2$, each of which is diffeomorphic to $B^n$.
\item $\lambda_1(x, \bar t) < \theta_0 R(\bar x, \bar t)$ for all points $x \in M \setminus D$.
\item $\lambda_1(x, \bar t) > \frac{1}{2} \theta_0 R(x, \bar t)$ for all points $x \in D$.
\item $\partial D_1$ and $\partial D_2$ are leaves of Hamilton's CMC foliation of $(M, g(\bar t))$. 
\item For each $x \in M \setminus D$, the point $(x, \bar t)$ is $\ep$-symmetric in the sense of Definition \ref{neck_symmetry}.
\item The Lie derivative $\mathcal L_{U^{(a)}}(g(t))$ satisfies for each $k \in \{1,2\}$ the estimate 
\[
\sup_{D_k \times [\bar t - \rho_k^2, \bar t]} \sum_{l = 0}^2 \sum_{a = 1}^{{n \choose 2}} \rho_k^{2l} \big|D^l (\mathcal L_{U^{(a)}}(g(t)))|^2 \leq \ep^2,
\]
where $\rho_k^{-2} := \sup_{x \in D_k} R(x, \bar t)$. 

\item For each $k \in \{1,2\}$, the following property holds. 
If $\Sigma \subset D_k$ is a leaf of Hamilton's CMC foliation of $(M, g(\bar t))$ that has distance at most $50\,r_{\mathrm{neck}}(\partial D_k)$ from $\partial D_k$, then 
\[
\sup_{\Sigma} \sum_{a = 1}^{{n \choose 2}} \rho_k^{-2}|\langle U^{(a)}, \nu \rangle|^2 \leq \ep^2, 
\]
where $\nu$ is the unit normal vector to $\Sigma$ in $(M, g(\bar t))$ and $r_{\mathrm{neck}}(\partial D_k)$ is defined by the identity $\mathrm{area}_{g(\bar t)}(\partial D_k) = \mathrm{area}_{g_{S^{n-1}}}(S^{n-1}) r_{\mathrm{neck}}(\partial D_k)^{n-1}$. 
\item[$\bullet$] For each $k \in \{1,2\}$, the following property holds. 
If $\Sigma \subset D_k$ is a leaf of Hamilton's CMC foliation of $(M, g(\bar t))$ that has distance at most $50 \,r_{\mathrm{neck}}(\partial D_k)$ from $\partial D_k$, then 
\[
\sum_{a, b = 1}^{{n \choose 2}} \bigg| \delta_{ab} - \mathrm{area}_{g(\bar t)}(\Sigma)^{-\frac{n+1}{n-1}} \int_\Sigma \langle U^{(a)}, U^{(b)} \rangle_{g(\bar t)} \, d\mu_{g(\bar t)} \bigg|^2 \leq \ep^2. 
\]
\end{itemize}
\end{defn}

Following the same arguments of \cite[Section $3$]{BDS20} and \cite[Section $3$]{BDNS21}, we have the following theorem.

\begin{thm}[Improvement Theorem C]\label{Thm:improve3}
There exist positive constants $\delta_4$ and $\delta_5$ depending only on $\theta_0$, $\Lambda_1$ and $n$ with the following property. Let $(M^n, g(t))_{t \le \bar t}$ be a Ricci flow solution satisfying Assumption \ref{assum1} and Assumption \ref{assum3}. Suppose $(M^n, g(t))$ is $\ep$-symmetric of type-C at time $t$ for any $t \le \bar t$ with $\ep \le \delta_4$ and the marked points $z_1$ and $z_2$ in the Assumption \ref{assum3} belong to $D_1$ and $D_2$ respectively. Moreover, after rescaling the metric by $r_i^{-2}$, the parabolic neighborhood $B_{g(\bar t)}(z_i,\delta_5^{-1}r_i) \times [\bar t-\delta_5^{-1}r_i,\bar t]$ is $\delta_5$-close in the $C^{[\delta_5^{-1}]}$-topology to the Bryant soliton based at the tip whose scalar curvature is $1$ for $i=1,2$, where $R(z_i,\bar t)=r_i^{-2}$. Then $(M^n, g(t))$ is $\frac{\ep}{2}$-symmetric of type-C at $\bar t$. 
\end{thm}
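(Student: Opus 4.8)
The plan is to adapt the Neck-and-Cap symmetry improvement machinery of Brendle--Daskalopoulos--Sesum \cite{BDS20} and Brendle--Daskalopoulos--Naff--Sesum \cite{BDNS21} to the present compact setting, where the solution is modelled near each of its two tips on the Bryant soliton and on a long round neck in between. First I would record the geometric picture at time $\bar t$: $M$ is diffeomorphic to $S^n$, the domain $D = D_1 \sqcup D_2$ has each $\partial D_k$ a leaf of Hamilton's CMC foliation, $\lambda_1 < \theta_0 R$ on $M \setminus D$ while $\lambda_1 > \frac{1}{2}\theta_0 R$ on $D$, and the family $\mathcal U = \{U^{(a)}\}$ satisfies the $\ep$-symmetry estimates of Definition \ref{def:sym3}. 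By Assumption \ref{assum1} every point of $M \setminus D$ is $\delta_1$-symmetric, and by Assumption \ref{assum3} the forward-and-backward parabolic neighborhood of every such point at scale $\bar L R^{-1/2}$ again consists of $\ep$-symmetric points; this is exactly the hypothesis needed to invoke the Neck Improvement Theorem \ref{Tneck}.

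Applying Theorem \ref{Tneck} pointwise on $M \setminus D$, and using the canonical CMC foliation to fix the gauge (its uniqueness, cf. \cite[Proposition D.1]{BK20}), produces a family of improved vector fields $\widetilde U^{(a)}$ on the neck satisfying the symmetry estimates with constant $\ep/2$. On a neighborhood of each cap $D_k$ I would use the $\delta_5$-closeness to the Bryant soliton: the latter carries an exact copy of $\mathfrak{so}(n)$ acting by isometries, and transplanting these Killing fields through the $C^{[\delta_5^{-1}]}$-comparison map gives vector fields $V^{(a)}_k$ near $D_k$ with $|\mathcal L_{V^{(a)}_k} g(t)| = O(\delta_5)$ at the scale $\rho_k$; taking $\delta_5 = \delta_5(\ep,\theta_0,\Lambda_1,n)$ small makes these satisfy the relevant estimates with constant, say, $\ep/4$.

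The bulk of the work is the gluing. On the collar where both $\widetilde U^{(a)}$ and $V^{(a)}_k$ are defined, I would show that they agree modulo a fixed element of $O(n)$ up to an $O(\ep)$ error, using the near-orthonormality of the $U^{(a)}$ along CMC leaves and the smallness of their normal components (the last bullets of Definition \ref{def:sym3}), then compose $V^{(a)}_k$ with that rotation and interpolate between the rotated $V^{(a)}_k$ and $\widetilde U^{(a)}$ via a cutoff supported on the collar. Since the collar has definite size in neck-scale units, the error introduced by the cutoff is $O(\ep/\bar L)$, so the glued fields obey all estimates with constant $\ep/2$ on an updated domain $D_{\mathrm{new}}$, whose boundary one arranges to be a CMC leaf and which one checks still traps $\{\lambda_1 > \frac{1}{2}\theta_0 R\}$ with $M \setminus D_{\mathrm{new}} \subset \{\lambda_1 < \theta_0 R\}$ and preserves the diffeomorphism types $D_k \cong B^n$. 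The hard part will be the compatibility of the two caps: one must verify that the $O(n)$-ambiguity at $D_1$ coincides with that at $D_2$ after transport along the neck, i.e. that the single globally defined family $\widetilde U^{(a)}$ on the connected region $M \setminus D$ simultaneously matches both transplanted Bryant families. This global matching---forced by connectedness of the neck together with uniqueness of the CMC foliation---is precisely what distinguishes type C from type B. The remaining time-dependent (parabolic) estimates then follow from the same localized maximum-principle arguments as in \cite[Section 3]{BDS20} and \cite[Section 3]{BDNS21}.
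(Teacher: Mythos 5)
The paper itself gives no proof of Theorem~\ref{Thm:improve3}, simply asserting that it follows from the arguments of \cite[Section 3]{BDS20} and \cite[Section 3]{BDNS21}, which is precisely the Neck-and-Cap machinery you outline; your sketch---pointwise application of Theorem~\ref{Tneck} on $M \setminus D$, transplanting Bryant Killing fields near each $D_k$, and cutoff interpolation on the collars---is a faithful summary of that approach. One remark: the ``global matching'' concern you flag between the two caps is, as far as I can tell, a non-issue, since each cap's transplanted Bryant fields are independently anchored (by their own element of $O(n)$) to the single globally defined neck family on its own collar, and the two rotations never need to agree with each other---only with the adjacent piece of the neck family---so no additional global consistency constraint arises beyond what already occurs in the type-B case.
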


\vskip10pt

Yu Li, Institute of Geometry and Physics, University of Science and Technology of China, No. 96 Jinzhai Road, Hefei, Anhui Province, 230026, China; yuli21@ustc.edu.cn.\\

Bing Wang, Institute of Geometry and Physics, School of Mathematical Sciences, University of Science and Technology of China, No. 96 Jinzhai Road, Hefei, Anhui Province, 230026, China; topspin@ustc.edu.cn.\\

\end{document}